\newcommand{\newcom}{\newcommand}
\def\eqdefa{\buildrel\hbox{\footnotesize def}\over =}
\def\fd{\frak{\Delta}}
\newcom{\al}{\alpha}
\newcom{\be}{\beta}
\newcom{\eps}{\varepsilon}
\newcom{\ga}{\gamma}
\newcom{\Ga}{\Gamma}
\newcom{\ka}{\kappa}
\newcom{\Lam}{\Lambda}
\newcom{\lam}{\lambda}
\newcom{\Om}{\Omega}
\newcom{\om}{\omega}
\newcom{\Si}{\Sigma}
\newcom{\si}{\sigma}
\newcom{\tht}{\theta}
\newcom{\dtri}{\nabla}
\newcom{\tri}{\triangle}
\newcom{\oo}{\infty}
\newcom{\vphi}{\varphi}
\newcom{\calb}{{\mathcal B}}
\newcom{\calc}{{\mathcal C}}
\newcom{\cD}{{\mathcal D}}
\newcom{\cF}{{\mathcal F}}
\newcom{\cG}{{\mathcal G}}
\newcom{\cI}{{\mathcal I}}
\newcom{\cJ}{{\mathcal J}}
\newcom{\cL}{{\mathcal L}}
\newcom{\cM}{{\mathcal M}}
\newcom{\cP}{{\mathcal P}}
\newcom{\cR}{{\mathcal R}}
\newcom{\cS}{{\mathcal S}}
\newcom{\cQ}{{\mathcal Q}}
\newcom{\caly}{{\mathcal Y}}
\newcom{\calz}{{\mathcal Z}}
\newcom{\bfz}{{\bf Z}}
\newcom{\R}{\Bbb R}
\newcom{\N}{\Bbb N}
\newcom{\Z}{\Bbb Z}
\newcom{\C}{\Bbb C}
\newcom{\E}{\Bbb E}
\newcom{\f}{\frac}
\newcom{\di}{\displaystyle\int}
\newcom{\ds}{\displaystyle\sum}
\newcom{\dl}{\displaystyle\lim}
\newcom{\ov}{\overline}
\newcom{\sset}{\subset}
\newcom{\wt}{\widetilde}
\newcom{\p}{\partial}
\newcom\na{\nabla}
\newcom{\co}{\cdot}
\newcom{\suml}{\sum\limits}
\newcom{\supl}{\sup\limits}
\newcom{\intl}{\int\limits}
\newcom{\infl}{\inf\limits}
\newcom{\disp}{\displaystyle}
\newcom{\non}{\nonumber}
\newcom{\no}{\noindent}
\newcom{\QED}{$\square$}
\def\ef{\hphantom{MM}\hfill\llap{$\square$}\goodbreak}
\newtheorem{athm}{\bf \t}[section]
\newenvironment{thm} [1] {\def\t{#1}\begin{athm} \bf \rm} {\end {athm}}
\newcom{\bthm}{\begin{thm}}\newcom{\ethm}{\end{thm}}
\newcom{\beq}{\begin{equation}}
\newcom{\eeq}{\end{equation}}
\newcom{\ben}{\begin{eqnarray}}
\newcom{\een}{\end{eqnarray}}
\newcom{\beno}{\begin{eqnarray*}}
\newcom{\eeno}{\end{eqnarray*}}
\newtheorem{defi}{Definition}[section]
\newtheorem{lem}{Lemma}[section]
\newtheorem{rmk}{Remark}[section]
\newtheorem{col}{Corollary}[section]
\newtheorem{prop}{Proposition}[section]
\renewcommand{\theequation}{\thesection.\arabic{equation}}
\begin{document}

\title[Large time wellposedness  of  Capillary-Gravity Waves ]
{Large time wellposdness   to the 3-D Capillary-Gravity Waves in the
long wave regime}

\author[M. Ming]{Mei Ming}%
\address[M. Ming]
 {Academy of
Mathematics $\&$ Systems Science, CAS\\
Beijing 100190, CHINA.} \email{meiming@amss.ac.cn}
\author[P. Zhang]{Ping Zhang}%
\address[P. Zhang]
 {Academy of Mathematics and Systems Science and  Hua Loo-Keng Key Laboratory of Mathematics,
  Chinese Academy of Sciences, Beijing 100190, CHINA} \email{zp@amss.ac.cn}
\author[Z. Zhang]{Zhifei Zhang}
\address[Z.  Zhang]{School of Mathematical Sciences and LMAM, Peking University\\
Beijing 100871, CHINA.} \email{zfzhang@math.pku.edu.cn}

\date{April 28, 2011}

\keywords{Water wave system, KP equations, surface tension,
pseudo-differential operators}

\subjclass[2000]{35Q35, 76D03}

\begin{abstract}
In the  regime of weakly transverse long waves, given long-wave
initial data, we  prove that the nondimensionalized water wave
system in an infinite strip under influence of gravity and  surface
tension on the upper free interface has a unique solution on
$[0,{T}/\eps]$ for some $\eps$ independent of constant $T.$ We shall
prove in the subsequent paper \cite{MZZ2} that  on the same time
interval, these solutions can be accurately approximated by sums of
solutions of two decoupled Kadomtsev-Petviashvili (KP) equations.
\end{abstract}

  \maketitle

  \tableofcontents

  \renewcommand{\theequation}{\thesection.\arabic{equation}}
  \setcounter{equation}{0}
  \section{Introduction}

\subsection{General setting}

The aim of this paper is to prove that in the  regime of weakly
transverse long waves, given long-wave initial data, the
nondimensionalized water wave system in an infinite strip under
influence of gravity and  surface tension on the upper free
interface has a unique solution on $[0,{T}/\eps]$ for some $\eps$
independent of constant $T.$ More precisely, we consider the
irrotational flow of an incompressible, inviscid fluid in an
infinite strip with impermeable bottom under the influence of
gravity and  surface tension on the upper free interface. In this
setting, we may assume that the free surface is described by the
graph $z=\zeta(t,x,y),$ and $z=-d+b(x,y)$ (the constant $d>0$)
describes the bottom of the infinite strip. Since the fluid is
incompressible and irrotational, there exists a velocity potential
$\phi$ such that the velocity  field is given by $v=\na\phi.$ Then
one can reduce the motion of the fluid to a system in terms of the
velocity potential $\phi$ and $\zeta$:
\beq\label{eq:waterwave}\left\{
\begin{array}{ll}
\p_x^2\phi+\p_y^2\phi+\p_z^2\phi=0,\qquad -d+b<z<\zeta,\\
\p_n\phi=0,\qquad z=-d+b,\\
\p_t\zeta+\na_h\phi\cdot\na_h\zeta=\p_z\phi,\qquad z=\zeta,\\
\p_t\phi+\f12(|\na_h\phi|^2+(\p_z\phi)^2)+g\zeta=\kappa\na_h\cdot\big(\f{\na_h
\zeta}{\sqrt{1+|\na_h\zeta|^2}}\big), \ z=\zeta,
\end{array}\right.
\eeq where $\na_h=(\p_x,\p_y)$ and $\p_n\phi$ denotes the outward
normal
 derivative at the bottom  of the fluid region,
 $g, \kappa>0$ denotes the gravitational force constant and the
surface tension coefficient respectively.

It is well-known \cite{CSS, CSS2, zak} that the water wave system
(\ref{eq:waterwave}) can be  reduced to a system  of two evolution
equations coupling the parametrization of the free surface $\zeta$
and the trace of the velocity potential $\phi$ at the free surface.
More precisely, let $n_+$ be the outward unit normal vector to the
free surface,
\[ \psi(t,x_h){\eqdefa}\phi(t,x_h,\zeta(t,x_h))\quad\mbox{with}\quad x_h\eqdefa (x,y) \]
and the (rescaled) Dirichlet-Neumann operator $G(\zeta,b)$ (or
simply $G(\zeta)$) \beno
G(\zeta)\psi{\eqdefa}\sqrt{1+|\na_h\zeta|^2}\p_{n_+}\phi|_{z=\zeta(t,x_h)}.
\eeno Taking the trace of (\ref{eq:waterwave}) on the free surface
$z=\zeta(t,x_h),$  the system (\ref{eq:waterwave}) is equivalent to
(see \cite{CSS, CSS2, zak} for instance) \beq \label{eq:Hamiltonian
form} \left\{
\begin{array}{ll}
\p_t \zeta-G(\zeta)\psi=0, \\
\p_t \psi+g\zeta+\frac{1}{2}|\na_h
\psi|^2-\f{(G(\zeta)\psi+\na_h\zeta\cdot \na_h
\psi)^2}{2{(1+|\na_h\zeta|^2)}}=\kappa\na_h\cdot(\f{\na_h
\zeta}{\sqrt{1+|\na_h\zeta|^2}}).
\end{array}\right.
\eeq

Recently this subject of water wave problem has attracted the interest of lots of
mathematicians. Concerning 2-D water wave system, when the surface
tension is neglected and the motion of free surface is a small
perturbation of still water, one could check Nalimov \cite{nal},
Yosihara \cite{yos} and  Craig \cite{wal}. In general, the local
wellposedness of 2-D full water wave problem was solved by Wu
\cite{Wu1} and see also Ambrose and Masmoudi \cite{amb1}, where they
firstly studied the 2-D irrotational water wave problem with nonzero
surface tension and proved the local wellposedness of the problem,
then they showed that as the surface tension goes to zero, the
solutions of nonzero surface tension problem goes to solutions of
the corresponding zero surface tension problem. (See similar result
by the same authors for the 3-D problem in \cite{amb2}). One may
also check \cite{Wu3} for the most recent almost global wellposedness to the 2-D full water wave system
without surface tension.

Concerning the 3-D water wave problem without surface tension,
 Wu \cite{Wu2} proved its local wellposedness  under the assumptions
that the fluid is irrotational and there is no self-intersection
point on the initial surface.  Lannes \cite{lan1} considered the
same problem in the case of finite depth under Eulerian coordinates.
More recently, following Lannes \cite{lan1}'s framework, Ming and
Zhang \cite{MZ} proved the local wellposedness of water wave system
in an infinite strip and under the influence of surface tension on
the free interface. Recently,  Alazard, Burq and  Zuily
\cite{Alazard} studied the regularities to the local  solutions of
3-D water wave system,  Germain, Masmoudi and Shatah \cite{Germain},
Wu \cite{Wu4} independently proved the global wellposedness of the
3-D water system without surface tension. D. Lannes proved very
recently a more general well-posedness result on the two-fluid
system with surface tension on the interface
\cite{Lannes-stability}, and he also stated a stability criterion
for these two-fluid interfaces and some applications.

When the initial vorticity does not equal zero, Iguchi,  Tanaka and
Tani \cite{Ig-Ta} proved the local wellposedness of the  free
boundary problem for an incompressible ideal fluid in two space
dimensions without surface tension. Similar result was proved by
Ogawa and Tani \cite{Og-Ta1} to the case with surface tension. And
in \cite{Og-Ta2},  Ogawa and Tani generalized the wellposedness
result in \cite{Og-Ta1} to the case of finite depth. One may check
\cite{Co-Sh, lin, Sh-Ze, zzc} for some recent study on the local
wellposedness of free boundary problem of 3-D Euler equations under
the Taylor sign condition on the initial interface.

\subsection{Nondimensionalized water-wave system and main results}

The complexity of the full water wave system led physicists and
mathematicians to derive simpler sets of equations likely to
describe the dynamics of \eqref{eq:waterwave} in some specific
physical regimes. Yet the mathematical analysis of the these models
on their relevance as approximate models for the water wave
equations only began three decades ago.

In the particular regime of weakly transverse long waves, Craig
\cite{wal} and Kano and Nishida \cite{KN} gave a first justification
of the 1-D Boussinesq systems. However, the convergence result
proved in \cite{KN}  is given on a time scale which is too short to
capture the nonlinear and dispersive effects for Boussinesq systems;
the correct large time convergence result was later proved by Craig
in \cite{wal}. In the 2-D case, assuming the large time
wellposedness of the dimensionless water wave equations, Bona, Colin
and Lannes \cite{BCL} justified the Boussinesq approximation. Notice
that at the first order, the Boussinesq systems reduce to two
decoupled Korteweg-de Vries (KdV) equations in 1-D case and
Kadomtsev-Petviashvili (KP) equations in 2-D case. Many papers
addressed the problem of the validity of KdV model (\cite{wal, Kano,
sw1, sw2,Iguchi}).  For the KP model, a first attempt was done in
\cite{Kano} for small and analytic initial data. But as in
\cite{KN}, the time scale considered is again unfortunately too
small for the relevant dynamics. In \cite{Lannes-non}, Lannes and
Saut proved the KP limit by assuming a large time wellposedness
theorem and a specific control of the solutions to the dimensionless
full water wave system without surface tension.

In the fundamental paper \cite{Lannes-Inven}, Alvarez-Samaniego and
Lannes systematically justified various 3-D asymptotic models,
including shallow-water equations, Boussinesq system, KP
approximation, Green-Naghdi equations, Serre approximation and
full-dispersion model for the water wave system without surface
tension.

As is well-known, the proof of large-time wellposedness of
dimensionless form of  \eqref{eq:Hamiltonian form}
 is the most delicate point in the
justification of the related approximations. The purpose of our
paper is to
 prove
that: in the long wave regime, the evolution of long wave-length
initial data  to \eqref{eq:Hamiltonian form} has a unique solution
on $[0,T/\eps]$ for some $\eps$ independent positive time $T.$ The
main idea of our proof is similar as in our previous work \cite{MZ}
but more complicated. We use the similarity between the main part of
Dirichlet-Neumann operator and the surface tension operator to
construct the energy for the linearized system, and we also use a
Nash-Moser iteration theorem to handle with the loss of derivatives
in the energy estimates. We refer to \cite{Tzvetkov,
Lannes-stability} for another way to prove the well-posedness
without using Nash-Moser iteration by taking the sufficient amount of derivatives
to the system.

Now we are going to introduce the specific regime we used in our
paper. This regime of weakly transverse long waves can be specified
in terms of relevant characters of the wave, namely, its typical
amplitude $a$, the mean depth $d$, the typical wavelength $\lambda$
along the longitudinal direction ( say , the $x$ axis), and $\frac
\lambda{\sqrt{\eps}}$ the wavelength in $y$ direction, $B$ the
amplitude of the variations of the bottom topogaphy, which satisfy
\beq \label{KPregime} {\eps}=\frac ad=\frac {d^2}{\lambda^2}=\frac B
d\ll 1.\eeq The asymptotic study becomes more transparent when
working with variables scaled in such a way that the dependent
quantities and the initial data which appear in the initial value
problem are all of order one. The relation \eqref{KPregime} which
sets the KP regime here are connected with small parameters in the
nondimensionalized equations of motion.

For simplicity, we  take gravitational constant $g=1$ in
\eqref{eq:Hamiltonian form} and denote the dimensionless variables
with a prime. We set \beq\label{scaling1}
\begin{split}
&x=\lambda x',~~y=\frac{\lambda }{\sqrt{\eps}} y',~~z=d z',~~t=\frac\lambda {\sqrt{d}}t',\\
&\zeta=a\zeta',~~\phi=\frac{a }{\sqrt{d}}
\lambda\phi',~~b=Bb',~~\psi=\frac{a}{\sqrt{d}}  \lambda\psi'.
\end{split}
\eeq Then we write the dimensionless form of (\ref{eq:waterwave}) as
follows (by neglecting the prime)
\beq\label{eq:waterwave-non}\left\{
\begin{array}{ll}
\eps\p_x^2\phi+\eps^2\p_y^2\phi+\p_z^2\phi=0,& -1+\eps b<z<\eps\zeta,\\
-\eps\na_h^{\eps}(\eps b)\cdot\na_h^{\eps}\phi+\p_z\phi=0,& z=-1+\eps b,\\
\p_t\zeta+\eps\na_h^{\eps}\zeta\cdot\na_h^{\eps}\phi=\f 1\eps\p_z\phi,&z=\eps\zeta,\\
\p_t\phi+\f12\big(\eps|\na_h^{\eps}\phi|^2+(\p_z\phi)^2\big)+\zeta\\
\ =({\kappa\eps}/{d^2})\na_h^{\eps}\cdot\big({\na_h^{\eps}
\zeta}/{\sqrt{1+\eps^3|\na_h^{\eps}\zeta|^2}}\big),& z=\eps\zeta
\end{array}\right.
\eeq where $\na_h^{\eps}\eqdefa (\p_x,\sqrt{\eps}\p_y)$. We define
the scaled Dirichlet-Neumann operator $G[{\eps}\zeta]$ by \beq
\label{DNO} G[{\eps}\zeta]\psi{\eqdefa}
\bigl(-{\eps}\na^{\eps}_h({\eps}\zeta)\cdot
\na^{\eps}_h\Phi+\p_z\Phi\bigr)|_{z={\eps}\zeta},\eeq with $\Phi$
solving  \beq\label{eq:elliptic equation-DN}
\left\{\begin{array}{ll}
\p_z^2\Phi+\eps\p_x^2\Phi+\eps^2\p_y^2\Phi=0,\quad-1+\eps b<z<{\eps}\zeta,\\
\Phi|_{z={\eps}\zeta}=\psi,\quad\p_n^{P_0}\Phi|_{z=-1+\eps b}=0.
\end{array}\right.
\eeq  Here $\p_n^{P_0}\Phi$ is the outward conormal derivative
associated to the elliptic equation \eqref{eq:elliptic equation-DN}, i.e.,
\[\p_n^{P_0}\Phi|_{z=-1+\eps b}{\eqdefa} n\cdot
P_0\na\Phi|_{z=-1+\eps b}\quad\hbox{with}\quad P_0=\left(\begin{matrix}\eps & 0 & 0\\
0&\eps^2&0\\
0& 0&1
\end{matrix}\right) \]
where $\na\eqdefa(\na_h,\partial_z)$  and $n$ stands for the outward
unit normal vector to the bottom of the infinite strip $\{(x,y,z)| \
-1+\eps b(x,y)<z<{\eps}\zeta(t,x,y)\}.$

 Then similar to (\ref{eq:Hamiltonian form}), the system \eqref{eq:waterwave-non} becomes
\beq\label{eq:Hamiltonian form-non} \left\{
\begin{array}{ll}
\p_t \zeta-\frac 1 {{\eps}}G[{\eps}\zeta]\psi=0, \\
\p_t \psi+\zeta+\frac{{\eps}}{2}|\na^{\eps}_h \psi|^2-\f{\eps^2}2{
(\frac 1{\eps} G[{\eps}\zeta]\psi+{\eps}\na^{\eps}_h\zeta\cdot
\na^{\eps}_h \psi)^2}/{(1+{\eps}^3|\na^
\eps_h\zeta|^2)}\\
\ =\alpha\eps\na^{\eps}_h \cdot \big({\na^{\eps}_h
\zeta}/{\sqrt{1+{\eps}^3|\na^{\eps}_h \zeta|^2}}\big),\\
\zeta|_{t=0}=\zeta_0^\eps,\quad \psi|_{t=0}=\psi_0^\eps,
\end{array}\right.
\eeq where $\alpha={\kappa}/{d^2}$ is the so-called Bond number.

The uniform energy estimates for the solutions to the linearized
system of \eqref{eq:Hamiltonian form-non} plays an essential role in
the proof of the large time well-posedness for the nonlinear system.
Compared with \cite{Lannes-Inven}, there is an additional term on
the left hand side of the linearized system \eqref{eq:linearized
system} due to the appearance of surface tension term  in
\eqref{eq:Hamiltonian form-non}. Then the  ordinary energy
functional given in \cite{Lannes-Inven} will not work for
\eqref{eq:linearized system}, otherwise, there will be a loss of one
order derivative in the energy estimates. The key point here is that
we observed the resemblance between the principle part of the
Dirichlet-Neumann operator and the linearized surface tension
operator, and based on this fact we constructed an effective energy
functional to obtain the uniform energy estimate for the linearized
system \eqref{eq:linearized system}. This new energy functional
leads to the use of a parameterized Sobolev space and some
complicated pseudo-differential operator estimates in the process of
the energy estimates. With these preparations, we can use a modified
version of Nash-Moser iteration theorem in \cite{Lannes-IUMJ} to
prove the large time existence of solutions to \eqref{eq:Hamiltonian
form-non}.

Before presenting our main results, we introduce the following
function space

\begin{defi}\label{def1.1}
{\sl We define the space $\frak{X}^s$ as
\[
\frak{X}^s{\eqdefa}\left\{U=(\zeta,\psi)^T: \zeta\in
H^{2s+1}(\R^2),\na_h \psi\in
 H^{2s-\f12}(\R^2)^2\right\}
\]
endowed with the semi-norm
\[
|U|_{\frak{X}^s}{\eqdefa}\sqrt{\eps}|\zeta|_{H^{2s+1}_\eps}+|\zeta|_{H^{2s}_\eps}+\sqrt{\eps}|\na_h^{\eps}\zeta|
_{H^{s}}+|\zeta|_{H^s}+|\mathfrak P\psi|_{H^{2s}_\eps}+|\mathfrak
P\psi|_{H^{s}}
\]
for $\mathfrak {P}\eqdefa
{|D^{\eps}_{h}|}/{(1+\sqrt{\eps}|D^{\eps}_{h}|)^\frac12},$
$|D^{\eps}_h|$ the Fourier multiplier with the symbol
$(\xi_1^2+\eps\xi_2^2)^\f12,$ and $H^s_\eps(\R^2)$ is the space of
tempered distributions $v$ so that \beq \label{defsob}
|v|_{H^s_\eps}\eqdefa |(1+|D_h^\eps|^2)^{\f{s}2}v|_{L^2}<\infty. \eeq}
\end{defi}

\begin{rmk}
The scaled Sobolev space $H^s_\eps$ is naturally connected with the equivalent form for the energy functional
introduced in Section 6, which is crucial to obtain an uniform energy estimates
for the linearized water-wave system.
\end{rmk}

Our result of this paper is as follows.
\bthm{Theorem}\label{thm:KP approximation}  {\sl Let the Bond number
$\al>0$ and $\al\neq \f13, s\ge m_0$ for some $m_0\in (9,10).$
Assume that there exist $P>D>0$ such that $b\in H^{2s+2P+1}(\R^2)$
and bounded initial data $(\zeta_0^\eps,\psi_0^\eps)\in
\frak{X}^{s+P}$
 satisfy
\[
\inf_{\R^2}\bigl(1+{\eps}\zeta_0^\eps-{\eps}
b\bigr)>0\qquad\hbox{uniformly for}\quad \eps\in(0,1).
\]
 Then there exits $T>0$ such that
(\ref{eq:Hamiltonian form-non}) has a unique family of solutions
$(\zeta^\eps,\psi^\eps)_{0<\eps<1}$ on $[0,\f{T}\eps]$ with
 $(\zeta^\eps)_{0<\eps<1},
(\p_x\psi^\eps)_{0<\eps<1}$, and
$(\sqrt{\eps}\p_y\psi^\eps)_{0<\eps<1}$ being uniformly bounded in
$C([0,T/\eps];$ $H^{s+D-\f12}(\R^2))$. }
 \ethm

\begin{rmk}\label{rmk1.0a}
Let \beno \zeta_{KP}^\eps(t,x,y)\eqdefa \f1
{\sqrt{2}}\big(\zeta_+(\eps t,x-t,y)-\zeta_-(\eps t,x+t,y)\big),
\eeno where $\zeta_\pm(\tau,X,Y)$ solve the uncoupled KP equations
\beno (KP)^\pm\quad \p_\tau
\zeta_\pm\pm\f12\p_{X}^{-1}\p_Y^2\zeta_\pm\pm\big(\f16-\f\al2\big)\p_{X}^3\zeta_\pm
+\f{3\sqrt{2}}4\zeta_\pm\p_{X}\zeta_\pm=0. \eeno We shall prove in
\cite{MZZ2} that: in addition to the assumptions in Theorem
\ref{thm:KP approximation}, we assume moreover  \beno \lim_{\eps\to
0}|\zeta^\eps_0-\zeta_0|_{\p_xH^{s+D-\f12}}=0\quad\mbox{and}\quad
\lim_{\eps\to 0}|\p_x\psi^\eps_0-\p_x\psi_0|_{\p_xH^{s+D-\f12}}=0
\eeno with  $(\p_x\psi_0,\zeta_0)\in \p_xH^{s+D-\f12}(\R^2)$ and
$(\p_y^2\p_x\psi_0,\p_y^2\zeta_0)\in \p_x^2H^{s+D-\f92}(\R^2)$. Then
$(KP)_\pm$ with initial data $(\p_x\psi_0\pm\zeta_0)/\sqrt{2}$ has a
unique solution $\zeta_\pm\in C([0,T/\eps];H^{s+D-\f32}(\R^2))$.
Furthermore, there holds\beno \lim_{\eps\rightarrow
0}|\zeta^\eps-\zeta^\eps_{KP}|_{L^\infty([0,T/\eps]\times \R^2)}=0.
\eeno
\end{rmk}

In case when the Bond number $\al=\frac13,$ the coefficients of the
third order dispersion terms in $(KP)^\pm$ vanish and the resulting
equations become illposed. These third order terms in $(KP)^\pm$
equations represent the leading order dispersive effects in the
water-wave problem and their disappearance means that in this
parameter regime the water waves are almost dispersionless. To model
interesting behaviors and capture the dispersive nature of the
water-wave problem for this parameter regime in our following
paper\cite{MZZ2}, we need to modify the scaling in \eqref{scaling1}
firstly and then prove the large-time existence for the new
water-wave system.  More precisely, we set \beq \label{scaling2}
\begin{split}
&x=\lam x',\quad y=\f{\lam}{\eps}y',\quad z=dz',\quad
t=\f{\lam}{\sqrt{d}}t',\\
& \zeta=a\zeta',\quad \phi=\f{a}{\sqrt{d}}\lam\phi',\quad
b=Bb',\quad \psi=\f{a}{\sqrt{d}}\lam\psi', \end{split} \eeq with
\[
\eps=\sqrt{\f ad}=\f{d^2}{\lam^2}=\sqrt{\f Bd}.
\]
Then similar to \eqref{eq:waterwave-non}, we obtain the following
dimensionless form of the original system (by neglecting the prime)
\ben\label{eq:water wave-rescaled system} \left\{
\begin{array}{ll}
\eps\p_x^2\phi+\eps^3\p_y^2\phi+\p_z^2\phi=0, & -1+\eps^2 b<z<\eps^2\zeta,\\
-\eps\wt\na_h^{\eps}(\eps^2 b)\cdot\wt\na_h^{\eps}\phi+\p_z\phi=0, & z=-1+\eps^2 b,\\
\p_t\zeta+\eps^2\wt\na_h^{\eps}\zeta\cdot\wt\na_h^{\eps}\phi=\f
1\eps\p_z\phi,
& z=\eps^2\zeta,\\
\p_t\phi+\f12\big(\eps^2|\wt\na_h^{\eps}\phi|^2+\eps(\p_z\phi)^2\big)+\zeta &\\
\ =\al\eps\wt\na_h^{\eps}\cdot\big({\wt\na_h^{\eps}
\zeta}/{\sqrt{1+\eps^5|\wt\na_h^{\eps}\zeta|^2}}\big),&
z=\eps^2\zeta,
\end{array}\right.
\een where $\wt\na_h^{\eps}\eqdefa (\p_x,\eps\p_y)$ and
$\al=\kappa/d^2$ is still the Bond number.
 We define a new
scaled Dirichlet-Neumann operator $\wt G[{\eps^2}\zeta]$ by
\beq\label{newdi} \wt G[{\eps^2}\zeta]\psi:=
\bigl(-{\eps}\wt\na^{\eps}_h({\eps^2}\zeta)\cdot
\wt\na^{\eps}_h\phi+\p_z\phi\bigr)|_{z={\eps^2}\zeta},\eeq with
$\phi$ solving  \[ \left\{\begin{array}{ll}
\p_z^2\phi+\eps\p_x^2\phi+\eps^3\p_y^2\phi=0,\quad-1+\eps^2 b<z<{\eps^2}\zeta,\\
\phi|_{z={\eps^2}\zeta}=\psi,\quad\p_n^{\wt{P}_0}\phi|_{z=-1+\eps^2
b}=0,
\end{array}\right.\]
and
 \[\p_n^{\wt{P}_0}\phi|_{z=-1+\eps^2 b}{\eqdefa} n\cdot
\wt{P}_0\na\phi|_{z=-1+\eps^2 b}\quad\hbox{with}\quad \wt{P}_0=\left(\begin{matrix}\eps & 0 & 0\\
0&\eps^3&0\\
0& 0&1
\end{matrix}\right) \]
Let $\psi(t,x_h)\eqdefa
\phi|_{z=\eps^2\zeta}=\phi(t,x_h,\eps^2\zeta)$. Then similar to
\eqref{eq:Hamiltonian form-non}, the new dimensionless system of
$(\phi, \zeta)$ can be reformulated as a system of $(\psi, \zeta)$:
\beq\label{eq:Hamiltonian form-non-5kp} \left\{
\begin{array}{ll}
\p_t \zeta-\frac 1 {{\eps}}\wt G[{\eps^2}\zeta]\psi=0, \\
\p_t \psi+\zeta+\frac{{\eps^2}}{2}|\wt\na^{\eps}_h
\psi|^2-\f12{\eps}^3 {(\frac 1{\eps} \wt
G[{\eps^2}\zeta]\psi+{\eps^2}\wt\na^{\eps}_h\zeta\cdot
\wt\na^{\eps}_h \psi)^2}/{(1+{\eps}^5|\wt\na^
\eps_h\zeta|^2)}\\
\ =\alpha\eps\wt\na^{\eps}_h \cdot \big({{\wt\na^{\eps}_h
\zeta}}/{{\sqrt{1+{\eps}^5|\wt\na^{\eps}_h \zeta|^2}}}\big),\\
\zeta|_{t=0}=\zeta_0^\eps,\quad \psi|_{t=0}=\psi_0^\eps.
\end{array}\right.
\eeq

To describe the function space for the initial data such that
\eqref{eq:Hamiltonian form-non-5kp} has a unique solution on
$[0,{T}/{\eps^2}],$ we need to modify Definition \ref{def1.1} as
below:

\begin{defi}\label{def1.2}
{\sl We define the space $\wt{\frak{X}}^s$ as
\[
\wt{\frak{X}}^s{\eqdefa}\left\{U=(\zeta,\psi)^T: \zeta\in
H^{2s+1}(\R^2),\na_h \psi\in
 H^{2s-\f12}(\R^2)^2\right\}
\]
endowed with the semi-norm
\[
|U|_{\wt{\frak{X}}^s}{\eqdefa}\sqrt{\eps}|\zeta|_{\wt
H^{2s+1}_\eps}+|\zeta|_{\wt
H^{2s}_\eps}+\sqrt{\eps}|\wt\na_h^{\eps}\zeta|
_{H^{s}}+|\zeta|_{H^s}+|\wt{\mathfrak P}\psi|_{\wt
H^{2s}_\eps}+|\wt{\mathfrak P}\psi|_{H^{s}}
\]
for a new regularizing Poisson operator $\wt{\mathfrak {P}}\eqdefa
{|\wt D^{\eps}_{h}|}/{(1+\sqrt{\eps}|\wt D^{\eps}_{h}|)^\frac12}$
with  $\wt D^{\eps}_h=\f 1i\wt\na^\eps_h$,  and $\wt H^s_\eps(\R^2)$
is the space of tempered distributions $v$ so that \beq |v|_{\wt
H^s_\eps}\eqdefa |(1+|\wt D_h^\eps|^2)^{\f s2}v|_{L^2}<\infty. \eeq}
\end{defi}

Our second main result is as follows.

\bthm{Theorem}\label{thm:5-KP approximation} ({\bf Degenerate case})
{\sl Let $\al=\f13+\eps\tht$, $\theta\ge 0$ fixed and $s\ge m_0$ for
some $m_0\in (9,10).$ Assume that there exists $P>D>0$ such that for
all $b\in H^{2s+2P+1}(\R^2)$ and bounded initial data
$(\zeta^\eps_0,\psi^\eps_0) \in \wt{\frak X}^{S+P}$ satisfying
\[
\inf_{\R^2}\bigl(1+{\eps^2}\zeta^\eps_0-{\eps^2}
b\bigr)>0\quad\hbox{uniformly for}\quad \eps\in(0,1).
\]
 Then there exits $T>0$ such that
(\ref{eq:Hamiltonian form-non-5kp}) has a unique family of solutions
$(\zeta_\eps,\psi_\eps)_{0<\eps<1}$ on $[0,{T}/{\eps^2}]$ which
satisfy
 $(\zeta_\eps)_{0<\eps<1},
(\p_x\psi_\eps)_{0<\eps<1}$, and $({\eps}\p_y\psi_\eps)_{0<\eps<1}$
are uniformly bounded in $C([0,T/\eps^2];H^{s+D-\f12}(\R^2)).$ }
 \ethm

\begin{rmk}\label{rmk1.2} In fact, these two theorems above are two
particular results of a more general existence theorem. First of
all, define as in \cite{Lannes-Inven} that\[ \epsilon=\f ad,\quad
\mu=\f{d^2}{\lam^2},\quad \beta=\f Bd,
\] and set the dimensionless variable(with prime) as below\beq \label{scaling3}
\begin{split}
&x=\lam x',\quad y=\f{\lam}{\gamma}y',\quad z=dz',\quad
t=\f{\lam}{\sqrt{d}}t',\\
& \zeta=a\zeta',\quad \phi=\f{a}{\sqrt{d}}\lam\phi',\quad
b=Bb',\quad \psi=\f{a}{\sqrt{d}}\lam\psi', \end{split} \eeq One can
derive a more general water-wave system of $(\psi, \zeta)$:
\beq\label{eq:Hamiltonian form-general} \left\{
\begin{array}{ll}
\p_t \zeta-\frac 1 {{\mu}} G[{\epsilon}\zeta]\psi=0, \\
\p_t \psi+\zeta+\frac{{\eps^2}}{2}|\na^{\gamma}_h
\psi|^2-\f12{\epsilon\mu {(\frac 1{\mu}
G[{\epsilon}\zeta]\psi+{\epsilon}\na^{\gamma}_h\zeta\cdot
\na^{\gamma}_h \psi)^2}}/{(1+\epsilon^2\mu\na^
\gamma_h\zeta|^2)}\\
\ =\alpha\mu\na^{\gamma}_h \cdot \big({{\na^{\gamma}_h
\zeta}}/{{\sqrt{1+\epsilon^2\mu|\na^{\gamma}_h \zeta|^2}}}\big),\\
\zeta|_{t=0}=\zeta_0^\eps,\quad \psi|_{t=0}=\psi_0^\eps.
\end{array}\right.
\eeq with $\psi(t,x_h)\eqdefa
\phi|_{z=\epsilon\zeta}=\phi(t,x_h,\epsilon\zeta)$,
$\na^\gamma_h\eqdefa(\p_x,\gamma\p_y)^T$ and the nondimensionalized
Dirichlet-Neumann operator $ G[\epsilon\zeta]$ defined by
\beq\label{newdi} G[\epsilon\zeta]\psi:=
\bigl(-{\mu}\na^{\gamma}_h({\epsilon}\zeta)\cdot
\na^{\gamma}_h\phi+\p_z\phi\bigr)|_{z={\epsilon}\zeta},\eeq with
$\phi$ solving  \[ \left\{\begin{array}{ll}
\p_z^2\phi+\mu\p_x^2\phi+\gamma^2\mu\p_y^2\phi=0,\quad-1+\epsilon b<z<{\epsilon}\zeta,\\
\phi|_{z=\epsilon\zeta}=\psi,\quad\p_n^{{P}_0}\phi|_{z=-1+\epsilon
b}=0,
\end{array}\right.\]
and
 \[\p_n^{{P}_0}\phi|_{z=-1+\epsilon b}{\eqdefa} n\cdot
{P}_0\na\phi|_{z=-1+\epsilon b}\quad\hbox{with}
\quad {P}_0=\left(\begin{matrix}\mu & 0 & 0\\
0&\gamma^2\mu &0\\
0& 0&1
\end{matrix}\right). \]

We can have a large-time existence result similar to
Theorem\ref{thm:KP approximation} for solutions to the general
system(\ref{eq:Hamiltonian form-general}) on time interval $[0,\f
T\epsilon]$ following the proof of Theorem\ref{thm:KP
approximation}. Then Theorem\ref{thm:KP approximation} and Theorem\ref{thm:5-KP
approximation} are indeed two particular results of this result. In
fact, one can take
\[\epsilon=\mu=\eps,\quad \gamma=\sqrt \epsilon\] in system(\ref{eq:Hamiltonian
form-general}) to arrive at Theorem\ref{thm:KP approximation}, and
one can take \[\epsilon=\mu^2=\eps^2,\quad \gamma=\sqrt
\epsilon=\eps\] to arrive at Theorem\ref{thm:5-KP approximation}.

\end{rmk}

\begin{rmk}\label{rmk1.3}
Let \beno \zeta^{KP}_\eps(t,x,y)\eqdefa \f1
{\sqrt{2}}\big(\zeta_+(\eps^2 t,x-t,y)-\zeta_-(\eps^2 t,x+t,y)\big),
\eeno where $\zeta_\pm(\tau,X,y)$ solve the uncoupled fifth order KP
equations \beno (KP^{5\textrm{th }})^\pm\quad \p_\tau
\zeta_\pm+\f12\p_{X}^{-1}\p_Y^2\zeta_\pm\mp\f \tht
2\p_{X}^3\zeta_\pm\pm\f1{90}\p_{X}^5\zeta_\pm+\f{3\sqrt{2}}4\zeta_\pm\p_{X}\zeta_\pm=0.
\eeno We shall prove in \cite{MZZ2} that: under the assumptions in
Theorem \ref{thm:5-KP approximation}, we assume moreover
\beno\label{datac} &&\lim_{\eps\rightarrow
0}|\zeta^\eps_0-\zeta_0|_{H^{s+D-\f12}\cap
\p_xH^{s+D-\f12}}=0\quad\hbox{and}\quad
|\p_x\psi^\eps_0-\p_x\psi_0|_{H^{s+D-\f12}\cap \p_xH^{s+D-\f12}}=0
\eeno with $(\p_x\psi^0,\zeta^0)\in H^{s+D-\f12}(\R^2)\ \cap\
\p_xH^{s+D-\f12}(\R^2)$ and $\ (\p^2_y\p_x\psi^0, \p^2_y\zeta^0)\in\
$ $\p^2_xH^{s+D-6}(\R^2)$. Then $(KP^{5\textrm{th }})_\pm$ with
initial data $(\p_x\psi^0\pm\zeta^0)/\sqrt{2}$ has a unique solution
$\zeta_\pm\in C([0,T];$ $H^{s+D-\f12}(\R^2))$. Furthermore, there
holds\beq\label{conv5} \lim_{\eps\rightarrow
0}|\zeta_\eps-\zeta_\eps^{KP}|_{L^\infty([0,T/\eps^2]\times
\R^2)}=0. \eeq
\end{rmk}

\subsection{ Scheme of the proof and organization of the paper and notations} In
Section 2, we shall present various product laws and commutator
estimates in the scaled Sobolev spaces; We  provide  uniform
estimates for the solutions of scaled Laplacian equations in the
Section 3; While in Section 4, we modify some results from
\cite{Lannes-JFA} on the calculus of pseudo-differential operators
with rough symbols; We shall study the Dirichlet-Neumann operator in
Section 5; With the preparation in the above sections, we shall
prove large-time uniform estimates for the solutions of the
linearized system of \eqref{eq:Hamiltonian form-non}, which is the
crucial step in the proof of the large-time wellposedness result for
\eqref{eq:Hamiltonian form-non} in Section 7.  In the appendix, we
shall present a variance of Nash-Moser iteration Theorem in
\cite{Lannes-IUMJ}, which has been used in Section 7.\\

Let us complete this section by some notations, which we shall use
throughout the paper. We shall always denote by
$C(\lambda_1,\lambda_2,\cdots)$ a generic positive constant which is
a nondecreasing function of its variables,  $t_0$ a  fixed  number
in $(1,2)$, and $m_0\eqdefa t_0+8$. We denote $\na_h\eqdefa
(\p_x,\p_y),$ $\na^\eps_h \eqdefa (\p_x,\sqrt{\eps}\p_y),$
 the scaled
horizontal derivatives,  $D^{\eps}_h \eqdefa \frac1{i}\na^\eps_h$,
$\na \eqdefa (\na_h,\p_z),$ $\na^{\eps} \eqdefa (\sqrt{\eps}
\na^{\eps}_h, \p_z)$ the scaled full derivative,
$\xi^\eps=(\xi_1,\sqrt{\eps}\xi_2)$ in $\R^2$, and $|D^{\eps}_h|$
the Fourier multiplier with the symbol $|\xi^\eps|$. $\Lambda$ and
$\Lambda_{\eps}$ are Fourier multiplier with the symbol
$(1+|\xi|^2)^\f12$ and $(1+|\xi^\eps|^2)^\f12$ respectively. We
denote $|\cdot|_p$ the $L^p(\R^2)$ norm, $\|\cdot\|_p$ the
$L^p(\mathcal{S})$ norm with $\mathcal{S}=\R^2\times [-1,0],$
$H^s(\R^2)$  the usual Sobolev spaces with the norm $|f|_{H^s}
\eqdefa |\Lambda^sf|_2,$ $|f|_{H^s_\eps} \eqdefa |\Lambda^s_\eps
f|_2$  the norm in the scaled Sobolev spaces $H^s_\eps(\R^2),$ and
the regularizing Poisson operator $\mathfrak {P} \eqdefa
{|D^{\eps}_{h}|}/{(1+\sqrt{\eps}|D^{\eps}_{h}|)^\frac12}$. We use
$\p_xH^s(\R^2)$ to refer to the space of all the distributions $v$
such that there exists $\wt v\in H^s(\R^2)$ with $\p_x \wt v=v$, and
we write $|v|_{\p_xH^s}\eqdefa |\wt v|_{H^s}$. We define similarly
for $\p^2_xH^s(\R^2)$. Finally we shall always use the convention
that
$$
A_s=B_s+\langle C_s\rangle_{s>s_0}
=\left\{\begin{array}{ll}B_s&\quad \textrm{if}\quad s\le s_0,\\B_s+C_s&\quad \textrm{if}\quad s>s_0.
\end{array}\right.
$$

\renewcommand{\theequation}{\thesection.\arabic{equation}}
\setcounter{equation}{0}

\section{Preliminaries}

Recall that $|f|_{H^s_{\eps}}=|\Lambda_{\eps}^s f|_2$ is the norm of
the scaled Sobolev space $H^s_{\eps}(\R^2).$  It is easy to observe
by a scaling argument that

\begin{lem}\label{lem2.1} {\sl Let $r, s\ge 0$.  There exists an
$\eps$ independent constant $C$  such that
\begin{itemize}
 \item[(i)]
If $f\in H^s(\R^2)$ and $\frac1p=\frac12-\frac s2,$ \beq
\label{lem:Sobolev Embedding}\begin{split} & |f|_p\leq C \eps^{-\f
s4} |f|_{H^s_{\eps}}\quad\mbox{for}\ \ 0\leq s<1, \
 \textrm{and}\quad |f|_\infty\leq
C\eps^{-\frac14}|f|_{H^s_{\eps}}\quad \textrm{for } s>1;
\end{split} \eeq

 \item[(ii)]
If $f,g\in H^s\cap H^{t_0}(\R^2)$, \beq\label{lem:product estimate}
\begin{split}
&|fg|_{H^s_{\eps}}\leq C\big(|f|_{H^s_{\eps}}|g|_\infty+|f|_\infty|g|_{H^s_{\eps}}\big),\\
&|fg|_{H^s_{\eps}}\leq
C\eps^{-\f14}\Big(|f|_{H^s_{\eps}}|g|_{H^{t_0}_\eps}
+\langle|f|_{H^{t_0}_\eps}|g|_{H^s_{\eps}}\rangle_{s>t_0}\Big);\end{split}
\eeq

 \item[(iii)]
If $F\in C^\infty(\R)$ with  $F(0)=0$ and $f\in H^s\cap
L^\infty(\R^2)$,  \beq \label{lem:composition} |F(f)|_{H^s_\eps}\le
C(|f|_\infty)|f|_{H^s_\eps}; \eeq

 \item[(iv)] If $f\in H^{s+r}\cap H^{t_0+1}(\R^2)$ and $g\in H^{s+r-1}\cap
H^{t_0}(\R^2)$,  \beq \label{lem:commutator estimate}
\begin{split} |\big[\Lambda^s_{\eps}, f\big]g|_{H^r_\eps}\leq& C
\big(|\na^{\eps}_h
f|_{H^{s+r-1}_\eps}|g|_{\infty}+|\na^{\eps}_h f|_{\infty}|g|_{H^{s+r-1}_\eps}\big),\\
|\big[\Lambda^s_{\eps}, f\big]g|_{H^r_\eps}\leq &C
\eps^{-\f14}\Big(|\na^{\eps}_h
f|_{H^{t_0}_\eps}|g|_{H^{s+r-1}_\eps}+ \langle|\na^{\eps}_h
f|_{H^{s+r-1}_\eps}|g|_{H^{t_0}_\eps}\rangle_{s>t_0+1-r}\Big).
\end{split}
\eeq
\end{itemize}
 } \end{lem}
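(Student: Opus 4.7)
The plan is to prove all four statements uniformly by a single dilation that reduces each scaled Sobolev norm to a standard one. For $\eps\in(0,1)$, set $\tilde{f}(x,y)\eqdefa f(x,\sqrt{\eps}\,y)$. On the Fourier side $\widehat{\tilde{f}}(\xi_1,\xi_2)=\eps^{-1/2}\hat{f}(\xi_1,\xi_2/\sqrt{\eps})$, which yields at once the intertwining relations $\widetilde{\Lambda^s_\eps f}=\Lambda^s\tilde{f}$, $\widetilde{\na^\eps_h f}=\na\tilde{f}$, and $\widetilde{fg}=\tilde{f}\,\tilde{g}$, together with the quantitative dictionary
\[
|f|_{H^s_\eps}=\eps^{1/4}|\tilde{f}|_{H^s},\qquad |f|_\infty=|\tilde{f}|_\infty,\qquad |f|_p=\eps^{1/(2p)}|\tilde{f}|_p.
\]
The proof of each item then consists in quoting the corresponding classical estimate for $\tilde{f},\tilde{g}$ on $\R^2$, applying this dictionary, and tracking the net power of $\eps$.

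For (i), inserting the Gagliardo--Nirenberg--Sobolev bound $|\tilde{f}|_p\leq C|\tilde{f}|_{H^s}$ into the dictionary produces the factor $\eps^{1/(2p)-1/4}$, which equals $\eps^{-s/4}$ precisely when $1/p=(1-s)/2$; the $L^\infty$ endpoint (for $s>1$) similarly gives $\eps^{-1/4}$. For (ii), the standard product inequality for $H^s(\R^2)$ in its $L^\infty$-form yields the first bound with no net $\eps$ loss, because the factor $\eps^{1/4}$ from $|fg|_{H^s_\eps}$ is compensated by a single $\eps^{-1/4}$ on the right; in the tame version both $H^s$ and $H^{t_0}$ norms of $\tilde{f},\tilde{g}$ contribute $\eps^{-1/4}$ and only one is compensated, leaving the prefactor $\eps^{-1/4}$. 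Statement (iii) follows in the same fashion from the classical Moser composition estimate applied to $F(\tilde{f})$, together with $|F(\tilde f)|_{H^s}\le C(|\tilde f|_\infty)|\tilde f|_{H^s}$.

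For (iv), the commutator conjugates cleanly, $\widetilde{[\Lambda^s_\eps,f]g}=[\Lambda^s,\tilde{f}]\tilde{g}$, so I would invoke the Kato--Ponce type commutator inequality in $H^r(\R^2)$, in both its ordinary and tame forms, written in terms of $\na\tilde{f}$ on the right. Converting via $\widetilde{\na^\eps_h f}=\na\tilde{f}$ and the norm dictionary reproduces the two claimed bounds, with the extra $\eps^{-1/4}$ in the tame version arising for exactly the same reason as in (ii). The only mildly delicate point throughout is the bookkeeping of the $\eps^{\pm 1/4}$ factors: each scaled Sobolev norm on the right contributes $\eps^{-1/4}$, each $L^\infty$ norm contributes nothing, and these are offset by the single $\eps^{1/4}$ coming from the norm on the left; accordingly the tame versions unavoidably pick up one uncancelled $\eps^{-1/4}$. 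This $\eps$-tracking is really the only thing one must be careful about.
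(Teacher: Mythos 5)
Your proposal is correct and takes essentially the same route as the paper: the authors introduce Lemma~\ref{lem2.1} with the remark that it follows ``by a scaling argument'' and do not spell it out, and your dilation $\tilde f(x,y)=f(x,\sqrt{\eps}\,y)$ with the dictionary $|f|_{H^s_\eps}=\eps^{1/4}|\tilde f|_{H^s}$, $|f|_\infty=|\tilde f|_\infty$, $|f|_p=\eps^{1/(2p)}|\tilde f|_p$ is precisely that scaling argument. Essentially the same change of variables (up to an overall normalization factor of $\gamma$) is what the paper uses explicitly in the proof of Lemma~\ref{lem:trace theorem} and in the reduction to $\eps=1$ in Propositions~\ref{prop:psdo-estimate}--\ref{prop:psdo-commuatator-special}, so your filled-in details are exactly what was intended.
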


\begin{rmk}\label{rmk2.1} \eqref{lem:commutator estimate} still holds  with $\Lambda^s_{\eps}$
being replaced by $|D^{\eps}_{h}|^s$ for $s\in 2\N$. \end{rmk}

\begin{lem}\label{lem:trace theorem} {\sl Let $s\in \R$ and $\na
u\in L^2([-1,0];H^{s-1}(\R^2))$ with $u(x_h,0)=0$. Then for any
$\frak{z}\in [-1,0]$, one has
\[
|u|_{z=\frak{z}}|_{H^{s-\frac12}_\eps} \leq C\eps^{-\f14}\|
\na^{\eps}\Lambda^{s-1}_\eps u\|_2
\]
for the constant $C$ independent of $\eps$.} \end{lem}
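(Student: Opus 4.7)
The plan is to prove this trace estimate by taking the partial Fourier transform in the horizontal variable $x_h$ and exploiting the boundary condition $u(x_h,0)=0$. For each $\xi\in\R^2$, the fundamental theorem of calculus applied to $|\hat u(\xi,z)|^2$ together with $\hat u(\xi,0)=0$ yields
\[
|\hat u(\xi,\frak{z})|^2 \;=\; -2\,\mathrm{Re}\int_{\frak{z}}^{0}\hat u(\xi,z)\,\overline{\p_z\hat u(\xi,z)}\,dz \;\le\; 2\int_{-1}^{0}|\hat u(\xi,z)||\p_z\hat u(\xi,z)|\,dz.
\]
Using Plancherel, the target inequality amounts to bounding $\int_{\R^2}(1+|\xi^\eps|^2)^{s-\f12}|\hat u(\xi,\frak z)|^2\,d\xi$ by $C\eps^{-\f12}\int_{-1}^{0}\int_{\R^2}(1+|\xi^\eps|^2)^{s-1}\bigl(\eps|\xi^\eps|^2|\hat u|^2 + |\p_z\hat u|^2\bigr)d\xi\,dz$, because $|\widehat{\na^\eps v}|^2=\eps|\xi^\eps|^2|\hat v|^2+|\p_z\hat v|^2$.

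Next I would split the $\xi$-integral using the threshold $|\xi^\eps|=1/\sqrt{\eps}$. In the \emph{high-frequency} regime $|\xi^\eps|>1/\sqrt{\eps}$, I apply the weighted Cauchy--Schwarz to the pointwise bound above with the $\xi$-dependent weight $\lam=\sqrt{\eps}|\xi^\eps|$:
\[
|\hat u(\xi,\frak z)|^2 \le \int_{-1}^{0}\!\Bigl(\sqrt{\eps}|\xi^\eps|\,|\hat u|^2 + \tfrac{1}{\sqrt{\eps}|\xi^\eps|}|\p_z\hat u|^2\Bigr)dz.
\]
After multiplying by $(1+|\xi^\eps|^2)^{s-\f12}$, the elementary estimate $(1+|\xi^\eps|^2)^{\f12}\le\sqrt2\,|\xi^\eps|$ (valid when $|\xi^\eps|\ge1/\sqrt{\eps}$ and $\eps\le1$) converts each term into precisely the desired form $C\eps^{-\f12}(1+|\xi^\eps|^2)^{s-1}\cdot\eps|\xi^\eps|^2|\hat u|^2$ and $C\eps^{-\f12}(1+|\xi^\eps|^2)^{s-1}|\p_z\hat u|^2$ respectively.

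In the \emph{low-frequency} regime $|\xi^\eps|\le1/\sqrt{\eps}$, the horizontal-derivative term carries no useful information, so I discard it and instead use the cruder bound $|\hat u(\xi,\frak z)|^2\le\int_{-1}^{0}|\p_z\hat u|^2\,dz$ obtained directly from Cauchy--Schwarz. The missing power is paid by the observation $(1+|\xi^\eps|^2)^{\f12}\le\sqrt{2}\,\eps^{-\f12}$ throughout this region, so that $(1+|\xi^\eps|^2)^{s-\f12}\le\sqrt{2}\,\eps^{-\f12}(1+|\xi^\eps|^2)^{s-1}$, which is once more of the desired form. Adding the two regimes and integrating in $\xi$ gives $|u|_{z=\frak{z}}|_{H^{s-\f12}_\eps}^2\le C\eps^{-\f12}\|\na^\eps\Lambda^{s-1}_\eps u\|_2^2$, hence the claim. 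The main subtlety is identifying this frequency split: a single weight $\lam(\xi)$ cannot simultaneously satisfy the two weight constraints in the whole frequency space, and it is the anisotropy of the scaled norm $H^s_\eps$ together with the existence of a region of $\xi$-volume $\sim\eps^{-1}$ where $\sqrt{\eps}|\xi^\eps|$ stays below $1$ that forces the $\eps^{-\f14}$ loss.
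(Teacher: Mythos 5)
Your proof is correct, and it takes a genuinely different route from the paper's. The paper performs the change of variables $u_\gamma(x,y,z)=\gamma u(x,\gamma y,z)$ with $\gamma=\sqrt{\eps}$, which converts the scaled norm $|\cdot|_{H^{s-1/2}_\eps}$ into a standard $H^{1/2}$-norm of $\Lambda^{s-1}u_\gamma$ at cost $\gamma^{-1/2}$; it then applies one \emph{global} Cauchy--Schwarz in $z$ (not a pointwise weighted one), bounds the resulting product $\|\Lambda^s u_\gamma\|_2\|\p_z\Lambda^{s-1}u_\gamma\|_2$ by Young with weight $\gamma$, and finally absorbs the leftover $\sqrt{\eps}\|\Lambda^{s-1}_\eps u\|_2$ via the Poincar\'e inequality $\|\Lambda^{s-1}_\eps u\|_2\le C\|\p_z\Lambda^{s-1}_\eps u\|_2$ (which, like the fundamental theorem of calculus step, uses $u|_{z=0}=0$). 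You instead stay in the anisotropic Fourier variable $\xi^\eps$ throughout, split at $|\xi^\eps|=\eps^{-1/2}$, and in each regime perform a \emph{pointwise} weighted estimate tailored to produce the integrand $\eps|\xi^\eps|^2|\hat u|^2+|\p_z\hat u|^2$ directly; this sidesteps both the substitution and the explicit Poincar\'e step, since in the low-frequency zone you simply discard the horizontal contribution and rely on the $z$-derivative alone. The paper's argument is a bit shorter once one accepts the scaling identity \eqref{lem2.1a}; yours is more self-contained, makes visible that $u|_{z=0}=0$ is used exactly once (in the trace/FTC step), and localizes the source of the $\eps^{-1/4}$ factor to the $(1+|\xi^\eps|^2)^{1/2}\lesssim\eps^{-1/2}$ bound on the low-frequency block. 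One stylistic point: when you say ``a single weight $\lam(\xi)$ cannot satisfy the two constraints in the whole frequency space,'' it would be cleaner to note that the natural candidate $\lam=\sqrt{\eps}|\xi^\eps|$ degenerates as $\xi\to0$, which is precisely why the low-frequency regime has to be handled by the cruder $L^\infty$-in-$z$ bound on $\hat u$; otherwise the remark may read as slightly mysterious.
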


\begin{proof} Let $\ga\eqdefa\sqrt{\eps}.$ It is easy to observe that
 \beq \label{lem2.1a}
\Lambda_{\eps}^su(x_h,z)=\ga^{-1}\Lambda^s u_{\ga}\big(x,\f y
\ga,z\big), \quad u_\ga(x,y,z)=\ga u(x,\ga y,z), \eeq  we have
\ben\label{eq:trace-est1}
|u(\cdot,\frak{z})|_{H^{s-\frac12}_\eps}=\ga^{-\f12}|\Lambda^{s-\f12}u_\ga(\cdot,\frak{z})|_2
=\ga^{-\f12}|\Lambda^{s-1}u_\ga(\cdot,\frak{z})|_{H^\f12}. \een
Whereas  as $u(x_h,0)=0,$ applying Cauchy-Schwartz inequality gives
\begin{align*}
&(1+|\xi|^2)^\f12|\widehat{\Lambda^{s-1}u}_\ga(\xi,\frak{z})|^2\\
&=-\int_{\frak{z}}^0(1+|\xi|^2)^\f12\widehat{\Lambda^{s-1}u}_\ga(\xi,z)\p_z\widehat{\Lambda^{s-1}u}_\ga(\xi,z)dz\\
&\le\Big(\int_{-1}^0(1+|\xi|^2)|\widehat{\Lambda^{s-1}u}_\ga(\xi,z)|^2dz\Big)^\f12
\Big(\int_{-1}^0|\p_z\widehat{\Lambda^{s-1}u}_\ga(\xi,z)|^2dz\Big)^\f12,
\end{align*}
which implies that
\begin{align*}
|\Lambda^{s-1}u_\ga(\cdot,t)|_{H^\f12}^2&\le \|\Lambda^{s}u_\ga\|_2\|\p_z\Lambda^{s-1}u_\ga\|_2
=\ga\|\Lam^s_{\eps} u\|_2\|\p_z\Lambda^{s-1}_\eps u\|_2\\
&\le \f12\ga^2\|\Lam^s_{\eps} u\|_2^2+\f12 \|\p_z\Lambda^{s-1}_\eps
u\|_2^2,
\end{align*}
from which and (\ref{eq:trace-est1}), we deduce that
\beno
|u(\cdot,t)|_{H^{s-\frac12}_\eps}\le
C\ga^{-\f12}\big(\ga\|\Lam^s_{\eps} u\|_2+\|\p_z\Lambda^{s-1}_\eps
u\|_2\big) \le C\ga^{-\f12}\|\na^{\eps}\Lambda^{s-1}_\eps u\|_2.
\eeno where in the last step, we used again the fact that
$u(x_h,0)=0$ such that $\|\Lambda^{s-1}_\eps u\|_2\le
C\|\p_z\Lambda^{s-1}_\eps u\|_2$.\end{proof}

We introduce the following scaled 2nd-order elliptic operator
\beq
\label{eq:ellipticoperator}
\frak{\Delta}_{\eps}(a){\eqdefa}|D^{\eps}_{h}|^2-\frac{\eps^3
\bigl((\p_xa)^2D^2_x+2\eps\p_xa\p_yaD_xD_y+\eps^2(\p_ya)^2D^2_y\bigr)
}{1+\eps^3|\na_h^{\eps} a|^2},
\eeq
which is a part of the linearized opertor for the nonlinear system
corresponding to the surface tension term.

\begin{lem}\label{lem:elliptic operator} {\sl Let $ s\in [0,1]$.
 Then for $k\in \N, s\ge 0$
and $f\in H^{2k+s}\cap H^{t_0}(\R^2)$, we have \beno
&&|\fd_{\eps}(a)^k f|_{H^s_{\eps}}\leq
M(a)\big(|f|_{H^{2k+s}_\eps}+|f|_{H^{t_0}_\eps}
|\na^{\eps}_h a|_{H^{2k+s}_\eps}\big),\\
&&|\fd_{\eps}(a)^k f|_{H^s_{\eps}}\geq
M(a)^{-1}|f|_{H^{2k+s}_\eps}-M(a) \big(1+|\na^{\eps}_h
a|_{H^{2k+s}_\eps}\big)|f|_{H^{t_0}_\eps}.\eeno Here and in what
follows $M(a)$ always denotes a constant depending on
$|a|_{H^{m_0}_\eps}$.} \end{lem}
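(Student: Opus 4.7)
The proof starts from a symbol computation: by Cauchy--Schwarz $(\na^\eps_h a\cdot\xi^\eps)^2\le|\na^\eps_h a|^2|\xi^\eps|^2$, so the symbol of $\fd_\eps(a)$ satisfies
$$\frac{|\xi^\eps|^2}{1+\eps^3|\na^\eps_h a|^2}\;\le\;\sigma(x,\xi)\;\le\;|\xi^\eps|^2,$$
and Lemma \ref{lem2.1}(i) gives $\eps^3|\na^\eps_h a|^2_\infty\le C\eps^{5/2}|a|^2_{H^{m_0}_\eps}\le M(a)^2$, so both bounds are comparable to $|\xi^\eps|^2$ with constants depending only on $|a|_{H^{m_0}_\eps}$. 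Hence $\fd_\eps(a)$ is a uniformly (in $\eps$) elliptic, self-adjoint, second-order operator whose principal coefficients are smooth functions of $\na^\eps_h a$ vanishing at $0$. The plan is to proceed by induction on $k$: the real work is in the base case $k=1$, and the factorization $\fd_\eps(a)^k=\fd_\eps(a)\circ\fd_\eps(a)^{k-1}$ handles the iteration.

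For the upper bound at $k=1$, decompose $\fd_\eps(a)=|D^\eps_h|^2-R(a)$, where $R(a)f$ has the form $c(a)\cdot\wt D f$ with $c(a)$ a smooth matrix-valued function of $\na^\eps_h a$ vanishing at $0$ and $\wt D$ a second-order scaled derivative operator. The contribution of $|D^\eps_h|^2$ is trivial. For $R(a)f$ I would use the composition estimate Lemma \ref{lem2.1}(iii) to bound $|c(a)|_{H^s_\eps}\le M(a)|\na^\eps_h a|_{H^s_\eps}$, and then the product law Lemma \ref{lem2.1}(ii) to obtain $|R(a)f|_{H^s_\eps}\le M(a)\bigl(|f|_{H^{s+2}_\eps}+|\na^\eps_h a|_{H^{s+2}_\eps}|f|_{H^{t_0}_\eps}\bigr)$. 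The inductive step for the upper bound glues two applications of this together via the inductive hypothesis at order $k-1$ (with $s$ replaced by $s+2$).

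For the lower bound at $k=1$ I would use a Garding-type energy estimate: pair the equation $\fd_\eps(a)f=g$ against $\Lambda_\eps^{2s+2}f$ in $L^2$, exploit the pointwise symbol lower bound $\sigma\ge M(a)^{-2}|\xi^\eps|^2$ to extract $M(a)^{-1}|f|^2_{H^{s+2}_\eps}$, and absorb the variable-coefficient errors using the commutator estimate Lemma \ref{lem2.1}(iv), whose crucial feature is that it places any top-order derivative onto $|\na^\eps_h a|_{H^{s+2}_\eps}$ while pairing it only with the low norm $|f|_{H^{t_0}_\eps}$. For the inductive step, apply this base-case lower bound to $h\eqdefa\fd_\eps(a)^{k-1}f$ and then invoke the inductive hypothesis to convert $|h|_{H^{s+2}_\eps}$ into the target $|f|_{H^{2k+s}_\eps}$. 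The main obstacle is exactly this bookkeeping: one must orchestrate the decompositions $A_s=B_s+\langle C_s\rangle_{s>s_0}$ in the product and commutator inequalities so that whenever $|\na^\eps_h a|_{H^{2k+s}_\eps}$ appears it is paired with $|f|_{H^{t_0}_\eps}$ and never with a high-order norm of $f$.
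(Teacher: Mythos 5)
Your proposal tracks the paper's own argument closely in outline: identify $\fd_\eps(a)$ as a uniformly elliptic second-order operator with coefficients that are smooth functions of $\na^\eps_h a$ vanishing at $0$; establish the $k=1$ case by an energy/Gårding argument for the lower bound and by boundedness estimates for the upper bound; then induct on $k$ via the factorization of $\fd_\eps(a)^k$, using interpolation to rebalance intermediate norms. The differences are mostly organizational. For the lower bound, the paper does not pair $\fd_\eps(a)f$ against $\Lambda^{2s+2}_\eps f$ in one shot; it first proves $(\fd_\eps(a)f,f)\ge M(a)^{-1}|f|^2_{H^1_\eps}-M(a)|f|^2_{L^2}$ by integration by parts (writing $\fd_\eps(a)=\sum g_{ij}D^\eps_iD^\eps_j$), then applies this to $\na^\eps_h f$ to bootstrap to $|\fd_\eps(a)f|_{L^2}\gtrsim |f|_{H^2_\eps}$, and only then commutes $\Lambda^s_\eps$ through; your one-step pairing against $\Lambda^{2s+2}_\eps f$ reaches the same estimate. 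For the upper bound the paper simply invokes Proposition~\ref{prop:psdo-estimate} (the pseudodifferential estimate with limited-smoothness symbol) plus interpolation, whereas you propose the more hands-on decomposition $\fd_\eps(a)=|D^\eps_h|^2-R(a)$ with the product/composition laws of Lemma~\ref{lem2.1}; both give the claimed bound.

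One small but substantive correction to your description of the mechanism: Lemma~\ref{lem2.1}(iv) does not by itself put the top-order derivative onto $|\na^\eps_h a|$. For $s$ in the admissible range the active commutator term is $|\na^\eps_h g_{ij}|_{H^{t_0}_\eps}|D^\eps_iD^\eps_j f|_{H^{s-1}_\eps}$, i.e.\ a high-order-in-$f$ piece $\sim|f|_{H^{s+1}_\eps}$ (and the paper's intermediate display explicitly carries such a $|f|_{H^{s+1}_\eps}$ term). What pushes the high derivative onto $a$ is the subsequent interpolation step (the paper writes out the string of interpolation inequalities at the end of its induction), which trades $|f|_{H^{s+1}_\eps}$ against the Gårding gain $|f|_{H^{s+2}_\eps}$ and the low norm $|f|_{H^{t_0}_\eps}$. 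You correctly single out this bookkeeping as the crux; just be aware that the engine there is interpolation and Young, not the commutator lemma deciding on its own which factor takes the load. Also note that the induction, in either order ($\fd_\eps(a)\circ\fd_\eps(a)^{k-1}$ as you propose, or $\fd_\eps(a)^{k-1}\circ\fd_\eps(a)$ as the paper does), requires the one-step estimates at Sobolev indices outside $[0,1]$, so the base case has to be proved for general $s\ge 0$ and the $[0,1]$ restriction recovered at the end.
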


\begin{proof}  One  can  deduce this lemma from
Lemma 3.5 and Lemma 3.6 in \cite{MZ} by a  scaling argument. For
completeness,  we shall  present the outline of the proof here.
Indeed for the first estimate, one only need to use Proposition
\ref{prop:psdo-estimate} and an interpolation argument. Now we focus
on the sketch of the proof for the second estimate.

We use an inductive argument on $k$. Let us first deal with the case
when  $k=1$. Toward this, we write $\fd_{\eps}(a)$ as
\begin{eqnarray*}
\begin{split}
\fd_{\eps}(a)=&\sum_{i,j=1,2}[\delta_{ij}-\,(1+\eps^3|\na^\eps_h
a|^2)^{-1}\eps^3\p^\eps_i a\p^\eps_j
a]D^\eps_iD^\eps_j\\
\eqdefa&\sum_{i,j=1,2}g_{ij}(\eps^{\f32}\na^\eps_h
a)D^\eps_iD^\eps_j\quad\mbox{for}\quad (D_1^\eps,D_2^\eps)=D_h^\eps.
\end{split}
\end{eqnarray*}
Then we have \beno
\begin{split}
(\fd_\eps(a) f,f)=&-\sum_{i,j=1,2}(g_{ij}(\eps^{\f32}\na^\eps_h a)\p^\eps_i\p^\eps_jf,f)\\
=&\sum_{i,j=1,2}(g_{ij}(\eps^{\f32}\na^\eps_h
a)\p^\eps_if,\p^\eps_jf)
+(\p_jg_{ij}(\eps^{\f32}\na^\eps_h a)\p^\eps_if,f)\\
\ge& M(a)^{-1}|\na^\eps_h f|_{L^2}^2-M(a)|f|_{L^2}|\na^\eps_h f|_{L^2}\\
\ge& M(a)^{-1}|f|_{H^1_\eps}^2-M(a)|f|_{L^2}^2. \end{split} \eeno
Whereas notice that \beno \begin{split} |(\fd_\eps(a)\na^\eps_h
f,\na^\eps_h f)| \le&|(\na^\eps_h\fd_\eps(a)f,\na^\eps_h f)|
+\sum_{i,j=1,2}|((\na^\eps_h g_{ij})D^\eps_iD^\eps_jf,\na^\eps_h f)|\\
\le& (|\fd_\eps(a)f|_{L^2}+M(a)|f|_{H^1_\eps})|\na^\eps_h
f|_{H^1_\eps}.\end{split} \eeno As a consequence, we obtain \beno
\begin{split}
M(a)^{-1}|\na^\eps_h f|^2_{H^1_\eps}&\le
(\fd_\eps(a)\na^\eps_h f,\na^\eps_h f)+M(a)|\na^\eps_h f|^2_{L^2}\\
& \le \f {M(a)^{-1}} 2|\na^\eps_h
f|^2_{H^1_\eps}+M(a)|\fd_\eps(a)f|^2_{L^2}+M(a)|f|^2_{L^2}.
\end{split} \eeno This ensures \[ |\fd_\eps(a)f|_{L^2}\ge
M(a)^{-1}|f|_{H^2_\eps}-M(a)|f|_{L^2}, \] from which and Kato-Ponce
type commutator estimate, we infer \beno
\begin{split} |\fd_\eps(a)f|_{H^s_\eps}\ge& |\fd_\eps(a)\Lambda^s_\eps
f|_{L^2}
-|[\Lambda^s_\eps,g_{ij}]D^\eps_iD^\eps_jf|_{L^2}\\
\ge&
M(a)^{-1}|f|_{H^{2+s}_\eps}-M(a)|f|_{H^s_\eps}-M(a)(|\na^\eps_ha|_{H^{2+s}_\eps}|f|_{H^{t_0}_\eps}+|f|_{H^{s+1}_\eps}),
\end{split}
\eeno that is \[ |\fd_\eps(a)f|_{H^s_\eps} \ge
M(a)^{-1}|f|_{H^{2+s}_\eps}-M(a)(1+|\na^\eps_ha|_{H^{2+s}_\eps})|f|_{H^{t_0}_\eps}.
\] Now we  assume inductively that for $1\le \ell\le k-1$
\[ |\fd_\eps(a)^\ell f|_{H^{s}_\eps}\ge
M_1(a)^{-1}|f|_{H^{2\ell+s}_\eps}-M_1(a)(1+|\na^\eps_h
a|_{H^{2\ell+s}_\eps})|f|_{H^{t_0}_\eps}. \] Then we deduce from the
induction assumption that \beno
\begin{split}
|\fd_\eps(a)^kf|_{H^{s}_\eps}=&|\fd_\eps(a)^{k-1}\fd_\eps(a)f|_{H^{s}_\eps}
\ge
M(a)^{-1}|\fd_\eps(a)f|_{H^{2(k-1)+s}_\eps}\\
& -M(a)(1+|\na^\eps_h
a|_{H^{2(k-1)+s}_\eps})|\fd_\eps(a)f|_{H^{t_0}_\eps}),
\end{split}\eeno which together with an interpolation argument
implies the second inequality of  the lemma.\end{proof}

\begin{lem}\label{lem:elliptic operator-commutator} {\sl Let $k\in
\N, s\ge 0$. Then for any $f\in H^{2k+s}\cap H^{t_0+2}(\R^2)$ and
$g\in H^{2k+s}\cap H^{t_0+1}(\R^2)$, there hold \beno
\begin{split}
&\bigl|[\fd_{\eps}(a)^k, f]g\bigr|_{H^s_\eps}\leq
M(a)\Big(|f|_{H^{t_0+2}}|g|_{H^{2k+s-1}_\eps}+
|f|_{H^{2k+s}_\eps}|g|_{H^{t_0}}+|a|_{H^{2k+s}_\eps}|f|_{H^{t_0+2}_\eps}|g|_{H^{t_0+1}_\eps}\Big),\\
&\bigl|[\fd_{\eps}(a)^k, \na_h^{\eps}]g\bigr|_{H^s_\eps}\leq
\eps^{2}M(a)\Big(|\na_h^{\eps} g|_{H^{2k+s-1}_\eps}
+|a|_{H^{2k+s+1}_\eps}|\na_h^{\eps} g|_{H^{t_0}_\eps}\Big),
\\
&\bigl|[\fd_{\eps}(a)^k, \mathfrak{P}]g\bigr|_{H^s_\eps}\le
\eps^2M(a)(|\na^\eps_h
g|_{H^{2k+s-1}_\eps}+|a|_{H^{2k+s+2}_\eps}|\na^\eps_h
g|_{H^{t_0}_\eps})
\end{split} \eeno }
\end{lem}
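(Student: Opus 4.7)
The plan is induction on $k$, reducing each of the three commutator bounds to the $k=1$ case and then iterating via Lemma~\ref{lem:elliptic operator}. I would begin by rewriting
\[
\fd_\eps(a)=|D_h^\eps|^2 - g_{ij}(\eps^{3/2}\na_h^\eps a)\,D_i^\eps D_j^\eps, \qquad g_{ij}(v)=\f{v_iv_j}{1+|v|^2},
\]
so that the principal part $|D_h^\eps|^2$ is a Fourier multiplier and the tail is a second-order operator whose coefficient $g_{ij}(\eps^{3/2}\na_h^\eps a)$ vanishes to order $|v|^{2}$ at the origin and is therefore of size $\eps^3$ times a bounded smooth function of $\na_h^\eps a$ (controlled by $M(a)$ via Lemma~\ref{lem2.1}(iii)). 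Since $|D_h^\eps|^2$ commutes with $\na_h^\eps$ and with $\mathfrak{P}$ (both Fourier multipliers), in the second and third estimates only the tail contributes.

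For $k=1$ I would treat the three estimates separately. For $[\fd_\eps(a),f]g$, split into $[|D_h^\eps|^2,f]g = -2\na_h^\eps f\cdot\na_h^\eps g-(\na_h^\eps\cdot\na_h^\eps f)\,g$, which is controlled by the tame product estimate (2.2) of Lemma~\ref{lem2.1}, plus the tail commutator, which after Leibniz expansion is a sum of products of $\na_h^\eps f$, $D_h^\eps g$ and smooth functions of $\na_h^\eps a$; these are bounded by (ii)--(iv) of Lemma~\ref{lem2.1}. For $[\fd_\eps(a),\na_h^\eps]g$, only the tail contributes, and the commutator reduces to $(\na_h^\eps g_{ij}(\eps^{3/2}\na_h^\eps a))\,D_i^\eps D_j^\eps g$; since $g_{ij}(v)=O(|v|^2)$ near $0$, differentiation produces the prefactor $\eps^3$, which suffices for the stated $\eps^2 M(a)$ bound. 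For $[\fd_\eps(a),\mathfrak{P}]g$, again only the tail contributes, and the commutator $[g_{ij}(\eps^{3/2}\na_h^\eps a)D_i^\eps D_j^\eps,\mathfrak{P}]$ is handled by the pseudo-differential calculus prepared in Section 4: since $\mathfrak{P}$ has order at most one smoothed by $(1+\sqrt\eps|D_h^\eps|)^{-1/2}$, the symbolic commutator costs one extra derivative on the coefficient, which is precisely why $|a|_{H^{2k+s+2}_\eps}$ (rather than $|a|_{H^{2k+s+1}_\eps}$) appears in the third estimate.

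For the inductive step from $k$ to $k+1$ I would use the telescoping identity
\[
[\fd_\eps(a)^{k+1},X]=\fd_\eps(a)\,[\fd_\eps(a)^{k},X]+[\fd_\eps(a),X]\,\fd_\eps(a)^{k}, \qquad X\in\{f,\na_h^\eps,\mathfrak{P}\},
\]
combine the inductive hypothesis on the inner commutator with Lemma~\ref{lem:elliptic operator} applied to the outer factor $\fd_\eps(a)$ (or $\fd_\eps(a)^k$), and invoke a Kato--Ponce style interpolation of the kind already used in the proof of Lemma~\ref{lem:elliptic operator} to redistribute high and low Sobolev indices in the resulting cross terms.

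The main obstacle is the bookkeeping, which must be done delicately: one has to track which factor carries the top regularity at each recursion level, keep the low-regularity $|a|_{H^{m_0}_\eps}$ dependence hidden inside $M(a)$ while preserving the linear top-order coefficient $|a|_{H^{2k+s+\sigma}_\eps}$ exactly as stated, and distinguish the scaled $H^{t_0}_\eps$ from the unscaled $H^{t_0+2}$ norm on $f$ in the first inequality. The third inequality is the most subtle step, because $\mathfrak{P}$ is non-local; its treatment relies essentially on the rough-symbol pseudo-differential calculus of Section 4, which is where the extra derivative in the coefficient term is paid for.
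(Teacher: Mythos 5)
Your proposal is correct and follows essentially the same strategy as the paper: decompose $\fd_\eps(a)$ as $|D^\eps_h|^2$ plus an $O(\eps^3)$ tail with a smooth coefficient that vanishes quadratically at the origin, establish the $k=1$ case by the tame product and commutator estimates of Lemma~\ref{lem2.1}, then induct on $k$ using Lemma~\ref{lem:elliptic operator} and the scaled Sobolev interpolation inequalities to redistribute regularity in the cross terms. The only substantive difference is algebraic: you use the two-term telescoping $[\fd_\eps(a)^{k+1},X]=\fd_\eps(a)[\fd_\eps(a)^{k},X]+[\fd_\eps(a),X]\fd_\eps(a)^{k}$, whereas the paper first subtracts the clean multiplier $|D^\eps_h|^{2k}$ and telescopes the difference $\fd_\eps(a)^k-|D^\eps_h|^{2k}$ via a four-term identity, reserving Remark~\ref{rmk2.1} for the leftover $[|D^\eps_h|^{2k},f]$. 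Both routes lead to the same family of cross terms (products like $|a|_{H^{s+3}_\eps}\cdot|g|_{H^{2(\ell-1)+t_0-1}_\eps}$) and require the same interpolation inequalities to close, so the distinction is one of bookkeeping rather than substance; the paper's variant simply makes the "safe" multiplier contribution visible at every recursion level. One small inaccuracy: for the $[\fd_\eps(a)^k,\mathfrak{P}]$ estimate the paper does not in fact invoke the Section~4 pseudo-differential machinery; it treats $\mathfrak{P}$ as a scalar Fourier multiplier commuting with $(D^\eps_h)^2$ and bounds $[g_{ij}(\eps^{3/2}\na^\eps_h a),\mathfrak{P}]$ by the same elementary Kato--Ponce-type commutator estimates of Lemma~\ref{lem2.1}, which is why the argument reads "it's almost the same." Your route through Proposition~\ref{prop:psdo-commuatator-special} would of course also work, but it is heavier than what is needed; your diagnosis of why the exponent on $|a|$ increases by one in the third estimate is correct.
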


\begin{proof} Firstly let's focus on the proof for the first inequality. Indeed thanks to Lemma
\ref{lem2.1} and  Sobolev inequality, it reduces to prove that
\begin{align}\label{eq:elliptic operator-est1}
\begin{split}
\bigl|[\fd_{\eps}(a)^k-|D^{\eps}_{h}|^{2k}, f]g\bigr|_{H^s_\eps} &\
\leq M(a)\Big(|f|_{H^{t_0+2}_\eps}|g|_{H^{2k+s-1}_\eps}+
|f|_{H^{2k+s}_\eps}|g|_{H^{t_0}_\eps}\\
&\quad+|a|_{H^{2k+s}_\eps}|f|_{H^{t_0+2}_\eps}|g|_{H^{t_0+1}_\eps}\Big){\eqdefa}I_k(s,f,g).
\end{split}
\end{align} We shall use an  inductive argument on $k$ to prove
\eqref{eq:elliptic operator-est1}.
 We first infer from Lemma
\ref{lem2.1} that
\begin{align}\label{eq:elliptic operator-est2}
\begin{split}
\bigl|[\fd_{\eps}(a)-|D^{\eps}_{h}|^2, f]g\bigr|_{H^s_\eps} &\ \le
M(a)\Big(|f|_{H^{t_0+1}_\eps}|g|_{H^{s+1}_{\eps}}+
\langle|f|_{H^{s+2}_\eps}|g|_{H^{t_0}_\eps}\rangle_{s>t_0-1}\\
&\qquad
+\langle|a|_{H^{s+1}_\eps}|f|_{H^{t_0+2}_\eps}|g|_{H^{t_0+1}_\eps}\rangle_{s>t_0}\Big).
\end{split}
\end{align}
This shows (\ref{eq:elliptic operator-est1}) for $k=1$. Now we
assume that (\ref{eq:elliptic operator-est1})
 holds for $k\le \ell-1$. To prove the case of $k=\ell$, we write
\beno
\begin{split}
\big[\fd_{\eps}(a)^\ell-|D^{\eps}_{h}|^{2\ell}, f\big]g
&=\fd_{\eps}(a)\big[\fd_{\eps}(a)^{\ell-1}-|D^{\eps}_{h}|^{2(\ell-1)}, f\big]g+(\fd_{\eps}(a)-|D^{\eps}_{h}|^2)\big[|D^{\eps}_{h}|^{2(\ell-1)}, f\big]g\nonumber\\
&\ +\big[\fd_{\eps}(a)-|D^{\eps}_{h}|^{2},
f\big]\fd_{\eps}(a)^{\ell-1}g+\big[|D^{\eps}_{h}|^2,
f\big](\fd_{\eps}(a)^{\ell-1}-|D^{\eps}_{h}|^{2(\ell-1)})g.
\end{split}
\eeno We first get by applying Lemma \ref{lem:elliptic operator} and
the induction assumption that \beno
\begin{split}
&\bigl|\fd_{\eps}(a)\big[\fd_{\eps}(a)^{\ell-1}-|D^{\eps}_{h}|^{2(\ell-1)}, f\big]g\bigr|_{H^s_\eps}\\
&\ \le
M(a)\Big(|\big[\fd_{\eps}(a)^{\ell-1}-|D^{\eps}_{h}|^{2(\ell-1)},
f\big]g|_{H^{2+s}_\eps}+|a|_{H^{s+3}_\eps}|\big[\fd_{\eps}(a)^{\ell-1}-|D^{\eps}_{h}|
^{2(\ell-1)}, f\big]g|_{H^{t_0}_\eps}\Big)\\
&\ \le
I_\ell(s,f,g)+M(a)|a|_{H^{s+3}_\eps}\Big(|f|_{H^{t_0+2}_\eps}|g|_{H^{2(\ell-1)+t_0-1}_\eps}
+|f|_{H^{2(\ell-1)+t_0}_\eps}|g|_{H^{t_0}_\eps}
\\
&\quad+|a|_{H^{2(\ell-1)+t_0}_\eps}|f|_{H^{t_0+2}_\eps}|g|_{H^{t_0+1}_\eps}\Big)\\
&\ \le I_\ell(s,f,g), \end{split} \eeno where in the last inequality
we used the following interpolation inequalities \beno
&&|a|_{H^{s+3}_\eps}\le
|a|_{H^{t_0+1}_\eps}^{1-\tht}|a|_{H^{2\ell+s}_\eps}^\tht,\quad
\tht=\f{s-t_0+2}{s+2\ell-t_0-1},\\
&&|a|_{H^{2(\ell-1)+t_0}_\eps}\le
|a|_{H^{t_0+1}_\eps}^{1-\tht}|a|_{H^{2\ell+s}_\eps}^\tht,\quad
\tht=\f{2(\ell-1)-1}{s+2\ell-t_0-1},\\
&&|g|_{H^{2(\ell-1)+t_0-1}_\eps}\le
|g|_{H^{t_0}_\eps}^{1-\tht}|g|_{H^{s+2\ell-1}_\eps}^{\tht},\quad
\tht=\f{2(\ell-1)-1}{s+2\ell-t_0-1},\\
&&|f|_{H^{2(\ell-1)+t_0}_\eps}\le
|f|_{H^{t_0+1}_\eps}^{1-\tht}|f|_{H^{s+2\ell}_\eps}^{\tht},\quad
\tht=\f{2(\ell-1)-1}{s+2\ell-t_0-1}, \eeno such that for example,
\beno |a|_{H^{s+3}_{\eps}}|g|_{H^{2(\ell-1)+t_0-1}_\eps}\le
M(a)\big(|a|_{H^{2\ell+s}_\eps}|g|_{H^{t_0+1}_\eps}
+|g|_{H^{2\ell+s-1}_\eps}\big). \eeno Similarly applying Lemma
\ref{lem2.1} ensures that \beno
\bigl|(\fd_{\eps}(a)-|D^{\eps}_{h}|^2)\big[|D^{\eps}_{h}|^{2(\ell-1)},
f\big]g\bigr|_{H^s_{\eps}}\le I_\ell(s,f,g). \eeno Thanks to
(\ref{eq:elliptic operator-est2}), one has
\begin{align*}
\begin{split}
& \bigl|\big[\fd_{\eps}(a)-|D^{\eps}_{h}|^2,
f\big]\fd_{\eps}(a)^{\ell-1}g\bigr|_{H^s_\eps}\\
&\ \le M(a)\Big(|f|_{H^{t_0+1}_\eps}
|\fd_{\eps}(a)^{\ell-1}g|_{H^{s+1}_{\eps}}+
\langle|f|_{H^{s+2}_\eps}|\fd_{\eps}(a)^{\ell-1}g|_{H^{t_0}_\eps}\rangle_{s>t_0-1}\nonumber\\
&\qquad
+\langle|a|_{H^{s+1}_\eps}|f|_{H^{t_0+2}_\eps}|\fd_{\eps}(a)^{\ell-1}g|_{H^{t_0+1}_\eps}\rangle_{s>t_0}\Big),
\end{split}
\end{align*}
while it follows from Lemma \ref{lem:elliptic operator} and an
interpolation argument that for $s>t_0-1$, \beno
\begin{split}
|f|_{H^{s+2}_\eps}|\fd_{\eps}(a)^{\ell-1}g|_{H^{t_0}_\eps} \le &
M(a)|f|_{H^{s+2}_\eps}\big(|g|_{H^{2(\ell-1)+t_0}_\eps}+|a|_{H^{2(\ell-1)+t_0+1}_\eps}|g|_{H^{t_0}_\eps}\big)\\
\le& I_\ell(s,f,g),\end{split} \eeno and for $s>t_0$, \beno
\begin{split}
|a|_{H^{s+1}_{\eps}}&|f|_{H^{t_0+2}_\eps}|\fd_{\eps}(a)^{\ell-1}g|_{H^{t_0+1}_\eps}\\
&\le
M(a)|a|_{H^{s+1}_\eps}|f|_{H^{t_0+2}_\eps}|\big(|g|_{H^{2(\ell-1)+t_0+1}_\eps}
+|a|_{H^{2\ell+t_0}_\eps}|g|_{H^{t_0}_\eps}\big) \le I_\ell(s,f,g).
\end{split}
\eeno As a consequence, we obtain \beno
\bigl|\big[\fd_{\eps}(a)-|D^{\eps}_{h}|^2,
f\big]\fd_{\eps}(a)^{\ell-1}g\bigr|_{H^s_\eps}\le I_\ell(s,f,g).
\eeno Similarly we can deduce from \eqref{lem:commutator estimate}
and Lemma \ref{lem:elliptic operator} that \beno
\bigl|\big[|D^{\eps}_{h}|^2,
f\big](\fd_{\eps}(a)^{\ell-1}-|D^{\eps}_{h}|^{2(\ell-1)})g\bigr|_{H^s_\eps}\le
I_\ell(s,f,g). \eeno This proves (\ref{eq:elliptic operator-est1})
for $k=\ell$, which proves  the first estimate of the lemma.

For the second inequality, one can use a similar inductive argument
 to prove it. And it's almost the same for the third
inequality by noticing that $\fd_{\eps}(a)=f(\eps^3(\na^\eps_h
a)^2)(D^\eps_h)^2$(formally) and one has \beno \bigl|[\fd_{\eps}(a),
\mathfrak P]g\bigr|_{H^s_\eps}&=&\bigl|[f(\eps^3(\na^\eps a)^2),
\mathfrak
P](D^\eps_h)^2 g\bigr|_{H^s_\eps}\\
&\le& \eps^2 M(a)(||D^\eps_h|^2
g|_{H^s_\eps}+|a|_{H^{s+4}_\eps}||D^\eps_h|^2 g|_{L^2})\\
&\le& \eps^2M(a)(||D^\eps_h|
g|_{H^{s+1}_\eps}+|a|_{H^{s+4}_\eps}||D^\eps_h|g|_{H^1_\eps}). \eeno
This completes the proof of the lemma.
\end{proof}

Let us conclude this section by recalling a result from
\cite{Lannes-Inven} on the anisotropic Poisson regularization.

\begin{lem}\label{lem:Poisson regularization} {\sl Let $\chi\in
C^\infty_c(\R)$ with $\chi(0)=1$, and define
$u^\dagger=\chi(z\sqrt{\eps} |D^{\eps}_{h}|)u$. Then for any  $\
s\in \R$, if $u\in H^{s-\f12}(\R^2)$, we have
\[
c_1
\Big|\frac1{(1+\sqrt{\eps}|D^{\eps}_{h}|)^\frac12}u\Big|_{H^s_\eps}\leq
\|\Lambda^s_{\eps} u^\dagger\|_2\leq c_2
\Big|\frac1{(1+\sqrt{\eps}|D^{\eps}_{h}|)^\frac12}u\Big|_{H^s_\eps},
\]
and if $u\in H^{s+\f12}(\R^2)$, we have
\[
c'_1\sqrt{\eps}|\mathfrak{P}u|_{H^s_{\eps}}\leq \|\Lambda^s_{\eps}
\na^{\eps}u^\dagger\|_2\leq c'_2\sqrt{\eps}
|\mathfrak{P}u|_{H^s_{\eps}}.
\]
Here $c_1,c_2,c_1'$ and $c_2'$ are positive constants depending only
on $\chi$.} \end{lem}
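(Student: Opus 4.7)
The plan is to reduce both estimates to two elementary one-variable inequalities by passing to the Fourier side in the horizontal variables and carrying out the $z$-integral explicitly. Because $\chi(z\sqrt{\eps}|D^{\eps}_h|)$ is, at each $z$, a Fourier multiplier in $x_h$ that commutes with $\Lam^s_{\eps}$ and with horizontal derivatives, and because $\p_z u^\dagger=\sqrt{\eps}|D^{\eps}_h|\chi'(z\sqrt{\eps}|D^{\eps}_h|)u$, Plancherel and Fubini reduce the norms on the left-hand sides to
\begin{equation*}
\|\Lam_\eps^s u^\dagger\|_2^2=\int_{\R^2}(1+|\xi^\eps|^2)^s|\hat u(\xi)|^2\left(\int_{-1}^0|\chi(z\eta)|^2\,dz\right)d\xi,
\end{equation*}
\begin{equation*}
\|\Lam_\eps^s\na^\eps u^\dagger\|_2^2=\int_{\R^2}(1+|\xi^\eps|^2)^s\,\eta^2|\hat u(\xi)|^2\left(\int_{-1}^0(|\chi|^2+|\chi'|^2)(z\eta)\,dz\right)d\xi,
\end{equation*}
where $\eta:=\sqrt{\eps}|\xi^\eps|$ and I have used $\eps|\xi^\eps|^2=\eta^2$.

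Next, for $\eta>0$ the substitution $\tau=z\eta$ converts the two $z$-integrals into $F(\eta)$ and $G(\eta)/\eta$ respectively, where
\begin{equation*}
F(\eta):=\f1\eta\int_{-\eta}^0|\chi(\tau)|^2\,d\tau,\qquad G(\eta):=\int_{-\eta}^0\bigl(|\chi(\tau)|^2+|\chi'(\tau)|^2\bigr)d\tau.
\end{equation*}
Meanwhile the right-hand sides of the lemma admit the Fourier representations
\begin{equation*}
\Big|\f{u}{(1+\sqrt{\eps}|D^\eps_h|)^{1/2}}\Big|_{H^s_\eps}^2=\int\f{(1+|\xi^\eps|^2)^s}{1+\eta}|\hat u|^2\,d\xi,\qquad \eps|\mathfrak{P}u|_{H^s_\eps}^2=\int(1+|\xi^\eps|^2)^s\f{\eta^2}{1+\eta}|\hat u|^2\,d\xi.
\end{equation*}
Hence the lemma reduces to the two pointwise equivalences
\begin{equation*}
F(\eta)\simeq \f1{1+\eta}\qquad\text{and}\qquad G(\eta)\simeq \f{\eta}{1+\eta}\quad\text{on }[0,\infty),
\end{equation*}
with constants depending only on $\chi$.

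The remaining work is the verification of these two one-variable bounds, which I would split according to the size of $\eta$. For $\eta\to 0^+$, a Taylor expansion together with $\chi(0)=1$ gives $F(\eta)\to|\chi(0)|^2=1$ and $G(\eta)/\eta\to |\chi(0)|^2+|\chi'(0)|^2=1+|\chi'(0)|^2>0$, matching the targets near $0$. For $\eta$ larger than the support radius of $\chi$, the two integrals become the fixed constants $\|\chi\|_{L^2(\R_-)}^2$ and $\|\chi\|_{L^2(\R_-)}^2+\|\chi'\|_{L^2(\R_-)}^2$, both strictly positive by $\chi(0)=1$ and continuity; this yields $F(\eta)\simeq 1/\eta$ and $G(\eta)\simeq 1$, again matching the targets. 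On the remaining compact range of $\eta$ both sides are continuous and strictly positive, hence comparable. No step here is genuinely delicate; the only substantive input is the assumption $\chi(0)=1$, which supplies positivity at both endpoints.
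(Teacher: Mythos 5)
Your proof is correct, and it is the standard (and essentially unavoidable) approach: since $\chi(z\sqrt{\eps}|D_h^\eps|)$ acts diagonally in the horizontal Fourier variable, Plancherel and Fubini reduce everything to an explicit $z$-integral, the substitution $\tau=z\eta$ with $\eta=\sqrt{\eps}|\xi^\eps|$ collapses the two sides to the pointwise comparisons $F(\eta)\simeq(1+\eta)^{-1}$ and $G(\eta)\simeq\eta(1+\eta)^{-1}$, and these follow from the behavior at $\eta\to0^+$ (using $\chi(0)=1$), at $\eta\to\infty$ (using compact support and $\|\chi\|_{L^2(\R_-)}>0$), and continuity/positivity on the intermediate compact range. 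The paper itself does not prove this lemma but cites it from Alvarez-Samaniego and Lannes, and that source's argument is the same Fourier-side reduction; you have reconstructed it accurately, including the correct identification $\eps|\xi^\eps|^2=\eta^2$ that makes both the $\sqrt{\eps}\na^\eps_h$-part and the $\p_z$-part contribute the common factor $\eta^2$ in the second estimate.
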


\renewcommand{\theequation}{\thesection.\arabic{equation}}
\setcounter{equation}{0}
\section{Elliptic estimates on the infinite strip}

In this section, we  consider the following boundary value problem
on the infinite strip \beq\label{eq:elliptic equation}
\left\{\begin{array}{ll}
\p_z^2\Phi+\eps\p_x^2\Phi+\eps^2\p_y^2\Phi=0,\quad-1+\eps b(x_h)<z<{\eps}\zeta(t,x_h),\\
\Phi|_{z={\eps}\zeta}=\psi,\quad\p_n\Phi|_{z=-1+\eps b}=0,
\end{array}\right.
\eeq under the assumption that \beq\label{ass:free surface}
 1+\eps\zeta-\eps b\ge
h_0\quad\mbox{for some}\quad h_0>0. \eeq We denote by $S$ a
diffeomorphism from $\mathcal{S}=\R^2\times[-1,0]$ to the fluid
domain $\Omega=\R^2\times[-1+\eps b(x_h),\eps \zeta(x_h)]$ so that
\beq\label{Transform}  S:\ (x_h,z)\in \mathcal{S}\mapsto
S(x_h,z)=(x_h,z+\sigma(x_h,z))\in\Om \eeq for $ \sigma(x_h,z)=-\eps
z b(x_h)+\eps(z+1)\zeta(x_h).$

Using this diffeomorphism $S$, the elliptic equation
(\ref{eq:elliptic equation}) can be equivalently formulated as an
elliptic problem with variable coefficients on the flat strip
 so that \beq\label{eq:elliptic equation on strip} \left\{\begin{array}{ll}
\na\cdot P^\eps[\sigma]\na u=0\qquad\hbox{in}\quad
\mathcal{S},\\
u|_{z=0}=\psi,\qquad\p_n^Pu|_{z=-1}=0,
\end{array}\right.
\eeq where $u=\Phi\circ S$ and $\p_n^P$ denotes the conormal
derivative associated with $P^\eps[\sigma]$, i.e., \beno
\p_n^Pu=-{\bf e_3}\cdot P^\eps[\sigma]\na u|_{z=-1}. \eeno Here
${\bf e_3}=(0,0,1)^T$. Moreover, we write
$$\na\cdot P^\eps[\sigma]\na=
\na^{\eps}\cdot (I+Q^\eps[\sigma])\na^{\eps}$$ with \beq\label{Qd}
Q^\eps[\sigma]=\left(\begin{matrix} \p_z\sigma &
0&-\sqrt{\eps}\p_x\sigma\\
0 & \p_z\sigma & -\eps\p_y\sigma\\
-\sqrt{\eps}\p_x\sigma & -\eps\p_y\sigma &
\frac{-\p_z\sigma+\eps(\p_x\sigma)^2+\eps^2(\p_y\sigma)^2}{1+\p_z\sigma}
\end{matrix}\right).\eeq

\no{\bf Notation 3.1}\,Throughout this paper, we shall always denote
$$M(\sigma){\eqdefa}C(\f1{h_0},|b|_{H^{m_0}},|\zeta|_{H^{m_0}})$$
to be a constant which is  a nondecreasing function to all
arguments.

To transform the Dirichlet boundary data $\psi$ in (\ref{eq:elliptic
equation on strip}) to be zero,  we are led to consider the
following elliptic  problem \beq\label{eq:elliptice equation-homo}
\left\{\begin{array}{ll} \na^{\eps}\cdot
(1+Q^\eps[\sigma])\na^{\eps}u=\na^{\eps}\cdot {\bf g}\quad\hbox{in}\quad \mathcal{S},\\
u|_{z=0}=0,\qquad\p_n^Pu|_{z=-1}=-{\bf e_3}\cdot {\bf g}|_{z=-1},
\end{array}\right.
\eeq with \beno \p_n^Pu|_{z=-1}{\eqdefa}-{\bf
e_3}\cdot(1+Q^\eps[\sigma])\na^{\eps}u|_{z=-1}. \eeno

\begin{prop}\label{prop:elliptic estimate} {\sl Let $s\geq 0$,
 and $\zeta, b\in H^{m_0} \bigcap H^{s+1}(\R^2)$ satisfy
(\ref{ass:free surface}). Then for all ${\bf g}\in C([-1,0];
H^s(\R^2)^3)$, (\ref{eq:elliptice equation-homo}) has  a unique
variational solution $u\in H^1(\mathcal{S})$ so that
\[
\|\Lambda^s_{\eps}\na^{\eps} u\|_2\leq
M(\sigma)\left(\|\Lambda^s_{\eps} {\bf
g}\|_2+\langle\|\Lambda^{t_0}_\eps {\bf g}\|_2|(\zeta,
b)|_{H^{s+1}_{\eps}}\rangle_{s>t_0+1}\right).
\]}
\end{prop}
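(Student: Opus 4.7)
The plan is to reduce \eqref{eq:elliptice equation-homo} to a coercive variational problem for $s=0$, and then to bootstrap tangential regularity via the commutator lemmas of Section 2. First, observe that the matrix $I+Q^{\eps}[\sigma]$ is essentially the pull-back of the anisotropic Euclidean metric under the diffeomorphism $S$, and the hypothesis \eqref{ass:free surface} gives $1+\partial_z\sigma=1+\eps\zeta-\eps b\ge h_0>0$. This makes the bilinear form $a(u,v):=\int_{\mathcal{S}}(I+Q^{\eps}[\sigma])\nabla^{\eps}u\cdot\nabla^{\eps}v\,dx_h\,dz$ coercive with constant $M(\sigma)^{-1}$ on $V:=\{u\in H^1(\mathcal{S}):u|_{z=0}=0\}$, the Dirichlet trace at $z=0$ supplying a Poincar\'e inequality in $z$. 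Lax--Milgram then produces a unique variational solution $u\in V$, and testing against $u$ yields the base estimate $\|\nabla^{\eps}u\|_2\le M(\sigma)\|\mathbf{g}\|_2$.

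For integer $s\ge 1$, I would apply the horizontal Fourier multiplier $\Lambda^s_{\eps}$ (which commutes with $\partial_z$) to \eqref{eq:elliptice equation-homo}. The function $v:=\Lambda^s_{\eps}u$ then satisfies the same problem with source replaced by $\tilde{\mathbf{g}}=\Lambda^s_{\eps}\mathbf{g}-[\Lambda^s_{\eps},Q^{\eps}[\sigma]]\nabla^{\eps}u$ and the corresponding modified conormal datum at $z=-1$. The base estimate applied to $v$ gives
\[
\|\Lambda^s_{\eps}\nabla^{\eps}u\|_2\le M(\sigma)\bigl(\|\Lambda^s_{\eps}\mathbf{g}\|_2+\|[\Lambda^s_{\eps},Q^{\eps}[\sigma]]\nabla^{\eps}u\|_2\bigr).
\]
Since $\sigma=\eps((z+1)\zeta-zb)$, Lemma \ref{lem2.1}(iii) together with the smooth dependence of $Q^{\eps}[\sigma]$ on $\nabla\sigma$ yields $|\nabla_h^{\eps}Q^{\eps}[\sigma]|_{H^{r}_{\eps}}\le \eps M(\sigma)\bigl(1+|(\zeta,b)|_{H^{r+1}_{\eps}}\bigr)$. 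Feeding this into Lemma \ref{lem2.1}(iv) applied slice-by-slice in $z$ splits the commutator into two pieces: one proportional to $\|\nabla^{\eps}u\|_{L^2_zH^{s-1}_{\eps}}$ with $\eps$-independent coefficient $M(\sigma)$, which is absorbed inductively from the estimate at level $s-1$; and, only when $s>t_0+1$, an additional piece of the form $M(\sigma)|(\zeta,b)|_{H^{s+1}_{\eps}}\|\nabla^{\eps}u\|_{L^2_zH^{t_0}_{\eps}}$. The latter is controlled via the estimate at the fixed level $s=t_0\le t_0+1$, which contributes $\|\Lambda^{t_0}_{\eps}\mathbf{g}\|_2$ and produces exactly the correction $\langle\|\Lambda^{t_0}_{\eps}\mathbf{g}\|_2|(\zeta,b)|_{H^{s+1}_{\eps}}\rangle_{s>t_0+1}$. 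Non-integer values of $s$ are finally recovered by real interpolation between consecutive integer levels.

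The main obstacle is the tame splitting of the commutator $[\Lambda^s_{\eps},Q^{\eps}[\sigma]]\nabla^{\eps}u$: the top Sobolev derivative of $\sigma$ must be kept on a single linear factor while its companion is pushed down to the fixed low-regularity scale $H^{t_0}_{\eps}$, which is precisely the content of the second inequality of Lemma \ref{lem2.1}(iv). The $\eps^{-1/4}$ losses there are harmless because they are compensated by the overall prefactor $\eps$ inherent to $\sigma$, so the induction constants remain uniform in $\eps$. Some care is also needed for the modified conormal datum $-\mathbf{e}_3\cdot[\Lambda^s_{\eps},Q^{\eps}[\sigma]]\nabla^{\eps}u|_{z=-1}$ that arises when commuting $\Lambda^s_{\eps}$ through the boundary condition, but Lemma \ref{lem:trace theorem} bounds its trace by the same bulk commutator norm, so it folds into the scheme above without further loss.
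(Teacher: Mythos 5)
Your proof is essentially the paper's (which defers to the argument of Proposition~\ref{prop:elliptic estimate-general}, itself following Proposition~2.4 of \cite{Lannes-Inven}): coercivity of the bilinear form at $s=0$, then tangential regularity via the commutator Lemma~\ref{lem2.1}(iv). Conjugating the equation by $\Lambda^s_{\eps}$ and reapplying the base estimate is the same computation as the paper's device of testing the weak form against $\Lambda^{2s}_{\eps}u$; the paper absorbs $\|\Lambda^{s-1}_{\eps}\na^{\eps}u\|_2$ in a single interpolation/Young step rather than integer induction plus interpolation, but that is cosmetic. One small slip to flag: since $Q^{\eps}[\sigma]$ already depends on first-order spatial derivatives of $(\zeta,b)$ through its entries $\sqrt{\eps}\p_x\sigma$ and $\eps\p_y\sigma$, the correct bound is $|\na^{\eps}_h Q^{\eps}[\sigma]|_{H^r_{\eps}}\le \eps M(\sigma)|(\zeta,b)|_{H^{r+2}_{\eps}}$ (up to $\eps$-powers on the pieces), not $H^{r+1}_{\eps}$; with $r=s-1$ this yields exactly the $|(\zeta,b)|_{H^{s+1}_{\eps}}$ factor in the statement, so your scheme still closes once the exponent is corrected.
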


\begin{proof}\, Thanks to (\ref{lem:Sobolev
Embedding})-(\ref{lem:commutator estimate}),  one can deduce
Proposition \ref{prop:elliptic estimate} by exactly following the
same line as the proof of Proposition 2.4 in \cite{Lannes-Inven}
(see also the proof of Proposition \ref{prop:elliptic
estimate-general} below), and we omit the details here.\end{proof}

\no{\bf Notation 3.2.} For $u\in H^{\f32}(\R^2)$, we define $u^b$ as
the solution of  \beq\label{u^b} \left\{\begin{array}{ll}
\na^{\eps}\cdot (1+Q^\eps[\sigma])\na^{\eps}u^b=0\quad\hbox{in}\quad \mathcal{S},\\
u^b|_{z=0}=u,\quad\p_nu^b|_{z=-1}=0.
\end{array}\right.
\eeq

As an immediate  corollary of Proposition \ref{prop:elliptic
estimate}, we obtain

\begin{col}\label{cor:elliptic estimate} {\sl Let  $s\geq 0,$  and $\zeta, b\in
H^{m_0} \bigcap H^{s+1}(\R^2)$ satisfy (\ref{ass:free surface}).
Then for any $u \in H^{s+\f12}(\R^2)$, one has
\[
\|\Lambda^s_{\eps}\na^{\eps} u^b\|_2\leq \sqrt{\eps}
M(\sigma)\Big(|\mathfrak P u|_{H^s_{\eps}}+\langle|\mathfrak P
u|_{H^{t_0}_\eps}|(\zeta,b)|_{H^{s+1}_{\eps}} \rangle_{s>t_0}\Big).
\]}
\end{col}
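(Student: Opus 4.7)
The natural approach is to reduce to Proposition~\ref{prop:elliptic estimate} by lifting the Dirichlet data $u$ into the strip with the anisotropic Poisson regularization. Fix a cut-off $\chi\in C^\infty_c(\R)$ with $\chi(0)=1$ (chosen so that $\chi(-t)=0$ for $|t|$ in the support of interest, which we may arrange near the bottom by rescaling), and set
\[
u^\dagger(x_h,z)\eqdefa \chi(z\sqrt{\eps}|D^{\eps}_h|)\,u(x_h),
\qquad v\eqdefa u^b-u^\dagger.
\]
By construction $u^\dagger|_{z=0}=u$, so $v|_{z=0}=0$, and a direct computation shows that $v$ solves
\[
\na^\eps\cdot(1+Q^\eps[\sigma])\na^\eps v=\na^\eps\cdot\mathbf{g},\qquad
\p_n^P v|_{z=-1}=-\mathbf{e}_3\cdot\mathbf{g}|_{z=-1},
\]
with source $\mathbf{g}\eqdefa -(1+Q^\eps[\sigma])\na^\eps u^\dagger$. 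This is exactly the setting of \eqref{eq:elliptice equation-homo}, so Proposition~\ref{prop:elliptic estimate} applies to $v$.

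The plan is then straightforward. First I would apply Proposition~\ref{prop:elliptic estimate} to obtain
\[
\|\Lambda^s_\eps\na^\eps v\|_2\le M(\sigma)\Big(\|\Lambda^s_\eps\mathbf{g}\|_2+\langle\|\Lambda^{t_0}_\eps\mathbf{g}\|_2\,|(\zeta,b)|_{H^{s+1}_\eps}\rangle_{s>t_0+1}\Big).
\]
Next I would control $\|\Lambda^s_\eps\mathbf{g}\|_2$ by expanding $\mathbf{g}$, using the tame product estimate \eqref{lem:product estimate} on $Q^\eps[\sigma]\na^\eps u^\dagger$, and invoking the bound $\|\Lambda^s_\eps\na^\eps u^\dagger\|_2\le c_2'\sqrt{\eps}\,|\mathfrak P u|_{H^s_\eps}$ from Lemma~\ref{lem:Poisson regularization}. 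Since the entries of $Q^\eps[\sigma]$ are smooth functions of $\na^\eps\sigma$, the composition estimate \eqref{lem:composition} gives a bound of the form $M(\sigma)$ on their $H^{t_0}_\eps$ norm, and \eqref{lem:product estimate} then yields
\[
\|\Lambda^s_\eps\mathbf{g}\|_2\le \sqrt{\eps}\,M(\sigma)\Big(|\mathfrak P u|_{H^s_\eps}+\langle|\mathfrak P u|_{H^{t_0}_\eps}\,|(\zeta,b)|_{H^{s+1}_\eps}\rangle_{s>t_0}\Big).
\]
Combining the two estimates (and absorbing the single-derivative gap between the thresholds $s>t_0+1$ and $s>t_0$ by using the interpolation inequality $|(\zeta,b)|_{H^{s}_\eps}\le|(\zeta,b)|_{H^{m_0}}^{1-\theta}|(\zeta,b)|_{H^{s+1}_\eps}^{\theta}$, swept into $M(\sigma)$) gives the bound on $\|\Lambda^s_\eps\na^\eps v\|_2$, and a final triangle inequality
\[
\|\Lambda^s_\eps\na^\eps u^b\|_2\le\|\Lambda^s_\eps\na^\eps v\|_2+\|\Lambda^s_\eps\na^\eps u^\dagger\|_2
\]
together with Lemma~\ref{lem:Poisson regularization} for the second term concludes the proof.

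The routine work is the bookkeeping of the Sobolev thresholds; the only genuine point of care is to verify that the cut-off $\chi$ can be arranged so that the Neumann lift $u^\dagger$ does not produce spurious boundary terms at $z=-1$ that fail the compatibility required to feed into Proposition~\ref{prop:elliptic estimate}. This is handled either by choosing $\chi$ supported near $0$ (so $\p_z u^\dagger|_{z=-1}$ carries only frequencies where $\chi$ vanishes) or, more robustly, by simply absorbing $\mathbf{e}_3\cdot\mathbf{g}|_{z=-1}$ into the source-term formulation used in Proposition~\ref{prop:elliptic estimate}, which was tailored precisely to accommodate such a conormal contribution. After this, the anisotropy encoded in $\na^\eps$ and the scaled operator $|D^\eps_h|$ behaves transparently under the scaled product and commutator lemmas of Section~2.
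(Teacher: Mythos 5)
Your proposal reproduces the paper's proof: the paper also sets $v=u^b-u^\dagger$ with $u^\dagger$ the Poisson lift from Lemma~\ref{lem:Poisson regularization}, observes that $v$ solves \eqref{eq:elliptice equation-homo} with ${\bf g}=-(1+Q^\eps[\sigma])\na^{\eps}u^\dagger$, applies Proposition~\ref{prop:elliptic estimate} together with the product estimates of Lemma~\ref{lem2.1}, and closes with Lemma~\ref{lem:Poisson regularization} and the triangle inequality. Your worry about the cut-off near $z=-1$ is a non-issue, as you note yourself: the formulation of \eqref{eq:elliptice equation-homo} already carries the conormal trace $\mathbf{e}_3\cdot\mathbf{g}|_{z=-1}$ as data, so no special choice of $\chi$ is required.
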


\begin{proof}\, Let $v {\eqdefa} u^b-u^\dagger$ with $u^\dagger$ being given by
Lemma \ref{lem:Poisson regularization}. Then $v$ solves \beno
\left\{\begin{array}{ll} \na^{\eps}\cdot
(1+Q^\eps[\sigma])\na^{\eps}v=-\na^{\eps}\cdot
(1+Q^\eps[\sigma])\na^{\eps}u^\dagger
\quad\hbox{in}\quad \mathcal{S},\\
u|_{z=0}=0,\qquad\p^P_nu|_{z=-1}={\bf e_3}\cdot
(1+Q^\eps[\sigma])\na^{\eps}u^\dagger|_{z=-1}.
\end{array}\right.
\eeno Thanks to (\ref{lem:Sobolev Embedding})-(\ref{lem:product
estimate}), we get by applying Proposition \ref{prop:elliptic
estimate} for ${\bf g}=-(1+Q^\eps[\sigma])\na^{\eps}u^\dagger$
 that \beno
\|\Lam^s_{\eps}\na^{\eps}v\|_2\le M(\sigma)\big(\|\Lam^s_{\eps}
\na^{\eps}u^\dagger\|_2+
\langle\|\Lambda^{t_0}_\eps\na^{\eps}u^\dagger\|_{2}|(\zeta,b)|_{H^{s+1}_{\eps}}\rangle_{s>t_0}\big),
\eeno which together with Lemma \ref{lem:Poisson regularization}
completes the proof of the corollary.\end{proof}

Besides \eqref{eq:elliptice equation-homo}, we also need to deal
with a more general elliptic  problem as follows
\beq\label{eq:elliptice equation-homo-general}
\left\{\begin{array}{ll} \na^{\eps}\cdot (1+Q^\eps[\sigma])\na^{\eps}u=f\quad\hbox{in}\quad \mathcal{S},\\
u|_{z=0}=0,\qquad\p_n^Pu|_{z=-1}=g.
\end{array}\right.
\eeq

\begin{prop}\label{prop:elliptic estimate-general} {\sl Under
the assumptions of Proposition \ref{prop:elliptic estimate}, for any
given $f\in L^2((-1,0);$ $ H^{s-1}(\R^2))$ and $g\in H^s(\R^2),$
(\ref{eq:elliptice equation-homo-general})  has  a variational
solution $u\in H^1(\mathcal{S})$ so that
\begin{align*}
\begin{split}
\|\Lambda^s_{\eps}\na^{\eps} u\|_2 \leq &
M(\sigma)\Big(\eps^{-\f12}\|\Lambda^{s-1}_\eps
f\|_2+|g|_{H^s_{\eps}}+\langle(\eps^{-\f12}\|\Lambda^{t_0-1}_\eps
f\|_2+|g|_{H^{t_0}_\eps})|(\zeta, b)|_{H^{s+1}_{\eps}}
\rangle_{s>t_0+1}\Big),\\
\|\Lambda^s_{\eps}\na^{\eps} u\|_2 \leq &
M(\sigma)\Big(\eps^{-\f12}\|\Lambda^{s-1}_\eps
f\|_2+\eps^{-\f14}|g|_{H^{s-\f12}_\eps}\\
&\ +\langle(\eps^{-\f12}\|\Lambda^{t_0-1}_\eps
f\|_2+|g|_{H^{t_0}_\eps})|(\zeta,
b)|_{H^{s+1}_{\eps}}\rangle_{s>t_0+1}\Big),
\end{split}
\end{align*}
and if $s\ge 1$, we have \beno \begin{split}
\|\Lambda^{s-1}_\eps\na^{\eps} \p_zu\|_2 \leq &
M(\sigma)\Big(\|\Lambda^{s-1}_\eps f\|_2+\sqrt{\eps}|g|_{H^s_{\eps}}
+\langle(\|\Lambda^{t_0}_\eps f\|_2
+\sqrt{\eps}|g|_{H_\eps^{t_0+1}})|(\zeta,b)|_{H^{s+1}_{\eps}}\rangle_{s>t_0+1}\Big).
\end{split}\eeno }
\end{prop}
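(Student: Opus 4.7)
My plan is to mirror the proof of Proposition \ref{prop:elliptic estimate}, working in the variational framework directly rather than reducing to that result by stuffing $f$ and $g$ into the divergence-form data. Existence of $u\in H^1(\mathcal{S})$ comes from Lax--Milgram applied to the bilinear form $a(u,v)=\int_{\mathcal{S}}(I+Q^\eps[\sigma])\na^\eps u\cdot\na^\eps v\,dV$ on $\{v\in H^1(\mathcal{S}):v|_{z=0}=0\}$; the pointwise positive definiteness of $I+Q^\eps[\sigma]$ (readable from \eqref{Qd} by completing the square, together with $1+\p_z\sigma\ge h_0$) combined with the Poincar\'e inequality $\|u\|_2\le C\|\p_z u\|_2$ yields coercivity of $a$.

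For the base case $s=0$, I would test the equation against $u$ and integrate by parts to get
\[
\int_{\mathcal{S}}(I+Q^\eps[\sigma])\na^\eps u\cdot\na^\eps u\,dV=-\int_{\mathcal{S}}fu\,dV+\int_{\R^2}g\,u|_{z=-1}\,dx_h,
\]
with left-hand side at least $M(\sigma)^{-1}\|\na^\eps u\|_2^2$. Using Plancherel and the identity $\|\Lambda_\eps u\|_2\le C\eps^{-1/2}\|\na^\eps u\|_2$ (a consequence of $\|u\|_2\le C\|\p_z u\|_2$ together with $\|\na^\eps_h u\|_2\le\eps^{-1/2}\|\na^\eps u\|_2$), I bound $|\int fu|\le\|\Lambda^{-1}_\eps f\|_2\|\Lambda_\eps u\|_2\le C\eps^{-1/2}\|\Lambda^{-1}_\eps f\|_2\|\na^\eps u\|_2$. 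The boundary integral is controlled either by the crude trace $|u|_{z=-1}|_2\le C\|\na^\eps u\|_2$ (yielding the first estimate with $|g|_2$), or by Lemma~\ref{lem:trace theorem} as $C\eps^{-1/4}|g|_{H^{-1/2}_\eps}\|\na^\eps u\|_2$ (yielding the second).

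General $s$ is reached by applying $\Lambda^s_\eps$ to the equation and pairing with $\Lambda^s_\eps u$. The essential new term is the commutator $[\Lambda^s_\eps,Q^\eps[\sigma]]\na^\eps u$, which by Lemma~\ref{lem2.1}(iv) together with the structure of $Q^\eps[\sigma]$ in \eqref{Qd} as a smooth function of $\na\sigma$ produces an $L^2$ bound of the shape $M(\sigma)(\|\na^\eps u\|_{H^{t_0}_\eps}+\langle|(\zeta,b)|_{H^{s+1}_\eps}\|\na^\eps u\|_{H^{t_0}_\eps}\rangle_{s>t_0+1})$. After pairing with $\Lambda^s_\eps\na^\eps u$, the lower-order $H^{t_0}_\eps$ norm of $\na^\eps u$ is absorbed by a preceding application of the inequality at the level $s=t_0$ (where no bracketed higher-order correction is present), closing the first two estimates.

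The third estimate is obtained algebraically from the equation itself. Extracting the coefficient of $\p_z^2 u$ from $\na^\eps\cdot(I+Q^\eps[\sigma])\na^\eps u=f$ gives $(1+Q^\eps_{33}[\sigma])\p_z^2 u=f-R$, where $R$ collects terms involving at most one $\p_z$ derivative of $u$ together with horizontal derivatives. Since \eqref{Qd} yields $1+Q^\eps_{33}[\sigma]=(1+\eps|\na^\eps_h\sigma|^2)/(1+\p_z\sigma)\ge M(\sigma)^{-1}$, I can apply $\Lambda^{s-1}_\eps$ and use Lemma~\ref{lem2.1} for the resulting products and commutators to obtain $\|\Lambda^{s-1}_\eps\p_z^2 u\|_2\le M(\sigma)(\|\Lambda^{s-1}_\eps f\|_2+\|\Lambda^s_\eps\na^\eps u\|_2+\text{higher-order})$; the remaining tangential piece satisfies $\sqrt{\eps}\|\Lambda^{s-1}_\eps\na^\eps_h\p_z u\|_2\le\|\Lambda^s_\eps\na^\eps u\|_2$ directly. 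A refined boundary analysis exploiting the bottom condition $-{\bf e}_3\cdot(I+Q^\eps[\sigma])\na^\eps u|_{z=-1}=g$ is needed to produce the specific prefactor $\sqrt{\eps}$ in front of $|g|_{H^s_\eps}$ rather than inheriting it from the first estimate. The principal technical hurdle I anticipate is this final bookkeeping of $\eps$-powers and the bracketed $\langle\cdot\rangle_{s>t_0+1}$ corrections through the interpolation steps, although the template of Lemma~\ref{lem:elliptic operator-commutator} should guide it closely.
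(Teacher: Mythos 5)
Your proposal follows essentially the same route as the paper: Lax--Milgram for existence, testing with $\Lambda^{2s}_\eps u$, the commutator estimate of Lemma~\ref{lem2.1}(iv), the $u|_{z=0}=0$ Poincar\'e-type bounds, Lemma~\ref{lem:trace theorem} for the sharper boundary term, an interpolation back to the $s=t_0$ level to absorb the lower-order $\|\na^\eps u\|_{H^{t_0}_\eps}$, and finally extracting $\p_z^2 u$ from the equation for the third inequality. One clarification on the hurdle you flag at the end: no separate refined boundary analysis is needed for the $\sqrt{\eps}|g|_{H^s_\eps}$ coefficient. Isolating $\p_z^2 u$ from the equation produces $\|\Lambda^{s-1}_\eps\p_z^2 u\|_2\le M(\sigma)\big(\|\Lambda^{s-1}_\eps f\|_2+\sqrt{\eps}\|\Lambda^s_\eps\na^\eps u\|_2+\langle\cdots\rangle\big)$, where the prefactor $\sqrt{\eps}$ on the middle term comes from the scaled structure $\na^\eps=(\sqrt{\eps}\na^\eps_h,\p_z)$; substituting the already-proved first estimate for $\|\Lambda^s_\eps\na^\eps u\|_2$ then yields $\sqrt{\eps}\cdot\eps^{-1/2}\|\Lambda^{s-1}_\eps f\|_2+\sqrt{\eps}|g|_{H^s_\eps}+\cdots$, i.e.\ the desired coefficients drop out automatically (the corresponding $\sqrt{\eps}\|\Lambda^{s-1}_\eps\na^\eps_h\p_z u\|_2$ piece of $\|\Lambda^{s-1}_\eps\na^\eps\p_z u\|_2$ is likewise dominated by $\sqrt{\eps}\|\Lambda^s_\eps\na^\eps u\|_2$).
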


\begin{proof}\,\,Since the existence part can be obtained by a  standard argument,
here we just present the detailed proof of the estimates. Indeed
recall that what we mean by $u$ is a variational solution of
(\ref{eq:elliptice equation-homo-general}):  for any $\phi\in
C^\infty_c([-1,0)\times\R^2)$, there holds \beq \label{prop3.2}
\begin{split}
\int _{\mathcal
S}(1+Q^\eps[\sigma])\na^{\eps}u\cdot\na^{\eps}\phi\,dx_h\,dz
&=-\int_{\mathcal S}f\,\phi\,dx_h\,dz+\int_{\R^2}g\,\
\phi|_{z=-1}\,dx_h.\end{split}  \eeq
 Taking $\phi=\Lambda^{2s}_\eps u$ in \eqref{prop3.2} results in
\begin{eqnarray}\label{eq:ellip-est1}
\begin{split}
&\int_{\mathcal S}(1+Q^\eps[\sigma])\na^{\eps}\Lambda^s_{\eps}
u\cdot\na^{\eps}\Lambda^s_{\eps} u\,dx_h\,dz\\
&=-\int_{\mathcal S}\bigl([\Lambda^s_{\eps},
Q^\eps[\sigma]]\na^{\eps}u\cdot\na^{\eps}\Lambda^s_{\eps}
u+\Lambda^{s-1}_\eps f\,\ \Lambda^{s+1}_{\eps} u\bigr)\,dx_h\,dz+
\int_{\R^2}\Lambda^s_{\eps} g\,\ \Lambda^s_{\eps} u|_{z=-1}\,dx_h\\
&\leq\|[\Lambda^s_{\eps}, Q^\eps[\sigma]]\na^{\eps}u\|_2
\|\na^{\eps}\Lambda^s_{\eps} u\|_2+\|\Lambda^{s-1}_\eps
f\|_2\|\Lambda^{s+1}_{\eps} u\|_2+|\Lambda^s_{\eps}
g|_2|\Lambda^s_{\eps} u|_{z=-1}|_2.
\end{split}
\end{eqnarray}
Thanks to Proposition 2.3 in \cite{Lannes-Inven}, we have
\begin{eqnarray*}
\int_{\mathcal S}(1+Q^\eps[\sigma])\na^{\eps}\Lambda^s_{\eps}
u\cdot\na^{\eps}\Lambda^s_{\eps} u\,dx_h\,dz \geq
M(\sigma)^{-1}\|\Lambda^s_{\eps} \na^{\eps} u\|^2_2,
\end{eqnarray*}
While it follows from (\ref{lem:commutator estimate}) that
\begin{eqnarray*}
\begin{split}
\|[\Lambda^s_{\eps}, Q^\eps[\sigma]]\na^{\eps}u\|_2 \leq
M(\sigma)\Big(\|\Lambda^{s-1}_\eps\na^{\eps}
u\|_2+\langle\|\Lambda^{t_0}_\eps\na^{\eps}u\|_2|(\zeta,
b)|_{H^{s+1}_{\eps}}\rangle_{s>t_0+1}\Big),
\end{split}
\end{eqnarray*}
and thanks to $u(x_h,0)=0$, we have \beno
\sqrt{\eps}\|\Lambda^{s+1}_{\eps} u\|_2\le
C\|\Lam^s_{\eps}\na^{\eps} u\|_2, \quad |\Lambda^s_{\eps}
u|_{z=-1}|_2\leq \|\Lambda^s_{\eps} \p_z u\|_2. \eeno Plugging all
the above estimates into (\ref{eq:ellip-est1}) and using Young's
inequality yield that
\begin{align*}
\begin{split}
\|\Lambda^s_{\eps} \na^{\eps} u\|_2 \leq&
M(\sigma)\Big(\|\Lambda^{s-1}_\eps\na^{\eps}
u\|_2+\langle\|\Lambda^{t_0}_\eps\na^{\eps}u\|_2|(\zeta,
b)|_{H^{s+1}_{\eps}}\rangle_{s>t_0+1}\\
&+\eps^{-\f12}\|\Lambda^{s-1}_\eps f\|_2+|g|_{H^s_\eps}\Big),
\end{split}
\end{align*}
from which and an interpolation argument, we deduce that
\begin{align*}
\begin{split}
\|\Lambda^s_{\eps} \na^{\eps} u\|_2 \leq&
M(\sigma)\Big(\|\na^{\eps}u\|_2
+\langle\|\Lambda^{t_0}_\eps\na^{\eps}u\|_2|(\zeta,
b)|_{H^{s+1}_{\eps}}\rangle_{s>t_0+1}\\
&+\eps^{-\f12}\|\Lambda^{s-1}_\eps f\|_2+|g|_{H^s_\eps}\Big).
\end{split}
\end{align*}
Whereas taking $\phi=u$ in \eqref{prop3.2}, we get \beno
\|\na^{\eps} u\|_2 \leq M(\sigma)(\|f\|_2+|g|_2). \eeno
Consequently, we arrive at \beq\label{prop3.2a}
\begin{split}\|\Lambda^s_{\eps} \na^{\eps} u\|_2 \leq
M(\sigma)\Big(&\langle\|\Lambda^{t_0}_\eps\na^{\eps}u\|_2|(\zeta,
b)|_{H^{s+1}_{\eps}}\rangle_{s>t_0+1}+\eps^{-\f12}\|\Lambda^{s-1}_\eps
f\|_2+|g|_{H^s_\eps}\Big),
\end{split}\eeq and taking $s=t_0$ in \eqref{prop3.2a} gives \beno
\|\Lambda^{t_0}_\eps \na^{\eps} u\|_2\le M(\sigma)
\Big(\eps^{-\f12}\|\Lambda^{t_0-1}_\eps
f\|_2+|g|_{H^{t_0}_\eps}\Big), \eeno which implies the first
inequality of Proposition \ref{prop:elliptic estimate-general}.

To prove the second inequality, we only need to replace  the
estimate for the boundary term. Indeed thanks to Lemma
\ref{lem:trace theorem}, one has
\[
\bigl|\int_{\R^2} \Lambda^s_{\eps} g\Lambda^s_{\eps}
u|_{z=-1}\,dx_h\bigr| \leq
|g|_{H^{s-\f12}_\eps}|u|_{z=-1}|_{H^{s+\f12}_\eps}\leq
C\eps^{-\f14}|g|_{H^{s-\f12}_\eps}\|\Lambda^s_{\eps}\na^{\eps}u\|_2,
\]
which along with the proof of \eqref{prop3.2a} gives the second
inequality of Proposition \ref{prop:elliptic estimate-general}.

Finally, we get by using the elliptic equation and
\eqref{lem:product estimate} to obtain
\begin{align*}
\begin{split}
\|\Lambda^{s-1}_\eps\p_z^2 u\|_2\le&
M(\sigma)\Big(\|\Lambda^{s-1}_\eps f\|_2+
\sqrt{\eps}\|\Lambda^s_{\eps}\na^{\eps} u\|_2\\
&\quad +\langle(\|\Lambda^{t_0}_\eps
f\|_2+\sqrt{\eps}\|\Lambda^{t_0+1}_\eps \na^{\eps}
u\|_2)|(\zeta,b)|_{H^{s+1}_{\eps}}\rangle_{s>t_0+1}\Big).
\end{split}
\end{align*}
This together with the first inequality implies the third inequality
of the proposition. This finishes the proof of Proposition
\ref{prop:elliptic estimate-general}. \end{proof} \vspace{0.1cm}

\renewcommand{\theequation}{\thesection.\arabic{equation}}
\setcounter{equation}{0}
\section{Calculus of pseudo-differential operators with symbols of limited smoothness}

In this section, we shall adapt some results from \cite{Lannes-JFA}
on the calculus of pseudo-differential operators with symbols of
limited smoothness to our setting here. More precisely,  we shall
consider symbols of the form \beno
\sigma(x_h,\xi)=\Sigma(v(x_h),\xi) \eeno for $v\in C^0(\R^2)^p$ with
$p\in \N$, and $\Sigma$ is a function defined as follows (see
\cite{Lannes-JFA}):

\begin{defi}\label{def4.1} Let $m\in \N_0, p\in \N$, and $\Si$ be a function defined
on $\R^p_v\times \R^2_\xi$. We say $\Si(v,\xi)\in
C^\infty(\R^p,\dot\cM^m)$ if
\begin{itemize}
 \item[1)]
$\Si\in C^\infty(\R^p;C^{m})$ and $|\p_{\xi}^\be\Si(v,{\xi})|\le
C_\be(|v|)|{\xi}|^{m-\be}$ for any $\xi\in\R^2$, $|\be|\le m$;

\item[2)] for any $\alpha\in\N^p,\beta\in\N^2$, there exists a non-decreasing
function $C_{\alpha,\beta}(\cdot)$ such that
\[\sup_{|{\xi}|\ge \f14}|{\xi}|^{|\beta|-m}|(\p^\alpha_v\p^\beta_{\xi}\Si) (v,{\xi})|\le
C_{\alpha,\beta}(|v|).\] \end{itemize}
\end{defi}

For given $\Sigma,$ $v$ and $\eps\in [0,1],$ we consider
pseudo-differential operators, $\textrm{Op}_{\eps}(\sigma),$ defined
by \beno \textrm{Op}_{\eps}(\sigma)u(x_h){\eqdefa}
(2\pi)^{-2}\int_{\R^2}e^{ix_h\cdot{\xi}}\sigma(x_h,{\xi}^{\eps})\hat
u({\xi})d{\xi},\quad {\xi}^{\eps}=(\xi_1,\sqrt{\eps}\xi_2). \eeno

\begin{prop}\label{prop:psdo-estimate} {\sl Let $m\in \N_0$,
$p\in \N$. Then for given $\Sigma\in C^\infty(\R^p,$ $\dot\cM^m),$
$v\in H^{t_0}(\R^2)$ and $\sigma(x_h,{\xi})=\Sigma(v(x_h),{\xi}),$
one has \beno
\begin{split}
\bigl|\textrm{Op}_{\eps}(\sigma)u\bigr|_{H^s_{\eps}}\le&
C(|v|_{\infty})\Big(||D^{\eps}_{h}|^m
u|_{H^{s}_\eps}+\eps^{-\f14}|v|_{H^{t_0}_\eps}||D^{\eps}_{h}|^m
u|_{H^{s}_\eps}+\langle\eps^{-\f14}|v|_{H^{s}_\eps}||D^{\eps}_{h}|^m
u|_{H^{t_0}_\eps}\rangle_{s>t_0}\Big),\end{split} \eeno for all
$0\le s\le t_0$.}
\end{prop}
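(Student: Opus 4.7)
The plan is to reduce everything to the standard (unscaled) pseudo-differential calculus from \cite{Lannes-JFA} by a change of variables that absorbs $\eps$. Set $\ga\eqdefa \sqrt{\eps}$ and for any function $f(x,y)$ denote $f_\ga(x,y)\eqdefa \ga f(x,\ga y)$. As recorded in \eqref{lem2.1a}, this scaling intertwines the scaled and unscaled Fourier multipliers: $\widehat{f_\ga}(\eta_1,\eta_2)=\hat f(\eta_1,\eta_2/\ga)$ and, more usefully, $(|D^{\eps}_h|^m u)_\ga=|D|^m u_\ga$, so $\Lambda^s_\eps$ conjugates to $\Lambda^s$. A direct computation gives the dictionary $|f|_{H^s_\eps}=\ga^{-1/2}|f_\ga|_{H^s}$. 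Note that this does \emph{not} preserve the $L^\infty$ norm in the non-trivial way, but $|v(\cdot,\ga\,\cdot)|_\infty=|v|_\infty$, which is exactly what we need for the $C(|v|_\infty)$ prefactor.

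Next I would compute how $\textrm{Op}_\eps(\sigma)$ transforms. Performing the substitution $\eta=\xi^\eps$ in the defining integral, one obtains the identity
\beq\label{scalingop}
\big(\textrm{Op}_\eps(\sigma)u\big)_\ga(x,y')=\textrm{Op}(\tilde\sigma)u_\ga(x,y'),\qquad \tilde\sigma(x,y',\eta)\eqdefa \Sigma(v(x,\ga y'),\eta).
\eeq
The point is that $\tilde\sigma$ belongs to the \emph{same} symbol class $C^\infty(\R^p,\dot\cM^m)$ with the same constants $C_{\al,\be}(\cdot)$, since the construction of $\tilde\sigma$ is just pre-composition with a diffeomorphism of the base. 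Hence we may apply the unscaled version of the estimate (i.e.\ Proposition 2.12 of \cite{Lannes-JFA}):
\beq\label{unscaled}
|\textrm{Op}(\tilde\sigma)u_\ga|_{H^s}\le C(|v|_\infty)\Big(||D|^m u_\ga|_{H^s}+|\tilde v|_{H^{t_0}}||D|^m u_\ga|_{H^s}+\langle|\tilde v|_{H^s}||D|^m u_\ga|_{H^{t_0}}\rangle_{s>t_0}\Big),
\eeq
where $\tilde v(x,y')\eqdefa v(x,\ga y')=v_\ga(x,y')/\ga$.

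The final step is to translate each norm in \eqref{unscaled} back through the dictionary. Using $|f|_{H^s_\eps}=\ga^{-1/2}|f_\ga|_{H^s}$ for $f=\textrm{Op}_\eps(\sigma)u$ on the left side of \eqref{scalingop}, together with the identities $||D|^m u_\ga|_{H^s}=\ga^{1/2}||D^{\eps}_h|^m u|_{H^s_\eps}$ and $|\tilde v|_{H^s}=\ga^{-1/2}|v|_{H^s_\eps}$ (both of which follow directly by unwinding the Fourier definitions), one sees that every factor of $\ga^{1/2}$ cancels except an overall $\ga^{-1/2}=\eps^{-1/4}$ in front of each $|v|_{H^\sigma_\eps}$ term, which is exactly the loss appearing in the statement. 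The $L^\infty$ prefactor survives untouched because $|\tilde v|_\infty=|v|_\infty$.

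The only non-routine step is verifying that the symbol $\tilde\sigma$ genuinely lies in $C^\infty(\R^p,\dot\cM^m)$ with constants independent of $\ga$, but this is immediate from Definition \ref{def4.1}: the conditions are pointwise in the base variable and are therefore unaffected by composition with $(x,y')\mapsto(x,\ga y')$. The rest of the argument is bookkeeping of scaling factors. I expect no other obstacle, since Lemma \ref{lem2.1} already encodes precisely the Sobolev product/commutator inequalities that underlie the unscaled Lannes--JFA estimate \eqref{unscaled}; invoking that estimate as a black box and simply transporting the resulting inequality is cleaner than attempting a direct paraproduct proof in the $\eps$-scaled setting.
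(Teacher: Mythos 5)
Your scaling reduction to $\eps=1$ is correct and is exactly the first step of the paper's proof: the identity \eqref{lem2.1a}, the dictionary $|f|_{H^s_\eps}=\eps^{-1/4}|f_\gamma|_{H^s}$, the invariance of $|\cdot|_\infty$, and the fact that $\sigma\mapsto\tilde\sigma$ preserves the symbol class $C^\infty(\R^p,\dot\cM^m)$ with the same constants are all as you say. The bookkeeping producing the $\eps^{-1/4}$ loss in front of each $|v|$-norm is also right.

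The gap is in treating the unscaled inequality as a black box from \cite{Lannes-JFA}. The estimate in the statement is in terms of $||D|^m u|_{H^s}$ rather than $|u|_{H^{s+m}}$, and the difference between the two is precisely the behaviour at low frequencies. But the class $\dot\cM^m$ only controls $v$-derivatives of the symbol in the region $|\xi|\ge\frac14$; at low frequencies one only has the homogeneous bound on $\p_\xi^\beta\Sigma$ with no information on $\p_v$. Because of this, there is no single clean citation to invoke: the paper's own proof, after the scaling reduction, (i) subtracts the $v$-independent multiplier $\sigma_0(\xi)=\Sigma(0,\xi)$, (ii) splits $u=u_{lf}+u_{hf}$ with a low-frequency cutoff, (iii) applies Corollary 30 of \cite{Lannes-JFA} only to $u_{hf}$ (where $|u_{hf}|_{H^{s+m}}\sim||D|^m u|_{H^s}$), and (iv) handles $u_{lf}$ by a direct argument using $|\sigma(\cdot,\xi)-\sigma_0(\xi)|_{H^s}\le C(|v|_\infty)|v|_{H^s}$ together with the homogeneous pointwise bound to absorb the factor $|\xi|^m$. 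That low-frequency argument is the genuine content here, and it is not supplied by the result you call ``Proposition 2.12''. Your proof would be complete if, after the (correct) scaling reduction, you supplied the $\sigma_0$-subtraction and the low/high frequency split as above.
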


\begin{proof}\,Using the scaling argument, one can reduce the proof of
Proposition \ref{prop:psdo-estimate} to the case when $\eps=1$. We
first split $u$ as the low and high frequency part so that
\beq\label{prop4.1} u=u_{lf}+u_{hf}, \quad u_{lf}=\psi(D)u, \eeq
where $\psi\in C^\infty_0(\R^d)$ and $\psi\equiv1$ near the origin.
Let $\sigma_0({\xi}){\eqdefa}\Sigma(0,{\xi})$, it is easy to observe
that
$$
\bigl|\textrm{Op}(\sigma_0)u\bigr|_{H^s}\le C||D|^mu|_{H^s}.
$$  Whence without loss of generality, we may assume
that $\sigma_0({\xi})=0.$ While thanks to Corollary 30 of
\cite{Lannes-JFA}, we have
\begin{align*}
\begin{split}
\bigl|\textrm{Op}(\sigma)u_{hf}\bigr|_{H^s} &\ \le
C(|v|_{\infty})\big(|u_{hf}|_{H^{s+m}}+|v|_{H^{t_0}}|u_{hf}|_{H^{s+m}}+\langle|v|_{H^{s}}|u_{hf}|_{H^{t_0+m}}\rangle
_{s>t_0}\big)\\
&\ \le
C(|v|_{\infty})\big(||D|^mu|_{H^{s}}+|v|_{H^{t_0}}||D|^mu|_{H^{s}}+\langle|v|_{H^{s}}||D|^mu|_{H^{t_0}}\rangle
_{s>t_0}\big).
\end{split}
\end{align*}
On other hand, notice that \beno
\textrm{Op}(\sigma)u_{lf}(x_h)=(2\pi)^{-2}\int_{\R^2}e^{ix_h\cdot{\xi}}\sigma(x_h,{\xi})\psi({\xi})\hat
u({\xi})d{\xi}, \eeno and
$|e^{ix_h\cdot{\xi}}\sigma(x_h,{\xi})|_{H^s}\le
C\langle{\xi}\rangle^s|\sigma(\cdot,{\xi})|_{H^s}$, which along with
(\ref{lem:composition}) ensures that
\begin{align*}
\bigl|\textrm{Op}(\sigma)u_{lf}\bigr|_{H^s} \le& C\sup_{|{\xi}|\le
1}\bigl(|{\xi}|^{-m}|\sigma(\cdot,{\xi})|_{H^s}\bigr)
\int_{|{\xi}|\le 1}|{\xi}|^m|\hat u({\xi})|d{\xi}\\
\le& C(|v|_{\infty})|v|_{H^s}||D|^mu|_{2}.
\end{align*}
This finishes the proof of Proposition
\ref{prop:psdo-estimate}.\end{proof}

To handle the composition and commutator between two
pseudo-differential operators of limited-smooth symbols,  we recall
the following symbols for $n\in \N_0$:
 \beq\label{poisson}
 \begin{split}
 &\sigma_1\sharp_n
\sigma_2(x_h,\xi){\eqdefa} \sum_{|\alpha|\le
n}\f{(-i)^{|\alpha|}}{\alpha!}\partial_{\xi}^\alpha\sigma_1(x_h,\xi)\partial_{x_h}^\alpha\sigma_2(x_h,\xi)\quad\mbox{and}
\\
& \{\sigma_1,\sigma_2\}_n(x_h,\xi){\eqdefa} \sigma_1\sharp_n
\sigma_2(x_h,\xi)-\sigma_2\sharp_n \sigma_1(x_h,\xi).
\end{split} \eeq

\begin{prop}\label{prop:psdo-commuatator} {\sl Let $m_1, m_2, n\in\N_0$
with $m^+{\eqdefa}\max(m_1,m_2)$, $m^-{\eqdefa}$ $\min(m_1,m_2)\ge
n$. Let $\sigma^j(x_h,\xi)=\Si^j(v^j(x_h),\xi)$ $(j=1,2)$ with
$p_j\in\N$, $\Si^j\in C^\infty(\R^{p_j},\dot\cM^{m_j})$ and $v_j\in
H^{t_0+m^++1}(\R^2)$. Then for any $0\le s\le t_0+1$, there holds
\beno
\begin{split}
\big|\textrm{Op}_{\eps}&(\sigma^1)\circ\textrm{Op}_{\eps}(\sigma^2)u-
\textrm{Op}_{\eps}(\sigma^1\sharp_n\sigma^2)u\big|_{H^s_\eps}\\
&\le
C(|v|_{W^{n+1,\infty}})\Bigl\{||D^{\eps}_{h}|^{m^-}u|_{H^{s+m^+-n-1}_\eps}|v|_{W^{n+1,\infty}}
+\big(\eps^{-\f14}|v|_{H^{t_0+m^++1}_\eps}
\\
&\quad+\eps^{-\f12}|v|_{H^{t_0+m^++1}_\eps}^2\big)||D^{\eps}_{h}|^{m^-}u|_{2}
\\
&\quad+\langle\big(\eps^{-\f14}|v|_{H^{s+m^+}_\eps}
+\eps^{-\f12}|v|_{H^{s+m^+}_\eps}|v|_{H^{t_0+m^++1}_\eps}\big)||D^{\eps}_{h}|^{m^+}u|_{H^{t_0}_\eps}\rangle_{s>t_0+1}
\Bigr\}.
\end{split}
\eeno
In particular, we have
\beno
\begin{split}
\big|\big[\textrm{Op}_{\eps}&(\sigma^1),\textrm{Op}_{\eps}(\sigma^2)\big]u-
\textrm{Op}_{\eps}(\{\sigma^1,\sigma^2\}_n)u\big|_{H^s_\eps}\\
&\le
C(|v|_{W^{n+1,\infty}})\Bigl\{||D^{\eps}_{h}|^{m^-}u|_{H^{s+m^+-n-1}_\eps}|v|_{W^{n+1,\infty}}
\\
&\quad+\big(\eps^{-\f14}|v|_{H^{t_0+m^++1}_\eps}+\eps^{-\f12}|v|_{H^{t_0+m^++1}_\eps}^2\big)||D^{\eps}_{h}|^{m^-}u|_{2}\\
&\quad +\langle\big(\eps^{-\f14}|v|_{H^{s+m^+}_\eps}
+\eps^{-\f12}|v|_{H^{s+m^+}_\eps}|v|_{H^{t_0+m^++1}_\eps}\big)||D^{\eps}_{h}|^{m^+}u|_{H^{t_0}_\eps}\rangle_{s>t_0+1}
\Bigr\}. \end{split}\eeno Here $v=(v^1,v^2).$} \end{prop}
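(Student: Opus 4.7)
The plan is to reduce the statement to the unscaled case $\eps=1$ by the anisotropic rescaling of Lemma~\ref{lem:trace theorem}, apply the corresponding composition/commutator estimates for limited-smoothness symbols from \cite{Lannes-JFA}, and then translate the result back into the scaled spaces $H^s_\eps$, tracking the powers of $\eps$ carefully. The commutator inequality (the ``In particular'' clause) follows immediately from the composition bound by antisymmetrization
\[
[\textrm{Op}_{\eps}(\sigma^1),\textrm{Op}_{\eps}(\sigma^2)]-\textrm{Op}_{\eps}(\{\sigma^1,\sigma^2\}_n)
=\bigl(\textrm{Op}_{\eps}(\sigma^1)\textrm{Op}_{\eps}(\sigma^2)-\textrm{Op}_{\eps}(\sigma^1\sharp_n\sigma^2)\bigr)-\bigl(\textrm{Op}_{\eps}(\sigma^2)\textrm{Op}_{\eps}(\sigma^1)-\textrm{Op}_{\eps}(\sigma^2\sharp_n\sigma^1)\bigr),
\]
so I would concentrate on the composition estimate.

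Setting $\ga=\sqrt{\eps}$ and using the isometry $f\mapsto f_\ga$, $f_\ga(x,y)=\ga f(x,\ga y)$, we have $|f|_{H^s_\eps}=\ga^{-1/2}|f_\ga|_{H^s}$, and in Fourier variables the scaled covariable $\xi^\eps$ becomes the unscaled $\xi$. Consequently $\textrm{Op}_\eps(\sigma^j)$ is conjugated to a standard pseudo-differential operator $\textrm{Op}(\widetilde\sigma^j)$ with symbol $\widetilde\sigma^j(x,y,\xi)=\Si^j(v^j(x,\ga y),\xi)$, which still belongs to $C^\infty(\R^{p_j},\dot\cM^{m_j})$ with the same constants $C_{\alpha,\beta}$. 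Moreover, the composition symbol \eqref{poisson} is compatible with this change of variables: the $\p_\xi$-derivatives commute with the base rescaling, while the $\p_{x_h}$-derivatives on $\widetilde v^j$ produce exactly the $\sqrt{\eps}$ weights hidden in the $\xi^\eps$ structure of $\sigma^j$, so the unscaled $\widetilde\sigma^1\sharp_n\widetilde\sigma^2$ corresponds to $\sigma^1\sharp_n\sigma^2$ after rescaling.

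Next, as in the proof of Proposition~\ref{prop:psdo-estimate}, I would split $u=u_{lf}+u_{hf}$ with $u_{lf}=\psi(D)u$. On the high-frequency part, I invoke the composition theorem of \cite{Lannes-JFA} for symbols of the form $\Si(v,\xi)\in C^\infty(\R^p,\dot\cM^m)$, which controls the remainder after $n$ terms of symbolic expansion by a polynomial expression of $|\widetilde v|_{W^{n+1,\infty}}$ and Sobolev norms of $\widetilde v$. The low-frequency part is handled directly: since $\widehat{u_{lf}}\in L^1_\xi$ is compactly supported, one can move $|D|^{m^-}$ and $|D|^{m^+}$ freely onto $u$ and reduce matters to Lemma~\ref{lem2.1}(iii) applied to $\Si^j(\widetilde v^j(\cdot),\xi)$ for $|\xi|\le 1$. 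Undoing the rescaling then converts every $|\widetilde v|_{H^\bullet}$ into $\ga^{1/2}|v|_{H^\bullet_\eps}=\eps^{1/4}|v|_{H^\bullet_\eps}$, while $|u|_{H^\bullet}$ becomes $\ga^{1/2}|u|_{H^\bullet_\eps}$. Since only the $u$-factor of the bound is matched by the $\ga^{-1/2}$ coming from $|\cdot|_{H^s_\eps}$ on the left-hand side, each residual Sobolev norm on $v$ contributes $\ga^{-1/2}=\eps^{-1/4}$, which produces the $\eps^{-1/4}$ prefactor on the linear-in-$|v|_{H^\cdot_\eps}$ terms and the $\eps^{-1/2}$ prefactor on the quadratic term $|v|_{H^{t_0+m^++1}_\eps}^2$ in the statement.

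The main obstacle is the careful bookkeeping of $\eps$-powers: one must track how many factors of $v$-Sobolev norms (as opposed to $v$-$L^\infty$ norms, which are scale-invariant) appear in each term of the Lannes--JFA remainder, since only the former carry $\eps^{-1/4}$. The quadratic contribution $|v|_{H^{t_0+m^++1}_\eps}^2$ is the one that must be traced back to the Moser-type estimate on the Taylor remainder of $\Si^j$ in its $v$-variable after $n+1$ $x_h$-derivatives, where composition with the smooth function $\Si^j$ creates an additional factor of a Sobolev norm of $v$ beyond the one produced by the $\p_{x_h}^{n+1}$-derivatives themselves. A secondary check is that the low-frequency piece is uniformly bounded in $\eps$ and does not worsen the $\eps$-dependence, which follows from a Bernstein-type bound for $|D^\eps_h|^{m^-}$ applied to $u_{lf}$ combined with Lemma~\ref{lem2.1}(iii) for the composition of $\Si^j$ with $\widetilde v^j$.
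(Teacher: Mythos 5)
Your overall strategy matches the paper's: rescale to $\eps=1$, split $u=u_{lf}+u_{hf}$ with $u_{lf}=\psi(D)u$, handle $u_{hf}$ by the composition theorems (Theorems 7 and 8) of \cite{Lannes-JFA}, and obtain the commutator version from the composition version by antisymmetrization. However, your low-frequency treatment has a genuine gap. You propose to ``reduce matters to Lemma~\ref{lem2.1}(iii) applied to $\Si^j(\widetilde v^j(\cdot),\xi)$,'' but that composition estimate requires the composed function to vanish at the origin, and $\Si^j(0,\xi)$ need not vanish. Without a vanishing condition, the low-frequency contribution comes with only a prefactor $C(|v|_\infty)$ rather than a Sobolev norm of $v$, and such a term does not fit into any of the three terms on the right side of the claimed inequality: each of those vanishes (at least linearly) as the norms of $v$ tend to zero, as it must, since the symbolic calculus is exact when $v\equiv 0$.

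The paper closes this by subtracting the constant symbols $\sigma_0^j(\xi)\eqdefa\Si^j(0,\xi)$ and writing the low-frequency remainder as
\[
\big(\textrm{Op}(\sigma^1)\circ\textrm{Op}(\sigma^2-\sigma_0^2)-\textrm{Op}(\sigma^1\sharp_n(\sigma^2-\sigma_0^2))\big)u_{lf}
+\big(\textrm{Op}(\sigma^1-\sigma_0^1)\circ\textrm{Op}(\sigma_0^2)-\textrm{Op}((\sigma^1-\sigma_0^1)\sharp_n\sigma_0^2)\big)u_{lf},
\]
where the second bracket vanishes identically because $\sigma_0^2$ is $x_h$-independent, so that $\textrm{Op}(\tau)\circ\textrm{Op}(\sigma_0^2)=\textrm{Op}(\tau\sigma_0^2)=\textrm{Op}(\tau\sharp_n\sigma_0^2)$. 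One is then left working with $\sigma^2-\sigma_0^2$, which does vanish at $v^2=0$, so Lemma~\ref{lem2.1}(iii) applies to it while $\textrm{Op}(\sigma^1)$ is controlled by Proposition~\ref{prop:psdo-estimate}; this is what produces the linear and quadratic $v$-Sobolev factors $|v|_{H^{s+m^+}}$ and $|v|_{H^{s+m^+}}|v|_{H^{t_0+m^+}}$ in the low-frequency bound. Your argument will not close without this subtraction. Separately, a small slip in your bookkeeping: since $\widetilde v^j(x,y')=v^j(x,\gamma y')$ carries no $\gamma$ prefactor (unlike $u_\gamma$), one has $|\widetilde v^j|_{H^\bullet}=\gamma^{-1/2}|v^j|_{H^\bullet_\eps}$ rather than $\gamma^{1/2}|v^j|_{H^\bullet_\eps}$; since you use the correct exponent $\eps^{-1/4}$ in the next sentence, this is cosmetic.
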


\begin{proof}\, Again using a scaling argument, one can reduce the proof of Proposition
\ref{prop:psdo-commuatator} to the case when $\eps=1$. As in the
proof of Proposition \ref{prop:psdo-estimate}, we split $u$ into
$u=u_{lf}+u_{hf}$ given by \eqref{prop4.1}. Then we get by applying
 Theorem 7 and Theorem 8 in \cite{Lannes-JFA} that
\beq\label{prop4.2a}
\begin{split}
\big|\textrm{Op}&(\sigma^1)\circ\textrm{Op}(\sigma^2)u_{hf}-\textrm{Op}(\sigma^1\sharp_n\sigma^2)u_{hf}\big|_{H^s}\\
&\le
C(|v|_{W^{n+1,\infty}})\Bigl(|u_{hf}|_{H^{s+m_1+m_2-n-1}}|v|_{W^{n+1,\infty}}
+\langle\big(|v|_{H^{s+m^+}}\\
&\qquad+|v|_{H^{s+m^+}}|v|_{H^{t_0+m^++1}}\big)|u_{hf}|_{H^{m^++t_0}}\rangle_{s>t_0+1} \Bigr)\\
&\le
C(|v|_{W^{n+1,\infty}})\Bigl(||D|^{m^-}u|_{H^{s+m^+-n-1}}|v|_{W^{n+1,\infty}}\\
&\qquad+
\langle\big(|v|_{H^{s+m^+}}+|v|_{H^{s+m^+}}|v|_{H^{t_0+m^++1}}\big)||D|^{m^+}u|_{H^{t_0}}\rangle_{s>t_0+1}\Bigr).
\end{split}
\eeq Setting $\sigma_0^1(\xi){\eqdefa}\Sigma^1(0,\xi)$ and
$\sigma_0^2(\xi){\eqdefa}\Sigma^2(0,\xi)$, we write
\begin{align*}
\textrm{Op}(\sigma^1)\circ\textrm{Op}(\sigma^2)u-\textrm{Op}(\sigma^1\sharp_n\sigma^2)
&=\textrm{Op}(\sigma^1)\circ\textrm{Op}(\sigma^2-\sigma_0^2)-\textrm{Op}(\sigma^1\sharp_n(\sigma^2-\sigma_0^2))
\\&\quad+\textrm{Op}(\sigma^1-\sigma_0^1)\circ\textrm{Op}(\sigma_0^2)-\textrm{Op}((\sigma^1-\sigma_0^1)\sharp_n \sigma_0^2).
\end{align*}
It follows from the proof of Proposition \ref{prop:psdo-estimate}
that \beno
\big|\textrm{Op}(\sigma^1\sharp_n(\sigma^2-\sigma_0^2))u_{lf}\big|_{H^s}\le
C(|v|_{W^{n,\infty}})|v|_{H^{s+n}} ||D|^{m_1+m_2-n}u_{lf}|_{2},
\eeno and it is easy to observe that \beno\begin{split}
\big|\textrm{Op}&(\sigma^1)\circ\textrm{Op}(\sigma^2-\sigma^2_0)u_{lf}\big|_{H^s}\\&\le
C(|v_1|_\infty)\Big(|\textrm{Op}(\sigma^2-\sigma^2_0)u_{lf}|_{H^{s+m_1}}+
|v_1|_{H^s}|\textrm{Op}(\sigma^2-\sigma^2_0)u_{lf}|_{H^{t_0+m_1}}\Big)\\
&\le C(|v|_\infty)\Big(|v_2|_{H^{s+m_1}}||D|^{m_2}u_{lf}|_{2}+
|v_1|_{H^s}|v_2|_{H^{t_0+m_1}}||D|^{m_2}u_{lf}|_{2}\Big),\end{split}
\eeno which implies that
\begin{align*}
\begin{split}
&\big|\textrm{Op}(\sigma^1)\circ\textrm{Op}(\sigma^2)u_{lf}-\textrm{Op}(\sigma^1\sharp_n\sigma^2)u_{lf}\big|_{H^s}\le
C(|v|_{W^{n,\infty}})\big(1+|v|_{H^{t_0+m^+}}\big)|v|_{H^{s+m^+}}|||D|^{m^-}u|_{2}.
\end{split}
\end{align*}
This along with \eqref{prop4.2a} concludes the proof of Proposition
\ref{prop:psdo-commuatator}.\end{proof}

\begin{prop}\label{prop:psdo-commuatator-special} {\sl Let $m_1, m_2, n\in\N_0$
with $m_1, m_2\ge n$ and $m^-{\eqdefa}$ $\min(m_1,m_2)$. Let
$\sigma^1(\xi)\in \dot\cM^{m_1}$, and
$\sigma^2(x_h,\xi)=\Si(v(x_h),\xi)$  with $p\in\N$, $\Si\in
C^\infty(\R^{p},\dot\cM^{m_2})$ and $v\in H^{t_0+m_1+1}(\R^2)$. Then
for any $0\le s\le t_0+1$, there holds \beno
\begin{split}
\big|&\big[\sigma^1(D^{\eps}_{h}),\textrm{Op}_{\eps}(\sigma^2)\big]u
-\textrm{Op}_{\eps}(\{\sigma^1,\sigma^2\}_n)u\big|_{H^s_\eps}\\
 &
\le
C(|v|_{W^{n+1,\infty}})\Bigl(||D^{\eps}_{h}|^{m_1+m_2}u|_{H^{s-n-1}_\eps}|v|_{W^{n+1,\infty}}
+\eps^{-\f14}|v|_{H^{s+m_1}_\eps}\big(||D^{\eps}_{h}|^{m_2}u|_{H^{t_0}_\eps}+||D^{\eps}_{h}|^{m^-
}u|_{2}\big)\Bigr). \end{split}\eeno} \end{prop}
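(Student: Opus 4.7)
The plan is to follow the structure of the proof of Proposition \ref{prop:psdo-commuatator}, exploiting two simplifications that arise because $\sigma^1$ depends only on $\xi$. First, since $\p_{x_h}^\alpha \sigma^1\equiv 0$ for $|\alpha|\ge 1$, the Poisson bracket collapses to
\[
\{\sigma^1,\sigma^2\}_n(x_h,\xi)=\sum_{1\le|\alpha|\le n}\f{(-i)^{|\alpha|}}{\alpha!}\p_\xi^\alpha\sigma^1(\xi)\,\p_{x_h}^\alpha\sigma^2(x_h,\xi).
\]
Second, if one sets $\sigma_0^1(\xi):=\Si^1(0,\xi)$ as in the previous proof, then $\sigma^1-\sigma_0^1\equiv 0$, so the mechanism that produced the quadratic $|v|^2$ contribution on the low-frequency side of Proposition \ref{prop:psdo-commuatator} is absent here; this is precisely what allows the right-hand side to be linear in $v$.

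First I would scale to $\eps=1$ by the partial dilation $x_2\mapsto x_2/\sqrt{\eps}$ used in Propositions \ref{prop:psdo-estimate}--\ref{prop:psdo-commuatator}; the $\eps^{-\f14}$ prefactor in the claimed estimate comes out as a bookkeeping factor from the ratio between $|\cdot|_{H^s_\eps}$ and $|\cdot|_{H^s}$. Then I would split $u=u_{lf}+u_{hf}$ via \eqref{prop4.1}, so that $u_{hf}$ is spectrally supported in $\{|\xi|\ge 1/4\}$ where $\dot\cM^{m_j}$ controls all derivatives of $\Si^j$. On the high-frequency piece, Theorems~7 and~8 of \cite{Lannes-JFA} apply directly to $[\sigma^1(D),\textrm{Op}(\sigma^2)]u_{hf}-\textrm{Op}(\{\sigma^1,\sigma^2\}_n)u_{hf}$; since only $\sigma^2$ carries rough coefficients, the corresponding tame estimate is linear in $v$ and yields both the symbolic remainder $||D|^{m_1+m_2}u_{hf}|_{H^{s-n-1}}|v|_{W^{n+1,\infty}}$ and a single $|v|_{H^{s+m_1}}\,||D|^{m_2}u_{hf}|_{H^{t_0}}$ contribution. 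On the low-frequency piece I would use the identity
\[
[\sigma^1(D),\textrm{Op}(\sigma^2)]u_{lf}-\textrm{Op}(\{\sigma^1,\sigma^2\}_n)u_{lf}=[\sigma^1(D),\textrm{Op}(\sigma^2-\sigma_0^2)]u_{lf}-\textrm{Op}(\{\sigma^1,\sigma^2-\sigma_0^2\}_n)u_{lf},
\]
apply Proposition \ref{prop:psdo-estimate} to each term, and use that $\sigma^1(D)u_{lf}$ has compactly supported Fourier transform so that taking $m_1$ derivatives costs nothing beyond an $L^2$ factor of $u$ bounded by $||D^\eps_h|^{m^-}u|_2$; the symbol $\sigma^2-\sigma_0^2$ vanishes at $v=0$ and therefore contributes only the single power $|v|_{H^{s+m_1}}$.

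The main obstacle will be verifying that the tame estimate from \cite{Lannes-JFA}, when specialized to one $v$-independent symbol in the restricted range $0\le s\le t_0+1$, really produces a single factor of $|v|_{H^{s+m_1}_\eps}$ (and not $|v|_{H^{s+m^+}_\eps}$), and that no cross term of the form $|v|_{H^{s+m^+}_\eps}|v|_{H^{t_0+m^++1}_\eps}$ sneaks back in when one interpolates between $s$ and $t_0$ and rescales in $\eps$. Once this has been checked carefully, assembling the high- and low-frequency bounds and undoing the $\eps$ scaling yields the stated estimate.
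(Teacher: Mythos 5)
Your proposal has the same skeleton as the paper's proof — rescale to $\eps=1$, split $u=u_{lf}+u_{hf}$, exploit the Fourier-multiplier structure of $\sigma^1$ — and your observation that $\mathrm{Op}(\sigma^2)\circ\sigma^1(D)=\mathrm{Op}(\sigma^2\sigma^1)$ exactly (so the whole commutator-minus-bracket reduces to a composition error) is the right starting point. The one place you diverge, and which you rightly flag as the "main obstacle," is the citation for the high-frequency piece: you invoke Theorems~7 and~8 of \cite{Lannes-JFA}, which in the form used in Proposition~\ref{prop:psdo-commuatator} only produce the extra $|v|_{H^{s+m^+}}|u|_{H^{t_0+m^+}}$-type term under the $s>t_0+1$ proviso — exactly the range \emph{excluded} here. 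As written that route would leave you only with the symbolic remainder on the high-frequency side and fail to produce the required $|v|_{H^{s+m_1}_\eps}\,||D^{\eps}_h|^{m_2}u|_{H^{t_0}_\eps}$ contribution.

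The paper sidesteps this by citing Corollary~39 of \cite{Lannes-Inven} for the high-frequency estimate; that corollary is precisely the specialization to a constant-coefficient first symbol and yields
\[
\big|[\sigma^1(D),\textrm{Op}(\sigma^2)]u_{hf}-\textrm{Op}(\{\sigma^1,\sigma^2\}_n)u_{hf}\big|_{H^s}
\le C(|v|_{W^{n+1,\infty}})\bigl(|u|_{H^{s+m_1+m_2-n-1}}|v|_{W^{n+1,\infty}}+|v|_{H^{s+m_1}}|u|_{H^{t_0+m_2}}\bigr)
\]
with the linear-in-$v$ tame term present \emph{for all} $s$ in the relevant range, not just $s>t_0+1$. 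Combined with your (correct) low-frequency argument — which uses $\sigma^1-\sigma_0^1\equiv 0$, the vanishing of $[\sigma^1(D),\sigma^2_0(D)]$, and the gain $||D^\eps_h|^{m_2}u_{lf}|_2\lesssim||D^\eps_h|^{m^-}u|_2$ — this supplies exactly the two terms in the stated estimate. So the structure is sound; you would just need to replace the appeal to Theorems~7--8 by the Fourier-multiplier corollary (or reprove that case directly) to close the gap you yourself identified.
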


\begin{proof}\,  Similar to the proof of Proposition \ref{prop:psdo-commuatator},
one first reduces the proof of this proposition to the case when
$\eps=1.$ For the high frequency
 part,  $u_{hf}$ of $u$,
we  use  Corollary 39 in \cite{Lannes-Inven} so that \beno
\begin{split}
\big|\big[\sigma^1(D),&\textrm{Op}(\sigma^2)\big]u_{hf}
-\textrm{Op}(\{\sigma^1,\sigma^2\}_n)u_{hf}\big|_{H^s}\\
&\le
C(|v|_{W^{n+1,\infty}})\bigl(|u|_{H^{s+m_1+m_2-n-1}}|v|_{W^{n+1,\infty}}
+|v|_{H^{s+m_1}}|u|_{H^{t_0+m_2}}\bigr).\end{split} \eeno The low
frequence part, $u_{lf}$ of $u$ can be obtained by exactly  the same
line as the proof to Proposition
\ref{prop:psdo-commuatator}.\end{proof}

\renewcommand{\theequation}{\thesection.\arabic{equation}}
\setcounter{equation}{0}

\section{The Dirichlet-Neumann operator}

The goal of this section is to study the Dirichlet-Neumann operator
defined by \eqref{DNO}, which will be the key ingredient used in the
proof of the first part of Theorem \ref{thm:KP approximation}.
Firstly thanks to the argument at the beginning of section 3, we
write (\ref{eq:elliptic equation-DN}) on $\Omega=\R^2\times[-1+\eps
b(x_h),\eps \zeta(t,x_h)]$ into a problem on the flat strip $\cS$:
\ben\label{eq:elliptic equation-DN-new} \left\{\begin{array}{ll}
\na_{h,z}\cdot P^\eps[\sigma]\na_{h,z}\psi^b=0,\quad\hbox{in}\quad \mathcal{S},\\
\psi^b|_{z=0}=\psi,\qquad\p_n^P \psi^b|_{z=-1}=0.
\end{array}\right.
\een where $\psi^b=\Phi\circ S$, $P^\eps[\sigma]=I+Q^\eps(\sigma),$
$Q^\eps(\sigma)$ and $S$ are given by \eqref{Qd} and
\eqref{Transform} respectively. Then we can write the
Dirichlet-Neumann operator as \beq\label{eq:DN-representation}
G[{\eps}\zeta]\psi=\p_n^P\psi^b|_{z=0}=-{\bf e_3}\cdot
P^\eps[\sigma]\na \psi^b|_{z=0}. \eeq

\subsection{Some basic properties}

For the convenience of the readers, we shall first recall some basic
properties of Dirichlet-Neumann operator from \cite{Lannes-Inven}.

\begin{prop}\label{prop:DN-basic properties} Let $ \zeta, b\in
H^{m_0}(\R^2)$ satisfy (\ref{ass:free surface}). Then we have
\begin{itemize}
 \item[(1)]
 The Dirichlet-Neumann operator $G[{\eps}\zeta]$ is self-adjoint: \beno
\big(u, G[\eps\zeta]v\big)=\big(v, G[\eps\zeta]u\big),\quad
\forall\, u,v\in H^\f12(\R^2); \eeno

\item[(2)] For all $ u,v\in H^\f12(\R^2)$,
\beno \bigl|\big(u, G[\eps\zeta]v\big)\bigr|\le \big(u,
G[\eps\zeta]u\big)^\f12 \big(v, G[\eps\zeta]v\big)^\f12; \eeno

\item[(3)] For $ u\in H^\f12(\R^2)$,  \beno M(\sigma)^{-1}|\mathfrak
{P}u|_2^2\leq \big(u, \f1\eps G[\eps\zeta]u\big)\le
M(\sigma)|\mathfrak {P}u|_2^2; \eeno

\item[(4)] For  $\underline{\bf v}\in H^{t_0+1}(\R^2)^2,$  $u\in
H^\f12(\R^2)$,  and  $\frak{\Delta}_{\eps}(\zeta)$ given by
\eqref{eq:ellipticoperator},  \beno
\begin{split}
&\bigl|\big(\underline{\bf v}\cdot\na_h^{\eps} u, \f1\eps
G[\eps\zeta]u\big)\bigr|
\le M(\sigma)|\underline{\bf v}|_{W^{1,\infty}}|\mathfrak {P}u|_2^2\quad\mbox{and}\\
&\bigl|\big(\big[\fd_\eps(\zeta)^k, \underline{\bf
v}\cdot\na_h^{\eps}\big]u,
 \f1\eps G[\eps\zeta]\big[\fd_\eps(\zeta)^k, \underline{\bf
 v}\cdot\na_h^{\eps}\big]u\big)\bigr|\\
&\qquad\le M(\sigma)|\underline{\bf v}|_{H^{t_0+2}}\Big(|\mathfrak
{P}u|_{H^{2k}_\eps}^2+|(\zeta,\underline{\bf v})|_{H^{2k+2}}
|\mathfrak {P}u|_{H^{t_0+1}}^2\Big). \end{split}\eeno \end{itemize}
\end{prop}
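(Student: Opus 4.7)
The whole proposition rests on the variational identity obtained from integration by parts in the flattened strip $\mathcal S$: for any $u,v\in H^{1/2}(\R^2)$,
\[
(u,G[\eps\zeta]v)=\int_{\mathcal S}(I+Q^\eps[\sigma])\na^\eps u^b\cdot\na^\eps v^b\,dx_h\,dz,
\]
where $u^b,v^b$ are the variational solutions of \eqref{u^b}. Symmetry of the matrix $I+Q^\eps[\sigma]$ then yields (1) at once, and the Cauchy--Schwarz inequality applied to the positive-definite bilinear form on the right-hand side gives (2).

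For (3), the upper bound is a direct consequence of Corollary \ref{cor:elliptic estimate} applied to $u^b$ combined with Lemma \ref{lem:Poisson regularization}: since $I+Q^\eps[\sigma]$ is pointwise bounded in terms of $\sigma$,
\[
\int_{\mathcal S}(I+Q^\eps[\sigma])\na^\eps u^b\cdot\na^\eps u^b\,dx_h\,dz\le M(\sigma)\|\na^\eps u^b\|_2^2\le \eps M(\sigma)|\mathfrak P u|_2^2.
\]
For the lower bound, the variational solution $u^b$ minimises the $P^\eps[\sigma]$-weighted Dirichlet energy among all extensions of $u$ satisfying the bottom Neumann condition; comparing against the Poisson lift $u^\dagger$ of Lemma \ref{lem:Poisson regularization} and using positive-definiteness of $I+Q^\eps[\sigma]$ (Proposition 2.3 of \cite{Lannes-Inven}) yields $\|\na^\eps u^b\|_2\ge M(\sigma)^{-1}\sqrt{\eps}|\mathfrak P u|_2$, the matching lower bound.

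The first estimate of (4) follows from (2) and (3): write
\[
\bigl|(\underline{\bf v}\cdot\na_h^\eps u,\tfrac1\eps G u)\bigr|\le \bigl(\underline{\bf v}\cdot\na_h^\eps u,\tfrac1\eps G\underline{\bf v}\cdot\na_h^\eps u\bigr)^{1/2}\bigl(u,\tfrac1\eps G u\bigr)^{1/2}\le M(\sigma)|\mathfrak P(\underline{\bf v}\cdot\na_h^\eps u)|_2|\mathfrak P u|_2,
\]
and close the bound by the zero-order commutator inequality $|\mathfrak P(\underline{\bf v}\cdot\na_h^\eps u)|_2\le C|\underline{\bf v}|_{W^{1,\infty}}|\mathfrak P u|_2$, obtained from Lemma \ref{lem2.1} and the pseudo-differential calculus for $\mathfrak P$ developed in Section 4. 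The second inequality in (4) is proved along the same lines: apply (3) to $C:=[\fd_\eps(\zeta)^k,\underline{\bf v}\cdot\na_h^\eps]u$, split the commutator as $[\fd_\eps(\zeta)^k,\underline{\bf v}]\na_h^\eps u+\underline{\bf v}\cdot[\fd_\eps(\zeta)^k,\na_h^\eps]u$, and use the first two commutator estimates of Lemma \ref{lem:elliptic operator-commutator}, together with the near-commutation of $\mathfrak P$ with differential operators of positive order (third estimate of Lemma \ref{lem:elliptic operator-commutator}), to bound $|\mathfrak P C|_2$ by the desired right-hand side.

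\textbf{Main obstacle.} The delicate point is the bookkeeping in part (4): one must distribute the $2k$ derivatives coming from $\fd_\eps(\zeta)^k$, the one derivative from $\na_h^\eps$, the zero-order regulariser $\mathfrak P$, and the low-regularity factor $\underline{\bf v}$ so that only $|\mathfrak P u|_{H^{2k}_\eps}$ (and the mild loss $|(\zeta,\underline{\bf v})|_{H^{2k+2}}|\mathfrak P u|_{H^{t_0+1}}$) appears on the right, rather than the naive $|\mathfrak P u|_{H^{2k+1}_\eps}$ produced by crude product laws. The third estimate of Lemma \ref{lem:elliptic operator-commutator}, which saves one derivative in $[\fd_\eps(\zeta)^k,\mathfrak P]$, is precisely what makes this cancellation possible.
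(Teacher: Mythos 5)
Your derivations of (1) and (2) from the variational identity and Cauchy--Schwarz are fine, and the upper bound in (3) follows from Corollary~\ref{cor:elliptic estimate} as you say. The problems start afterward.

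For the lower bound in (3), the energy-minimisation property points the wrong way. Comparing $u^b$ with the Poisson lift $u^\dagger$ gives
$\int_{\mathcal S}(1+Q^\eps[\sigma])\na^\eps u^b\cdot\na^\eps u^b
\le \int_{\mathcal S}(1+Q^\eps[\sigma])\na^\eps u^\dagger\cdot\na^\eps u^\dagger
\le M(\sigma)\,\eps\,|\mathfrak P u|_2^2,$
which is again the \emph{upper} bound. What you need is the opposite inequality
$\|\na^\eps u^b\|_2^2 \ge c\,\eps\,|\mathfrak P u|_2^2$, and this has nothing to do with minimality: it is a trace/Poincar\'e estimate on the strip (a Fourier-side one-dimensional trace inequality in $z$, of the same flavour as Lemma~\ref{lem:trace theorem}), applied to the fact that $u^b|_{z=0}=u$.

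The serious gap is the first estimate of (4). After Cauchy--Schwarz you must control $|\mathfrak P(\underline{\bf v}\cdot\na_h^\eps u)|_2$, and the ``zero-order commutator inequality'' $|\mathfrak P(\underline{\bf v}\cdot\na_h^\eps u)|_2\le C|\underline{\bf v}|_{W^{1,\infty}}|\mathfrak P u|_2$ you invoke is simply false: take $\underline{\bf v}\equiv(1,0)$, then the claim becomes $|\p_x\mathfrak P u|_2\le C|\mathfrak P u|_2$, i.e.\ boundedness of $\p_x$ on $L^2$. The left side carries a full extra $\eps$-derivative, and no amount of commutator gymnastics removes it, because there is no commutator here --- the derivative genuinely lands on $u$. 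The naive bound obtained this way is $M(\sigma)|\underline{\bf v}|_{W^{1,\infty}}|\mathfrak P u|_{H^1_\eps}|\mathfrak P u|_2$, which is of the wrong order. The stated estimate requires a cancellation that Cauchy--Schwarz destroys: one must exploit self-adjointness of $G[\eps\zeta]$ to symmetrise. Writing $(\underline{\bf v}\cdot\na_h^\eps u,\f1\eps G u)=(u,\f1\eps G(\underline{\bf v}\cdot\na_h^\eps u))$ and integrating $\na_h^\eps$ by parts gives
\[
2\big(\underline{\bf v}\cdot\na_h^\eps u,\tfrac1\eps G u\big)
=\big(u,[\tfrac1\eps G,\underline{\bf v}\cdot\na_h^\eps]u\big)
-\big((\na_h^\eps\cdot\underline{\bf v})u,\tfrac1\eps G u\big),
\]
and now both terms are of the correct order: the second has a bounded coefficient $\na_h^\eps\cdot\underline{\bf v}$ rather than a derivative falling on $u$, while the first is a genuine commutator that gains one derivative (this is the Green's-identity argument of Proposition~3.7 in \cite{Lannes-Inven}). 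The same symmetrisation is what underlies the second estimate of (4) as well; a direct application of (3) to $[\fd_\eps(\zeta)^k,\underline{\bf v}\cdot\na_h^\eps]u$ followed by a crude commutator bound would also produce a quadratic $|\underline{\bf v}|_{H^{t_0+2}}^2$ dependence rather than the stated linear one.
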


 \begin{proof}\, The second estimate in (4) can be
deduced by following the proof of Proposition 3.7 (i) in
\cite{Lannes-Inven}, and all the other estimates can be found in
\cite{Lannes-Inven}.
\end{proof}

\begin{prop}\label{prop:shape derivative} {\sl Let $ s\ge t_0,$ and
$\zeta, b\in H^{s+\f32}(\R^2)$ satisfy (\ref{ass:free surface}).
Then for any $\psi\in H^{s+\f32}$, the mapping $\zeta\mapsto
G[\eps\zeta]\psi$ is well-defined and differentiable in a
neighborhood of $\zeta$ in $H^{s+\f32}(\R^2).$ Moreover, for any
$h\in H^{s+\f32}(\R^2)$, there holds \beno \begin{split} &d_\zeta
G[\eps\zeta]\psi\cdot h=-\eps
G[\eps\zeta](hZ)-\eps^2\na_h^{\eps}\cdot(h\textbf{v})\qquad\mbox{with}\\
&{\bf v}=\na^{\eps}_h\psi-{\eps} Z \na^{\eps}_h\zeta\quad\mbox{
and}\quad Z=\frac1
{1+{\eps}^3|\na^{\eps}_h\zeta|^2}\big(G[{\eps}\zeta]\psi
+{\eps}^2\na^{\eps}_h\zeta\cdot\na^{\eps}_h\psi\big).
\end{split}\eeno}
\end{prop}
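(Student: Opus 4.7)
My plan is to work on the flat strip $\cS$ via the diffeomorphism $S$ from \eqref{Transform}, so that $\psi^b=\Phi\circ S$ satisfies the variable-coefficient elliptic problem \eqref{eq:elliptic equation-DN-new} with coefficients depending on $\sigma(x_h,z)=\eps(z+1)\zeta-\eps zb$. First I would establish that $\zeta\mapsto \psi^b$ is differentiable from a neighborhood of $\zeta$ in $H^{s+\f32}(\R^2)$ into $H^1(\cS)$. For $h\in H^{s+\f32}$, setting $\dot\sigma=\eps(z+1)h$, the candidate Gateaux derivative $\dot u$ solves
\beq
\na^{\eps}\cdot(I+Q^\eps[\sigma])\na^{\eps}\dot u=-\na^{\eps}\cdot\bigl((dQ^\eps[\sigma]\cdot\dot\sigma)\na^{\eps}\psi^b\bigr),\quad \dot u|_{z=0}=0,\quad \p_n^P\dot u|_{z=-1}=0,
\eeq
where the bottom datum is homogeneous because $\dot\sigma|_{z=-1}=0$. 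Existence, uniqueness and the estimates needed to turn the difference quotient into a true derivative follow from Proposition \ref{prop:elliptic estimate-general} applied to this system, with the background regularity of $\psi^b$ supplied by Corollary \ref{cor:elliptic estimate}. Linearity in $h$ then upgrades Gateaux to Fr\'echet differentiability.

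To identify the explicit formula I would switch back to the moving-domain picture rather than manipulate the flat-strip equation directly. Let $\zeta_\tau=\zeta+\tau h$, let $\Phi_\tau$ be the scaled harmonic extension on $\Om_{\zeta_\tau}=\R^2\times[-1+\eps b,\eps\zeta_\tau]$ with Dirichlet datum $\psi$ and vanishing conormal datum on the bottom, and set $\dot\Phi:=\p_\tau\Phi_\tau|_{\tau=0}$. Differentiating the identity $\Phi_\tau(x_h,\eps\zeta_\tau(x_h))=\psi(x_h)$ at $\tau=0$ yields $\dot\Phi|_{z=\eps\zeta}=-\eps hZ$. The surface chain rule $\na_h^{\eps}\psi=(\na_h^{\eps}\Phi)|_{\eps\zeta}+\eps Z\na_h^{\eps}\zeta$ together with \eqref{eq:DN-representation} identifies $Z=(\p_z\Phi)|_{\eps\zeta}$ and $\mathbf{v}=(\na_h^{\eps}\Phi)|_{\eps\zeta}$ as in the statement. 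Because $\dot\Phi$ is scaled-harmonic in $\Om$ with zero conormal derivative at $z=-1+\eps b$, we obtain
\beno
-\eps G[\eps\zeta](hZ)=(\p_z\dot\Phi-\eps^2\na_h^{\eps}\zeta\cdot\na_h^{\eps}\dot\Phi)|_{z=\eps\zeta}.
\eeno

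Finally, differentiating $G[\eps\zeta_\tau]\psi=(\p_z\Phi_\tau-\eps^2\na_h^{\eps}\zeta_\tau\cdot\na_h^{\eps}\Phi_\tau)|_{z=\eps\zeta_\tau}$ at $\tau=0$ produces the $\dot\Phi$-contribution above together with the remainder
\beno
R=\eps h(\p_z^2\Phi)|_{\eps\zeta}-\eps^2(\na_h^{\eps}h)\cdot\mathbf{v}-\eps^3 h\na_h^{\eps}\zeta\cdot(\p_z\na_h^{\eps}\Phi)|_{\eps\zeta}.
\eeno
Applying $\na_h^{\eps}$ to the surface relation $\mathbf{v}(x_h)=(\na_h^{\eps}\Phi)(x_h,\eps\zeta(x_h))$ yields the divergence identity
\beno
\na_h^{\eps}\cdot\mathbf{v}=(|\na_h^{\eps}|^2\Phi)|_{\eps\zeta}+\eps\na_h^{\eps}\zeta\cdot(\p_z\na_h^{\eps}\Phi)|_{\eps\zeta},
\eeno
which, combined with the scaled Laplace equation $(\p_z^2\Phi)|_{\eps\zeta}=-\eps(|\na_h^{\eps}|^2\Phi)|_{\eps\zeta}$, collapses $R$ to $-\eps^2\na_h^{\eps}\cdot(h\mathbf{v})$ and hence produces the announced formula. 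The main obstacle is precisely this final algebraic reduction: one must carefully marry the bulk equation with the surface chain-rule identity above to trade the $\p_z\na_h^{\eps}\Phi$ term for a horizontal divergence; the analytic part (well-posedness of the linearized system and convergence of the difference quotient) is routine given the tools of Sections 2 and 3.
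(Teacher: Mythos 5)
The paper does not supply a proof for Proposition~\ref{prop:shape derivative}; it is stated as a scaled analogue of the shape-derivative formula from \cite{Lannes-Inven, MZ} and used later without derivation. So there is no in-paper argument to compare against; I can only assess your proof on its own merits.

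Your argument is correct and follows the standard route from those references. The differentiability step via the flat-strip formulation is the right one: differentiating the straightened problem \eqref{eq:elliptic equation-DN-new} in the coefficients and invoking Proposition~\ref{prop:elliptic estimate-general} and Corollary~\ref{cor:elliptic estimate} handles the analytic side, and the homogeneous bottom datum for $\dot u$ is indeed guaranteed because $\dot\sigma|_{z=-1}=0$. The algebraic identification is also verified: from $\na_h^\eps\psi = (\na_h^\eps\Phi)|_{\eps\zeta} + \eps(\p_z\Phi)|_{\eps\zeta}\na_h^\eps\zeta$ together with \eqref{eq:DN-representation} one finds
\[
G[\eps\zeta]\psi + \eps^2\na_h^\eps\zeta\cdot\na_h^\eps\psi = (1+\eps^3|\na_h^\eps\zeta|^2)(\p_z\Phi)|_{\eps\zeta},
\]
so $Z=(\p_z\Phi)|_{\eps\zeta}$ and $\mathbf v=(\na_h^\eps\Phi)|_{\eps\zeta}$ as you claim. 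Differentiating the boundary identity $\Phi_\tau(x_h,\eps\zeta_\tau)=\psi$ gives $\dot\Phi|_{z=\eps\zeta}=-\eps hZ$, and since $\dot\Phi$ inherits the scaled Laplace equation and the homogeneous bottom condition, the first contribution is exactly $-\eps G[\eps\zeta](hZ)$. Your remainder computation also checks out: combining $(\p_z^2\Phi)|_{\eps\zeta}=-\eps(\na_h^\eps\cdot\na_h^\eps\Phi)|_{\eps\zeta}$ with $\na_h^\eps\cdot\mathbf v = (\na_h^\eps\cdot\na_h^\eps\Phi)|_{\eps\zeta}+\eps\na_h^\eps\zeta\cdot(\p_z\na_h^\eps\Phi)|_{\eps\zeta}$ cancels the $\p_z\na_h^\eps\Phi$ terms and yields $R=-\eps^2\na_h^\eps\cdot(h\mathbf v)$. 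The proof is complete and correctly proves the stated formula.
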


\begin{prop}\label{prop:shape derivative estimate} {\sl Let $ s\ge
t_0,$  and $\zeta, b\in H^{m_0}\cap H^{s+1}(\R^2)$ satisfy
(\ref{ass:free surface}). Then for any $u\in H^{s+\f12}(\R^2),$ $
j\in \{0,1,2\}$ and $\textbf{h}\in H^{t_0+1}\cap H^{s+1}(\R^2)^j$,
one has \beno
\begin{split}&\Big|\f1{\sqrt{\eps}}d_\zeta^jG[\eps\zeta]u\cdot
\textbf{h}\Big|_{H^{s-\f12}_{\eps}} \le
\eps^{\f34j}M(\sigma)\Big(|\mathfrak P
u|_{H^s_\eps}\prod_{k=1}^j|h_k|_{H^{t_0+1}_\eps}+|(\zeta,b)|_{H^{s+1}_{\eps}}\\
&\qquad\times|\mathfrak P u|_{H^{t_0}_\eps}
\prod_{k=1}^j|h_k|_{H^{t_0+1}_{\eps}}+|\mathfrak P u|_{H^{t_0}_\eps}
\sum_{k=1}^j|h_k|_{H^{s+1}_\eps}\prod_{l\neq
k}|h_l|_{H^{t_0+1}_\eps}\Big).\end{split} \eeno} \end{prop}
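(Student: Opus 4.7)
The plan is to induct on $j\in\{0,1,2\}$, using the representation \eqref{eq:DN-representation}, the elliptic and trace estimates of Section~3, the Poisson regularization of Lemma~\ref{lem:Poisson regularization}, and the shape derivative formula of Proposition~\ref{prop:shape derivative}.

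For $j=0$, I would begin from $G[\eps\zeta]u = -\mathbf{e}_3 \cdot P^\eps[\sigma]\nabla u^b|_{z=0}$ and rewrite the right-hand side in terms of $(I + Q^\eps[\sigma])\nabla^\eps u^b|_{z=0}$, accounting for the $\sqrt\eps$ factor built into the third component of $P_0$. Decomposing $u^b = u^\dagger + v$ as in the proof of Corollary~\ref{cor:elliptic estimate}, Lemma~\ref{lem:Poisson regularization} controls the trace of $\nabla^\eps u^\dagger$ at $z=0$ by $\sqrt\eps\,|\mathfrak P u|_{H^s_\eps}$, while Proposition~\ref{prop:elliptic estimate-general} applied to the equation for $v$ together with Lemma~\ref{lem:trace theorem} for $\nabla^\eps v|_{z=0}$ (after integrating by parts once in $z$) gives the remaining trace estimate. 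Product laws from Lemma~\ref{lem2.1} then handle the multiplication by $Q^\eps[\sigma]$, yielding the $j=0$ bound.

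For $j=1$, I would apply Proposition~\ref{prop:shape derivative} to write
\[
\tfrac{1}{\sqrt\eps}\,d_\zeta G[\eps\zeta]u\cdot h = -\eps\cdot\tfrac{1}{\sqrt\eps}G[\eps\zeta](hZ) - \eps^{3/2}\nabla_h^\eps \cdot (h\mathbf{v}),
\]
and invoke the $j=0$ bound on the first term, which reduces matters to controlling $|\mathfrak{P}(hZ)|_{H^s_\eps}$; Lemma~\ref{lem2.1} together with the $j=0$ bound applied to the $G[\eps\zeta]u$ appearing inside $Z$ yields the needed estimate, while the second term is handled directly by a product law, since $\mathbf{v}$ is made up of $\nabla^\eps_h\psi$ and $Z\nabla^\eps_h\zeta$. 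For $j=2$ I would differentiate once more in $\zeta$, obtaining terms of the form $G[\eps\zeta]\bigl(h_1(d_\zeta Z \cdot h_2)\bigr)$, $(d_\zeta G[\eps\zeta]\cdot h_2)(h_1 Z)$, and analogous contributions from $\nabla^\eps_h\cdot(h\mathbf{v})$; these are controlled by the $j=1$ bound combined with the explicit shape derivatives of $Z$ and $\mathbf{v}$ (which only involve $d_\zeta G$ and algebraic manipulations), plus the product laws.

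The principal obstacle will be the careful bookkeeping of $\eps$-powers: the explicit $\eps$ in Proposition~\ref{prop:shape derivative}, the $\sqrt\eps$ inherent in the $j=0$ estimate, and the $\eps^{-1/4}$ losses in the product estimates and in the $\mathfrak P$ operator must combine to yield precisely the $\eps^{3j/4}$ scaling. A secondary issue is handling the endpoint terms $\langle\cdots\rangle_{s>t_0}$, which forces one to split into low- and high-regularity contributions in the manner dictated by Lemma~\ref{lem2.1} and to track the symmetric roles of the $h_k$'s in the case $j=2$.
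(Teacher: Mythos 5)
Your proposal diverges from the paper in both halves, and the second divergence is a genuine gap.

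For $j=0$, the paper does not go through trace estimates at $z=0$ at all; it runs a duality argument. One pairs $\Lambda_\eps^{s-1/2}G[\eps\zeta]u$ against a test function $v$, uses $(G[\eps\zeta]u,\Lambda_\eps^{s-1/2}v)=\int_{\cS}(1+Q^\eps[\sigma])\nabla^\eps u^b\cdot\Lambda_\eps^{s-1/2}\nabla^\eps v^\dagger$, and then exploits the key low-regularity bound $\|\Lambda_\eps^{-1/2}\nabla^\eps v^\dagger\|_2\le C|v|_2$ from the Poisson regularization together with Corollary \ref{cor:elliptic estimate} for $\|\Lambda^s_\eps\nabla^\eps u^b\|_2$. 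Your route would instead attack $\p_z u^b|_{z=0}$ directly; since $\p_z v|_{z=0}\neq 0$ for $v=u^b-u^\dagger$, Lemma \ref{lem:trace theorem} does not apply as stated, and the ``integrate by parts in $z$'' step would have to be spelled out and the $\eps^{-1/4}$ loss in the anisotropic trace paired very carefully against the $\sqrt\eps$ and $\eps$ gains in Proposition \ref{prop:elliptic estimate-general}. It is not obviously false, but it is not the paper's argument, and the bookkeeping you flag as ``principal obstacle'' is in fact substantial precisely because you have abandoned the duality that makes those powers come out cleanly.

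For $j\ge1$ there is a real obstruction. Writing $\frac{1}{\sqrt\eps}d_\zeta G[\eps\zeta]u\cdot h=-\sqrt\eps\,G[\eps\zeta](hZ)-\eps^{3/2}\nabla^\eps_h\cdot(h\mathbf v)$ and bounding the two terms separately loses a derivative on $u$. At leading order $G[\eps\zeta]\approx\sqrt\eps|D^\eps_h|$ and $Z\approx G[\eps\zeta]u$, so $-\eps G(hZ)\approx-\eps^2|D^\eps_h|\bigl(h|D^\eps_h|u\bigr)$, while $-\eps^2\nabla^\eps_h\cdot(h\nabla^\eps_h u)\approx+\eps^2 h|D^\eps_h|^2u$; each piece is of order $|D^\eps_h|^2u$ and is \emph{not} controlled by $\eps^{3/4}|\mathfrak P u|_{H^s_\eps}$ for $u\in H^{s+1/2}$, but their sum cancels at principal symbol level, leaving a commutator of order $|D^\eps_h|u\cdot\nabla h$. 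Concretely, to feed $G[\eps\zeta](hZ)$ into the $j=0$ bound you must control $|\mathfrak P(hZ)|_{H^s_\eps}$, which requires $Z$ (hence $G[\eps\zeta]u$) at the level $H_\eps^{s+1/2}$; but the $j=0$ estimate only yields $G[\eps\zeta]u\in H_\eps^{s-1/2}$ when $u\in H^{s+1/2}$. The paper avoids this by never estimating the two terms separately: the cited proof in \cite{Lannes-Inven}, like the $j=0$ case here, re-expresses $d_\zeta^jG[\eps\zeta]u\cdot\mathbf h$ through Green's identity on the strip and reads off the bound from the elliptic estimates, so the cancellation is built into the variational formulation rather than needing to be seen by hand. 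Your plan as stated does not see it and would not close.
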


\begin{proof}\,We only present  the proof for the case when $j=0$, the other cases
can be handled in a  similar way (one may check the proof of
Proposition 3.3 in \cite{Lannes-Inven}). Indeed for any $v\in
\cS(\R^2)$, let $u^b$ and $u^\dag$ be defined by \eqref{eq:elliptic
equation-DN-new} and \eqref{lem:Poisson regularization}
respectively. Then applying \eqref{lem:product estimate} and the
fact that
\[
\|\Lam^{-\f12}_\eps\na^{\eps}v^\dag\|_2\le C|v|_2
\] gives
\begin{align*}
\begin{split}
\big(\Lam^{s-\f12}_\eps G[\eps
\zeta]u,v\big)&=\big(G[\eps\zeta]u,\Lam^{s-\f12}_\eps
v\big)=\int_\cS (1+Q^\eps[\si])\na^{\eps}u^b\cdot
\Lam^{s-\f12}_\eps\na^{\eps}v^\dag\,dx_h\,dz\\
&=\int_\cS\Lam^{s}_\eps(1+Q^\eps[\si])\na^{\eps}u^b\cdot\Lam^{-\f12}_\eps\na^{\eps}v^\dag\,dx_h\,dz\\
&\le C \|\Lam^{s}_\eps(1+Q^\eps[\si])\na^{\eps}u^b\|_2|v|_2\\
&\le M(\si)|v|_2\big(\|\Lam^{s}_\eps\na^{\eps}u^b\|_2
+\|\Lam^{t_0}_\eps\na^{\eps}u^b\|_2|(\zeta, b)|_{H^{s+1}_\eps}\big),
\end{split}
\end{align*}
which along with  Corollary \ref{cor:elliptic estimate} proves the
proposition for the case $j=0.$ \end{proof}

\begin{rmk}\label{rem:DN operator} We can also deduce  from the proof of Proposition
3.3 in \cite{Lannes-Inven} that \beno
\begin{split}
&\Big|\f1{\sqrt{\eps}}\Lam^{m-\f12}_\eps d_\zeta^jG[\eps\zeta]u\cdot
\textbf{h}\Big|_{H^{s}}
\le \eps^jM(\sigma)\Big(|\Lam^{m}_\eps\mathfrak P u|_{H^s}\prod_{k=1}^j|h_k|_{H^{t_0+m+1}}\\
&\qquad\quad +|\Lam^{m+1}_\eps(\zeta,b)|_{H^{s}}|\mathfrak P
u|_{H^{t_0+m}}\prod_{k=1}^j|h_k|_{H^{t_0+m+1}} \\
&\qquad\quad +|\mathfrak P
u|_{H^{t_0+m}}\sum_{k=1}^j|\Lambda_{\eps}^{m+1}h_k|_{H^{s}}\prod_{l\neq
k}|h_l|_{H^{t_0+m+1}}\Big),\end{split} \eeno for $m=0,1,2,3$. This
result is not sharp, but is enough for our applications in this
paper.
\end{rmk}

\begin{prop}\label{prop:DN-commutator-time} {\sl Let $ T>0,$
$b\in H^{m_0}(\R^2), \zeta\in C([0,T];H^{m_0}(\R^2))$ satisfy
(\ref{ass:free surface}) for some $h_0$ independent of $t$. Then for
any $u\in C^1([0,T];$ $ H^{\f12}(\R^2))$ and $t\in [0,T]$, one has
\beno \Big|\Big(\big[\p_t, G[\eps\zeta]\big]u(t), u(t)\Big)\Big| \le
\eps M(\sigma(t))|\na_h^{\eps}\p_t\zeta|_\infty|\mathfrak P
u(t)|_2^2. \eeno }\end{prop}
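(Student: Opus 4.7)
The starting point is that $u$ depends on $t$ only through its own time dependence and not through $\zeta$, so the time derivative commutator is exactly the shape derivative of $G[\eps\zeta]$ evaluated in the direction $\p_t\zeta$:
\[
[\p_t,G[\eps\zeta]]u \;=\; d_\zeta G[\eps\zeta]u\cdot\p_t\zeta.
\]
Plugging this into Proposition \ref{prop:shape derivative} (with $\psi=u$, $h=\p_t\zeta$) gives
\[
[\p_t,G[\eps\zeta]]u \;=\; -\eps\,G[\eps\zeta](\p_t\zeta\,Z)\;-\;\eps^2\,\na_h^{\eps}\!\cdot\!(\p_t\zeta\,{\bf v}),
\]
where $Z,{\bf v}$ are the quantities of Proposition \ref{prop:shape derivative} with $\psi=u$. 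Pairing with $u$, using the self‑adjointness from Proposition \ref{prop:DN-basic properties}(1) on the first summand and integrating by parts in $x_h$ on the second, one obtains the compact identity
\[
\bigl([\p_t,G[\eps\zeta]]u,u\bigr)\;=\;-\eps\bigl(\p_t\zeta\,Z,\;G[\eps\zeta]u\bigr)\;+\;\eps^2\bigl(\p_t\zeta\,{\bf v},\na_h^{\eps}u\bigr).
\]
Now the algebraic identities $(1+\eps^3|\na_h^{\eps}\zeta|^2)Z=G[\eps\zeta]u+\eps^2\na_h^{\eps}\zeta\!\cdot\!\na_h^{\eps}u$ and ${\bf v}=\na_h^{\eps}u-\eps Z\na_h^{\eps}\zeta$ cause the cross term $\eps^3\int\p_t\zeta\,Z(\na_h^{\eps}\zeta\!\cdot\!\na_h^{\eps}u)$ to appear twice with opposite signs and cancel, collapsing the right‑hand side to
\[
\bigl([\p_t,G[\eps\zeta]]u,u\bigr)\;=\;-\eps\!\int_{\R^2}\!\p_t\zeta\,(1+\eps^3|\na_h^{\eps}\zeta|^2)Z^2\,dx_h\;+\;\eps^2\!\int_{\R^2}\!\p_t\zeta\,|\na_h^{\eps}u|^2\,dx_h.
\]

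Next I would bound each of the two terms by the target quantity $\eps M(\sigma)|\na_h^{\eps}\p_t\zeta|_\infty|\mathfrak Pu|_2^2$. For the $Z^2$ term, the elliptic estimates of Corollary \ref{cor:elliptic estimate} give $\|\na^{\eps}u^b\|_2\leq\sqrt{\eps}M(\sigma)|\mathfrak Pu|_2$, and since $Z$ is a linear combination of the trace of $\p_zu^b$ at $z=0$ and $\na_h^{\eps}u$ (via the formula for $Z$), Lemma \ref{lem:trace theorem} together with Proposition \ref{prop:shape derivative estimate} (case $j=0$) controls $|\sqrt{\eps}Z|$ in an appropriate Sobolev norm by $\sqrt{\eps}M(\sigma)|\mathfrak Pu|_2$; this makes the first term $O(\eps^2|\p_t\zeta|_\infty|\mathfrak Pu|_2^2)$, which is absorbed into the claimed bound (using $|\p_t\zeta|_\infty\le C|\na_h^{\eps}\p_t\zeta|_\infty$ at the relevant frequencies controlled by $\mathfrak P$).

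For the more delicate second term $\eps^2\int\p_t\zeta\,|\na_h^{\eps}u|^2$, I would integrate by parts in $x_h$ to produce $\na_h^{\eps}\p_t\zeta$: writing
\[
\eps^2\!\int\p_t\zeta\,|\na_h^{\eps}u|^2 \;=\; -\eps^2\!\int\na_h^{\eps}\p_t\zeta\!\cdot\! u\,\na_h^{\eps}u \;-\;\eps^2\!\int\p_t\zeta\,u\,\Delta_h^{\eps}u,
\]
and then using the variational equivalent $\bigl([\p_t,G[\eps\zeta]]u,u\bigr)=\int_{\cS}\p_tP^{\eps}[\sigma]\na u^b\!\cdot\!\na u^b\,dx_hdz$ to re‑express the $\Delta_h^{\eps}u$-contribution through the structure of $\sigma=-\eps zb+\eps(z+1)\zeta$ (so that $\p_z(\p_t\sigma)=\eps\p_t\zeta$): an integration by parts in $z$ then converts every appearance of $\eps\p_t\zeta$ from the diagonal entries of $\p_tQ^{\eps}[\sigma]$ into a boundary contribution plus a term with $\p_x\p_t\sigma$ or $\p_y\p_t\sigma$, i.e. into $\eps|\na_h^{\eps}\p_t\zeta|_\infty$. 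The resulting integrals involving $\na u^b$ and $\p_z\na u^b$ are controlled by Corollary \ref{cor:elliptic estimate} and Proposition \ref{prop:elliptic estimate-general}, yielding the target bound.

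The main obstacle is exactly this last step: producing the derivative $\na_h^{\eps}\p_t\zeta$ (rather than $\p_t\zeta$ itself) from the diagonal part of $\p_tQ^{\eps}[\sigma]$, where only $\eps\p_t\zeta$ appears without any spatial derivative. The fact that $\p_t\sigma=\eps(z+1)\p_t\zeta$ vanishes at $z=-1$ (so that the $z$‑integration by parts produces no bottom contribution) and that the top boundary contribution at $z=0$ is exactly $\eps\p_t\zeta|\na_h u|_{z=0}^2$, which must be re‑absorbed using the identity $(G[\eps\zeta]u,u)$‑quadratic form representation, is where the delicate bookkeeping lies. Everything else is routine application of Propositions \ref{prop:DN-basic properties}--\ref{prop:shape derivative estimate} and the elliptic estimates of Section 3.
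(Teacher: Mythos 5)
The paper does not actually supply a proof of this proposition (it is stated and used, with the proof deferred to the cited reference), so I am judging your argument on its own merits. Your first three steps are correct: $[\p_t,G[\eps\zeta]]u=d_\zeta G[\eps\zeta]u\cdot\p_t\zeta$, the application of Proposition~\ref{prop:shape derivative}, the self-adjointness and integration by parts, and the cancellation of the cross terms that yields the identity
\[
\bigl([\p_t,G[\eps\zeta]]u,u\bigr)=-\eps\int_{\R^2}\p_t\zeta\,(1+\eps^3|\na_h^\eps\zeta|^2)Z^2\,dx_h+\eps^2\int_{\R^2}\p_t\zeta\,|\na_h^\eps u|^2\,dx_h.
\]
This identity is a legitimate and attractive starting point. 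The problem is everything that follows.

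The decisive gap is that you estimate the two integrals \emph{separately}, and each one is individually \emph{too large}. Neither $|Z|_{2}$ nor $|\na_h^\eps u|_2$ is controlled by $|\mathfrak P u|_2$: at high frequencies $\mathfrak P\sim\eps^{-1/4}|D_h^\eps|^{1/2}$ while $\na_h^\eps\sim|D_h^\eps|$, so $|\na_h^\eps u|_2$ exceeds $|\mathfrak P u|_2$ by a factor of order $\sqrt{\eps}\,|D_h^\eps|$, and the same is true for $|G[\eps\zeta]u|_2$ (Proposition~\ref{prop:shape derivative estimate} with $j=0$ bounds $G u$ in $H^{-1/2}_\eps$, not $L^2$, by $\sqrt\eps M|\mathfrak P u|_2$). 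Consequently $\eps^2\int\p_t\zeta|\na_h^\eps u|^2$ and $\eps\int\p_t\zeta\,\rho^2Z^2$ can each be of size $\eps^{5/2}|\p_t\zeta|_\infty\,|D_h^\eps|\,|\mathfrak P u|_2^2$ on high-frequency data, which is \emph{not} $O(\eps M|\p_t\zeta|_{W^{1,\infty}}|\mathfrak P u|_2^2)$. The bound in the proposition is achievable only because of a cancellation between the two pieces (in the flat-bottom, $\zeta=0$ model, $-\eps(G_0u,G_0u)+\eps^2|\na_h^\eps u|_2^2=\eps^2\int|\xi^\eps|^2\mathrm{sech}^2(\sqrt\eps|\xi^\eps|)|\hat u|^2$, and it is the $\mathrm{sech}^2$ that kills the high-frequency growth). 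Your write-up acknowledges ``delicate bookkeeping'' but never exhibits or exploits this cancellation, so the estimate is not actually established. In addition, the auxiliary inequality $|\p_t\zeta|_\infty\le C|\na_h^\eps\p_t\zeta|_\infty$ that you invoke to absorb the $Z^2$ term is simply false (a gradient bound cannot control $L^\infty$ without further hypotheses), and the proposed integration by parts on the second term introduces $u$ and $\Delta_h^\eps u$ whose contributions you do not actually bound. The standard route that does close is to work with the variational representation $\bigl([\p_t,G]u,u\bigr)=-\int_{\cS}\p_tQ^\eps[\sigma]\,\na^\eps u^b\cdot\na^\eps u^b$, separate the entries of $\p_tQ^\eps[\sigma]$ into those already carrying $\na_h^\eps\p_t\sigma$ (controlled directly via Corollary~\ref{cor:elliptic estimate}) and the diagonal ones carrying $\p_z\p_t\sigma=\eps\p_t\zeta$, and for the latter use the fact that $\p_z$ of the associated quadratic form is, modulo the elliptic equation for $u^b$, a horizontal divergence; the $z$-integration by parts (with $\p_t\sigma|_{z=-1}=0$) then produces $\na_h^\eps\p_t\zeta$ in the bulk, and the remaining $z=0$ boundary term is exactly what cancels against the other half of the decomposition. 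None of this cancellation is present in your argument as written.
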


\subsection{The principle  part of the DN operator}\label{sec5.2} Recall that
$\sigma(t,x_h,z)=-\eps zb(x_h)+\eps(1+z)\zeta(t,x_h),$ we  rewrite
$P^\eps[\sigma]$ in \eqref{eq:elliptic equation-DN-new} as
\[
P^\eps[\sigma]=\left(\begin{matrix} P_1^\eps & {\bf p}^\eps\\
({\bf p}^\eps)^T & p_{d+1}^\eps\end{matrix}\right),
\]
with
\begin{align*}
\begin{split}
&P_1^\eps={\eps}(1+{\eps} \zeta-{\eps} b)
\left(\begin{matrix}1& 0\\
0 & \eps\end{matrix}\right),\\
&{\bf p}^\eps=-{\eps}^2\left(\begin{array}{ll}-z\p_xb+(z+1)\p_x\zeta\\
\eps(-z\p_yb+(z+1)\p_y\zeta)
\end{array}\right),\\
&p_{d+1}^\eps=\f{1+{\eps}\big({\eps} z\p_xb-{\eps}(z+1)\p_x
\zeta\big)^2+{\eps}^2 \big({\eps}
z\p_yb-{\eps}(z+1)\p_y\zeta\big)^2}{1+{\eps} \zeta-{\eps} b}.
\end{split}
\end{align*}
Then we have
\begin{align*}
\begin{split}
{\bf P}^\eps[\sigma]{\eqdefa}&\na_{h,z}\cdot P^\eps[\sigma]\na_{h,z}
=p_{d+1}^\eps\p^2_z+(2{\bf p}^\eps\cdot
\na_h+\big(\p_zp_{d+1}^\eps\\
&+\na_h\cdot {\bf
p}^\eps)\big)\p_z+P_1^\eps\Delta_h+\big((\na_h\cdot
P_1^\eps)+\p_z{\bf p}^\eps\big)\cdot\na_h.
\end{split}
\end{align*}

For simplicity, we shall neglect the subscript $\eps$ in what
follows.

We now define the approximate operator to ${\bf P}^\eps[\sigma]$ as
follows
\begin{eqnarray}\label{papp}
{\bf P}_{app}{\eqdefa} p_{d+1}(\p_z-\eta_-(x_h,z,D^{\eps}_{h}))
(\p_z-\eta_+(x_h,z,D^{\eps}_{h}))
\end{eqnarray}
where $\eta_\pm(x_h,z,D^{\eps}_{h})$ ($z\in [-1,0]$) are
pseudo-differential operators with symbols
\begin{align}
\eta_\pm(x_h,z,\xi^{\eps}) &=\frac 1{p_{d+1}} \Bigl(-i{\bf
p}\cdot\xi\pm\sqrt{p_{d+1}\xi\cdot
P_1\xi-({\bf p}\cdot\xi)^2}\Bigr)\label{eta}\\
&=\frac{1+\p_z\sigma}{1+{\eps}|\na^{\eps}_h\sigma|^2}
\left(i{\eps}\na^{\eps}_h\sigma\cdot\xi^{\eps}\pm
\sqrt{{\eps}(1+{\eps}|\na^{\eps}_h\sigma|^2)|\xi^{\eps}|^2
-{\eps}^2|\na^{\eps}_h\sigma\cdot\xi^{\eps}|^2}\right). \nonumber
\end{align}
It is easy to observe that there exits some constant $c_+>0$ so that
\beq \label{eq:eta} \sqrt{\eps} M(\sigma)|\xi^{\eps}|\ge
\textrm{Re}(\eta_+(x_h,z,\xi^{\eps})) \ge \sqrt{\eps}
c_{+}|\xi^{\eps}|, \eeq and there exists $\Sigma_\pm(v,\xi) \in
C^\infty(\R^3,\dot\cM^1)$ such that $\eta_\pm(x_h,z,\xi^{\eps})=$
$\sqrt{\eps}\ \Sigma_\pm(\na^{\eps}\sigma,\xi^{\eps}).$

As in \cite{lan1}, for $u\in\cS(\R^2),$ we define the approximate
solutions to \eqref{u^b} as \beq\begin{split}
 \label{eq:main symbol
of DN} &u^b_{app}(x_h,z){\eqdefa}\sigma_{app}(x_h,z,D^{\eps}_{h})u
\quad \mbox{with}\\
& \sigma_{app}(x_h,z,\xi)=\exp\big(-\int^0_z
\eta_+(x_h,s,\xi)ds)\big),\end{split}\eeq and we  define the symbol
for the approximate Dirichlet-Neumann operator as \beq \label{gxh}
g(x_h,\xi^{\eps}){\eqdefa}\sqrt{{\eps}(1+{\eps}^3|\na^{\eps}_h
\zeta|^2) |\xi^{\eps}|^2-{\eps}^4(\xi^{\eps}\cdot\na^{\eps}_h
\zeta)^2}. \eeq Then it follows from \cite{lan1} that
\beq\label{eq:main part of DN}
g(x_h,D^{\eps}_{h})\psi=\p^P_n\psi^b_{app}|_{z=0}. \eeq We'll see
that $g(x_h,D^\eps_h)$ is the principle part of the D-N operator.

The goal of this subsection is to prove the following proposition
which concerns the accuracy of the approximate Dirichlet-Neumann
operator.

\begin{prop}\label{prop:remainder} {\sl Let $s\ge 0,$   $u\in
H^{s+\f12}\cap H^{t_0+\f12}(\R^2)$, and $u^b$ be defined by
\eqref{u^b}. Let \beq\label{eq:remainder} R[\eps\zeta]u{\eqdefa}
G[\eps\zeta]u-g(x_h,D^{\eps}_{h})u=\p^P_n(u^b-u^b_{app})|_{z=0}.
\eeq Then we have \beq\label{prop5.5a}
|R[\eps\zeta]u|_{H^s_{\eps}}\le \sqrt{\eps}
M(\sigma)\Big(|\mathfrak{P}u|_{H^{s}_\eps} +|\mathfrak{P}
u|_{H^{t_0}_\eps}|(\zeta,b)|_{H^{s+3}_\eps}\Big). \eeq}
\end{prop}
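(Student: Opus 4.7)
I plan to set $w := u^b - u^b_{app}$ and reduce the proposition to an elliptic estimate combined with a trace. Since $\sigma_{app}(x_h,0,\xi)=1$, one has $w|_{z=0}=0$, and $w$ satisfies
\begin{equation*}
\na\cdot P^\eps[\si]\na w = -\na\cdot P^\eps[\si]\na u^b_{app}\ \hbox{in}\ \cS, \quad w|_{z=0}=0, \quad \p_n^P w|_{z=-1}=\p_n^P u^b_{app}|_{z=-1}.
\end{equation*}
Because $w(\cdot,0)\equiv 0$ forces $\na_h w|_{z=0}=0$, inspecting $-{\bf e_3}\cdot P^\eps[\si]\na w|_{z=0}$ yields the identity $R[\eps\zeta]u = -p_{d+1}|_{z=0}\,\p_z w|_{z=0}$, so the task reduces to estimating $\p_z w|_{z=0}$ in $H^s_\eps$. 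I would achieve this via the fundamental theorem in $z$: writing $\p_z w|_{z=0} = \p_z w|_{z=-1} + \int_{-1}^0 \p_z^2 w\,dz$, substituting for $\p_z^2 w$ from the equation, and controlling the resulting interior norms through Proposition \ref{prop:elliptic estimate-general} together with a Minkowski argument. This reduces the problem to estimating (i) the source $\na\cdot P^\eps[\si]\na u^b_{app}$ in suitable $L^2_z H^{*}_\eps$ norms and (ii) the bottom datum $\p_n^P u^b_{app}|_{z=-1}$ in $H^s_\eps$.

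The heart of the argument lies in the quantitative smallness of these two quantities. For (i), I will exploit the factorization \eqref{papp}, ${\bf P}_{app} = p_{d+1}(\p_z-\textrm{Op}_\eps(\eta_-))(\p_z-\textrm{Op}_\eps(\eta_+))$, together with the defining identity $\p_z u^b_{app} = \textrm{Op}_\eps(\eta_+\sigma_{app})u$ built into \eqref{eq:main symbol of DN}. This shows that $(\p_z-\textrm{Op}_\eps(\eta_+))u^b_{app}$ is a pure pseudodifferential composition remainder governed by Proposition \ref{prop:psdo-commuatator}, hence one order smoother than the principal symbol. By construction of $\eta_\pm$ in \eqref{eta}, the difference ${\bf P}^\eps[\si] - {\bf P}_{app}$ is likewise subprincipal. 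Applying Propositions \ref{prop:psdo-estimate} and \ref{prop:psdo-commuatator-special} to each piece, and using $\eta_\pm\in\sqrt\eps\,\dot\cM^1$, converts these two subprincipal gains into the advertised prefactor $\sqrt\eps$ while introducing the regularity bookkeeping $|(\zeta,b)|_{H^{s+3}_\eps}$.

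For (ii), the symbol $\sigma_{app}(x_h,-1,\xi^\eps)$ decays like $e^{-\sqrt\eps\,c_+|\xi^\eps|}$ by the lower bound \eqref{eq:eta}, so the corresponding pseudodifferential operator is infinitely smoothing at high frequencies, and its contribution is again controlled, via the calculus of Section 4, by the same right-hand side as (i). Feeding (i) and (ii) into Proposition \ref{prop:elliptic estimate-general} then gives $\|\Lambda^{s-1}_\eps\na^\eps\p_z w\|_2 \le \sqrt\eps M(\si)(|\mathfrak P u|_{H^s_\eps}+|\mathfrak P u|_{H^{t_0}_\eps}|(\zeta,b)|_{H^{s+3}_\eps})$, and combined with the trace/integration representation of $R[\eps\zeta]u$ above this delivers \eqref{prop5.5a}.

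The main obstacle will be the pseudodifferential bookkeeping: each invocation of Propositions \ref{prop:psdo-commuatator} and \ref{prop:psdo-commuatator-special} exacts a price in the regularity of $\si$, and hence of $\zeta,b$. One must ensure that the worst such contribution, originating in the $z$-dependence of $\eta_+$ inside $\sigma_{app}$ together with the cross terms $\p_z{\bf p}$ and $\na_h\cdot{\bf p}$ of ${\bf P}^\eps[\si]$, lands precisely at $|(\zeta,b)|_{H^{s+3}_\eps}$ and not worse, while every formal occurrence of $|\xi^\eps|$ in $\eta_\pm$ releases its companion $\sqrt\eps$ into the final prefactor.
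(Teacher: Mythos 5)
Your reduction of the proposition to an estimate for $\p_z w|_{z=0}$ (with $w=u^b-u^b_{app}$) by the fundamental theorem of calculus in $z$ is a genuinely different route from the paper's. The paper does not take the trace directly: it estimates $(\Lambda^s_\eps R[\eps\zeta]u,v)$ by Green's identity against the Poisson regularization $\Lambda^s_\eps v^\dag$, producing the boundary term at $z=-1$, the interior term with ${\bf P}u^b_{app}$, and the interior term $\int_\cS\Lambda^s_\eps(1+Q[\sigma])\na^\eps u^b_r\cdot\na^\eps v^\dag$; the crucial $\sqrt\eps$ on the last term comes from Lemma \ref{lem:duality}, which exploits the $\eps$-tuned decay of $v^\dag$. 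Your items (i) and (ii) are precisely the paper's Steps 1--3 inside Lemma \ref{lem:remainder-elliptic}, so the hard estimates you plan to reuse are the right ones.

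However, two steps of your final reduction are not sound as stated. First, the index is off by one: to bound $\bigl|\int_{-1}^0\p_z^2 w\,dz\bigr|_{H^s_\eps}$ you need $\|\Lambda^s_\eps\p_z^2 w\|_2\lesssim\sqrt\eps$, not $\|\Lambda^{s-1}_\eps\na^\eps\p_z w\|_2\lesssim\sqrt\eps$; the third inequality of Proposition \ref{prop:elliptic estimate-general} must therefore be invoked at level $s+1$, which is exactly what the second inequality of Lemma \ref{lem:remainder-elliptic} encodes — you should cite it rather than re-derive a weaker bound. Second, and more seriously, your decomposition $\p_z w|_{z=0}=\p_z w|_{z=-1}+\int_{-1}^0\p_z^2 w\,dz$ leaves the boundary value $\p_z w|_{z=-1}$ unaddressed. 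The Neumann condition gives only the conormal derivative $\p^P_n w|_{z=-1}=-\p^P_n u^b_{app}|_{z=-1}$, which at the bottom reads $-(1+Q_{33})\p_z w|_{z=-1}+\eps^2\na^\eps_h b\cdot\na^\eps_h w|_{z=-1}$; solving for $\p_z w|_{z=-1}$ forces you to control $\na^\eps_h u^b_r|_{z=-1}$ in $H^s_\eps$ with an overall $\sqrt\eps$. This does work out — the $\eps^2$ prefactor beats the $\eps^{-1/2}$ trace loss and the $H^s_\eps$-level estimate on $\|\Lambda^{s+1}_\eps\na^\eps u^b_r\|_2$ in Lemma \ref{lem:remainder-elliptic} — but the step is entirely missing from your write-up, and it is exactly the difficulty that Green's identity against $v^\dag$ is designed to avoid.
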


\begin{rmk} This estimate is not a standard one since the gain is of
half instead of one derivative compared to similar estimates in
\cite{lan1,MZ}. This is due to the need of $O(\sqrt{\eps})$ term in
the r.h.s. of (\ref{eq:remainder}). In fact, we refer to
\cite{Lannes-stability} for how to gain the full derivative without
losing the $\sqrt \eps$ in the r.h.s. of (\ref{eq:remainder}).

\end{rmk}

We start the proof of this proposition by the following lemma.

\begin{lem}\label{lem:approximate} {\sl Under the assumptions of Proposition \ref{prop:remainder}, we have
\begin{align*}
\begin{split}
\|\Lam^s_{\eps}\na_h^{\eps} u^b_{app}\|_2&\leq
M(\sigma)\Big(|\mathfrak{P}u|_{H^{s}_\eps}
+|\mathfrak{P} u|_{H^{t_0}_\eps}|(\zeta,b)|_{H^{s+2}_\eps}\Big),\\
\|\Lam^s_{\eps}\p_z u^b_{app}\|_2&\leq \sqrt{\eps}
M(\sigma)\Big(|\mathfrak{P}u|_{H^{s}_\eps} +|\mathfrak{P}
u|_{H^{t_0}_{\eps}}|(\zeta,b)|_{H^{s+2}_\eps}\Big).
\end{split}
\end{align*}}
\end{lem}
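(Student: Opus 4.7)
The strategy rests on a single pointwise observation: since $\textrm{Re}\,\eta_+(x_h,z,\xi^\eps)\ge \sqrt{\eps}\,c_+|\xi^\eps|$ by \eqref{eq:eta}, integrating in $z$ yields
\[
\bigl|\sigma_{app}(x_h,z,\xi^\eps)\bigr|\le e^{-\sqrt{\eps}\,c_+|z||\xi^\eps|},\qquad z\in[-1,0].
\]
The arithmetic identity
\[
\int_{-1}^0 |\xi^\eps|^2\, e^{-2\sqrt{\eps}\,c_+|z||\xi^\eps|}\,dz \le \frac{C|\xi^\eps|^2}{1+\sqrt{\eps}|\xi^\eps|}= C\,\mathfrak{P}(\xi^\eps)^2
\]
is exactly the mechanism that turns the pointwise-in-$z$ factor $|\xi^\eps|$ coming from $\nabla_h^\eps$ into the norm $|\mathfrak{P}u|_{H^s_\eps}$ once one integrates over the strip. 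Thus the target bound is the natural $L^2_z$-consequence of pseudo-differential control on each horizontal slice.

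The plan has three steps. First, for $\partial_z u^b_{app}$ use the defining ODE $\partial_z \sigma_{app}=-\eta_+\sigma_{app}$, so that $\partial_z u^b_{app}=-\eta_+(x_h,z,D^\eps_h)\,\mathrm{Op}_\eps(\sigma_{app})u$; the representation $\eta_+=\sqrt{\eps}\,\Sigma_+(\nabla^\eps\sigma,\xi^\eps)$ with $\Sigma_+\in C^\infty(\R^3,\dot{\mathcal M}^1)$ makes the $\sqrt{\eps}$ prefactor explicit and reduces the second estimate to the first. Second, for $\nabla_h^\eps u^b_{app}$ write $\nabla_h^\eps\,\mathrm{Op}_\eps(\sigma_{app})u=\mathrm{Op}_\eps(i\xi^\eps\sigma_{app})u+\mathrm{Op}_\eps(\nabla_{x_h}\sigma_{app})u$, view $\sigma_{app}(x_h,z,\cdot)$ as a parametrized symbol $\Sigma(v(x_h,z),\xi^\eps)$ with $v=\nabla^\eps\sigma(\cdot,z)$ lying (thanks to the exponential decay in $\xi$) in $C^\infty(\R^3,\dot{\mathcal M}^m)$ for every $m$, and apply Proposition \ref{prop:psdo-estimate} slicewise in $z$. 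Third, square in $L^2(\R^2)$, integrate over $z\in[-1,0]$, and invoke the displayed identity to produce $|\mathfrak{P}u|^2_{H^s_\eps}$ from the principal part; the corrections from $\nabla_{x_h}\sigma_{app}$ and the subprincipal terms of Proposition \ref{prop:psdo-estimate} are moved to the second term via $x_h$-derivatives falling on $\sigma$, which costs two derivatives on $(\zeta,b)$ (one from the $\nabla^\eps\sigma$ already present in $\eta_+$ and one from differentiating $\eta_+$ once more in $x_h$), yielding the $|(\zeta,b)|_{H^{s+2}_\eps}$ contribution.

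The principal difficulty will be bookkeeping in step two: the seminorms of $\sigma_{app}$ in the classes $\dot{\mathcal M}^m$ depend on $z$ through derivatives of the exponential, producing factors of the form $(z|\xi^\eps|)^j e^{-\sqrt{\eps}c_+|z||\xi^\eps|}$. One must check that after squaring and integrating against $dz$, each such factor returns a $\mathfrak{P}$-weight (using $\int_{-1}^0 (z|\xi^\eps|)^{2j}e^{-2\sqrt{\eps}c_+|z||\xi^\eps|}\,dz \lesssim |\xi^\eps|^{-1}/\sqrt{\eps}$ for $j\ge 1$, which is even better than needed), and that the $v=\nabla^\eps\sigma$-argument in Proposition \ref{prop:psdo-estimate} indeed stays inside $H^{t_0}(\R^2)$ uniformly in $z$ by our standing assumption $\zeta,b\in H^{m_0}$. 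A clean fallback, should the symbol calculus become cumbersome, is to compare $u^b_{app}$ with the concrete Poisson extension $u^\dagger$ of Lemma \ref{lem:Poisson regularization} (to which $\sigma_{app}$ reduces when $\sigma\equiv 0$) and to estimate the difference by the elliptic machinery of Section 3 applied to the equation satisfied by $u^b_{app}-u^\dagger$.
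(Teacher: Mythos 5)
Your key computation — that $\int_{-1}^0|\xi^\eps|^2e^{-2\sqrt{\eps}c_+|z||\xi^\eps|}\,dz\lesssim|\xi^\eps|^2/(1+\sqrt{\eps}|\xi^\eps|)$ — is exactly the identity the paper uses, the $\p_z$-reduction via $\p_z\sigma_{app}=-\eta_+\sigma_{app}$ and $\eta_+=\sqrt{\eps}\,\Sigma_+(\na^\eps\sigma,\xi^\eps)$ is what the paper does, and the ``psdo estimate slicewise, then square and integrate in $z$'' scheme is the right skeleton. But there is a genuine gap at the junction between your Step~2 and Step~3.

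Proposition \ref{prop:psdo-estimate} is a norm inequality, not a pointwise-in-$\xi$ one: applied slicewise to the order-one symbol $i\xi^\eps\sigma_{app}(x_h,z,\xi^\eps)$ it returns, for each fixed $z$, a bound of the schematic form $|\textrm{Op}_\eps(i\xi^\eps\sigma_{app}(\cdot,z,\cdot))u|_{H^s_\eps}\leq C(z)\,||D^\eps_h|u|_{H^s_\eps}+\cdots$, where $C(z)$ is \emph{uniformly bounded} in $z$ (the seminorms of the symbol are $O(1)$, the exponential decay in $\xi$ merely sits inside the symbol and is invisible to the estimate). After squaring and integrating over $z\in[-1,0]$ you therefore get $||D^\eps_h|u|_{H^s_\eps}$ on the right, not $|\mathfrak{P}u|_{H^s_\eps}$: the integral identity you state in the opening paragraph has nothing to latch onto once the psdo estimate has collapsed the $\xi$-dependence into a constant. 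The same defect hits the correction terms. You flag the bookkeeping problem with the factors $(z|\xi^\eps|)^je^{-\sqrt{\eps}c_+|z||\xi^\eps|}$, but the claim that ``after squaring and integrating against $dz$, each such factor returns a $\mathfrak{P}$-weight'' is not how the estimate unfolds, because those factors never leave the symbol class.

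The paper's fix is to split the exponential in half \emph{before} touching Proposition \ref{prop:psdo-estimate}: write
\[
u^b_{app}=\widetilde{\sigma}_{app}(x_h,z,D^\eps_h)\,\exp\bigl(\tfrac{c_+}{2}\sqrt{\eps}\,z|D^\eps_h|\bigr)u,\qquad \widetilde{\sigma}_{app}\eqdefa\sigma_{app}(x_h,z,D^\eps_h)\circ\exp\bigl(-\tfrac{c_+}{2}\sqrt{\eps}\,z|D^\eps_h|\bigr).
\]
Because $\textrm{Re}\,\eta_+\ge\sqrt{\eps}c_+|\xi^\eps|$, the half of the exponential absorbed into $\widetilde{\sigma}_{app}$ damps all the $(z|\xi^\eps|)^j$-growth produced by differentiating $\sigma_{app}$, and there is still a residual half of decay, so $\widetilde{\sigma}_{app}$ is a genuine order-zero symbol with seminorms bounded uniformly in $z\in[-1,0]$. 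Only now does one apply Proposition \ref{prop:psdo-estimate} slicewise: its constants are $z$-independent, and the object on which the $z$-integration acts is the \emph{Fourier multiplier} $|D^\eps_h|\exp(\frac{c_+}{2}\sqrt{\eps}z|D^\eps_h|)u$, precisely where your integral identity produces the $\mathfrak{P}$-weight. (This is Lemma \ref{lem:Poisson regularization} in disguise.) Without this pre-factorization, your chain of inequalities terminates at $||D^\eps_h|u|_{H^s_\eps}$ and the lemma does not close.
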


\begin{proof}\,Thanks to \eqref{eq:main symbol of DN}, we
write
\begin{align}\label{lem5.1a}
\begin{split}
u^b_{app}=&\sigma_{app}(x_h,z,D^{\eps}_{h})\exp(-\f{c_+}2\sqrt{\eps} z|D^{\eps}_{h}|)
\exp(\f{c_+}2\sqrt{\eps} z|D^{\eps}_{h}|)u\\
{\eqdefa}&\widetilde{\sigma}_{app}(x_h,z,D^{\eps}_{h})\exp(\f{c_+}2\sqrt{\eps}
z|D^{\eps}_{h}|)u.
\end{split}
\end{align}
Note by (\ref{eq:eta}) that
$\widetilde{\sigma}_{app}(x_h,z,D^{\eps}_{h})$ is a
pseudo-differential operator of order zero, and
\begin{align*}
\begin{split}
\na_h^{\eps} u^b_{app}=&\Big(-\Big(\int^0_z
(\na_h^{\eps}\eta_+)(\cdot,s,\cdot)ds
\widetilde{\sigma}_{app}\Big)(x_h,z,D^{\eps}_{h})+
\widetilde{\sigma}_{app}(x_h,z,D^{\eps}_{h})\na_h^{\eps}\Big)\exp(\f{c_+}2\sqrt{\eps}
z|D^{\eps}_{h}|)u,
\end{split}
\end{align*}
 which along with Proposition \ref{prop:psdo-estimate} and \beno
 \begin{split}
&\|\Lam^s_\eps|D^{\eps}_{h}|\exp(\f {c_+}2\sqrt{\eps}
z|D^{\eps}_{h}|)u\|^2_2\\
&=\int^0_{-1}\int_{\sqrt{\eps}|\xi^{\eps}|\ge
1}\langle\xi^{\eps}\rangle^{2s}(\sqrt{\eps}|\xi^{\eps}|)\f{|\xi^{\eps}|^2}{\sqrt{\eps}|\xi^{\eps}|}
\exp(\f {c_+}2\sqrt{\eps} z|\xi^{\eps}|)|\hat u|^2d\xi
dz\\
&\quad+\int^0_{-1}\int_{\sqrt{\eps}|\xi^{\eps}|\le
1}\langle\xi^{\eps}\rangle^{2s}|\xi^{\eps}|^2\exp(\f
{c_+}2\sqrt{\eps} z|\xi^{\eps}|)|\hat
u|^2d\xi dz \\
&\le 2\int_{\sqrt{\eps}|\xi^{\eps}|\ge
1}\langle\xi^{\eps}\rangle^{2s}\f{|\xi^{\eps}|^2}{1+\sqrt{\eps}|\xi^{\eps}|}|\hat
u|^2\Big(\int^0_{-1}\exp(C_+\sqrt{\eps} z|\xi^{\eps}|)\sqrt{\eps}|\xi^{\eps}|dz\Big)d\xi\\
&\quad+ 2\int_{\sqrt{\eps}|\xi^{\eps}|\le
1}\langle\xi^{\eps}\rangle^{2s}\f{|\xi^{\eps}|^2}{1+\sqrt{\eps}|\xi^{\eps}|}|\hat
u|^2d\xi \le C|\mathfrak P u|^2_{H^s_\eps},\end{split}\eeno ensures
that
\begin{align*}
\|\Lambda_{\eps}^s\na_h^{\eps} u^b_{app}\|_2&\le
M(\sigma)\big(\|\Lambda_{\eps}^s|D^{\eps}_{h}|\exp(\f{c_+}2\sqrt{\eps}
z|D^{\eps}_{h}|)u\|_2\\
&\quad
+\|\Lambda_{\eps}^{t_0}|D^{\eps}_{h}|\exp(\f{c_+}2\sqrt{\eps} z|D^{\eps}_{h}|)u\|_2|(\zeta,b)|_{H^{s+2}_\eps}\big)\\
&\le M(\sigma)\big(|\mathfrak{P}u|_{H^{s}_\eps} +|\mathfrak{P}
u|_{H^{t_0}_\eps}|(\zeta,b)|_{H^{s+2}_\eps}\big).
\end{align*}
The other estimate of the lemma can be obtained in a similar
way.\end{proof}

\begin{lem}\label{lem:remainder-elliptic} {\sl Under the
assumptions of Proposition \ref{prop:remainder}, we denote
$u_r^b{\eqdefa} u^b-u^b_{app}.$
 Then one has
\begin{align*}
\begin{split}
&\|\Lambda^{s+1}_{\eps}\na^{\eps}u^b_r\|_2\leq
M(\sigma)\Big(|\mathfrak{P}u|_{H^{s}_\eps}
+|\mathfrak{P} u|_{H^{t_0}_\eps}|(\zeta,b)|_{H^{s+3}_\eps}\Big),\\
&\|\Lambda^s_{\eps}\na^{\eps}\p_zu^b_r\|_2\leq \sqrt{\eps}
M(\sigma)\Big(|\mathfrak{P}u|_{H^{s}_\eps} +|\mathfrak{P}
u|_{H^{t_0}_\eps}|(\zeta,b)|_{H^{s+3}_\eps}\Big).
\end{split}
\end{align*}
}
\end{lem}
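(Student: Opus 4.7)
The plan is to recognize that $u_r^b=u^b-u^b_{app}$ solves an inhomogeneous version of the problem \eqref{eq:elliptice equation-homo-general}, and then to apply Proposition \ref{prop:elliptic estimate-general}. Since $\sigma_{app}(x_h,0,\xi)=\exp(0)=1$, one has $u^b_{app}|_{z=0}=u=u^b|_{z=0}$, so the top Dirichlet datum vanishes. Setting $f_r:=-\na^\eps\cdot(I+Q^\eps[\sigma])\na^\eps u^b_{app}$ and $g_r:=-\p_n^Pu^b_{app}|_{z=-1}$, the residual $u_r^b$ satisfies $\na^\eps\cdot(I+Q^\eps[\sigma])\na^\eps u_r^b=f_r$, $u_r^b|_{z=0}=0$, $\p_n^P u_r^b|_{z=-1}=g_r$. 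So the task reduces to estimating $f_r$ in $\Lam^s_\eps L^2$ and $g_r$ in $H^{s+\f12}_\eps$ with the right-hand side $M(\sigma)(|\mathfrak{P}u|_{H^s_\eps}+|\mathfrak{P}u|_{H^{t_0}_\eps}|(\zeta,b)|_{H^{s+3}_\eps})$.

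First I would split $f_r=-(\mathbf{P}^\eps[\sigma]-\mathbf{P}_{app})u^b_{app}-\mathbf{P}_{app}u^b_{app}$. By the construction of $\eta_\pm$ in \eqref{eta}, one checks directly from the explicit formulas $p_{d+1}(\eta_++\eta_-)=-2i\mathbf{p}\cdot\xi$ and $p_{d+1}\eta_+\eta_-=-\xi\cdot P_1\xi$ that the principal symbols of $\mathbf{P}^\eps[\sigma]$ and $\mathbf{P}_{app}$ agree. Hence $\mathbf{P}^\eps[\sigma]-\mathbf{P}_{app}$ is a pseudo-differential operator of order $1$ (in $\xi^\eps$) whose symbol falls in the class $C^\infty(\R^p,\dot\cM^1)$ of Section 4. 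Applying Proposition \ref{prop:psdo-estimate} together with Lemma \ref{lem:approximate} controls $\|\Lam^s_\eps(\mathbf{P}^\eps-\mathbf{P}_{app})u^b_{app}\|_2$. For the second piece, since $\p_z u^b_{app}=\textrm{Op}_\eps(\eta_+\sigma_{app})u$ while $\eta_+(x_h,z,D_h^\eps)u^b_{app}=\textrm{Op}_\eps(\eta_+)\circ\textrm{Op}_\eps(\sigma_{app})u$, Proposition \ref{prop:psdo-commuatator} shows that $(\p_z-\eta_+)u^b_{app}$ is a symbolic remainder gaining one order in $|\xi^\eps|$; composing with $p_{d+1}(\p_z-\eta_-)$ and invoking Proposition \ref{prop:psdo-commuatator} once more yields the desired $\Lam^s_\eps L^2$ bound on $\mathbf{P}_{app}u^b_{app}$. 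The quantitative $\eps$-weights arise from the structure $\eta_\pm=\sqrt{\eps}\,\Sigma_\pm(\na^\eps\sigma,\xi^\eps)$ noted after \eqref{eta}, which ensures that each commutator carries an explicit factor of $\sqrt{\eps}$.

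Next I would control $g_r=-\p_n^Pu^b_{app}|_{z=-1}$ by the same pseudo-differential calculus combined with Lemma \ref{lem:trace theorem}. The decomposition \eqref{lem5.1a} extracts the regularizing factor $\exp(\tfrac{c_+}{2}\sqrt{\eps}z|D_h^\eps|)$, which at $z=-1$ provides arbitrary smoothing in $|D_h^\eps|$ with the required $\sqrt{\eps}$ weight, so that $|g_r|_{H^{s+\f12}_\eps}$ enjoys the same right-hand side bound. Finally the first estimate of the lemma follows from the second inequality of Proposition \ref{prop:elliptic estimate-general} applied at level $s+1$, and the second estimate follows from its third inequality, whose structure $\sqrt{\eps}|g|_{H^{s+1}_\eps}$ transmits the extra $\sqrt{\eps}$ factor to $\|\Lam^s_\eps\na^\eps\p_zu_r^b\|_2$.

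The main obstacle will be the symbolic bookkeeping in Step~2: balancing each derivative loss against the explicit $\sqrt{\eps}$ weights inside $\eta_\pm$ and inside $\na_h^\eps=(\p_x,\sqrt{\eps}\p_y)$, so that $f_r$ lands in $\Lam^s_\eps L^2$ with the precise stated right-hand side. The three extra derivatives on $(\zeta,b)$ (compared with the $s+1$ appearing in Corollary \ref{cor:elliptic estimate}) are exactly what is needed to absorb, via the composition estimate \eqref{lem:composition} and the product bounds \eqref{lem:product estimate}, the nonlinear dependence of $\eta_\pm$ on $\na^\eps\sigma=(\sqrt{\eps}\na_h^\eps\sigma,\p_z\sigma)$, together with the one $z$-derivative of $\sigma_{app}$ produced by the factorization and the order-one smoothness lost in passing from $\mathbf{P}^\eps$ to $\mathbf{P}_{app}$.
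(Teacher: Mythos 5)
Your overall architecture matches the paper's proof: you set up the same inhomogeneous elliptic problem for $u_r^b$ with vanishing top Dirichlet datum, split the bulk source into $h^1_{app}=({\bf P}-{\bf P}_{app})u^b_{app}$ and $h^2_{app}={\bf P}_{app}u^b_{app}$, invoke the symbolic calculus (Propositions \ref{prop:psdo-estimate}--\ref{prop:psdo-commuatator}) together with Lemma \ref{lem:approximate}, and exploit the cancellation $\na_h\cdot P_1+\p_z{\bf p}=0$ implicitly via the factorization symbol identities $p_{d+1}(\eta_++\eta_-)=-2i{\bf p}\cdot\xi$, $p_{d+1}\eta_+\eta_-=-\xi\cdot P_1\xi$. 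This is essentially the paper's route, and your observation that each commutator error carries an explicit $\sqrt\eps$-weight from $\eta_\pm=\sqrt\eps\,\Sigma_\pm(\na^\eps\sigma,\xi^\eps)$ is exactly the mechanism the paper uses in Steps 1--2.

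There is, however, one concrete slip in how you close the argument. For the first estimate of the lemma you propose applying the \emph{second} inequality of Proposition \ref{prop:elliptic estimate-general} at level $s+1$, which contains the boundary term $\eps^{-\f14}|g|_{H^{s+\f12}_\eps}$, while you only claim that $|g_r|_{H^{s+\f12}_\eps}$ is bounded by the target right-hand side with no $\eps$-gain. As stated this leaves an uncancelled factor $\eps^{-\f14}$, and the first estimate would fail. The paper instead uses the \emph{first} inequality of Proposition \ref{prop:elliptic estimate-general}, whose boundary term is $|g|_{H^{s+1}_\eps}$ without any negative power of $\eps$, and establishes precisely the bound
$|\p^P_nu^b_{app}|_{z=-1}|_{H^{s+1}_{\eps}} \leq M(\sigma)\big(|\mathfrak{P}u|_{H^{s}_{\eps}} +|\mathfrak{P}u|_{H^{t_0}_{\eps}}|(\zeta,b)|_{H^{s+3}_{\eps}}\big)$
in Step 3. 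Your version can be repaired either by switching to that first inequality, or by upgrading your boundary estimate to $|g_r|_{H^{s+\f12}_\eps}\leq \eps^{\f14}M(\sigma)(\cdots)$ --- which is available by interpolating the $H^{s+1}_\eps$ and $\sqrt\eps$-weighted $H^{s}_\eps$ estimates of Step 3 --- but one of the two must be done explicitly.
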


\begin{proof}\,Thanks to the definition of $u^b_r$, we find out that it
solves \ben\label{eq:elliptic-remainder} \left\{\begin{array}{ll}
{\bf P}u^b_r={\bf P}(u^b-u^b_{app})=-({\bf P}-{\bf
P}_{app})u^b_{app}-{\bf P}_{app}
u^b_{app}\quad \hbox{in}\quad \mathcal S,\\
u^b_r|_{z=0}=0,\qquad\p^P_n u^b_r|_{z=-1}=-\p^P_nu^b_{app}|_{z=-1},
\end{array}\right.
\een where ${\bf P}_{app}$ is defined in \eqref{papp}. Then we get
by applying the first inequality of Proposition \ref{prop:elliptic
estimate-general} that
\begin{align}
\|\Lambda^{s+1}_{\eps} \na^{\eps}u^b_r\|_2 &\ \leq
M(\sigma)\Big(\eps^{-\frac12}
\|\Lambda^s_{\eps}(h^1_{app}+h^2_{app})\|_2+|\p^P_n
u^b_{app}|_{z=-1}|
_{H^{s+1}_{\eps}}\label{eq:elliptic-remainder-est1}\\
&\quad+\big(\eps^{-\frac12}\|\Lambda^{t_0-1}_\eps(h^1_{app}+h^2_{app})\|_2+|\p^P_nu^b_{app}|_{z=-1}|_{H^{t_0}_\eps})
|(\zeta,b)|_{H^{s+2}_\eps}\Big). \nonumber
\end{align}
Here \beno h^1_{app}=({\bf P}-{\bf
P}_{app})u^b_{app}\quad\mbox{and}\quad h^2_{app}={\bf
P}_{app}u^b_{app}. \eeno In what follows , we shall estimate term by
term the right hand side of
(\ref{eq:elliptic-remainder-est1}).\vspace{0.1cm}

\no{\bf Step 1.} The estimate of $h^1_{app}$.

Let \ben
&&\tau_1(x_h,z,D^{\eps}_{h}){\eqdefa}\eta_-(x_h,z,D^{\eps}_{h})\circ\eta_+(x_h,z,D^{\eps}_{h})-
(\eta_-\eta_+)(x_h,z,D^{\eps}_{h}),\nonumber\\
&&\tau_2(x_h,z,D^{\eps}_{h}){\eqdefa}(\p_z\eta_+)(x_h,z,D^{\eps}_{h}).
\label{tau} \een Then we write \beno \begin{split} {\bf P}-{\bf
P}_{app}=&-p_{d+1}\tau_1+p_{d+1}\tau_2+(\p_zp_{d+1}+\na_h\cdot{\bf
p}) \p_z+(\na_h\cdot P_1+\p_z{\bf p})\cdot \na_h.
\end{split}\eeno
While it is easy to observe that
\begin{eqnarray*}
\begin{split}
\na_h\cdot P_1=&{\eps}^2\left(\begin{array}{ll} \p_x(\zeta-b)\\
\eps\p_y(\zeta-b)
\end{array}\right)=- \p_z{\bf p},\\
\na_h\cdot{\bf p}=&\eps|D^{\eps}_{h}|^2\sigma={\eps}^2(-z|D^{\eps}_{h}|^2 b+(z+1)|D^{\eps}_{h}|^2\zeta),\\
\p_zp_{d+1}=&2{\eps}^3\f{(-z \na^{\eps}_h
b+(1-z)\na^{\eps}_h\zeta)\cdot\na^{\eps}_h(\zeta-b)}
{1+{\eps}(\zeta-b)},
\end{split}
\end{eqnarray*}
one has $\na_h\cdot P_1+\p_z{\bf p}=0$ and \beno
\p_zp_{d+1}+\na_h\cdot{\bf p}={\eps}^2F(b,\na_h^{\eps}
b,|D^{\eps}_{h}|^2b,\zeta,\na_h^{\eps} \zeta,|D^{\eps}_{h}|^2\zeta)
\eeno for some smooth function $F$. Then it follows from
(\ref{lem:product estimate})-(\ref{lem:composition}) that
\begin{align*}
\begin{split}
\|\Lambda^s_{\eps} h^1_{app}\|_2 \leq&
M(\sigma)\Big(\|\Lambda^s_{\eps}\tau_1(x_h,z,D^{\eps}_{h})u^b_{app}\|_2+
\|\Lambda^s_{\eps}\tau_2(x_h,z,D^{\eps}_{h})u^b_{app}\|_2\\
&+\eps^2\|\Lambda^s_{\eps}\p_z
u^b_{app}\|_2+\big(\|\Lambda^{t_0}_\eps\tau_1(x_h,z,D^{\eps}_{h})u^b_{app}\|_2\\&
+\|\Lambda^{t_0}_\eps\tau_2(x_h,z,D^{\eps}_{h})u^b_{app}\|_2
+\eps^{\f74}\|\Lambda^{t_0}_\eps\p_z
u^b_{app}\|_2\big)|(\zeta,b)|_{H^{s+2}_\eps}\Big).
\end{split}
\end{align*}
Applying Proposition \ref{prop:psdo-commuatator} and Lemma
\ref{lem:approximate} yields
\begin{align*}
\begin{split}
\|\Lambda^s_{\eps}\tau_1(x_h,z,D^{\eps}_{h})u^b_{app}\|_2 &\leq\eps
M(\sigma)\big(\eps\|\Lambda_{\eps}^s|D^{\eps}_{h}|u^b_{app}\|_2
+\eps^{\f34}\|\Lambda_{\eps}^{t_0}|D^{\eps}_{h}|u^b_{app}\|_2|(\zeta,b)|_{H^{s+2}_\eps}\big)\\
&\le \eps^{\f74}M(\sigma)\big(|\mathfrak{P}u|_{H^{s}_\eps}
+|\mathfrak{P} u|_{H^{t_0}_\eps}|(\zeta,b)|_{H^{s+2}_\eps}\big).
\end{split}
\end{align*}
Similarly applying Proposition \ref{prop:psdo-estimate} and Lemma
\ref{lem:approximate} gives
\begin{align*}
\|\Lambda^s_{\eps}\tau_2(x_h,z,D^{\eps}_{h})u^b_{app}\|_2 \le
\eps^{\f74}M(\sigma)\big(|\mathfrak{P}u|_{H^{s}_\eps} +|\mathfrak{P}
u|_{H^{t_0}_\eps}|(\zeta,b)|_{H^{s+2}_\eps}\big).
\end{align*}
As a consequence, we obtain \beq\label{eq:elliptic-remainder-est2}
\|\Lambda^s_{\eps} h^1_{app}\|_2
\leq\eps^{\f74}M(\sigma)\big(|\mathfrak{P}u|_{H^{s}_\eps}
+|\mathfrak{P} u|_{H^{t_0}_\eps}|(\zeta,b)|_{H^{s+2}_\eps}\big).
\eeq

\no{\bf Step 2.}\, The estimate of $h^2_{app}$.\,

Thanks to \eqref{eq:main symbol of DN}, we write \beno
\begin{split}
h^2_{app}=&{\bf P}_{app}u^b_{app}
=p_{d+1}(\p_z-\eta_-(x_h,z,D^{\eps}_{h}))
\tau_3(x_h,z,D^{\eps}_{h})\exp(\f{c_+}2z\sqrt{\eps}|D^{\eps}_{h}|)u,
\end{split}\eeno
where  $\widetilde\sigma_{app}$ is given by \eqref{lem5.1a} and
\[
\tau_3(x_h,z,D^{\eps}_{h}){\eqdefa}\textrm{Op}_{\eps}(\eta_+\widetilde\sigma_{app})
-\textrm{Op}_{\eps}(\eta_+)\circ\widetilde\sigma_{app}(x_h,z,D^{\eps}_{h}).
\]
Applying Proposition \ref{prop:psdo-estimate} gives
\begin{align*}
\begin{split}
\|\Lambda^s_{\eps} h^2_{app}\|_2\le&
M(\sigma)\Big(\|\Lam^s_{\eps}\na^{\eps}\tau_3(x_h,z,
D^{\eps}_{h})\exp(\f{c_+}2z\sqrt{\eps} |D^{\eps}_{h}|)u \|_2\\
&+\|\Lambda^{t_0}_\eps\na^{\eps}\tau_3(x_h,z,D^{\eps}_{h})\exp(\f{c_+}2z\sqrt{\eps}|D^{\eps}_{h}|)u
\|_2|(\zeta,b)|_{H^{s+2}_{\eps}}\Big).
\end{split}
\end{align*}
As in the proof of Proposition \ref{prop:psdo-estimate}, we split
$u$ into the low frequency and high frequency parts so that
$u=u_{lf}+u_{hf}$ with  $ u_{lf}=\psi(D^{\eps}_{h})u$. Then we
deduce from Proposition \ref{prop:psdo-commuatator} and the proof of
Lemma \ref{lem:approximate} that \beno
\begin{split}
\|\Lam^s_{\eps}\na^{\eps}\tau_3(x_h,z,D^{\eps}_{h})\exp(\f{c_+}2z\sqrt{\eps}|D^{\eps}_{h}|)u_{hf}\|_2
&\le
\eps^{\f74}M(\sigma)\Big(\|\Lambda^{s+1}_{\eps}\exp(\f{c_+}2z\sqrt{\eps}|D^{\eps}_{h}|)u_{hf}\|_2\\
&\quad+
\|\Lambda^{t_0+1}_\eps\exp(\f{c_+}2z\sqrt{\eps}|D^{\eps}_{h}|)u_{hf}\|_2|(\zeta,b)|_{H^{s+3}_\eps}\Big)\\
&\le \eps^{\f74}M(\sigma)\big(|\mathfrak{P}u|_{H^{s}_\eps}
+|\mathfrak{P} u|_{H^{t_0}_\eps}|(\zeta,b)|_{H^{s+3}_\eps}\big),
\end{split}
\eeno and it follows from a similar procedure as that used in
handling $u_{lf}$ in Proposition \ref{prop:psdo-commuatator} that
\begin{align*}
\begin{split}
\|\Lam^s_{\eps}\na^{\eps}\tau_3(x_h,z,D^{\eps}_{h})\exp(\f{c_+}2z\sqrt{\eps}|D^{\eps}_{h}|)u_{lf}\|_2
&\le
\eps^{\f32}M(\sigma)\||D^{\eps}_{h}|\exp(\f{c_+}2z\sqrt{\eps}|D^{\eps}_{h}|)u\|_2|(\zeta,b)|_{H^{s+3}_\eps}\\
&\le \eps^{\f32}M(\sigma)|\mathfrak{P}
u|_{2}|(\zeta,b)|_{H^{s+3}_\eps}.
\end{split}
\end{align*}
Whence  we obtain \beq\label{eq:elliptic-remainder-est3}
\|\Lambda^s_{\eps} h^2_{app}\|_2
\leq\eps^{\f32}M(\sigma)\big(|\mathfrak{P}u|_{H^{s}_\eps}
+|\mathfrak{P} u|_{H^{t_0}_\eps}|(\zeta,b)|_{H^{s+3}_\eps}\big).
\eeq

\no{\bf Step 3.}\, The estimate of
$\p^P_nu^b_{app}|_{z=-1}$.\,

Noticing that
\begin{align*}
&\p^P_nu^b_{app}|_{z=-1}=-{\bf e_3} \cdot(1+Q[\sigma])\na^{\eps}u^b_{app}|_{z=-1}\\
&={\eps}^2\na^{\eps}_h b\cdot\na^{\eps}_h
\sigma_{app}(x_h,-1,D^{\eps}_{h})u
-p_{d+1}|_{z=-1}(\eta_+\sigma_{app})(x_h,-1,D^{\eps}_{h})u,
\end{align*}
which together with  (\ref{lem:product
estimate})-(\ref{lem:composition}) implies that \beno
\begin{split}
|\p^P_nu^b_{app}|_{z=-1}|_{H^{s+1}_{\eps}}\leq&
M(\sigma)\Big(\eps^2||D^{\eps}_{h}|\sigma_{app}(x_h,-1,D^{\eps}_{h})u|
_{H^{s+1}_{\eps}}\\
&\ +|(\eta_+\sigma_{app})(x_h,-1,D^{\eps}_{h})u|_{H^{s+1}_{\eps}}
+\big({\eps}^2||D^{\eps}_{h}|\sigma_{app}(x_h,-1,D^{\eps}_{h})u|_{H^{t_0}_\eps}\\
&\
+|(\eta_+\sigma_{app})(x_h,-1,D^{\eps}_{h})u|_{H^{t_0}_\eps}\big)|(\zeta,b)|_{H^{s+2}_\eps}\Big).
\end{split}
\eeno It is easy to observe from the proof of Lemma
\ref{lem:approximate} that \beno
\begin{split}
\sqrt{\eps}||D^{\eps}_{h}|\sigma_{app}(x_h,-1,D^{\eps}_{h})u|_{H^{s+1}_{\eps}}
&\le
M(\sigma)\Big(|(1+\sqrt{\eps}|D^{\eps}_{h}|)^{-\f12}|D^{\eps}_{h}|u|_{H^{s}_\eps}\\
&\quad+||D^{\eps}_{h}|(1+\sqrt{\eps}|D^{\eps}_{h}|)^{-\f12}u|_{H^{t_0}_{\eps}}|(\zeta,b)|_{H^{s+3}_\eps}\Big)\\
&\le M(\sigma)\big(|\mathfrak{P}u|_{H^{s}_\eps} +|\mathfrak{P}
u|_{H^{t_0}_\eps}|(\zeta,b)|_{H^{s+3}_\eps}\big), \end{split} \eeno
and similarly we have \beno
|(\eta_+\sigma_{app})(x_h,-1,D^{\eps}_{h})u|_{H^{s+1}_{\eps}}\le
M(\sigma)\big(|\mathfrak{P}u|_{H^{s}_{\eps}} +|\mathfrak{P}
u|_{H^{t_0}_{\eps}}|(\zeta,b)|_{H^{s+3}_{\eps}}\big). \eeno
Therefore, we obtain that \ben\label{eq:elliptic-remainder-est4}
|\p^P_nu^b_{app}|_{z=-1}|_{H^{s+1}_{\eps}} \leq
M(\sigma)\big(|\mathfrak{P}u|_{H^{s}_{\eps}} +|\mathfrak{P}
u|_{H^{t_0}_{\eps}}|(\zeta,b)|_{H^{s+3}_{\eps}}\big). \een The above
arguments also imply that \ben\label{eq:elliptic-remainder-est5}
|\p^P_nu^b_{app}|_{z=-1}|_{H^{s}_{\eps}} \leq \sqrt{\eps}
M(\sigma)\big(|\mathfrak{P}u|_{H^{s}_{\eps}} +|\mathfrak{P}
u|_{H^{t_0}_{\eps}}|(\zeta,b)|_{H^{s+3}_{\eps}}\big). \een

Plugging
(\ref{eq:elliptic-remainder-est2})-(\ref{eq:elliptic-remainder-est4})
into (\ref{eq:elliptic-remainder-est1}) yields the first estimate of
the lemma. The second inequality of the lemma can be deduced from
the third inequality of Proposition \ref{prop:elliptic
estimate-general} and
(\ref{eq:elliptic-remainder-est2})-(\ref{eq:elliptic-remainder-est4}).
This finishes the proof of  Lemma
\ref{lem:remainder-elliptic}.\end{proof}

With the above two lemmas, we can complete the proof of Proposition
\ref{prop:remainder}.

\no{\bf Proof of Proposition \ref{prop:remainder}.}\,\,Thanks to
(\ref{eq:remainder}), for any $v\in \cS(\R^2)$, we get by applying
Green's identity  that
\begin{align*}
\begin{split}
\bigl(&\Lambda^s_{\eps} R[\eps\zeta]u,v)=\big(\p^P_n
u^b_r|_{z=0},\Lambda^s_{\eps} v\bigr)\\
&=-\big(\p^P_n u^b_r|_{z=-1},\Lambda^s_\eps v^\dag|_{z=-1}\big)+
\int_{\mathcal S}\bigl({\bf P}u^b_r\,\Lambda^s_{\eps}
v^\dag+(1+Q[\sigma])\na^{\eps}u^b_r\cdot\na^{\eps}\Lambda^s_{\eps} v^\dag\bigr)\,dx_h\,dz\\
&=\big(\Lambda^s_{\eps}\p^P_n
u^b_{app}|_{z=-1},\chi(-\sqrt{\eps}|D^{\eps}_{h}|)v\big)\\
&\quad- \int_{\mathcal S}\bigl(\Lambda^s_{\eps}{\bf
P}u^b_{app}\,v^\dag-
\Lam^s_{\eps}(1+Q[\sigma])\na^{\eps}u^b_r\cdot\na^{\eps}v^\dag\bigr)\,dx_h\,dz.
\end{split}
\end{align*}
As $\|v^\dag\|_2\le C|v|_2,$ applying Lemma \ref{lem:duality} below
ensures that \beno \begin{split}\big|\big(\Lambda^s_{\eps}
R[\eps\zeta]u,v\big)\big|\leq& C|v|_2
\Big(|\Lambda^s_{\eps}\p^P_nu^b_{app}|_{z=-1}|_2
+\|\Lambda^s_{\eps}{\bf P}u^b_{app}\|_2+\sqrt{\eps}
\|\Lambda^{s+1}_{\eps}
(1+Q[\sigma])\na^{\eps}u^b_r\|_2\Big),\end{split} \eeno which
together with (\ref{eq:elliptic-remainder-est2}),
(\ref{eq:elliptic-remainder-est3}),
(\ref{eq:elliptic-remainder-est5}) and Lemma
\ref{lem:remainder-elliptic} implies that
\begin{eqnarray*}
\big|\big(\Lambda^s_{\eps} R[\eps\zeta]u,v\big)\big|\le \sqrt{\eps}
M(\sigma)|v|_2 \big(|\mathfrak{P}u|_{H^{s}_{\eps}} +|\mathfrak{P}
u|_{H^{t_0}_{\eps}}|(\zeta,b)|_{H^{s+3}_{\eps}}\big).
\end{eqnarray*}
This proves \eqref{prop5.5a} by duality.\ef

\subsection{Commutator estimates}

In this subsection, we shall present several useful  commutator
estimates between the  Dirichlet-Neumann operator and the elliptic
operator $\fd_{\eps}(\zeta)$ defined by \eqref{eq:ellipticoperator}.

\begin{prop}\label{prop:DN-commutator} {\sl Let $ k\in \N,$   and
$\zeta, b\in H^{m_0}\cap H^{2k+2}(\R^2)$ satisfy (\ref{ass:free
surface}). Then for any $u\in H^{t_0+\f32}\cap H^{2k+\f12}(\R^2)$,
there holds \beq\label{eq:commutator-general} \big|\big[\f1\eps
G[\eps \zeta],\fd_{\eps}(\zeta)^k\big]u\big|_2\le \eps
M(\sigma)\Big(|\mathfrak Pu|_{H^{2k}_\eps}+|\mathfrak
Pu|_{H^{t_0+1}}|(\zeta,b)|_{H^{2k+2}_\eps}\Big). \eeq }\end{prop}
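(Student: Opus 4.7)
The approach is to reduce to the $k=1$ case by the Leibniz-type identity
$$\bigl[\tfrac{1}{\eps}G[\eps\zeta],\fd_{\eps}(\zeta)^k\bigr]=\sum_{j=0}^{k-1}\fd_{\eps}(\zeta)^{j}\bigl[\tfrac{1}{\eps}G[\eps\zeta],\fd_{\eps}(\zeta)\bigr]\fd_{\eps}(\zeta)^{k-1-j},$$
controlling the outer powers $\fd_{\eps}(\zeta)^{j}$ and $\fd_{\eps}(\zeta)^{k-1-j}$ via the mapping estimates of Lemma~\ref{lem:elliptic operator} and the commutator estimates of Lemma~\ref{lem:elliptic operator-commutator}, so that the crux reduces to a sharp estimate for the single commutator $[G[\eps\zeta]/\eps,\fd_{\eps}(\zeta)]$ in $L^{2}$.

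To estimate that single commutator, I would decompose $G[\eps\zeta]=g(x_h,D^{\eps}_h)+R[\eps\zeta]$ as in Proposition~\ref{prop:remainder}. For the principal-symbol piece, I would exploit the symbolic resemblance highlighted in Section~\ref{sec5.2}: a direct computation shows that the principal symbol of $g(x_h,D^{\eps}_h)^{2}/\eps$ coincides \emph{exactly} with $(1+\eps^{3}|\na^{\eps}_h\zeta|^{2})|\xi^{\eps}|^{2}-\eps^{3}(\xi^{\eps}\cdot\na^{\eps}_h\zeta)^{2}$, which is also the principal symbol of $(1+\eps^{3}|\na^{\eps}_h\zeta|^{2})\fd_{\eps}(\zeta)$. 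Consequently the Poisson bracket $\{g^{2}/\eps,\fd_{\eps}(\zeta)\}$ reduces modulo lower-order symbols to
$$\{(1+\eps^{3}|\na^{\eps}_h\zeta|^{2})\fd_{\eps}(\zeta),\fd_{\eps}(\zeta)\}=\eps^{3}\{|\na^{\eps}_h\zeta|^{2},\fd_{\eps}(\zeta)\}\,\fd_{\eps}(\zeta),$$
which carries a favorable $\eps^{3}$ factor. Since $\{g^{2}/\eps,\fd_{\eps}(\zeta)\}=(2g/\eps)\{g,\fd_{\eps}(\zeta)\}$ and $g\ge\sqrt{\eps}\,c_{+}|\xi^{\eps}|$ by \eqref{eq:eta}, this transfers the $\eps$-smallness to $\{g/\eps,\fd_{\eps}(\zeta)\}$. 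Applying Proposition~\ref{prop:psdo-commuatator} with $n=1$ to the symbols $g/\eps$ (which is of the form $\Sigma(\na^{\eps}\sigma,\xi^{\eps})/\sqrt{\eps}$ with $\Sigma\in C^\infty(\R^3,\dot{\mathcal{M}}^{1})$) and the symbol of $\fd_{\eps}(\zeta)$, together with this Poisson-bracket cancellation, should yield
$$\bigl|[g(x_h,D^{\eps}_h)/\eps,\fd_{\eps}(\zeta)]u\bigr|_{2}\le\eps\,M(\sigma)\bigl(|\mathfrak{P}u|_{H^{2}_{\eps}}+|\mathfrak{P}u|_{H^{t_0+1}}|(\zeta,b)|_{H^{4}_{\eps}}\bigr).$$

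For the remainder piece $[R[\eps\zeta]/\eps,\fd_{\eps}(\zeta)]$, I would treat $R[\eps\zeta]\fd_{\eps}(\zeta)u$ and $\fd_{\eps}(\zeta)R[\eps\zeta]u$ separately by Proposition~\ref{prop:remainder} with $s=0$, Lemma~\ref{lem:elliptic operator}, and the Green-identity/duality trick already used in the proof of Proposition~\ref{prop:remainder}: by pairing $R[\eps\zeta]u$ against $\fd_{\eps}(\zeta)v^{\dagger}$ and integrating by parts in the strip $\mathcal{S}$, one extracts an additional $\sqrt{\eps}$ beyond the one already built into Proposition~\ref{prop:remainder}, giving effectively $\eps$ and absorbing the $1/\eps$ prefactor. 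Chaining everything together and iterating in $k$ through the Leibniz identity above delivers \eqref{eq:commutator-general}.

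The main obstacle is the apparent order count: on face value, $G[\eps\zeta]/\eps$ behaves like $|\xi^{\eps}|/\sqrt{\eps}$, $\fd_{\eps}(\zeta)$ like $|\xi^{\eps}|^{2}$, and their commutator like $|\xi^{\eps}|^{2}/\sqrt{\eps}$, which would only give a bound of size $\eps^{-1/2}|\mathfrak{P}u|_{H^{2k}_\eps}$ — far worse than the target $\eps|\mathfrak{P}u|_{H^{2k}_\eps}$. Converting the symbolic coincidence of $g^{2}/\eps$ with $(1+\eps^{3}|\na^{\eps}_h\zeta|^{2})\fd_{\eps}(\zeta)$ into a rigorous $\eps^{3}$-cancellation inside the pseudo-differential calculus of Section~4 with symbols of only $H^{t_0}$-smoothness, while simultaneously keeping track of how the remainder $R[\eps\zeta]$ from Proposition~\ref{prop:remainder} contributes, is the most delicate technical step of the argument.
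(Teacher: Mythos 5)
The core difficulty you identify (the naive order count) is real, and your Poisson-bracket computation for the principal part $g$ is correct: since $g^{2}/\eps=\rho^{2}\,\fd_{\eps}(\zeta)$ as symbols, $\{g^{2}/\eps,\fd_{\eps}\}=\eps^{3}\{|\na^{\eps}_h\zeta|^{2},\fd_{\eps}\}\fd_{\eps}$, and after dividing by $2g/\eps\gtrsim \sqrt{\eps}|\xi^{\eps}|/\eps$ the first-order symbol $\{g/\eps,\fd_{\eps}\}_1$ carries an $\eps^{5/2}$ prefactor, which together with the error bound of Proposition~\ref{prop:psdo-commuatator} does deliver an admissible $\eps|\mathfrak{P}u|_{H^{2}_{\eps}}$ bound for $[g/\eps,\fd_{\eps}]$. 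So far so good.

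The genuine gap is in the remainder piece. Your count — ``Proposition~\ref{prop:remainder} gives $\sqrt{\eps}$, the Green-identity pairing extracts another $\sqrt{\eps}$, giving $\eps$, which absorbs the $1/\eps$'' — ends at $O(1)$, not at the required $O(\eps)$. Indeed the best the paper itself obtains by that route is Proposition~\ref{prop:remainder-commutator}, namely
$\bigl|[\tfrac1\eps R[\eps\zeta],\fd_{\eps}(\zeta)]u\bigr|_{H^{s}_{\eps}}\le \sqrt{\eps}\,M(\sigma)\bigl(|\mathfrak{P}u|_{H^{s+1}_{\eps}}+\cdots\bigr)$; at $s=0$ this is $\sqrt{\eps}\,|\mathfrak{P}u|_{H^{1}_{\eps}}$. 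This does \emph{not} imply the target $\eps\,|\mathfrak{P}u|_{H^{2}_{\eps}}$: for a test function $u$ with Fourier support at $|\xi^{\eps}|\sim 1$, both sides reduce to $|\mathfrak{P}u|_{L^{2}}$ up to constants, so the available bound is larger than the target by a factor $\eps^{-1/2}$. More generally, $\sqrt{\eps}|\mathfrak{P}u|_{H^{2k-1}_{\eps}}$ (the shape that the decomposition $G=g+R$ together with the sharp Theorem~\ref{thm:DN-commutator-sharp}-machinery produces) dominates $\eps|\mathfrak{P}u|_{H^{2k}_{\eps}}$ only when $|\xi^{\eps}|\gtrsim \eps^{-1/2}$, and is strictly weaker on the low-frequency range $|\xi^{\eps}|\lesssim \eps^{-1/2}$. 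The two estimates are complementary, and the paper in fact needs both in Section~6 (Proposition~\ref{prop:DN-commutator} for $\frak{H}_{3}$, Theorem~\ref{thm:DN-commutator-sharp} for $\frak{H}_{5}$); one cannot be derived from the other.

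The paper's actual proof of Proposition~\ref{prop:DN-commutator} therefore does \emph{not} split $G=g+R$ at all. It works directly with the variational formulation of the Dirichlet--Neumann operator on the flattened strip, writing, for a test function $v$,
\[
\bigl([G[\eps\zeta],\fd_{\eps}(\zeta)^k]u,v\bigr)=\int_{\cS}\Bigl\{[(1+Q[\si])\na^{\eps},\fd_{\eps}^k]u^b\cdot\na^{\eps}v^{\dagger}+(1+Q)\na^{\eps}\bigl((\fd_{\eps}^ku)^b-\fd_{\eps}^ku^b\bigr)\cdot\na^{\eps}v^{\dagger}-\bigl[\na^{\eps},\fd_{\eps}^k\bigr]\cdot\bigl((1+Q)\na^{\eps}u^b\bigr)\,v^{\dagger}\Bigr\},
\]
and obtains the full $\eps^{2}$ smallness (before dividing by $\eps$) from the coefficients rather than from any symbol-level cancellation: $\bigl[\na^{\eps},\fd_{\eps}^k\bigr]$ gains $\eps^{5/2}$ because $\na^{\eps}=(\sqrt{\eps}\na^{\eps}_h,\p_z)$ and Lemma~\ref{lem:elliptic operator-commutator} supplies an $\eps^{2}$ for $[\fd_{\eps}^k,\na^{\eps}_h]$, and $[Q[\si],\fd_{\eps}^k]$ inherits the $\eps$-smallness of $Q$. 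The remaining pieces are controlled by Lemma~\ref{lem:duality}, Corollary~\ref{cor:elliptic estimate}, and Propositions~\ref{prop:elliptic estimate}--\ref{prop:elliptic estimate-general}. To salvage your approach, you would need an estimate of the form $|[R[\eps\zeta],\fd_{\eps}(\zeta)]u|_{L^{2}}\le \eps^{2}M(\sigma)|\mathfrak{P}u|_{H^{2}_{\eps}}$, which is not provided by any remainder estimate in the paper and does not follow from the argument you sketch.
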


\begin{proof}\, Thanks to \eqref{Qd} and \eqref{eq:DN-representation}, for any $v\in \cS(\R^2)$, we get by
applying Green's identity  that \beq\label{prop5.6a}
\begin{split}
\big([G[\eps \zeta],&\fd_{\eps}(\zeta)^k]u,v\big)=\big(G[\eps \zeta]\fd_{\eps}(\zeta)^ku,v\big)-\big(\fd_{\eps}(\zeta)^kG[\eps \zeta]u,v\big)\\
&=\int_\cS\Bigl\{(1+Q[\si])\na^{\eps}(\fd_{\eps}(\zeta)^k u)^b\cdot
\na^{\eps}v^\dag- \fd_{\eps}(\zeta)^k(1+Q[\si])\na^{\eps}u^b\cdot\na^{\eps}v^\dag\\
&\qquad- \na^{\eps}\cdot\big(\fd_{\eps}(\zeta)^k(1+Q[\si])\na^{\eps}u^b\big)v^\dag\Bigr\}\,dx_h\,dz\\
&=\int_\cS\Bigl\{[(1+Q[\si])\na^{\eps},\fd_{\eps}(\zeta)^k]u^b\cdot\na^{\eps}v^\dag
\\
&\qquad+(1+Q[\si])\na^{\eps}((\fd_{\eps}(\zeta)^ku)^b-\fd_{\eps}(\zeta)^ku^b)\cdot\na^{\eps}
v^\dag\\
&\qquad-\big[\na^{\eps},\fd_{\eps}(\zeta)^k\big]\cdot\big((1+Q[\si])\na^{\eps}u^b\big)v^\dag\Bigr\}\,dx_h\,dz\\
&{\eqdefa} A_1+A_2+A_3.\end{split} \eeq

To deal with $A_1, A_2$, we need the following lemma, which can be
deduced from the proof of Lemma 3.1 in \cite{Lannes-Inven}.

\begin{lem}\label{lem:duality}{\sl For all $f\in L^2(\R^2)$ and
$\textbf{g}\in H^1(\cS)^3$, one has \beno
\bigl|\int_{\cS}\na^{\eps}f^\dag\cdot \textbf{g}\,dx_h\,dz\bigr|\le
C\sqrt{\eps}|f|_2\|\Lam_\eps \textbf{g}\|_2. \eeno} \end{lem}

Applying Lemma \ref{lem:duality} to $A_1$ gives \beno |A_1| \le
C\sqrt{\eps}|v|_2\|\Lam_\eps\big[(1+Q[\sigma])\na^{\eps},\fd_{\eps}(\zeta)^k\big]u^b\|_2,
\eeno but as \beno
\begin{split}
&\big[(1+Q[\sigma])\na^{\eps},\fd_{\eps}(\zeta)^k\big]u^b=(1+Q[\sigma])\big[\na^{\eps},\fd_{\eps}(\zeta)^k\big]u^b+\big[Q[\sigma],\fd_{\eps}(\zeta)^k\big]\na^{\eps}u^b,
\end{split}
\eeno from which,  Lemma \ref{lem:elliptic operator-commutator} and
Corollary \ref{cor:elliptic estimate}, we deduce that
\begin{align*}
\begin{split}
\|\Lam_{\eps}\big[(1+Q[\sigma])\na^{\eps},\fd_{\eps}(\zeta)^k\big]u^b\|_2
&\ \le \eps
M(\sigma)\big(\|\Lam^{2k}_\eps\na^{\eps}u^b\|_2+\|\Lam^{t_0+1}\na^{\eps}u^b\|_2
|(\zeta,b)|_{H^{2k+2}_{\eps}}\big)\\
&\ \le \eps^{\f32} M(\sigma)\big(|\mathfrak
Pu|_{H^{2k}_\eps}+|\mathfrak
Pu|_{H^{t_0+1}}|(\zeta,b)|_{H^{2k+2}_{\eps}}\big).
\end{split}
\end{align*}
As a consequence, we obtain \beq\label{prop5.6b} |A_1|\le
\eps^2M(\sigma)|v|_2\big(|\mathfrak Pu|_{H^{2k}_\eps}+|\mathfrak
Pu|_{H^{t_0+1}}|(\zeta,b)|_{H^{2k+2}_\eps}\big). \eeq Applying Lemma
\ref{lem:duality} again, we have \beno |A_2|\le \sqrt{\eps}
M(\sigma)|v|_2\|\Lam_{\eps}\na^{\eps}((\fd_{\eps}(\zeta)^ku)^b-\fd_{\eps}(\zeta)^ku^b)\|_2.
\eeno Thanks to \eqref{u^b}, we find that
$(\fd_{\eps}(\zeta)^ku)^b-\fd_{\eps}(\zeta)^ku^b$ solves
\[ \left\{\begin{array}{ll}
\na^{\eps}\cdot (1+Q[\sigma])\na^{\eps}((\fd_{\eps}(\zeta)^ku)^b-\fd_{\eps}(\zeta)^ku^b)=\textbf{g},\\
(\fd_{\eps}(\zeta)^ku)^b-\fd_{\eps}(\zeta)^ku^b|_{z=0}=0,\\
\p_n((\fd_{\eps}(\zeta)^ku)^b-\fd_{\eps}(\zeta)^ku^b)|_{z=-1}={\bf
e_{3}}\cdot
\big[(1+Q)\na^{\eps},\fd_{\eps}(\zeta)^k\big]u^b|_{z=-1},
\end{array}\right.
\]
where
\[ \textbf{g}\eqdefa -\big[\na^{\eps},\fd_{\eps}(\zeta)^k\big]\cdot(1+Q[\sigma])\na^{\eps}u^b
-\na^{\eps}\cdot\big[(1+Q[\sigma])\na^{\eps},\fd_{\eps}(\zeta)^k\big]u^b.\]
Then we deduce from Proposition \ref{prop:elliptic estimate} and
Proposition \ref{prop:elliptic estimate-general} that \beno
\begin{split}
&\|\Lam_{\eps}\na^{\eps}((\fd_{\eps}(\zeta)^ku)^b-\fd_{\eps}(\zeta)^ku^b)\|_2\\
&\ \le
M(\sigma)\big(\|\Lam_{\eps}[(1+Q[\sigma])\na^{\eps},\fd_{\eps}(\zeta)^k]u^b\|_2+
\eps^{-\f12}\|[\na^{\eps},\fd_{\eps}(\zeta)^k]\cdot(1+Q[\sigma])\na^{\eps}u^b\|_2\big)\\
&\ \le \eps^{\f32} M(\sigma)\big(|\mathfrak
Pu|_{H^{2k}_\eps}+|\mathfrak
Pu|_{H^{t_0+1}}|(\zeta,b)|_{H^{2k+2}_\eps}\big),
\end{split}
\eeno which gives \beq\label{prop5.6c} |A_2|\le
\eps^2M(\sigma)|v|_2\big(|\mathfrak Pu|_{H^{2k}_\eps}+|\mathfrak
Pu|_{H^{t_0+1}}|(\zeta,b)|_{H^{2k+2}_\eps}\big). \eeq To deal with
$A_3$, we apply Lemma \ref{lem:Poisson regularization}, Lemma
\ref{lem:elliptic operator-commutator} and Corollary
\ref{cor:elliptic estimate} to obtain
\begin{align*}
\begin{split}
|A_3|&\le c_2\|\big[\na^{\eps},\fd_{\eps}(\zeta)^k\big]\cdot\big((1+Q[\si])\na^{\eps}u^b\|_2|v|_2\\
&\le \eps^{\f52} M(\sigma)\big(|\mathfrak
Pu|_{H^{2k}_\eps}+|\mathfrak
Pu|_{H^{t_0+1}}|(\zeta,b)|_{H^{2k+2}_\eps}\big)|v|_2,
\end{split}
\end{align*}
which along with (\ref{prop5.6a}-\ref{prop5.6c}) concludes that
\beno \bigl|\big([G[\eps
\zeta],\fd_{\eps}(\zeta)^k]u,v\big)\bigr|\le
\eps^2M(\sigma)|v|_2\big(|\mathfrak Pu|_{H^{2k}_\eps}+|\mathfrak
Pu|_{H^{t_0+1}}|(\zeta,b)|_{H^{2k+2}_\eps}\big), \eeno and this
implies
 (\ref{eq:commutator-general}).\end{proof}

\begin{rmk}\label{rem:DN-commutator} It is easy to observe from the proof of Proposition
\ref{prop:DN-commutator} that \beno
\begin{split}
\big||D^{\eps}_{h}|^m\big[\Lam^s,\f1\eps G[\eps
\zeta]\big]u\big|_2\le \eps M(\si)\Big(|\Lam^{m+1}_\eps\mathfrak
Pu|_{H^{s-1}}+|\mathfrak Pu|_{H^{t_0+2}}|\Lam^{3}_\eps(\zeta,
b)|_{H^{s}}\Big) \end{split} \eeno  for $ m=0,1,$ which will be used
later in the lower order energy estimate.
\end{rmk}

In order to deal with the energy estimate for the linearized system
of \eqref{eq:Hamiltonian form-non}, we need the following sharper
commutator estimate.

\bthm{Theorem}\label{thm:DN-commutator-sharp} {\sl Let $s\ge 0, k\in
\N$, and $\zeta, b\in H^{m_0}\cap H^{2k+s+3}(\R^2)$ satisfy
(\ref{ass:free surface}). We denote $\rho(\zeta){\eqdefa}
(1+\eps^3|\na^\eps_h\zeta|^2)^{\f12}.$ Then for any $u\in
H^{2k+s-\f12}\cap H^{t_0+\f52}(\R^2)$, we have
\beq\label{eq:commutator-sharp} \begin{split} &\bigl|\big[\f
1\varepsilon \rho(\zeta)^{-1}G[{\eps}
\zeta],\fd_{\eps}(\zeta)^k\big]u\bigr|_{H^s_{\eps}} \leq \sqrt{\eps}
M(\sigma)\Big(|\mathfrak P u|_{H^{2k+s-1}_\eps}
+|\mathfrak{P}u|_{H^{t_0+2}}|(\zeta,b)|_{H^{2k+s+3}_\eps}\Big).
\end{split}\eeq }\ethm

\begin{rmk} Compared with (\ref{eq:commutator-general}), the
commutator estimate (\ref{eq:commutator-sharp}) gains one more
derivative. The key observation used to prove this theorem is that
the symbol of the principle part of the operator
$\rho(\zeta)^{-1}G[\eps\zeta]$ is the same as the square root of the
symbol of  $\fd_{\eps}(\zeta)$.
\end{rmk}

In what follows, we divide the proof of Theorem
\ref{thm:DN-commutator-sharp}  into two parts. In the first part, we
deal with the commutator estimate between the principle part of DN
operator and $\fd_{\eps}(\zeta)$.

\begin{prop}\label{prop:main part-commutator}{\sl Let $s\ge 0,$
$u\in H^{s+1}\cap H^{t_0+2}(\R^2),$ and $g(x_h,\xi^\eps)$ be
determined by \eqref{gxh}. Then under the assumptions of Theorem
\ref{thm:DN-commutator-sharp}, one has \beq\label{prop5.7}
\begin{split}
&\bigl|\big[\frac 1{\eps}
\rho(\zeta)^{-1}g(x_h,D^{\eps}_{h}),\fd_{\eps}(\zeta)\big]u\bigr|
_{H^{s}_\eps}\le {\eps}^\f94
M(\sigma)\Big(|\mathfrak{P}u|_{H^{s+\f12}_\eps}
+\langle|\mathfrak{P}u|_{H^{t_0+\f32}_\eps}|\zeta|_{H^{s+3}_\eps}\rangle_{s>t_0+1}\Big).
\end{split}
\eeq}
\end{prop}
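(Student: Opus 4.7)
The linchpin of the proof is the algebraic identity
\[
\frac{1}{\eps}\rho(\zeta)^{-1}g(x_h,\xi^\eps)=\frac{1}{\sqrt{\eps}}\sqrt{\sigma_{\fd}(x_h,\xi^\eps)},
\]
where $\sigma_{\fd}(x_h,\xi^\eps)=|\xi^\eps|^2-\eps^3(\xi^\eps\cdot\nabla_h^\eps\zeta)^2/\rho(\zeta)^2$ is exactly the symbol of the linearized surface-tension operator $\fd_\eps(\zeta)$. This follows by directly computing $g^2=\eps\rho^2\sigma_\fd$ from the definition \eqref{gxh}. Thus, up to the universal scalar $1/\sqrt{\eps}$, the symbol of $\frac{1}{\eps}\rho^{-1}g$ is the positive square root of the symbol of $\fd_\eps(\zeta)$.

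The crucial consequence is that the Poisson bracket
\[
\{\sqrt{\sigma_\fd},\sigma_\fd\}=\nabla_\xi\sqrt{\sigma_\fd}\cdot\nabla_{x_h}\sigma_\fd-\nabla_\xi\sigma_\fd\cdot\nabla_{x_h}\sqrt{\sigma_\fd}
\]
vanishes identically, because $\nabla_\xi\sqrt{\sigma_\fd}=\nabla_\xi\sigma_\fd/(2\sqrt{\sigma_\fd})$ and $\nabla_{x_h}\sqrt{\sigma_\fd}=\nabla_{x_h}\sigma_\fd/(2\sqrt{\sigma_\fd})$ are parallel (as vectors and scalars, respectively) to $\nabla_\xi\sigma_\fd$ and $\nabla_{x_h}\sigma_\fd$ with the same proportionality factor. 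In the notation of Section~4 this reads $\{\sqrt{\sigma_\fd},\sigma_\fd\}_1=0$.

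Based on this, the plan is to apply Proposition \ref{prop:psdo-commuatator} with $\sigma^1=\sqrt{\sigma_\fd}\in C^\infty(\R^2,\dot{\mathcal M}^1)$, $\sigma^2=\sigma_\fd\in C^\infty(\R^2,\dot{\mathcal M}^2)$ and $n=1$. To absorb the $\eps$-prefactors into the ``coefficient'' variable, both symbols are written as $\Sigma^j(v,\xi^\eps)$ with $v=\eps^{3/2}\nabla^\eps_h\zeta$; then $\Sigma^j$ become $\eps$-independent smooth functions of $(v,\xi^\eps)$. Since $\{\sigma^1,\sigma^2\}_1=0$, Proposition \ref{prop:psdo-commuatator} yields
\[
\bigl|[\textrm{Op}_\eps(\sqrt{\sigma_\fd}),\fd_\eps(\zeta)]u\bigr|_{H^s_\eps}\le C(|v|_{W^{2,\infty}})\Bigl\{||D^\eps_h|u|_{H^s_\eps}|v|_{W^{2,\infty}}+\cdots\Bigr\},
\]
and multiplying by the prefactor $1/\sqrt{\eps}$ produces the desired commutator with $\frac{1}{\eps}\rho^{-1}g$. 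The $\eps$-smallness of $v$ now drives the estimate: using the standard Sobolev embedding together with the assumption $m_0>4$ gives $|v|_{W^{2,\infty}}\lesssim\eps^{3/2}M(\sigma)$ and $|v|_{H^{s+2}_\eps}\le\eps^{3/2}|\zeta|_{H^{s+3}_\eps}$; together with the scaled embedding of Lemma \ref{lem2.1}(i), which contributes an extra $\eps^{-1/4}$ when it is applied, the combined powers $\eps^{-1/2}\cdot\eps^{-1/4}\cdot\eps^3=\eps^{9/4}$ appear in front of the $|\mathfrak{P}u|_{H^{s+\frac12}_\eps}$ norm. The ``higher'' terms ($\langle\cdots\rangle_{s>t_0+1}$) come from the corresponding higher-regularity terms in Proposition \ref{prop:psdo-commuatator}.

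The main obstacle is the careful bookkeeping of $\eps$-powers: naive application of Proposition \ref{prop:psdo-commuatator} only yields an $\eps^1$ bound, and reaching $\eps^{9/4}$ requires simultaneously (i) exploiting the full cancellation $\{\sqrt{\sigma_\fd},\sigma_\fd\}_1=0$ to absorb one derivative, (ii) routing the $L^\infty$ norms of $v$ through the $\eps$-scaled Lemma \ref{lem2.1}(i) rather than the unscaled one wherever this improves the exponent, and (iii) absorbing the residual loss into the parameterized Poisson norm $|\mathfrak{P}u|_{H^{s+\frac12}_\eps}$, whose high-frequency asymptotics $\mathfrak P\sim\eps^{-1/4}|D^\eps_h|^{1/2}$ matches the profile produced by the subprincipal remainder. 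A final interpolation argument (as in Lemma \ref{lem:elliptic operator}) distributes the regularity of $\zeta$ across the $H^{s+3}_\eps$ and $H^{t_0+3}_\eps$ norms so that the estimate closes in the form claimed.
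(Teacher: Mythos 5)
Your proposal is correct and takes essentially the same route as the paper: identify $\rho^{-1}g=\sqrt{\eps}\,\sqrt{\sigma_{\fd}}$ so that the first-order Poisson bracket $\{\rho(\zeta)^{-1}g,\fd_\eps(\zeta)\}_1$ vanishes, apply Proposition~\ref{prop:psdo-commuatator} with $n=1$ and $v=\eps^{3/2}\nabla^\eps_h\zeta$, and pass to the parameterized norm via $\bigl||D^\eps_h|u\bigr|_{H^s_\eps}\le|\mathfrak P u|_{H^{s+\frac12}_\eps}$. Your explicit parallel-gradient justification of the bracket cancellation usefully expands what the paper dispatches as ``easy to observe''.
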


\begin{proof} It is easy to observe from \eqref{poisson} and
\eqref{eq:main symbol of DN} that \beno \big\{\rho(\zeta)^{-1}
g(x_h,\xi^{\eps}),\fd_{\eps}(\zeta)(x,\xi^{\eps})\big\}_1=0, \eeno
where $\fd_{\eps}(\zeta)(x,\xi^{\eps})$ denotes the symbol of the
pseudo-differential  operator  with $\fd_{\eps}(\zeta)$ being
defined in \eqref{eq:ellipticoperator} so that $
\fd_{\eps}(\zeta)(x,\xi^{\eps})=|\xi^{\eps}|^2-{\eps}^3\rho(\zeta)^{-2}(\na^{\eps}_h
\zeta\cdot \xi^{\eps})^2. $ Then we have \beno
\begin{split}
\big[\rho(\zeta)^{-1}g(x_h,D^{\eps}_{h}),\fd_{\eps}(\zeta)\big]
=&\big[\rho(\zeta)^{-1}g(x_h,D^{\eps}_{h}),\fd_{\eps}(\zeta)\big]-\textrm{Op}_\eps\big\{\rho(\zeta)^{-1}
g(x_h,\xi),\fd_{\eps}(\zeta)(x,\xi)\big\}_1,
\end{split}\eeno
from which and Proposition \ref{prop:psdo-commuatator}, we infer
that \beno\begin{split}
&\big|\big[\rho(\zeta)^{-1}g(x_h,D^{\eps}_{h}),\fd_{\eps}(\zeta)\big]u\big|_{H^s_\eps}\le
\eps^{\f12}M(\sigma)\Big(\eps^{\f{11}4}||D^{\eps}_{h}|u|_{H^{s}_{\eps}}
+\eps^{\f{11}4}\langle|\zeta|_{H^{s+3}_{\eps}}||D^{\eps}_{h}|u|_{H^{t_0+1}_{\eps}}\rangle_{s>t_0+1}\Big),
\end{split}
\eeno which together with the fact that
\[
||D^{\eps}_{h}|u|_{H^s_{\eps}}\leq
\Big|\f{|D^{\eps}_{h}|(1+\sqrt{\eps}|D^{\eps}_{h}|)^{\f12}}
{(1+\sqrt{\eps}|D^{\eps}_{h}|)^{\f12}} u\Big|_{H^s_{\eps}} \leq
|\mathfrak{P}u|_{H^{s+\f12}_\eps}
\]
implies \eqref{prop5.7}.\end{proof}

Next let us turn to the commutator estimate between $R[\eps\zeta]$
and $\fd_{\eps}(\zeta)$.

\begin{lem}\label{lem:remainder-elliptic-comm}{\sl Let $s\ge 0,$  $u\in
H^{s+\f32}\cap H^{t_0+\f52}(\R^2).$ Let $u^b$ be given by
\eqref{u^b} and $u^b_r$ be given by Lemma
\ref{lem:remainder-elliptic}. We denote $w_r^b{\eqdefa}
(\fd_{\eps}(\zeta) u)^b_{r}-\fd_{\eps}(\zeta) u^b_{r}.$ Then one has
\begin{align}\label{lemma5.4}
\begin{split}
\|\Lambda^{s+1}_{\eps}&\na^{\eps}w^b_r\|_2+\eps^{-\f12}\|\Lambda^{s}_\eps\na^{\eps}\p_zw^b_r\|_2\\
&\le \eps
M(\sigma)\Big(|\mathfrak{P}u|_{H^{s+1}_\eps}+|\mathfrak{P}u|_{H^{t_0}}|(\zeta,b)|_{H^{s+5}_\eps}\\
&\
+\langle|\mathfrak{P}u|_{H^{t_0+1}}|(\zeta,b)|_{H^{s+5}_\eps}\rangle_{s>t_0}
+\langle|\mathfrak{P}u|_{H^{t_0+2}}|(\zeta,b)|_{H^{s+5}_\eps}\rangle_{s>t_0+1}\Big).
\end{split}
\end{align}
 }
\end{lem}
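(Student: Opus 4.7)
The plan is to derive the elliptic boundary value problem satisfied by $w_r^b$ and to apply Proposition \ref{prop:elliptic estimate-general} at regularity $s+1$, exploiting that every inhomogeneous term on the right-hand side carries a built-in $\eps$-gain.

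First I would set up the BVP for $w_r^b$. Writing
\[
w_r^b = \bigl[(\fd_\eps(\zeta) u)^b - \fd_\eps(\zeta) u^b\bigr] - \bigl[(\fd_\eps(\zeta) u)^b_{app} - \fd_\eps(\zeta) u^b_{app}\bigr],
\]
and applying ${\bf P}$, together with ${\bf P}(\fd_\eps(\zeta) u)^b = {\bf P} u^b = 0$ and the decomposition ${\bf P} u^b_{app} = h^1_{app} + h^2_{app}$ introduced in Step 1 and Step 2 of Lemma \ref{lem:remainder-elliptic}, one finds
\[
{\bf P} w_r^b = -\bigl[{\bf P},\fd_\eps(\zeta)\bigr] u_r^b - \bigl(h^1_{app,\fd_\eps u} + h^2_{app,\fd_\eps u}\bigr) + \fd_\eps(\zeta)\bigl(h^1_{app} + h^2_{app}\bigr),
\]
with $w_r^b|_{z=0}=0$ and a Neumann datum at $z=-1$ with the same three-fold structure: a commutator $-{\bf e_3}\cdot\bigl[(1+Q[\sigma])\na^\eps,\fd_\eps(\zeta)\bigr]u_r^b|_{z=-1}$ plus the approximation contributions $\p_n^P (\fd_\eps u)^b_{app}|_{z=-1}$ and $\fd_\eps \p_n^P u^b_{app}|_{z=-1}$.

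Next I would invoke Proposition \ref{prop:elliptic estimate-general} and estimate each source. Because $\fd_\eps(\zeta)$ acts only in $x_h$, $[\p_z^2,\fd_\eps(\zeta)]=0$ and $[|D_h^\eps|^2,\fd_\eps(\zeta)]=0$; only the variable-coefficient piece $[\na^\eps\cdot Q^\eps[\sigma]\na^\eps,\fd_\eps(\zeta)]$ survives, and Lemma \ref{lem:elliptic operator-commutator} gives it a prefactor $\eps^2$ in the relevant scaled norms. Paired with the Lemma \ref{lem:remainder-elliptic} bound on $u_r^b$, this already matches the claimed $\eps$ power. For $h^j_{app,\fd_\eps u}$ I would simply rerun Steps 1--2 of the proof of Lemma \ref{lem:remainder-elliptic} with $u$ replaced by $\fd_\eps(\zeta)u$, yielding size $\eps^{7/4}$ or $\eps^{3/2}$ at the cost of two extra derivatives on $u$ and on $(\zeta,b)$. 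The term $\fd_\eps(\zeta)(h^1_{app}+h^2_{app})$ is handled by Lemma \ref{lem:elliptic operator} combined with the same bounds on $h^j_{app}$, again losing two derivatives but preserving the $\eps^{3/2}$ gain. The boundary contributions at $z=-1$ are estimated exactly as in Step 3 of the proof of Lemma \ref{lem:remainder-elliptic}, using Proposition \ref{prop:psdo-estimate} and Lemma \ref{lem:trace theorem}, together with the analogous commutator estimate $[(1+Q[\sigma])\na^\eps,\fd_\eps(\zeta)]$ from Lemma \ref{lem:elliptic operator-commutator}.

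The main obstacle will be the careful bookkeeping needed to verify that after all substitutions the cumulative loss of horizontal derivatives on $(\zeta,b)$ is at most $s+5$ and that the cumulative $\eps$-power, after dividing by the $\eps^{-1/2}$ appearing in Proposition \ref{prop:elliptic estimate-general}, is at worst $\eps$. The $\sqrt\eps$ improvement over Lemma \ref{lem:remainder-elliptic} ultimately comes from two independent sources: the extra half-power of $\eps$ inside the commutator $[{\bf P},\fd_\eps(\zeta)]$ beyond what $h^j_{app}$ alone supplies, and the cancellation ${\bf P} u^b=0$ which eliminates the would-be dominant interior term and leaves only approximation errors on the right-hand side. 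Finally, the second inequality in \eqref{lemma5.4} follows from the first by solving the equation ${\bf P} w_r^b=\cdots$ algebraically for $\p_z^2 w_r^b$ and using the horizontal estimate just obtained, exactly as in the third inequality of Proposition \ref{prop:elliptic estimate-general}.
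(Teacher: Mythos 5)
Your setup of the boundary value problem for $w_r^b$ is correct, and it is in fact algebraically equivalent to the paper's: expanding $[\fd_{\eps}(\zeta),{\bf P}-{\bf P}_{app}]u^b_{app}$, $({\bf P}-{\bf P}_{app})(\fd_{\eps}u^b_{app}-(\fd_{\eps}u)^b_{app})$, $\fd_{\eps}{\bf P}_{app}u^b_{app}-{\bf P}_{app}(\fd_{\eps}u)^b_{app}$ and collecting terms recovers exactly your $-[{\bf P},\fd_\eps]u^b_r-(h^1_{app,\fd_\eps u}+h^2_{app,\fd_\eps u})+\fd_\eps(h^1_{app}+h^2_{app})$. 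The gap is in how you propose to estimate the approximation-error pieces.

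You say you would "rerun Steps 1--2 of the proof of Lemma~\ref{lem:remainder-elliptic} with $u$ replaced by $\fd_\eps(\zeta)u$" for $h^j_{app,\fd_\eps u}$, and handle $\fd_\eps(h^1_{app}+h^2_{app})$ by Lemma~\ref{lem:elliptic operator} on top of the old bounds; you acknowledge this costs "two extra derivatives on $u$." But the stated conclusion of the lemma involves only $|\mathfrak{P}u|_{H^{s+1}_\eps}$, i.e.\ one extra derivative on $u$ compared to Lemma~\ref{lem:remainder-elliptic}. Replacing $u$ by $\fd_\eps(\zeta)u$ in \eqref{eq:elliptic-remainder-est2}--\eqref{eq:elliptic-remainder-est3} gives $|\mathfrak{P}\fd_\eps(\zeta)u|_{H^s_\eps}\sim|\mathfrak{P}u|_{H^{s+2}_\eps}$, and $\eps^{-1/2}\eps^{7/4}|\mathfrak{P}u|_{H^{s+2}_\eps}=\eps^{5/4}|\mathfrak{P}u|_{H^{s+2}_\eps}$ is \emph{not} bounded by $\eps\,|\mathfrak{P}u|_{H^{s+1}_\eps}$; the same mismatch appears in the $|\mathfrak{P}u|_{H^{t_0}}$ factor, which you would inflate to $|\mathfrak{P}u|_{H^{t_0+2}}$ unconditionally. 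So the strategy loses one horizontal derivative on $u$ relative to the claim, and this is not a bookkeeping problem --- it is fatal.

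What is missing is the regrouping that turns the approximation terms themselves into commutators. The paper splits the source into $h_1=[\fd_\eps(\zeta),{\bf P}-{\bf P}_{app}]u^b_{app}$, $h_2=({\bf P}-{\bf P}_{app})(\fd_\eps(\zeta)u^b_{app}-(\fd_\eps(\zeta)u)^b_{app})$, $h_3=\fd_\eps(\zeta){\bf P}_{app}u^b_{app}-{\bf P}_{app}(\fd_\eps(\zeta)u)^b_{app}=[\fd_\eps(\zeta),{\bf P}_{app}\circ\sigma_{app}]u$, and $h_4=[\fd_\eps(\zeta),{\bf P}]u^b_r$, and each of these is a genuine commutator between $\fd_\eps(\zeta)$ (a second-order operator in $x_h$) and something of the right order. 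Propositions~\ref{prop:psdo-commuatator} and \ref{prop:psdo-commuatator-special} then shave one order off the product (because the top symbols commute), which is exactly the extra derivative you need. The same remark applies to the Neumann datum at $z=-1$: $\p_n^P(\fd_\eps u)^b_{app}|_{z=-1}$ and $\fd_\eps\p_n^P u^b_{app}|_{z=-1}$ cannot be estimated separately but must be combined into $-e_3\cdot[\fd_\eps(\zeta),(1+Q[\sigma])\na^\eps\sigma_{app}]u|_{z=-1}$ (the paper's $g_2$). Your remark that the $[{\bf P},\fd_\eps(\zeta)]u^b_r$ piece and the cancellation ${\bf P}u^b=0$ already give the gain is not enough: they supply the gain only for the single term $h_4$, and the other three sources dominate if estimated as you propose.
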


\begin{proof}\, Thanks to (\ref{eq:elliptic-remainder}), we have
\begin{align}\label{lemma5.4a}
\begin{split}
{\bf P}w^b_r=&\big[\fd_{\eps}(\zeta),{\bf P}-{\bf
P}_{app}\big]u^b_{app}
+\big({\bf P}-{\bf P}_{app}\big)\big(\fd_{\eps}(\zeta) u^b_{app}\\
&-(\fd_{\eps}(\zeta) u)^b_{app}\big)+\big(\fd_{\eps}(\zeta) {\bf
P}_{app}u^b_{app}-{\bf P}_{app}(\fd_{\eps}(\zeta) u)^b_{app}\big)
+\big[\fd_{\eps}(\zeta), {\bf P}\big]u^b_r\\{\eqdefa}
&h_1+h_2+h_3+h_4,
\end{split}
\end{align}
together with the boundary conditions \beno w^b_r|_{z=0}=0,\quad
\p_n^Pw^b_r|_{z=-1}=g_1+g_2 \eeno where \beno
\begin{split}
&g_1\eqdefa e_{3}\cdot\big[(1+Q[\sigma])\na^{\eps},\fd_{\eps}(\zeta)\big]u^b_r|_{z=-1},\\
&g_2\eqdefa -e_{3}\cdot\big[\fd_{\eps}(\zeta),
(1+Q[\sigma])\na^{\eps}\sigma_{app}(x_h,z,D^{\eps}_{h})\big]u|_{z=-1}.
\end{split}
\eeno

In what follows, we just consider the case of $s>t_0+1$, the other
cases can be handled in a similar way. In this case, we first get by
applying  Proposition \ref{prop:elliptic estimate-general} to
\eqref{lemma5.4a} that \beq\label{eq:rcomm-est-1}
\begin{split}
\|\Lambda^{s+1}_{\eps}&\na^{\eps}w^b_r\|_2 \le
M(\sigma)\Big(\eps^{-\f12}\|\Lam^s_{\eps}(h_1+\cdots+h_4)\|_2+\eps^{-\f14}|g_1|_{H^{s+\f12}_\eps}
\\&+|g_2|_{H^{s+1}_\eps}+\big(\eps^{-\f12}\|\Lambda^{t_0-1}_\eps(h_1+\cdots+h_4)\|_2+ |g_1+g_2|_{H^{t_0}_\eps}\big)|(\zeta,b)|_{H^{s+2}_\eps}\Big),
\end{split}
\eeq which reduces the estimate of \eqref{lemma5.4} to that of $h_1,
h_2, h_3, h_4$ and $g_1, g_2.$

\no{\bf Step 1.}\, The estimate of $h_1$.\,

 Recall from the proof of
Lemma \ref{lem:remainder-elliptic} that
\[
{\bf P}-{\bf
P}_{app}=-p_{d+1}\tau_1+p_{d+1}\tau_2+(\p_zp_{d+1}+\na_h\cdot{\bf
p})\p_z,
\]
 so we write \beno\begin{split} \big[\fd_{\eps}(\zeta),{\bf P}-{\bf
P}_{app}\big]=&-\big[\fd_{\eps}(\zeta),p_{d+1}\tau_1\big]
+\big[\fd_{\eps}(\zeta),p_{d+1}\tau_2\big]+\big[\fd_{\eps}(\zeta),
\p_zp_{d+1}+\na_h\cdot{\bf p}\big]\p_z.
\end{split}
\eeno To deal with $\big[\fd_{\eps}(\zeta),p_{d+1}\tau_1\big],$
thanks to \eqref{tau}, we can  split
$\big[\fd_{\eps}(\zeta),\tau_1\big]$ as
\begin{align*}
\big[\fd_{\eps}(\zeta),\tau_1\big]
=&\big(\big[\fd_{\eps}(\zeta),\eta_{-}\big]-\textrm{Op}_{\eps}\{\fd_{\eps}(\zeta),
\eta_-\}_1\big)\circ \textrm{Op}_\eps(\eta_+)\\
&+\textrm{Op}_{\eps}(\eta_-)\circ\big(\big[\fd_{\eps}(\zeta),\eta_{+}\big]-\textrm{Op}_{\eps}\{\fd_{\eps}(\zeta),\eta_+\}_1\big)\\
&+\big(\textrm{Op}_{\eps}\{\fd_{\eps}(\zeta),\eta_-\}_1\circ\textrm{Op}_{\eps}(\eta_+)-
\textrm{Op}_{\eps}\big(\{\fd_{\eps}(\zeta),\eta_-\}_1\eta_+\big)\big)\\
&+\big(\textrm{Op}_{\eps}(\eta_-)\circ\textrm{Op}_{\eps}\{\fd_{\eps}(\zeta),\eta_+\}_1-
\textrm{Op}_{\eps}\big(\eta_-\{\fd_{\eps}(\zeta),\eta_+\}_1\big)\big)\\
&+\big(\textrm{Op}_{\eps}\{\fd_{\eps}(\zeta),\eta_-\eta_+\}_1
-\big[\fd_{\eps}(\zeta),\textrm{Op}_{\eps}(\eta_-\eta_+)\big]\big),
\end{align*}
from which and Propositions
\ref{prop:psdo-estimate}-\ref{prop:psdo-commuatator} and Lemma
\ref{lem:approximate},  we deduce that
\begin{align*}
\begin{split}
\|\Lambda_{\eps}^s\big[\fd_{\eps}(\zeta),\tau_1\big]u^b_{app}\|_2 &\
\le \eps^2
M(\sigma)\big(\|\Lambda^{s+1}_{\eps}|D^{\eps}_{h}|u_{app}^b\|_2
+\eps^{-\f14}|(\zeta,b)|_{H^{s+4}_\eps}\|\Lambda^{t_0+2}_\eps|D^{\eps}_{h}|u_{app}^b\|_2\big)\\
&\ \le \eps^{\f74}M(\sigma)\big(|\mathfrak{P}u|_{H^{s+1}_\eps}
+|\mathfrak{P}u|_{H^{t_0+2}_\eps}|(\zeta,b)|_{H^{s+4}_\eps}\big).
\end{split}
\end{align*}
This ensures
\begin{align*}
\begin{split}
\|\Lambda_{\eps}^s\big[\fd_{\eps}(\zeta),p_{d+1}\tau_1\big]u^b_{app}\|_2
&\ \le
\|\Lambda_{\eps}^s\big[\fd_{\eps}(\zeta),p_{d+1}\big]\tau_1u^b_{app}\|_2
+\|\Lambda_{\eps}^sp_{d+1}\big[\fd_{\eps}(\zeta),\tau_1\big]u^b_{app}\|_2\nonumber\\
&\ \le \eps^{\f74}M(\sigma)\big(|\mathfrak{P}u|_{H^{s+1}_\eps}
+|\mathfrak{P}u|_{H^{t_0+2}_\eps}|(\zeta,b)|_{H^{s+4}_\eps}\big).
\end{split}
\end{align*}
Exactly following the same line, we  obtain
\begin{align*}
&\|\Lambda_{\eps}^s\big[\fd_{\eps}(\zeta),p_{d+1}\tau_2\big]u^b_{app}\|_2
+\|\Lambda_{\eps}^s\big[\fd_{\eps}(\zeta),(\p_zp_{d+1}+\na_h\cdot{\bf p})\big]\p_zu^b_{app}\|_2\\
&\le \eps^{\f74}M(\sigma)\big(|\mathfrak{P}u|_{H^{s+1}_\eps}
+|\mathfrak{P}u|_{H^{t_0+2}_\eps}|(\zeta,b)|_{H^{s+4}_\eps}\big).
\end{align*}
Consequently, we arrive at
\begin{align}\label{eq:rcomm-est-2}
\|\Lambda_{\eps}^s h_1\|_2 \le
\eps^{\f74}M(\sigma)\big(|\mathfrak{P}u|_{H^{s+1}_\eps}
+|\mathfrak{P}u|_{H^{t_0+2}_\eps}|(\zeta,b)|_{H^{s+4}_\eps}\big).
\end{align}

\no{\bf Step 2.} The estimate of $h_2$.\,

Set
\begin{align*}
\begin{split}
w^b_{app}{\eqdefa}&\fd_{\eps}(\zeta) u^b_{app}-(\fd_{\eps}(\zeta)
u)^b_{app}\\
=& \big[\fd_{\eps}(\zeta),
\widetilde{\sigma}_{app}(x_h,z,D^{\eps}_{h})\big]\exp\big(\f{c_+}2z\sqrt{\eps}
|D^{\eps}_{h}|\big)u\\
&+\widetilde{\sigma}_{app}(x_h,z,D^{\eps}_{h})\big[\fd_{\eps}(\zeta),
\exp\big(\f{c_+}2z\sqrt{\eps}|D^{\eps}_{h}|\big)\big]u,
\end{split}
\end{align*}
with $\widetilde{\sigma}_{app}$ being given by \eqref{lem5.1a}. Then
we deduce from the proof of Propositions
\ref{prop:psdo-estimate}-\ref{prop:psdo-commuatator} and Lemma
\ref{lem:approximate} that \beno
\|\Lambda^{s}_\eps(\na_h^{\eps},\p_z)w^b_{app}\|_2 \le \eps^\f12
M(\sigma)\big(|\mathfrak{P}u|_{H^{s+1}_\eps}
+|\mathfrak{P}u|_{H^{t_0+2}_\eps}|(\zeta,b)|_{H^{s+4}_\eps}\big).
\eeno And we get by applying Propositions
\ref{prop:psdo-estimate}-\ref{prop:psdo-commuatator}  that
\begin{align}\label{eq:rcomm-est-3}
\|\Lambda_{\eps}^s h_2\|_2 \le
\eps^{\f32}M(\sigma)\big(|\mathfrak{P}u|_{H^{s+1}_{\eps}}
+|\mathfrak{P}u|_{H^{t_0+2}_\eps}|(\zeta,b)|_{H^{s+4}_\eps}\big).
\end{align}

\no{\bf Step 3.} The estimate of $h_3$.\,

It is easy to observe from \eqref{eq:main symbol of DN} that $
h_3=\big[\fd_{\eps}(\zeta), {\bf P}_{app}\circ\sigma_{app}\big]u$,
since
\begin{eqnarray*}
\begin{split}
{\bf
P}_{app}\circ\sigma_{app}=&p_{d+1}(\p_z-\eta_-(x_h,z,D^{\eps}_{h}))
\tau(x_h,z,D^{\eps}_{h})\exp(\f{c_+}2z\sqrt{\eps}|D^{\eps}_{h}|),\\
\tau(x_h,z,D^{\eps}_{h})=&\textrm{Op}_{\eps}(\eta_+\tilde\sigma_{app})-\eta_+(x_h,z,D^{\eps}_{h})
\circ\tilde\sigma_{app}(x_h,z,D^{\eps}_{h}),
\end{split}
\end{eqnarray*}
we write
\begin{align*}
\begin{split}
h_3=& \big[\fd_{\eps}(\zeta)
,p_{d+1}\big](\p_z-\textrm{Op}_{\eps}(\eta_-))
\tau(X,z,D^{\eps}_{h})\exp(\f{c_+}2z\sqrt{\eps}|D^{\eps}_{h}|)u\\
&-p_{d+1}\big[\fd_{\eps}(\zeta),\textrm{Op}_{\eps}(\eta_-)\big]\tau(x_h,z,D^{\eps}_{h})\exp(\f{c_+}2z\sqrt{\eps}|D^{\eps}_{h}|)u\\
&+p_{d+1}(\p_z-\textrm{Op}_{\eps}(\eta_-))
\big[\fd_{\eps}(\zeta),\tau(x_h,z,D^{\eps}_{h})\big]\exp(\f{c_+}2z\sqrt{\eps}|D^{\eps}_{h}|)u\\
&+p_{d+1}(\p_z-\textrm{Op}_{\eps}(\eta_-))
\tau(x_h,z,D^{\eps}_{h})\big[\fd_{\eps}(\zeta),\exp(\f{c_+}2z\sqrt{\eps}|D^{\eps}_{h}|)\big]u.
\end{split}
\end{align*}
Applying Propositions
\ref{prop:psdo-estimate}-\ref{prop:psdo-commuatator} ensures that
\beq\label{eq:rcomm-est-4} \|\Lambda_{\eps}^s h_3\|_2\le \eps^\f32
M(\sigma)\big(|\mathfrak{P}u|_{H^{s+1}_{\eps}}
+|\mathfrak{P}u|_{H^{t_0+1}}|(\zeta,b)|_{H^{s+4}_\eps}\big). \eeq

\no{\bf Step 4.} The estimate of $h_4$.

Notice that
\begin{align*}
\begin{split}
h_4&=\big[\fd_{\eps}(\zeta), \na^{\eps}\cdot(1+Q[\sigma])\na^{\eps}\big]u^b_r\\
&=\big[\fd_{\eps}(\zeta), \na^{\eps}\cdot
(1+Q[\sigma])\big]\na^{\eps}u^b_r+ \na^{\eps}\cdot
(1+Q[\sigma])\big[\fd_{\eps}(\zeta), \na^{\eps}\big]u^b_r,
\end{split}
\end{align*}
which together with Lemma \ref{lem:elliptic operator-commutator}
 and Lemma \ref{lem:remainder-elliptic} implies that
\begin{align}\label{eq:rcomm-est-5}
\begin{split}
\|\Lambda_{\eps}^s h_4\|_2&\le \eps^2
M(\sigma)\big(\|\Lambda_{\eps}^{s+2}\na^{\eps}u^b_{r}\|_2
+|(\zeta,b)|_{H^{s+4}_\eps}\|\Lambda^{t_0+1}\na^{\eps}u^b_{r}\|_2\big)\\
&\quad+\eps
M(\sigma)\big(\|\Lambda_{\eps}^{s+1}\na^{\eps}u^b_{r}\|_2
+\|\Lambda_{\eps}^{s+1}(\na^{\eps})^2u^b_{r}\|_2+|(\zeta,b)|_{H^{s+4}_\eps}\|\Lambda^{t_0+1}\na^{\eps}u^b_{r}\|_2\big)\\
&\le \eps^\f32 M(\sigma)\big(|\mathfrak{P}u|_{H^{s+1}_{\eps}}
+|\mathfrak{P}u|_{H^{t_0+1}}|(\zeta,b)|_{H^{s+4}_\eps}\big).
\end{split}
\end{align}

\no{\bf Step 5.} The estimate of $g_1$ and $g_2$.\,

We first get by applying  Lemma \ref{lem:trace theorem} and Lemma
\ref{lem:elliptic operator-commutator} that
\begin{align*}
\begin{split}
|g_1|_{H^{s+\f12}_\eps}&\le \eps^{-\f14}\|\Lambda^{s}_\eps\na^{\eps}
\big[(1+Q[\sigma])\na^{\eps},\fd_{\eps}(\zeta)\big]u^b_r\|_2\\
&\le \eps^{\f34}
M(\sigma)\Big(\sqrt{\eps}\|\Lambda^{s+2}_\eps\na^{\eps}u^b_r\|_2
+\|\Lambda^{s+1}_{\eps}\na^{\eps}\p_z u^b_r\|_2\\
&\quad+\big(\sqrt{\eps}\|\Lambda^{t_0}\na^{\eps}u^b_r\|_2
+\|\Lambda^{t_0}\na^{\eps}\p_z
u^b_r\|_2\big)|(\zeta,b)|_{H^{s+5}_\eps}\Big),
\end{split}
\end{align*}
from which and Lemma \ref{lem:remainder-elliptic}, we infer that
\begin{align}\label{eq:rcomm-est-6}
|g_1|_{H^{s+\f12}_\eps}\le \eps^{\f54}
M(\sigma)\big(|\mathfrak{P}u|_{H^{s+1}_{\eps}}
+|\mathfrak{P}u|_{H^{t_0+1}}|(\zeta,b)|_{H^{s+5}_\eps}\big).
\end{align}

To deal with $g_2,$ we first rewrite it as
\begin{align*}
\begin{split}
g_2=&-e_{3}\cdot\big[\fd_{\eps}(\zeta), (1+Q[\sigma])\na^{\eps}\big]\sigma_{app}(x_h,z,D^{\eps}_{h})u|_{z=-1}\\
&-e_{3}\cdot(1+Q[\sigma])\na^{\eps}\big[\fd_{\eps}(\zeta),
\sigma_{app}(x_h,z,D^{\eps}_{h})\big]u|_{z=-1}{\eqdefa}
g_{21}+g_{22}.
\end{split}
\end{align*}
It follows  from Lemma \ref{lem:elliptic operator-commutator} and
the proof of Lemma \ref{lem:approximate} that
\begin{align*}\begin{split}
|g_{21}|_{H^{s+1}_\eps}\le& \eps M(\sigma)
\big(|\na^{\eps}\sigma_{app}(x_h,z,D^{\eps}_{h})u|_{z=-1}|_{H^{s+2}_\eps}\\&+
|\na^{\eps}\sigma_{app}(x_h,z,D^{\eps}_{h})u|_{z=-1}|_{H^{t_0+1}}|(\zeta,b)|_{H^{s+4}_\eps}\big)\\
\le& \eps M(\sigma)\big(|\mathfrak{P}u|_{H^{s+1}_{\eps}}
+|\mathfrak{P}u|_{H^{t_0+1}}|(\zeta,b)|_{H^{s+4}_\eps}\big),
\end{split}
\end{align*}
and similarly, one has
\begin{align*}
|g_{22}|_{H^{s+1}_\eps} \le \eps
M(\sigma)\big(|\mathfrak{P}u|_{H^{s+1}_{\eps}}
+|\mathfrak{P}u|_{H^{t_0+1}}|(\zeta,b)|_{H^{s+4}_\eps}\big).
\end{align*}
This gives \ben\label{eq:rcomm-est-8} |g_2|_{H^{s+1}_\eps}\le \eps
M(\sigma)\big(|\mathfrak{P}u|_{H^{s+1}_{\eps}}
+|\mathfrak{P}u|_{H^{t_0+1}}|(\zeta,b)|_{H^{s+4}_\eps}\big). \een

Plugging (\ref{eq:rcomm-est-2})-(\ref{eq:rcomm-est-6}) and
(\ref{eq:rcomm-est-8}) into (\ref{eq:rcomm-est-1}) results in \beno
\|\Lambda^{s+1}_{\eps}\na^{\eps}w^b_r\|_2\le \eps
M(\sigma)\big(|\mathfrak{P}u|_{H^{s+1}_{\eps}}
+|\mathfrak{P}u|_{H^{t_0+1}}|(\zeta,b)|_{H^{s+5}_\eps}\big). \eeno
On the other hand, it follows from the third inequality of
Proposition \ref{prop:elliptic estimate-general} that \beno
\begin{split}
\|\Lam^s_{\eps}\na^{\eps}\p_zw^b_r\|_2  &\ \le
M(\sigma)\Big(\|\Lam^s_{\eps}(h_1+\cdots+h_4)\|_2+\eps^\f14|g_1|_{H^{s+\f12}_\eps}+\sqrt{\eps}|g_2|_{H^{s}_\eps}\nonumber
\\&\quad+\big(\|\Lambda^{t_0}_\eps(h_1+\cdots+h_4)\|_2+\sqrt{\eps}
|g_1+g_2|_{H^{t_0+1}_\eps}\big)|(\zeta,b)|_{H^{s+2}_\eps}\Big)\nonumber\\
&\ \le \eps^{\f32} M(\sigma)\big(|\mathfrak{P}u|_{H^{s+1}_{\eps}}
+|\mathfrak{P}u|_{H^{t_0+2}}|(\zeta,b)|_{H^{s+5}_\eps}\big).
\end{split}\eeno
This completes the proof of Lemma
\ref{lem:remainder-elliptic-comm}.\end{proof}

\begin{lem}\label{lem:remainder-elliptic-comm-higher} {\sl Let $s\ge 0,
k\in \N$, and $u\in H^{2k+s-\f12}\cap H^{t_0+\f52}(\R^2)$. Then
there hold \beno
\begin{split}&\|\Lambda^{s+1}_{\eps}\na^{\eps}w^b_{r,k}\|_2\le \eps
M(\sigma)\big(|\mathfrak{P}u|_{H^{2k+s-1}_\eps}
+|\mathfrak{P}u|_{H^{t_0+2}}|(\zeta,b)|_{H^{2k+s+3}_\eps}\big),\\
&\|\Lam^s_{\eps}\na^{\eps}\p_zw^b_{r,k}\|_2 \le \eps^{\f32}
M(\sigma)\big(|\mathfrak{P}u|_{H^{2k+s-1}_\eps}
+|\mathfrak{P}u|_{H^{t_0+2}}|(\zeta,b)|_{H^{2k+s+3}_\eps}\big).
\end{split}
\eeno Here $w_{r,k}^b{\eqdefa}(|D^{\eps}_{h}|^{2k}
u)^b_{r}-|D^{\eps}_{h}|^{2k}u^b_{r}.$ }\end{lem}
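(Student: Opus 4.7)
The plan is to mimic the proof of Lemma \ref{lem:remainder-elliptic-comm} step by step, replacing the operator $\fd_{\eps}(\zeta)$ everywhere by the constant-coefficient Fourier multiplier $|D^{\eps}_{h}|^{2k}$. First I would derive the boundary value problem satisfied by $w^b_{r,k}$: writing $|D^{\eps}_{h}|^{2k}$ in place of $\fd_{\eps}(\zeta)$ in the derivation leading to \eqref{lemma5.4a}, we obtain ${\bf P}w^b_{r,k}=h_1+h_2+h_3+h_4$ where
\begin{align*}
h_1 & = [|D^{\eps}_{h}|^{2k},{\bf P}-{\bf P}_{app}]u^b_{app}, &
h_2 & = ({\bf P}-{\bf P}_{app})\bigl(|D^{\eps}_{h}|^{2k}u^b_{app}-(|D^{\eps}_{h}|^{2k}u)^b_{app}\bigr),\\
h_3 & = |D^{\eps}_{h}|^{2k}{\bf P}_{app}u^b_{app}-{\bf P}_{app}(|D^{\eps}_{h}|^{2k}u)^b_{app}, &
h_4 & = [|D^{\eps}_{h}|^{2k},{\bf P}]u^b_r,
\end{align*}
with $w^b_{r,k}|_{z=0}=0$ and $\p_n^P w^b_{r,k}|_{z=-1}=g_1+g_2$, where $g_1,g_2$ are the analogues of the boundary data in Lemma~\ref{lem:remainder-elliptic-comm}. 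Applying Proposition~\ref{prop:elliptic estimate-general} reduces the proof to estimating these six quantities in the relevant norms.

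Next, each of $h_1,h_2,h_3,g_1,g_2$ is handled exactly along the five-step scheme of Lemma~\ref{lem:remainder-elliptic-comm}: split the operator ${\bf P}-{\bf P}_{app}$ into the $\tau_1,\tau_2,(\p_z p_{d+1}+\na_h\cdot {\bf p})\p_z$ pieces, then commute $|D^{\eps}_{h}|^{2k}$ past each factor using Propositions~\ref{prop:psdo-estimate}--\ref{prop:psdo-commuatator-special}. The last of these is tailor-made for this setting, since $|D^{\eps}_{h}|^{2k}$ is a pure Fourier multiplier of order $2k$ with $\Sigma^1(\xi)=|\xi^\eps|^{2k}\in\dot{\mathcal M}^{2k}$, so commutators with the variable-coefficient symbols $\eta_\pm$, $\widetilde\sigma_{app}$, $p_{d+1}$ and $Q[\sigma]$ drop one order and gain the expected powers of $\eps$ coming from the small-parameter structure of $\sigma$, $\na_h\sigma$ and the matrix $Q[\sigma]$. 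Lemma~\ref{lem:approximate} then converts norms of $u^b_{app}$ into $|\mathfrak P u|_{H^{2k+s-1}_\eps}$-type norms.

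For $h_4$, we use Remark~\ref{rmk2.1} (the Kato--Ponce commutator estimate is valid with $|D^{\eps}_{h}|^{2k}$ in place of $\Lambda^s_\eps$) to write
\begin{equation*}
h_4 = \bigl[|D^{\eps}_{h}|^{2k},\na^{\eps}\cdot(1+Q[\sigma])\bigr]\na^{\eps}u^b_r + \na^{\eps}\cdot(1+Q[\sigma])\bigl[|D^{\eps}_{h}|^{2k},\na^{\eps}\bigr]u^b_r,
\end{equation*}
where the second bracket vanishes because $|D^{\eps}_{h}|^{2k}$ commutes with $\na^{\eps}_h$ and with $\p_z$. The first term is then controlled via Proposition~\ref{prop:psdo-commuatator-special} combined with the bounds on $u^b_r$ from Lemma~\ref{lem:remainder-elliptic}, the latter accounting for the $|(\zeta,b)|_{H^{2k+s+3}_\eps}$ regularity (three derivatives from the remainder estimate plus $2k$ derivatives absorbed by the commutator, giving the index $2k+s+3$).

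The only delicate point I expect is the careful bookkeeping of the powers of $\eps$: each commutator with $Q[\sigma]$ or with the coefficients of ${\bf P}_{app}$ carries explicit $\eps$-factors inherited from $\sigma=-\eps z b+\eps(z+1)\zeta$ and from the scaling of the matrix $P_0$, and these must be combined with the $\eps^{-1/2}$, $\eps^{-1/4}$ losses in Proposition~\ref{prop:elliptic estimate-general} and in the trace estimate to obtain the prefactors $\eps$ and $\eps^{3/2}$ claimed in the two inequalities. Once these are summed, plugging the resulting bounds for $h_1,\ldots,h_4,g_1,g_2$ into the first and third inequalities of Proposition~\ref{prop:elliptic estimate-general} yields the stated estimates for $\|\Lambda^{s+1}_\eps\na^{\eps}w^b_{r,k}\|_2$ and $\|\Lambda^s_\eps\na^{\eps}\p_z w^b_{r,k}\|_2$ respectively.
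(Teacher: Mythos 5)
Your proposal follows exactly the route indicated in the paper, whose own ``proof'' is just the one-line remark that the argument of Lemma~\ref{lem:remainder-elliptic-comm} carries over with Proposition~\ref{prop:psdo-commuatator-special} handling the commutators with $|D^{\eps}_{h}|^{2k}$. The details you fill in are the right ones---in particular the observation that $[|D^{\eps}_{h}|^{2k},\na^{\eps}]=0$ kills half of the $h_4$ term, which is the main simplification relative to the $\fd_{\eps}(\zeta)^k$ case.
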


\begin{proof}\,The proof of this lemma essentially follows from that of Lemma
\ref{lem:remainder-elliptic-comm}. Here we need to use Proposition
\ref{prop:psdo-commuatator-special} to deal with some commutator
estimates. We omit the details here.\end{proof}

\begin{prop}\label{prop:remainder-commutator} {\sl Let $s\ge 0$ and
$u\in H^{s+\f32}\cap H^{t_0+\f52}(\R^2)$. Then under the assumptions
of theorem \ref{thm:DN-commutator-sharp}, we have
\begin{align*}
\begin{split}
\big|\big[\frac 1{\eps} R[\eps\zeta],\fd_{\eps}(\zeta)\big]u\big|
_{H^{s}_\eps} &\ \le \sqrt{\eps}
M(\sigma)\Big(|\mathfrak{P}u|_{H^{s+1}_{\eps}}+|\mathfrak{P}u|_{H^{t_0}}|(\zeta,b)|_{H^{s+5}_\eps}\\
&\quad
+\langle|\mathfrak{P}u|_{H^{t_0+1}}|(\zeta,b)|_{H^{s+5}_\eps}\rangle_{s>t_0}
+\langle|\mathfrak{P}u|_{H^{t_0+2}}|(\zeta,b)|_{H^{s+5}_\eps}\rangle_{s>t_0+1}\Big).
\end{split}
\end{align*}}
\end{prop}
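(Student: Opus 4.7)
The plan is to adapt the duality/Green's-identity argument of Proposition~\ref{prop:remainder} to the commutator, exploiting the bulk estimates of Lemma~\ref{lem:remainder-elliptic-comm} together with the boundary vanishing $u_r^b|_{z=0}=0$. Writing $R[\eps\zeta]v=\p_n^P v_r^b|_{z=0}$ for any admissible $v$ and using that $\fd_\eps(\zeta)$ acts only in the horizontal variable and commutes with $\p_z$, a direct computation with $w_r^b=(\fd_\eps(\zeta)u)_r^b-\fd_\eps(\zeta)u_r^b$ (the quantity already analyzed in Lemma~\ref{lem:remainder-elliptic-comm}) produces the decomposition
\[
\bigl[R[\eps\zeta],\fd_\eps(\zeta)\bigr]u \;=\; \p_n^P w_r^b\big|_{z=0} \;+\; \bigl[\p_n^P,\fd_\eps(\zeta)\bigr]u_r^b\big|_{z=0}.
\]
This reduces the proof to a bulk piece, which is primed for Green's identity, plus a surface commutator at $z=0$.

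For the bulk piece I will pair $(\Lambda^s_\eps\p_n^P w_r^b|_{z=0},v)$ with arbitrary $v\in\mathcal{S}(\R^2)$ and apply Green's identity as in the proof of Proposition~\ref{prop:remainder}. Since $w_r^b|_{z=0}=0$, this leaves three contributions to control: $\int_\cS{\bf P}w_r^b\,\Lambda^s_\eps v^\dagger$, $\int_\cS(1+Q[\sigma])\nabla^\eps w_r^b\cdot\nabla^\eps\Lambda^s_\eps v^\dagger$, and the bottom trace $\p_n^P w_r^b|_{z=-1}=g_1+g_2$. The first two are handled directly by the $\|\Lambda^s_\eps(h_1+\cdots+h_4)\|_2$ bounds from Lemma~\ref{lem:remainder-elliptic-comm}, combined with Lemma~\ref{lem:duality} and the estimate $\|\Lambda^{s+1}_\eps\nabla^\eps w_r^b\|_2\le\eps M(\sigma)(\cdots)$, each contributing at the $\eps^{3/2}|v|_2$ level. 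For the bottom data I will revisit $g_1,g_2$ via an integer-order trace combined with the third estimate of Proposition~\ref{prop:elliptic estimate-general} for $\nabla^\eps\p_z w_r^b$, which avoids the $\eps^{-1/4}$ loss of the fractional trace of Lemma~\ref{lem:trace theorem} used in the original proof and retains the full $\eps^{3/2}$-gain.

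For the surface commutator, I expand $\p_n^P=-p_{d+1}^\eps\p_z-{\bf p}^\eps\cdot\nabla_h$ and commute $\fd_\eps(\zeta)$ through $\p_z$; the resulting three pieces, built from $[\nabla_h,\fd_\eps(\zeta)]u_r^b$, $[\fd_\eps(\zeta),{\bf p}^\eps]\cdot\nabla_h u_r^b$, and $[\fd_\eps(\zeta),p_{d+1}^\eps]\p_z u_r^b$, are all evaluated at $z=0$. The key observation is that $u_r^b|_{z=0}\equiv 0$ forces \emph{every} pure horizontal derivative of $u_r^b$ to vanish at $z=0$, so the first two pieces drop out identically, leaving
\[
\bigl[\p_n^P,\fd_\eps(\zeta)\bigr]u_r^b\big|_{z=0} \;=\; \bigl[\fd_\eps(\zeta),\,p_{d+1}^\eps|_{z=0}-1\bigr]\p_z u_r^b\big|_{z=0}
\]
after subtracting the constant $1$ (which commutes with $\fd_\eps(\zeta)$). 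The explicit formula $p_{d+1}^\eps|_{z=0}=(1+\eps^3|\nabla_h^\eps\zeta|^2)/(1+\eps(\zeta-b))$ shows $p_{d+1}^\eps|_{z=0}-1$ is $O(\eps)$ in every scaled Sobolev norm with $M(\sigma)$ coefficient, so Lemma~\ref{lem:elliptic operator-commutator} supplies a first factor of $\eps$; the remaining trace $\p_z u_r^b|_{z=0}=-R[\eps\zeta]u/p_{d+1}^\eps|_{z=0}$ is then estimated by Proposition~\ref{prop:remainder} at the raised index $s+1$, supplying the second $\sqrt\eps$ together with the $|\mathfrak{P}u|_{H^{s+1}_\eps}$ on the right-hand side.

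The main technical obstacle is the careful bookkeeping of $\eps$-powers: both halves of the decomposition must \emph{independently} produce an $\eps^{3/2}|v|_2$ bound before the overall division by $\eps$. For the surface commutator this succeeds only after (a) centering at $p_{d+1}^\eps|_{z=0}-1$ to extract the first $\eps$ and (b) invoking Proposition~\ref{prop:remainder} at the \emph{higher} index $s+1$ to trade the remaining derivative for the allowed $|\mathfrak{P}u|_{H^{s+1}_\eps}$-norm with the extra $\sqrt\eps$. For the bulk piece the bottom-trace refinement described above is essential so that no quarter-power of $\eps$ is lost at the $z=-1$ boundary. Once these two refinements are in place, the remaining commutator, product and composition estimates are a routine assemblage of Lemma~\ref{lem2.1} and Lemma~\ref{lem:elliptic operator-commutator}.
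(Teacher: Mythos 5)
Your decomposition $[R[\eps\zeta],\fd_\eps(\zeta)]u=\p_n^P w_r^b|_{z=0}+[\p_n^P,\fd_\eps(\zeta)]u_r^b|_{z=0}$ is algebraically the same quantity the paper pairs with $\Lambda^s_\eps v$ and then Greens into four bulk integrals $B_1,\dots,B_4$, but your way of estimating it is genuinely different in two respects. First, for the surface commutator you observe that $u_r^b(\cdot,0)\equiv 0$ annihilates all purely tangential contributions, leaving only
$[\fd_\eps(\zeta),\,p_{d+1}^\eps|_{z=0}-1]\,\p_z u_r^b|_{z=0}$, and since $p_{d+1}^\eps|_{z=0}-1=\frac{\eps^3|\na^\eps_h\zeta|^2-\eps(\zeta-b)}{1+\eps(\zeta-b)}$ is $O(\eps)$ and $\p_z u_r^b|_{z=0}=-R[\eps\zeta]u/p_{d+1}^\eps|_{z=0}$, you close by Lemma \ref{lem:elliptic operator-commutator} and Proposition \ref{prop:remainder} at index $s+1$; this picks up $\eps\cdot\sqrt\eps=\eps^{3/2}$ exactly as needed. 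The paper instead keeps $B_2,B_3$ as bulk integrals $\int_\cS[\fd_\eps,(1+Q)\na^\eps]u_r^b\cdots$ and estimates them by Lemma \ref{lem:duality} together with the interior bounds from Lemma \ref{lem:remainder-elliptic}; your top-boundary argument is more direct and exposes the $O(\eps)$ structure of the normal coefficient cleanly, whereas the paper's route is more systematic and avoids having to invoke Proposition \ref{prop:remainder} at a raised index. Second, for the bulk piece $\p_n^P w_r^b|_{z=0}$ the paper chooses $v^\dag=(1+z)\chi(z\sqrt\eps|D^\eps_h|)v$ precisely so that the $z=-1$ boundary term in Green's identity vanishes; you instead keep a generic lift and propose to estimate the trace $\p_n^P w_r^b|_{z=-1}=g_1+g_2$ directly at index $H^s_\eps$. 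This is doable — the integer-order $z$-trace combined with the $\p_z$-estimates of Lemma \ref{lem:remainder-elliptic-comm} does give $|g_1+g_2|_{H^s_\eps}\lesssim\eps^{3/2}M(\sigma)(\dots)$, one half-derivative and one quarter $\eps$-power better than what the fractional trace of Lemma \ref{lem:trace theorem} would give at $H^{s+1/2}_\eps$ — but your description of it as ``the third estimate of Proposition \ref{prop:elliptic estimate-general} for $\na^\eps\p_z w_r^b$'' is off: that estimate concerns the interior $\na^\eps\p_z w_r^b$, while what you must bound is the \emph{given} boundary datum $g_1+g_2$, whose sharper $H^s_\eps$-estimate would have to be extracted by re-running the trace step inside the proof of Lemma \ref{lem:remainder-elliptic-comm}. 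The simpler route here, which costs you nothing, is to adopt the paper's $(1+z)$-weighted lift and dispense with the $z=-1$ term entirely; with that substitution your argument goes through and is, for the surface piece, noticeably more economical than the paper's.
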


\begin{proof}\,\,Thanks to (\ref{eq:remainder}), for any $v\in
\cS(\R^2)$, we get by applying Green's identity  that
\begin{eqnarray*}
\begin{split}
\big(\Lambda_{\eps}^s[R[\eps\zeta],&\fd_{\eps}(\zeta)]u,v\big)=
\big(R[\eps\zeta]\fd_{\eps}(\zeta) u,\Lambda_{\eps}^s v\big)
-\big(\fd_{\eps}(\zeta) R[\eps\zeta]u,\Lam^s_{\eps} v\big)\\
=&\int_{\mathcal
S}\Bigl\{\na^{\eps}\cdot(1+Q[\sigma])\na^{\eps}(\fd_{\eps}(\zeta)
u)^b_{r}\Lambda _{\eps}^s
v^\dag+(1+Q[\sigma])\na^{\eps}(\fd_{\eps}(\zeta) u)^b_r\cdot
\na^{\eps}\Lam^s_{\eps} v^\dag\\
&-\na^{\eps}\cdot\big(\fd_{\eps}(\zeta)(1+Q[\sigma])\na^{\eps}u^b_{r}\big)
\Lambda_{\eps}^sv^\dag-\fd_{\eps}(\zeta)(1+Q[\sigma])\na^{\eps}u^b_r\cdot
\na^{\eps}\Lambda_{\eps}^s v^\dag\Bigr\}\,dx_h\,dz,
\end{split}
\end{eqnarray*}
which ensures
\begin{eqnarray*}
\begin{split}
\big(\Lambda_{\eps}^s[R[\eps\zeta],\fd_{\eps}(\zeta)]u,v\big) &\ =
\int_{\mathcal S}\Bigl\{\na^{\eps}\cdot(1+Q[\sigma])\na^{\eps}
\big((\fd_{\eps}(\zeta) u)^b_{r}\\
&\quad-\fd_{\eps}(\zeta) u^b_{r}\big)\Lam^s_{\eps}
v^\dag-\na^{\eps}\cdot\big[\fd_{\eps}(\zeta),(1+Q[\sigma])\na^{\eps}\big]u^b_{r}
\Lambda_{\eps}^sv^\dag\\
&\quad+\big[(1+Q[\sigma])\na^{\eps},
\fd_{\eps}(\zeta)\big]u^b_r\cdot \na^{\eps}\Lam^s_{\eps}
v^\dag\\
&\quad+(1+Q[\sigma])\na^{\eps} \big((\fd_{\eps}(\zeta)
u)^b_r-\fd_{\eps}(\zeta) u^b_r\big)\cdot
\na^{\eps}\Lam^s_{\eps} v^\dag\Bigr\}\,dx_h\,dz\\
&\ {\eqdefa} B_1+B_2+B_3+B_4.
\end{split}
\end{eqnarray*}
Here $v^\dag{\eqdefa} (1+z)\chi(z\sqrt{\eps}|D^{\eps}_{h}|)v$ with
$\chi$ being given by Lemma \ref{lem:Poisson regularization}.
\vspace{0.2cm}

Again we only consider the case of $s>t_0+1$, the other cases can be
handled in a  similar way. First of all, we get by applying Lemma
\ref{lem:duality} that
\begin{align*}
\begin{split}
|B_3|\le&
C\sqrt{\eps}|v|_2\|\Lambda^{s+1}_{\eps}\big[(1+Q[\sigma])\na^{\eps},
\fd_{\eps}(\zeta)\big]u^b_r\|_2+C|v|_2\|\Lam^s_{\eps}\big[(1+Q[\sigma])\na^{\eps},
\fd_{\eps}(\zeta)\big]u^b_r\|_2.
\end{split}
\end{align*}
Besides, Lemma \ref{lem:elliptic operator-commutator} and the first
inequality of Lemma \ref{lem:remainder-elliptic} ensures that \beno
\begin{split}
\|\Lambda^{s+1}_{\eps}\big[(1+Q[\sigma])\na^{\eps},
\fd_{\eps}(\zeta)\big]u^b_r\|_2
 &\ \le \eps
M(\sigma)\big(\|\Lambda_{\eps}^{s+2}\na^{\eps}u_r^b\|_2
+|(\zeta,b)|_{H^{s+4}_\eps}\|\Lambda^{t_0+1}\na^{\eps}u_r^b\|_2\big)\\
&\ \le \eps M(\sigma)\big(|\mathfrak{P}u|_{H^{s+1}_{\eps}}
+|\mathfrak{P}u|_{H^{t_0}}|(\zeta,b)|_{H^{s+4}_\eps}\big),\end{split}
\eeno and Lemma \ref{lem:elliptic operator-commutator} and the
second inequality of Lemma \ref{lem:remainder-elliptic} implies that
\beno\begin{split} \|\Lam^s_{\eps}\big[(1+Q[\sigma])\na^{\eps},
\fd_{\eps}(\zeta)\big]u^b_r\|_2
 &\ \le \eps
M(\sigma)\big(\|\Lambda_{\eps}^{s+1}\na^{\eps}u_r^b\|_2
+|(\zeta,b)|_{H^{s+3}_\eps}\|\Lambda^{t_0+1}\na^{\eps}u_r^b\|_2\big)\\
&\ \le \eps^{\f32} M(\sigma)\big(|\mathfrak{P}u|_{H^{s+1}_{\eps}}
+|\mathfrak{P}u|_{H^{t_0+1}}|(\zeta,b)|_{H^{s+3}_\eps}\big).
\end{split}\eeno Here we used the fact that
$\|\Lambda^{s+1}_{\eps}\na^{\eps}u^b_r\|_2\le
\|\Lambda^{s+1}_{\eps}\na^{\eps}\p_zu^b_r\|_2$ due to
$u^b_r|_{z=0}=0$. So we arrive at \beq\label{eq:rcomm-est-C}
|B_3|\le \eps^{\f32}
M(\sigma)|v|_2\big(|\mathfrak{P}u|_{H^{s+1}_{\eps}}
+|\mathfrak{P}u|_{H^{t_0+1}}|(\zeta,b)|_{H^{s+4}_\eps}\big). \eeq

Notice that  $B_2$ can be rewritten as
\begin{align*}
\begin{split}
B_2=&-\int_{\mathcal
S}\Bigl\{\sqrt{\eps}(\na^{\eps}_h,0)\cdot\big[\fd_{\eps}(\zeta),(1+Q[\sigma])\na^{\eps}\big]u^b_{r}
\Lambda_{\eps}^sv^\dag+{\bf
e_3}\cdot\big[\fd_{\eps}(\zeta),\p_zQ[\sigma]\na^{\eps}\big]u^b_{r}
\Lambda_{\eps}^sv^\dag\\
&\quad+{\bf
e_3}\cdot\big[\fd_{\eps}(\zeta),(1+Q[\sigma])\na^{\eps}\big]\p_zu^b_{r}
\Lambda_{\eps}^sv^\dag\Bigr\}\,dx_h\,dz,
\end{split}
\end{align*}
which together with  Lemma \ref{lem:elliptic operator-commutator}
and Lemma \ref{lem:remainder-elliptic} yields that
\beq\label{eq:rcomm-est-B} |B_2|\le \eps^{\f32}
M(\sigma)|v|_2\big(|\mathfrak{P}u|_{H^{s+1}_{\eps}}
+|\mathfrak{P}u|_{H^{t_0+1}}|\zeta|_{H^{s+4}_\eps}\big). \eeq

Finally we have by Lemma \ref{lem:duality} and Lemma
\ref{lem:remainder-elliptic-comm} that
\begin{align}
\begin{split}
|B_1|+|B_4| \le&
M(\sigma)|v|_2\Big(\sqrt{\eps}\|\Lambda^{s+1}_{\eps}\na^{\eps}w^b_r\|_2+\|\Lam^s_{\eps}\na^{\eps}\p_zw^b_r\|_2
\\
&+
\|\Lambda^{t_0}\na^{\eps}(w^b_r+\p_zw^b_r)\|_2|(\zeta,b)|_{H^{s+2}_\eps}\Big)\\
\le& \eps^{\f32}M(\sigma)|v|_2\big(|\mathfrak{P}u|_{H^{s+1}_{\eps}}
+|\mathfrak{P}u|_{H^{t_0+2}}|(\zeta,b)|_{H^{s+5}_\eps}\big).
\end{split}\label{eq:rcomm-est-A}
\end{align}

Summing up (\ref{eq:rcomm-est-C})-(\ref{eq:rcomm-est-A}), we
conclude Proposition \ref{prop:remainder-commutator} by the duality.
\end{proof}

\begin{rmk}\label{rem:commutator-sharp} It follows from the  proof
of Proposition \ref{prop:remainder-commutator} that
\begin{align*}\begin{split}
\big|\big[\frac 1{\eps}
R[\eps\zeta],\fd_{\eps}(\zeta)-|D^{\eps}_{h}|^2\big]&u\big|
_{H^{s}_\eps} \ \le\sqrt{\eps}
M(\sigma)\Big(|\mathfrak{P}u|_{H^{s+1}_{\eps}}+|\mathfrak{P}u|_{H^{t_0}_\eps}|(\zeta,b)|_{H^{s+5}_\eps}\\
&\quad
+\langle|\mathfrak{P}u|_{H^{t_0+1}_\eps}|(\zeta,b)|_{H^{s+5}_\eps}\rangle_{s>t_0}
+\langle|\mathfrak{P}u|_{H^{t_0+2}_\eps}|(\zeta,b)|_{H^{s+5}_\eps}\rangle_{s>t_0+1}\Big).
\end{split}
\end{align*}
\end{rmk}

\vspace{0.1cm}

Thanks to Lemma \ref{lem:remainder-elliptic-comm-higher} and the
proof of Proposition \ref{prop:remainder-commutator}, we  also
obtain that

\begin{prop}\label{prop:remainder-commutator-higher} {\sl Let $s\ge
0, k\in \N$, and $u\in H^{2k+s-\f12}\cap H^{t_0+\f52}(\R^2)$. Then
under the assumptions of theorem \ref{thm:DN-commutator-sharp}, one
has \beno \big|\big[\frac 1{\eps}
R[\eps\zeta],|D^{\eps}_{h}|^{2k}\big]u\big| _{H^{s}_\eps}\le
\sqrt{\eps} M(\sigma)\Big(|\mathfrak{P}u|_{H^{2k+s-1}_\eps}
+|\mathfrak{P}u|_{H^{t_0+2}}|\zeta|_{H^{2k+s+3}_\eps}\Big). \eeno}
\end{prop}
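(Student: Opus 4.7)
\medskip

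\noindent\textbf{Proof proposal.} The strategy is to mimic the proof of Proposition \ref{prop:remainder-commutator}, substituting the pseudo-differential multiplier $|D^{\eps}_{h}|^{2k}$ for the second-order operator $\fd_\eps(\zeta)$ throughout, and invoking the higher-order boundary layer estimate of Lemma \ref{lem:remainder-elliptic-comm-higher} in place of Lemma \ref{lem:remainder-elliptic-comm}. Specifically, I pair against an arbitrary test $v\in\mathcal S(\R^2)$ and compute, by Green's identity applied to $u^b_r=u^b-u^b_{app}$ and to the regularized extension $v^\dag=(1+z)\chi(z\sqrt{\eps}|D^{\eps}_{h}|)v$:
\begin{eqnarray*}
\bigl(\Lam^s_{\eps}[\tfrac{1}{\eps}R[\eps\zeta],|D^{\eps}_{h}|^{2k}]u,v\bigr)
&=&\tfrac1{\eps}\int_{\mathcal S}\Bigl\{\na^{\eps}\!\cdot(1+Q[\sigma])\na^{\eps}\bigl((|D^{\eps}_{h}|^{2k}u)^b_r-|D^{\eps}_{h}|^{2k}u^b_r\bigr)\Lam^s_{\eps} v^\dag\\
&&\qquad{}-\na^{\eps}\!\cdot\bigl[|D^{\eps}_{h}|^{2k},(1+Q[\sigma])\na^{\eps}\bigr]u^b_r\,\Lam^s_{\eps} v^\dag\\
&&\qquad{}+\bigl[(1+Q[\sigma])\na^{\eps},|D^{\eps}_{h}|^{2k}\bigr]u^b_r\!\cdot\na^{\eps}\Lam^s_{\eps} v^\dag\\
&&\qquad{}+(1+Q[\sigma])\na^{\eps}w^b_{r,k}\!\cdot\na^{\eps}\Lam^s_{\eps} v^\dag\Bigr\}\,dx_h\,dz=:B_1+B_2+B_3+B_4.
\end{eqnarray*}

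For the commutator pieces $B_2$ and $B_3$ I would use Lemma \ref{lem:elliptic operator-commutator} with $\fd_\eps(\zeta)^k$ replaced by $|D^{\eps}_{h}|^{2k}$ (see Remark \ref{rmk2.1}), combined with the $u^b_r$-bounds of Lemma \ref{lem:remainder-elliptic}. Together with Lemma \ref{lem:duality} for $B_3$ (to gain the $\sqrt{\eps}$ from the regularizing extension $v^\dag$) and the Poisson trick of Lemma \ref{lem:Poisson regularization} for $B_2$, these yield bounds of the form $\sqrt{\eps}M(\sigma)|v|_2(|\mathfrak Pu|_{H^{2k+s-1}_\eps}+|\mathfrak Pu|_{H^{t_0+2}_\eps}|\zeta|_{H^{2k+s+3}_\eps})$. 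The boundary contribution at $z=-1$ disappears because $v^\dag$ vanishes there up to cutoff; any remnant is subsumed by the same estimates via the trace lemma.

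For $B_1$ and $B_4$, the core of the argument is the bound on $w^b_{r,k}$ supplied by Lemma \ref{lem:remainder-elliptic-comm-higher}: applying Lemma \ref{lem:duality} to $B_1$ gives a factor $\sqrt{\eps}\|\Lam^{s+1}_{\eps}\na^{\eps}w^b_{r,k}\|_2$, while $B_4$ is controlled by $\|\Lam^s_{\eps}\na^{\eps}\p_zw^b_{r,k}\|_2$ after integrating the $\p_z$-component by parts in $z$ and using $v^\dag|_{z=0}=v$, $v^\dag|_{z=-1}=0$. Both match the desired right-hand side after factoring out the $1/\eps$ in front of $R[\eps\zeta]$. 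Summing the four estimates and taking the supremum over $|v|_2\le 1$ yields the claim by duality.

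The main technical obstacle is identifying which commutator estimates in Section~2 and which mapping estimates for $u^b_r$ produce exactly the sharp $\sqrt{\eps}$ gain and the combination of indices $H^{2k+s-1}_\eps$ and $H^{2k+s+3}_\eps$ on $\zeta$; this is precisely what Lemma \ref{lem:remainder-elliptic-comm-higher} is designed to deliver, so once one correctly parallels the $s>t_0+1$ case analyzed in the proof of Proposition \ref{prop:remainder-commutator}, the other ranges of $s$ follow by the same Sobolev interpolation/low-vs-high frequency bookkeeping used earlier in the section. No new ideas beyond those of Propositions \ref{prop:remainder-commutator} and Lemma \ref{lem:remainder-elliptic-comm-higher} are required.
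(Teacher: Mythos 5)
Your proposal is correct and follows exactly the route the paper indicates: it re-runs the Green's-identity decomposition from the proof of Proposition \ref{prop:remainder-commutator} with $\fd_{\eps}(\zeta)$ replaced by $|D^{\eps}_h|^{2k}$, handles the commutator pieces $B_2,B_3$ via the Kato--Ponce-type commutator estimates (Remark \ref{rmk2.1}) and Lemma \ref{lem:duality}, and controls $B_1,B_4$ with Lemma \ref{lem:remainder-elliptic-comm-higher} in place of Lemma \ref{lem:remainder-elliptic-comm}. The paper itself gives no details for this proposition beyond "Thanks to Lemma \ref{lem:remainder-elliptic-comm-higher} and the proof of Proposition \ref{prop:remainder-commutator}, we also obtain\dots", so your argument is simply supplying the omitted bookkeeping along that same line.
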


Now we are in a position to prove Theorem
\ref{thm:DN-commutator-sharp}.\vspace{0.1cm}

\no{\bf Proof of Theorem \ref{thm:DN-commutator-sharp}.}\, Thanks to
\eqref{eq:remainder}, we write
\begin{align}\label{th5.1}
\begin{split}
\big[\fd_{\eps}(\zeta)^k, \f 1{\eps} \rho(\zeta)^{-1}G[{\eps}
\zeta]\big]u &=\big[\fd_{\eps}(\zeta)^k, \f 1{\eps}
\rho(\zeta)^{-1}g(x_h,D^{\eps}_{h})\big]u\\
&+ \big[\fd_{\eps}(\zeta)^k, \f 1{\eps} \rho(\zeta)^{-1}R[{\eps}
\zeta]\big]u{\eqdefa} I_k+II_k.
\end{split}
\end{align}
We shall prove  by an induction argument on $k$ that
\beq\label{eq:comm-est1} |I_k|_{H^s_\eps}\le \sqrt{\eps}
M(\sigma)\Big(|\mathfrak P u|_{H^{2k+s-1}_\eps}
+|\mathfrak{P}u|_{H^{t_0+2}_\eps}|\zeta|_{H^{2k+s+3}_\eps}\Big).
\eeq The case of $k=1$ is a direct consequence of Proposition
\ref{prop:main part-commutator}. We assume that (\ref{eq:comm-est1})
holds for $k\le \ell-1$. We now turn to prove the case of $k=\ell$.
We write \beno \begin{split} I_k
=&\fd_{\eps}(\zeta)\big[\fd_{\eps}(\zeta)^{\ell-1},\f 1{\eps}
\rho(\zeta)^{-1}g(x_h,D^{\eps}_{h})\big]u+ \big[\fd_{\eps}(\zeta),\f
1{\eps}
\rho(\zeta)^{-1}g(x_h,D^{\eps}_{h})\big]\fd_{\eps}(\zeta)^{\ell-1}
u.\end{split} \eeno Using Lemma \ref{lem:elliptic operator} and the
induction assumption, we have by an interpolation argument that
\beno
\begin{split}
\big|\fd_{\eps}(\zeta)\big[\fd_{\eps}(\zeta)^{\ell-1},\f 1{\eps}
\rho(\zeta)^{-1} g(x_h,D^{\eps}_{h})\big]u\big|_{H^s_\eps}&\  \le
M(\sigma)\Big(\big|\big[\fd_{\eps}(\zeta)^{\ell-1},\f 1{\eps}
\rho(\zeta)^{-1}g(X,D^{\eps}_{h})\big]u\big|_{H^{s+2}_\eps}\\
&\qquad+\big|\big[\fd_{\eps}(\zeta)^{\ell-1},\f 1{\eps}
\rho(\zeta)^{-1}g(x_h,D^{\eps}_{h})
\big]u\big|_{H^{t_0}_{\eps}}|\zeta|_{H^{s+3}_\eps}\Big)\\
&\ \le \sqrt{\eps} M(\sigma)\Big(|\mathfrak P
u|_{H^{2\ell+s-1}_\eps}
+|\mathfrak{P}u|_{H^{t_0+2}_\eps}|\zeta|_{H^{2\ell+s+3}_\eps}\Big),
\end{split}
\eeno and applying  Proposition \ref{prop:main part-commutator} and
Lemma \ref{lem:elliptic operator} gives \beno
\begin{split}
\big|\big[\fd_{\eps}(\zeta),&\f 1{\eps}
\rho(\zeta)^{-1}g(x_h,D^{\eps}_{h})\big]\fd_{\eps}(\zeta)^{\ell-1}u\big|_{H^s_\eps}\\
&\le \eps^\f94M(\sigma)\Big(\big|\mathfrak P
\fd_{\eps}(\zeta)^{\ell-1}u\big|_{H^{s+\f12}_\eps}+\langle\big|\mathfrak P \fd_{\eps}(\zeta)^{\ell-1}u\big|_{H^{t_0+\f32}_\eps}|\zeta|_{H^{s+5}_\eps}\rangle_{s>t_0+1}\Big)\\
&\le \sqrt{\eps} M(\sigma)\Big(|\mathfrak P u|_{H^{2\ell+s-1}_\eps}
+|\mathfrak{P}u|_{H^{t_0+2}_{\eps}}|(\zeta,b)|_{H^{2\ell+s+3}_\eps}\Big).
\end{split}
\eeno
This proves (\ref{eq:comm-est1}) for $k=\ell$.

Noticing that $II_k$ can be rewritten as \beno
II_k=\big[\fd_{\eps}(\zeta)^k, \f 1{\eps}
\rho(\zeta)^{-1}\big]R[{\eps} \zeta]u+
\rho(\zeta)^{-1}\big[\fd_{\eps}(\zeta)^k, \f 1{\eps} R[{\eps}
\zeta]\big]u, \eeno applying Lemma \ref{lem:elliptic
operator-commutator} and Proposition \ref{prop:remainder} gives
\beno \begin{split} &\big|\big[\fd_{\eps}(\zeta)^k, \f 1{\eps}
\rho(\zeta)^{-1}\big]R[{\eps} \zeta]u\big|_{H^s_\eps}\le \eps^\f32
M(\sigma)\Big(|\mathfrak P u|_{H^{2k+s-1}_\eps}
+|\mathfrak{P}u|_{H^{t_0+2}_\eps}|(\zeta,b)|_{H^{2k+s+3}_\eps}\Big).
\end{split}
\eeno Similarly, we can prove by an induction argument that \beno
\begin{split}
&\big|\f 1{\eps}
\rho(\zeta)^{-1}\big[\fd_{\eps}(\zeta)^k-|D^{\eps}_{h}|^{2k},
R[{\eps} \zeta]\big]u\big|_{H^s_\eps} \\
&\quad\le \sqrt{\eps} M(\sigma)\Big(|\mathfrak P
u|_{H^{2\ell+s-1}_\eps}
+|\mathfrak{P}u|_{H^{t_0+2}_\eps}|(\zeta,b)|_{H^{2\ell+s+3}_\eps}\Big),
\end{split}
\eeno which together with Proposition
\ref{prop:remainder-commutator-higher} implies that $II_k$ also
satisfies \eqref{eq:comm-est1}. In fact, the case of $k=1$ comes
from Remark \ref{rem:commutator-sharp}.\ef

\renewcommand{\theequation}{\thesection.\arabic{equation}}
\setcounter{equation}{0}

\section{Large time existence for the linearized system}

\subsection{The linearized system}\label{sec6.1}
We first reformulate the original system (\ref{eq:Hamiltonian
form-non}) as \beq\label{eq:Hamiltonian form-non-new}
\p_tU+\mathcal{L}U+{\eps}\mathcal{A}[U]=0 \eeq where
\[U=(\zeta, \psi)^T,\quad\mathcal{A}[U]=(A_1[U], A_2[U])^T,\quad
\mathcal {L}=\left(\begin{matrix} 0 & -\frac1 {\eps} G[0]\\ 1&
0\end{matrix}\right),\] and \beno
\begin{split}
A_1[U]\eqdefa&-\frac 1{{\eps}^2}(G[{\eps}\zeta]\psi-G[0]\psi),\\
A_2[U]\eqdefa&\frac12 |\na^{\eps}_h\psi|^2 - \frac{\big(\eps^{-\f12}
G[{\eps}\zeta]\psi+{\eps}^{\frac32}\na^{\eps}_h\zeta\cdot
\na^{\eps}_h \psi\big)^2}{2(1+{\eps}^3|\na^
\eps_h\zeta|^2)}-\al\na^{\eps}_h \cdot \Big(\frac{\na^{\eps}_h
\zeta}{\sqrt{1+{\eps}^3|\na^{\eps}_h \zeta|^2}}\Big).
\end{split}\eeno

Motivated by \cite{lan1}, we shall use  Nash-Moser iteration Theorem
to prove the large time existence of solutions to
\eqref{eq:Hamiltonian form-non-new}. Toward this, a key step will be
to study the linearized system of \eqref{eq:Hamiltonian
form-non-new}. Indeed, we shall linearize the system
(\ref{eq:Hamiltonian form-non-new}) around an admissible reference
state in the following sense:

\begin{defi} {\sl Let $T > 0$. We say that
$\underline{U}=(\underline{\zeta},\underline{\psi})^T$ is an
admissible reference state to \eqref{eq:Hamiltonian form-non-new} on
$[0,\f T{\eps}]$ if there exists $h_0>0$ such that \beno
1+\eps\underline{\zeta}-\eps b\ge h_0\quad \mbox{uniformly on}\quad
[0,\f T{\eps}]\times \R^2. \eeno }\end{defi}

Given an admissible reference state
$\underline{U}=(\underline{\zeta},\underline{\psi})^T,$ one can
calculate by Proposition \ref{prop:shape derivative} (see also
\cite{Lannes-Inven}, \cite{MZ}) that the linearized operator of the
system (\ref{eq:Hamiltonian form-non-new}) equals  \beno
\begin{split}
&\underline{\mathcal{L}}=\p_t+\mathcal{L}+{\eps} d_{\underline{U}}\mathcal{A}\\
&=\p_t+\left(\begin{matrix} G[{\eps}\underline{\zeta}] (\cdot
\underline{Z})+{\eps}\na^{\eps}_h\cdot(\cdot {\bf\underline{v}})
&-\frac1 {\eps} G[{\eps}\underline{\zeta}]\cdot\\
{\eps}\underline{Z}G[{\eps}\underline{\zeta}](\cdot
\underline{Z})+(1+{\eps}^2\underline{Z}\na^{\eps}_h\cdot{\bf\underline{v}})\cdot
-\al\eps\underline{A} &
{\eps}{\bf\underline{v}}\cdot\na^{\eps}_h-\underline{Z}G[{\eps}\zeta]\end{matrix}\right),
\end{split}
\eeno where \beno
\begin{split}
&\underline{A}\eqdefa\na^{\eps}_h\cdot\left[\frac{\na^{\eps}_h}{(1+{\eps}^3|\na^{\eps}_h\underline{\zeta}|^2)^\frac12}
-\frac{{\eps}^3\na^{\eps}_h\underline{\zeta}(\na^{\eps}_h\underline{\zeta}\cdot\na^{\eps}_h)}
{(1+{\eps}^3|\na^{\eps}_h\underline{\zeta}|^2)^\frac32}\right],\\
&\underline {\bf v}\eqdefa\na^{\eps}_h\underline \psi-{\eps}
\underline Z \na^{\eps}_h\underline \zeta,\qquad\underline
Z\eqdefa\frac1 {1+{\eps}^3
|\na^{\eps}_h\underline\zeta|^2}\big(G[{\eps}\underline\zeta]\underline\psi
+{\eps}^2\na^{\eps}_h\underline\zeta\cdot\na^{\eps}_h\underline\psi\big).
\end{split}
\eeno This gives the linearized system of (\ref{eq:Hamiltonian
form-non-new}) as follows \beq\label{eq:linearized system}
\underline{\mathcal{L}}U={\eps} G,\quad U|_{t=0}=U^0. \eeq To solve
\eqref{eq:linearized system}, as in \cite{lan1, MZ} we introduce a
new variable $V{\eqdefa}(\zeta,\psi-{\eps}\underline{Z}\zeta)^T$
such that the principal symbol of the transformed linearized
operator
 is trigonalized.  Indeed  the system
(\ref{eq:linearized system}) can be equivalently written as
\beq\label{eq:linearized system-new} \underline{\mathcal{M}}V={\eps}
H,\quad V|_{t=0}=V^0, \eeq where
\[\underline{\mathcal{M}}{\eqdefa}\p_t+\left(\begin{matrix} {\eps}\na^{\eps}_h\cdot
(\cdot{\bf\underline{v}}) & -\frac1 {\eps} G[{\eps}\underline{\zeta}]\\
\underline{a}-\al\eps\underline{A}&
{\eps}{\bf\underline{v}}\cdot\na^{\eps}_h\end{matrix}\right),\] and
\[\underline{a}=1+{\eps}({\eps}{\bf\underline{v}}\cdot\na^{\eps}_h\underline{Z}
+\p_t\underline{Z}),\quad H=(G_1,G_2-{\eps}\underline{Z}G_1).\]

\subsection{Large time existence}

In this subsection, we shall prove the large time existence of
smooth enough solution for  the linearized system
\eqref{eq:linearized system} and establish the uniform  estimates
for thus obtained solutions on $[0,\f{T}\eps].$ Toward this, we
first introduce the following definition.

\begin{defi}\label{definitions} {\sl Let $s\in \R$ and $T>0$.
\begin{itemize}
 \item[(1)]
 We define the space $X^s$ as
\[
X^s{\eqdefa} \left\{U=(\zeta,\psi)^T: \zeta\in H^{2s+1}(\R^2),
\psi\in
 H^{2s+\f12}(\R^2)\right\}
\]
endowed with the norm \beno
\begin{split}
|U|_{X^s}{\eqdefa}&\sqrt{\eps}|\zeta|_{H^{2s+1}_\eps}+|\zeta|_{H^{2s}_\eps}+\sqrt{\eps}|\na_h^{\eps}
\zeta| _{H^{s}}+|\zeta|_{H^{s}}+|\mathfrak
P\psi|_{H^{2s}_\eps}+|\mathfrak P\psi|_{H^{s}}
+\eps|\psi|_{H^{s-1}}.
\end{split}
\eeno And $X^s_T{\eqdefa}C([0,\f T\eps]; X^s)$ endowed with its
canonical norm.

\item[(2)]  Let $\frak{X}^{s}$ be determined by Definition \ref{def1.1}.
The semi-normed space $(Y^s_T,|\cdot|_{Y^s_T})$ is defined as
\[
Y^s_T{\eqdefa}\bigcap^1_{j=0}C^j([0,\f T{\eps}];
\Lam^{2j}_\eps\frak{X}^s), \qquad
|U|_{Y^s_T}{\eqdefa}\sum^1_{j=0}\sup_{[0,\f T{\eps}]}|\p^j_t
\Lambda^{-2j}_\eps U| _{\frak{X}^{s}}.
\]

\item[(3)] For any $(G,U^0)\in X^s_T\times X^s$, we denote
\[
\mathcal I^s(t,U^0,G){\eqdefa}|U^0|^2_{X^s}+{\eps}
\int^t_0\sup_{[0,t']} |G(t'')|^2_{X^s}dt'.
\] \end{itemize}
}
\end{defi}

\begin{prop}\label{prop:linearized system} {\sl Let $k\in \N$,
$2k\ge m_0$, $T>0$ and $b\in H^{2k+5}(\R^2)$. Assume that
$\underline U= (\underline\zeta,\underline\psi)^T\in Y^{k}_T$ is an
admissible reference state on $[0,\f T\eps]$ for some $h_0>0$. Then
for any $(G,U^0)\in X^{k}_T\times X^{k}$, (\ref{eq:linearized
system}) has a unique solution $U\in X^{k}_T$ such that for all
$t\in [0,\f T{\eps}]$,
\[
|U(t)|^2_{X^{k}}\leq \underline C\Big(\mathcal I^{k}(t,U^0,G)+
|\Lambda^{5}_\eps\underline U|^2_{Y^{k}_T}\mathcal
I^{5}(t,U^0,G)\Big),
\]
where $ \underline
C=C(T,\f1{h_0},\al,\al^{-1},{\eps},|b|_{H^{2k+5}}, |\underline
U|_{Y^{m_0}_T})$.} \end{prop}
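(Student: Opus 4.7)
\medskip
\noindent\textbf{Proof proposal.} My plan is to work with the trigonalized system \eqref{eq:linearized system-new} for $V=(\zeta,\psi-\eps\underline{Z}\zeta)^T$, since recovering $U$ from $V$ costs only lower-order norms that are already controlled. The core of the argument is the construction of an energy functional that captures the algebraic resemblance between the principal part of $\frac1\eps G[\eps\underline{\zeta}]$ and the surface-tension operator $\alpha\eps\underline{A}$: as noted in Section~\ref{sec5.2}, the principal symbol of $\rho(\underline{\zeta})^{-1}G[\eps\underline{\zeta}]/\eps$ is (modulo a smoothing remainder) the square root of the symbol of $\fd_\eps(\underline{\zeta})$, which is in turn the principal part of $\underline{A}$. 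Accordingly, for each integer $0\le j\le k$ I would define
\[
E^j(V)(t)\eqdefa \big(\fd_\eps(\underline\zeta)^j\zeta,\ \alpha\eps\underline A\,\fd_\eps(\underline\zeta)^j\zeta\big)+\big(\fd_\eps(\underline\zeta)^j\wt\psi,\ \tfrac1\eps G[\eps\underline\zeta]\fd_\eps(\underline\zeta)^j\wt\psi\big)+\big(\underline a\,\fd_\eps(\underline\zeta)^j\zeta,\ \fd_\eps(\underline\zeta)^j\zeta\big),
\]
with $\wt\psi=\psi-\eps\underline Z\zeta$, and a parallel lower-order piece with $|D^\eps_h|^{2j}$ in place of $\fd_\eps(\underline\zeta)^j$. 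Lemma \ref{lem:elliptic operator} and Proposition \ref{prop:DN-basic properties}(3) show that $\sum_{j\le k}E^j(V)$ is equivalent, up to an $M(\sigma)$ factor and lower-order terms, to $|V|_{X^k}^2$ (the $\fd_\eps^k$ part controls $\sqrt\eps|\zeta|_{H^{2k+1}_\eps}+|\zeta|_{H^{2k}_\eps}$ through the surface-tension piece and $|\mathfrak P\wt\psi|_{H^{2k}_\eps}$ through the DN piece; a positivity argument using a Taylor sign-type condition hidden in $\underline a$ for low frequencies, together with the lower-order $H^k$ energies, closes the equivalence).

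Next I would apply $\fd_\eps(\underline\zeta)^j$ to \eqref{eq:linearized system-new}, test against the symmetrizer built from the coefficients of $E^j$, and compute $\frac{d}{dt}E^j$. Using the self-adjointness of $G[\eps\underline\zeta]$ (Proposition \ref{prop:DN-basic properties}(1)) and integrating by parts in the transport term $\eps\underline{\mathbf{v}}\cdot\na^\eps_h$, the two principal cross terms between the $\frac1\eps G[\eps\underline\zeta]$ column and the $\alpha\eps\underline A$ column cancel exactly—this is the whole point of the symmetrization and the reason no derivatives are lost at leading order. What remains is (i) commutator error of the form $[\fd_\eps(\underline\zeta)^j,\frac1\eps G[\eps\underline\zeta]]$, controlled by Theorem \ref{thm:DN-commutator-sharp} with a gain of one derivative; (ii) commutators $[\fd_\eps(\underline\zeta)^j,\underline{\mathbf{v}}\cdot\na^\eps_h]$ and $[\fd_\eps(\underline\zeta)^j,\underline a]$ handled by Lemma \ref{lem:elliptic operator-commutator} and Proposition \ref{prop:DN-basic properties}(4); (iii) time-derivative terms $(\p_t\fd_\eps(\underline\zeta)^j)\cdot$ and $[\p_t,G[\eps\underline\zeta]]$ handled by Proposition \ref{prop:DN-commutator-time} and direct differentiation, which explains the appearance of $|\p_t\Lambda^{-2}_\eps\underline U|$ and thus the $Y^k_T$ norm on $\underline U$. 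All these errors enter with an $\eps$ prefactor, so integrating from $0$ to $t\in[0,T/\eps]$ produces an $O(T)$ bound rather than an $O(T/\eps)$ bound.

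The resulting differential inequality has the schematic shape
\begin{equation*}
\frac{d}{dt}E^k(V)\le \eps\, M(\underline\sigma)\bigl(E^k(V)+|H(t)|^2_{X^k}+|\Lambda^5_\eps\underline U|^2_{Y^k_T}\,E^5(V)\bigr),
\end{equation*}
where the $|\Lambda^5_\eps\underline U|^2_{Y^k_T}\,E^5(V)$ term records the tame dependence on the reference state coming from the factor $|(\underline\zeta,b)|_{H^{2k+3}_\eps}$ times low-order norms in Theorem \ref{thm:DN-commutator-sharp} and Remarks \ref{rem:DN operator}, \ref{rem:DN-commutator} (this is the $P>D$ loss that eventually feeds Nash-Moser). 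A first Gronwall application at level $5$ (which is $\le m_0$ so no tame loss appears there, with $M(\underline\sigma)$ absorbed into $\underline C$) bounds $E^5(V)(t)\le \underline C\,\mathcal I^5(t,U^0,G)$, and then a second Gronwall at level $k$ gives the stated estimate after translating $E^k(V)$ back to $|U|_{X^k}^2$ using $\wt\psi=\psi-\eps\underline Z\zeta$ and the equivalence between the semi-norm in Definition \ref{def1.1} and the full $X^k$ norm.

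Existence and uniqueness are obtained by a standard parabolic regularization or Friedrichs mollification argument: add $\nu\Lambda^2_\eps$ on the diagonal of $\underline{\mathcal M}$, solve the resulting well-posed system by semigroup methods, derive the $\nu$-uniform estimate above (the symmetrization goes through identically because the regularizer is self-adjoint and commutes with $\fd_\eps(\underline\zeta)^j$ up to admissible commutators), then pass to the limit $\nu\to 0$. The hard step is item (i) above; everything else is bookkeeping. It is precisely Theorem \ref{thm:DN-commutator-sharp}, and nothing weaker, that makes the scheme work: Proposition \ref{prop:DN-commutator} loses a full derivative relative to $|\mathfrak Pu|_{H^{2k}_\eps}$, while the symmetrized energy only tolerates the half-derivative loss that the sharp theorem delivers via the parity between $\rho(\underline\zeta)^{-1}g(x_h,D^\eps_h)$ and $\sqrt{\fd_\eps(\underline\zeta)}$.
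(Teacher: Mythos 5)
Your scheme is exactly the paper's: trigonalize via $V=(\zeta,\psi-\eps\underline Z\zeta)$, build a symmetrized energy out of $\fd_\eps(\underline\zeta)^k$ applied to both components (with the surface-tension weight on $\zeta$ and the Dirichlet--Neumann weight on $\wt\psi$, plus a parallel $\Lambda^k$-level energy), close the commutator losses with Theorem~\ref{thm:DN-commutator-sharp}, and run a two-step Gronwall, first at a fixed low level and then at level $k$, together with a regularization argument for existence. Two details in your energy functional need correction before the calculation actually cancels at top order: the weight on the $\zeta$-component should be $\underline a-\alpha\eps\underline A$, not $\underline a+\alpha\eps\underline A$ (since $\underline A\rho=-\fd_\eps(\underline\zeta)+$ lower order, the sign you wrote makes the form indefinite at high frequency), and the high-order piece must carry the $\rho$-conjugation built in, as in the paper's $\bigl(\fd_\eps(\underline\zeta)^k\rho^{-1}V_1,\ \rho(1-\alpha\eps\underline A)\rho\,\fd_\eps(\underline\zeta)^k\rho^{-1}V_1\bigr)$, so that the commutator produced is precisely $\bigl[\rho^{-1}\tfrac1\eps G[\eps\underline\zeta],\fd_\eps(\underline\zeta)^k\bigr]$, which is what Theorem~\ref{thm:DN-commutator-sharp} estimates; you also omit the $\eps^2|V_2|^2_{H^{k-1}}$ term in the low-order energy that controls the $\eps|\psi|_{H^{k-1}}$ part of $|\cdot|_{X^k}$.
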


The proof of this proposition relies on the study of the
trigonalized linearized system (\ref{eq:linearized system-new}),
which we admit for the time being.

\begin{prop}\label{prop:linearized system-tri} {Under the
assumptions in Proposition \ref{prop:linearized system},  for any
given $(H,V^0)\in X^{k}_T\times X^{k}$, (\ref{eq:linearized
system-new}) has a unique solution $V\in X^{k}_T$ such that for all
$t\in [0,\f T{\eps}]$, \beq\label{prop6.2} |V(t)|^2_{X^{k}}\leq
\underline C\Big(\mathcal I^{k}(t,V^0,H) +|\Lambda^5_\eps\underline
U|^2_{Y^{k}_T}\mathcal I^{5}(t,V^0,H)\Big), \eeq with  $\underline
C$ being the same as in Proposition \ref{prop:linearized system}.}
\end{prop}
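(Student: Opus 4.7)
The plan is to deduce Proposition \ref{prop:linearized system-tri} from an energy estimate on the trigonalized system
\beno
\begin{split}
&\p_t \zeta+\eps\na_h^\eps\cdot(\zeta\underline{\bf v})-\tfrac1\eps G[\eps\underline\zeta]\psi=\eps G_1,\\
&\p_t\psi+(\underline a-\al\eps\underline A)\zeta+\eps\underline{\bf v}\cdot\na_h^\eps\psi=\eps(G_2-\eps\underline Z G_1).
\end{split}
\eeno
Existence of $V\in X^k_T$ will be obtained by a standard Friedrichs regularization (adding $-\nu\triangle$ to the $\psi$-equation, solving the resulting linear parabolic system and passing to the limit $\nu\to 0$ using the uniform energy estimate established below); this is routine given the linearity, so the heart of the proof is the a priori bound \eqref{prop6.2}.

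The key observation is that the system is symmetrizable by the pairing: test the first equation against $(\underline a-\al\eps\underline A)\zeta$ and the second against $\tfrac1\eps G[\eps\underline\zeta]\psi$. Because $G[\eps\underline\zeta]$ is self-adjoint (Proposition \ref{prop:DN-basic properties}\,(1)), the cross terms cancel exactly and one obtains, modulo commutators with $\p_t$ and with the transport terms, conservation of the quadratic form
\beno
E_0(V)\eqdefa \bigl((\underline a-\al\eps\underline A)\zeta,\zeta\bigr)+\bigl(\tfrac1\eps G[\eps\underline\zeta]\psi,\psi\bigr).
\eeno
Thanks to Proposition \ref{prop:DN-basic properties}\,(3)--(4) and to the positivity of the surface-tension bilinear form (coming from $-\al\eps\underline A\sim \al\eps|\na_h^\eps\cdot|^2$), this controls $|\zeta|_2^2+\al\eps|\na_h^\eps\zeta|_2^2+|\mathfrak P\psi|_2^2$, which is exactly $|V|_{X^0}^2$. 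For the higher-order estimate one applies $\fd_\eps(\underline\zeta)^k$ to both equations, which yields a system of the same structure for $\fd_\eps(\underline\zeta)^k V$ with source $\eps\fd_\eps(\underline\zeta)^k H$ plus commutators, and one repeats the pairing. The resulting energy $E_k(V)\eqdefa E_0(\fd_\eps(\underline\zeta)^k V)$ is equivalent, via Lemma \ref{lem:elliptic operator}, to the square of the principal part of $|V|_{X^k}$; together with lower-order pieces one recovers the full $|V|_{X^k}^2$.

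The main obstacle is controlling $[\fd_\eps(\underline\zeta)^k,\tfrac1\eps G[\eps\underline\zeta]]\psi$ and the analogous commutator with $\underline a-\al\eps\underline A$. Since $\tfrac1\eps G[\eps\underline\zeta]$ acts only as a ``half-order'' operator relative to the $\mathfrak P$-norm, a crude Kato--Ponce bound would lose one full derivative and destroy the scheme. This is exactly where Theorem \ref{thm:DN-commutator-sharp} enters: writing $\tfrac1\eps G[\eps\underline\zeta]=\rho(\underline\zeta)\cdot\tfrac1\eps\rho(\underline\zeta)^{-1}G[\eps\underline\zeta]$ and using that the principal symbol of $\rho(\underline\zeta)^{-1}G[\eps\underline\zeta]$ Poisson-commutes with the symbol of $\fd_\eps(\underline\zeta)$ at first order, Theorem \ref{thm:DN-commutator-sharp} gains the missing half derivative with a factor $\sqrt\eps$, yielding an error of the form
\beno
\bigl|\bigl[\fd_\eps(\underline\zeta)^k,\tfrac1\eps G[\eps\underline\zeta]\bigr]\psi\bigr|_{L^2}\le \sqrt\eps\,M(\underline\sigma)\bigl(|\mathfrak P\psi|_{H^{2k-1}_\eps}+|\mathfrak P\psi|_{H^{t_0+2}}|(\underline\zeta,b)|_{H^{2k+3}_\eps}\bigr).
\eeno
Commutators involving $\underline a-\al\eps\underline A$ and the transport terms $\eps\underline{\bf v}\cdot\na_h^\eps$ are handled by Lemma \ref{lem:elliptic operator-commutator} and Remark \ref{rem:DN-commutator}; time-derivatives of $\underline Z$ and $\underline{\bf v}$ (which appear in $\underline a$) are controlled via Propositions \ref{prop:shape derivative}--\ref{prop:shape derivative estimate} and Proposition \ref{prop:DN-commutator-time}, using the $Y^k_T$-regularity of $\underline U$.

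Collecting all commutator and source contributions, one arrives at a differential inequality of the form
\beno
\tfrac{d}{dt}E_k(V)\le \underline C\,\eps\bigl(|V|_{X^k}^2+|\Lambda^5_\eps\underline U|_{Y^k_T}^2|V|_{X^5}^2\bigr)+\underline C\,\eps|G|_{X^k}|V|_{X^k},
\eeno
where $\underline C$ depends only on the quantities allowed in the statement. A two-scale Gronwall argument---first at level $k=m_0$ (where $|\underline U|_{Y^{m_0}_T}$ already sits in $\underline C$, so the tame term disappears) to bound $|V|_{X^5}$ on $[0,T/\eps]$, and then at level $k$ to obtain the tame estimate \eqref{prop6.2}---yields the result, the factor $\eps\cdot(T/\eps)=T$ keeping $\underline C$ independent of $\eps$.
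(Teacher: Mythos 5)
Your proposal correctly identifies the global architecture of the proof — a symmetrizer-based energy estimate using the self-adjointness of $G[\eps\underline\zeta]$, the sharp commutator Theorem \ref{thm:DN-commutator-sharp} to compensate the derivative loss, and a two-scale Gronwall to get a $T/\eps$-lifespan with $\eps$-uniform constants. However, there is a genuine gap in the definition of the high-order energy.

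You define $E_k(V)\eqdefa E_0(\fd_\eps(\underline\zeta)^kV)$, i.e., you apply $\fd_\eps(\underline\zeta)^k$ to both unknowns and repeat the pairing of the first equation against $(\underline a-\alpha\eps\underline A)\fd_\eps^k V_1$ and the second against $\tfrac1\eps G\fd_\eps^k V_2$. After the exact cross-cancellation (self-adjointness of $G$), the residual error terms from the surface-tension part are
\[
\alpha\eps\Bigl[\bigl(\fd_\eps^k\tfrac1\eps \underline GV_2,\underline A\fd_\eps^kV_1\bigr)-\bigl(\fd_\eps^k\underline AV_1,\tfrac1\eps \underline G\fd_\eps^kV_2\bigr)\Bigr].
\]
To close the estimate these must be $\le\eps\cdot(\text{energy})^2$. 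Because the energy only controls $\sqrt\eps\,|V_1|_{H^{2k+1}_\eps}$ and $|\mathfrak P V_2|_{H^{2k}_\eps}$, any bound that requires $|\mathfrak PV_2|_{H^{2k+1/2}_\eps}$ is out of reach. Yet the factorization $\tfrac1\eps G=\rho\cdot\tfrac1\eps\rho^{-1}G$ applied to $[\fd_\eps^k,\tfrac1\eps G]$ produces, beyond the benign $\rho[\fd_\eps^k,\tfrac1\eps\rho^{-1}G]$ covered by Theorem \ref{thm:DN-commutator-sharp}, the extra commutator $[\fd_\eps^k,\rho]\tfrac1\eps\rho^{-1}GV_2$. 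Estimating it in $H^1_\eps$ (which you need for the pairing against $\underline A\fd_\eps^k V_1$ after integration by parts) via Lemma \ref{lem:elliptic operator-commutator} and Proposition \ref{prop:shape derivative estimate} forces $|\mathfrak P V_2|_{H^{2k+1/2}_\eps}$, and the $\eps^3$ prefactor from $\rho-1=O(\eps^3|\na_h^\eps\underline\zeta|^2)$ cannot convert this half derivative into the allowed $H^{2k}_\eps$-level (a half derivative costs roughly $\eps^{-1/2}$ at $\mathfrak P$-scale, not a bounded power of $\eps$). The companion error $([\fd_\eps^k,\alpha\eps\underline A]V_1,\tfrac1\eps\underline G\fd_\eps^kV_2)$ suffers from the same half-derivative overshoot.

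This is precisely why the paper defines the high-order functional with the $\rho$-conjugation,
\[
\mathcal E^k_h(V)^2=\bigl(\fd_\eps(\underline\zeta)^k\rho^{-1}V_1,\ \rho(1-\alpha\eps\underline A)\rho\,\fd_\eps(\underline\zeta)^k\rho^{-1}V_1\bigr)+\bigl(\fd_\eps(\underline\zeta)^kV_2,\tfrac1\eps G\fd_\eps(\underline\zeta)^kV_2\bigr),
\]
rather than $E_0(\fd_\eps^kV)$. With this choice, (a) the cross terms $\frak G_4$ and $\frak H_2$ cancel \emph{exactly} because multiplication by $\rho$ is symmetric; (b) the only commutator paired against the surface-tension symmetrizer is exactly $\rho\bigl[\rho^{-1}\tfrac1\eps G,\fd_\eps^k\bigr]$, to which Theorem \ref{thm:DN-commutator-sharp} applies directly without the troublesome $[\fd_\eps^k,\rho]$ tail; and (c) the commutator $[\underline A\rho,\fd_\eps^k]$ that appears on the $V_1$ side drops to order $\le 2k$ thanks to the identity $\underline A(\rho\cdot)=-\fd_\eps(\underline\zeta)(\cdot)+\text{(order 1)}$. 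Without the conjugation none of (a)–(c) holds, and the scheme loses half a derivative. A smaller remark: the paper's low-order energy also carries the term $\eps^2|\Lambda^{k-1}V_2|_2^2$, which you omit; it is needed to recover the $\eps|\psi|_{H^{s-1}}$ piece of the $X^k$-norm.
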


\no{\bf Proof of Proposition \ref{prop:linearized system}.}\
Recalling that $U=(V_1, V_2+\eps \underline{Z}V_1)$ and
$H=(G_1,G_2-\eps \underline{Z}G_1)$, it follows from Proposition
\ref{prop:shape derivative estimate} that
\begin{align*}
\begin{split}
|U(t)|^2_{X^k}\le& \underline{C}\Big(|V(t)|^2_{X^{k}}
+|\Lambda^5_\eps\underline U|^2_{Y^{k}_T}|V(t)|^2_{X^{t_0+1}}\Big)\\
\le& \underline C\Big(\mathcal I^{k}(t,V^0,H)
+|\Lambda^5_\eps\underline U|^2_{Y^{k}_T}\mathcal I^{5}(t,V^0,H)\Big)\\
\le& \underline C\Big(\mathcal I^{k}(t,U^0,G)+
|\Lambda^{5}_\eps\underline U|^2_{Y^{k}_T}\mathcal
I^{5}(t,U^0,G)\Big).
\end{split}
\end{align*}
\ef

In what follows, we shall use the energy method to prove Proposition
\ref{prop:linearized system-tri}. Notice that
$S=\left(\begin{array}{cc}
-\al\eps\underline{A} & 0\\
0& \f1\eps G[\eps\underline{\zeta}]
\end{array}\right)$ is  a symmetrizer
of $\underline{\cM},$  so that a natural energy functional for the
system (\ref{eq:linearized system-new}) is given by \beno
E^s(V)^2{\eqdefa}(\Lambda^s V, S\Lambda^s V)=(\Lambda^s V_1,
-\al\eps\underline{A}\Lambda^s V_1) +(\Lambda^s V_2, \f1\eps
G[\eps\underline{\zeta}]V_2). \eeno We shall see below that the
estimate of $E^s(V)^2$ will lead to deal with the following
commutators \beno \big(\big[\Lam^s, \underline
A\big]V_1,G[\eps\underline \zeta]\Lam^sV_2\big)\quad\hbox{and}\quad
\big(\big[\Lam^s, G[\eps\underline \zeta\big]V_2, \underline
A\Lam^sV_1\big). \eeno Since $\big[\Lam^s,\underline{A}]$ is a
pseudo-differential  operator of order $s+1$ and $ \big[\Lam^s,$
$G[\eps\zeta]\big]$ is a pseudo-differential operator of order $s$,
the above two commutators are dominated by (for example)
$|V_1|_{H^{s+\f32}}|V_2|_{H^{s+\f12}}$, which can not dominated by
the energy $E^s(V)$ due to
$$
E^s(V)\sim |V_1|_{H^{s+1}}+|V_2|_{H^{s+\f12}}\quad
(\textrm{essentially}).
$$
That is, we'll lose one half derivative in the process of energy
estimate if we choose to use this kind of symmetrizer. To overcome
this difficulty, motivated by \cite{MZ} we introduce a new energy
functional $\mathcal{E}^k(V)$ defined by \beq \label{energy}
\mathcal{E}^{k}(V)^2{\eqdefa}\mathcal{E}^{k}_{l}(V)^2+\mathcal{E}^{k}_{h}(V)^2
\eeq with $\mathcal{E}^{k}_{l}(V)$ and $\mathcal{E}^{k}_{h}(V)$
given by \beno
\begin{split}
\mathcal{E}^{k}_{l}(V)^2\eqdefa&\big(\Lambda^kV_1,(1-\alpha\eps\underline{A})\Lambda^kV_1\big)
+\big(\Lambda^kV_2,\frac1{\eps}
G[{\eps}\underline{\zeta}]\Lambda^kV_2\big)+\eps^2(\Lam^{k-1}V_2,
\Lam^{k-1}V_2),\\
\mathcal{E}^{k}_{h}(V)^2\eqdefa&\big(\fd_{\eps}(\underline\zeta)^k\rho^{-1}
V_1, \rho(1-\alpha\eps\underline{A})
\rho\fd_{\eps}(\underline\zeta)^k\rho^{-1}
V_1\big)+\big(\fd_{\eps}(\underline\zeta)^kV_2,\frac1{\eps}
G[{\eps}\underline{\zeta}] \fd_{\eps}(\underline\zeta)^kV_2\big)
\end{split}\eeno for $\fd_{\eps}(\underline\zeta)$
given by  \eqref{eq:ellipticoperator} and
$\rho=\rho(\underline\zeta)=\bigl(1+\eps^3|\na^\eps_h\underline\zeta|^2\bigr)^{\f12}.$
Let us also introduce \beq\label{energy2}
{E}^{k}(V)^2{\eqdefa}{E}^{k}_{l}(V)^2+{E}^{k}_{h}(V)^2 \eeq for
\beno \begin{split}&E_l^k(V)^2{\eqdefa}\eps|\na_h^{\eps}
V_1|_{H^{k}}^2+|V_1|_{H^{k}}^2
+|\mathfrak P V_2|_{H^k}^2+\eps^2|V_2|_{H^{k-1}}^2,\\
&E_h^k(V)^2{\eqdefa}\eps|V_1|_{H^{2k+1}_\eps}^2+|V_1|_{H^{2k}_\eps}^2+|\mathfrak
P V_2|_{H^{2k}_\eps}^2. \end{split}\eeno

We have the following relation between $\mathcal{E}^{k}(V)$ and ${E}^{k}(V)$:

\begin{lem}\label{lem:energy relation}{\sl Let $\mathcal{E}^{k}(V)$ and ${E}^{k}(V)$ be given
by \eqref{energy} and \eqref{energy2} respectively.  We have \beno
\begin{split}
&M(\sigma)^{-1}E_h^k(V)^2\le \mathcal E_h^k(V)^2+\eps
M(\sigma)\big(|V_1|^2_{H^{t_0}}+|\mathfrak P V_2|^2_{H^{t_0}}\big)
|\underline \zeta|^2_{H^{2k+1}_\eps},\\
&\mathcal E_h^k(V)^2\le M(\si)E_h^k(V)^2+\eps
M(\sigma)\big(|V_1|^2_{H^{t_0}}+|\mathfrak P V_2|^2_{H^{t_0}}\big)
|\underline \zeta|^2_{H^{2k+1}_\eps},\\
 &M(\sigma)^{-1}E_l^k(V)^2\le \mathcal E_l^k(V)^2\le
M(\sigma)E_l^k(V)^2. \end{split}\eeno }\end{lem}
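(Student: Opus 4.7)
The three inequalities are established by dissecting $\mathcal{E}_l^k$ and $\mathcal{E}_h^k$ into their four constituent pieces and matching each against the corresponding piece of $E_l^k$, $E_h^k$. The main tools are Proposition~\ref{prop:DN-basic properties} (3) for the Dirichlet--Neumann part, integration by parts (together with Cauchy--Schwarz) for the surface-tension part, and Lemma~\ref{lem:elliptic operator} combined with the $\mathfrak{P}$-commutator of Lemma~\ref{lem:elliptic operator-commutator} for the $\fd_\eps(\underline\zeta)^k$ structure.

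\textbf{Low order.} The term $\eps^2|\Lam^{k-1}V_2|_2^2$ is literally $\eps^2|V_2|_{H^{k-1}}^2$, and $(\Lam^k V_1,\Lam^k V_1)=|V_1|_{H^k}^2$. For the DN piece, Proposition~\ref{prop:DN-basic properties} (3) applied with $u=\Lam^k V_2$, together with $\mathfrak{P}\Lam^k=\Lam^k\mathfrak{P}$, yields $M(\sigma)^{-1}|\mathfrak{P}V_2|_{H^k}^2\le(\Lam^k V_2,\tfrac1\eps G[\eps\underline\zeta]\Lam^k V_2)\le M(\sigma)|\mathfrak{P}V_2|_{H^k}^2$. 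For the surface-tension piece, expanding the definition of $\underline A$ and integrating by parts gives, for real $u$,
\[
(u,-\alpha\eps\underline A u)=\alpha\eps\int\frac{|\na_h^\eps u|^2+\eps^3\bigl(|\na_h^\eps\underline\zeta|^2|\na_h^\eps u|^2-(\na_h^\eps\underline\zeta\cdot\na_h^\eps u)^2\bigr)}{\rho^3}\,dx,
\]
whose integrand is pointwise pinched between $M(\sigma)^{-1}|\na_h^\eps u|^2$ and $M(\sigma)|\na_h^\eps u|^2$ by Cauchy--Schwarz and $1\le\rho\le M(\sigma)$. Applied with $u=\Lam^k V_1$ this gives equivalence with $\alpha\eps|\na_h^\eps V_1|_{H^k}^2$. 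No commutators enter, so no error terms appear.

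\textbf{High order.} For the $V_2$ piece, Proposition~\ref{prop:DN-basic properties} (3) reduces the DN term to a two-sided comparison with $|\mathfrak{P}\fd_\eps(\underline\zeta)^kV_2|_2^2$. Decomposing $\mathfrak{P}\fd_\eps^kV_2=\fd_\eps^k\mathfrak{P}V_2+[\mathfrak{P},\fd_\eps^k]V_2$, applying both halves of Lemma~\ref{lem:elliptic operator} to $f=\mathfrak{P}V_2$, and controlling the commutator by the third inequality of Lemma~\ref{lem:elliptic operator-commutator}, I obtain equivalence with $|\mathfrak{P}V_2|_{H^{2k}_\eps}^2$ modulo an error of the stated form $\eps M(\sigma)|\mathfrak{P}V_2|_{H^{t_0}}^2|\underline\zeta|_{H^{2k+1}_\eps}^2$. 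For the $V_1$ piece, set $W:=\fd_\eps(\underline\zeta)^k\rho^{-1}V_1$; self-adjointness of $\underline A$ gives
\[
\bigl(W,\rho(1-\alpha\eps\underline A)\rho W\bigr)=|\rho W|_2^2+\alpha\eps\bigl(\rho W,-\underline A\,\rho W\bigr),
\]
which by the low-order identity is equivalent to $|W|_2^2+\alpha\eps|\na_h^\eps(\rho W)|_2^2$ with $M(\sigma)^{\pm1}$ constants. Using Lemma~\ref{lem2.1} (ii)--(iii) to exchange $\rho$- and $\rho^{-1}$-weighted norms with ordinary scaled Sobolev norms, and applying Lemma~\ref{lem:elliptic operator} at $s=0$ and $s=1$ to $\fd_\eps^k\rho^{-1}V_1$, this is converted into equivalence with $|V_1|_{H^{2k}_\eps}^2+\alpha\eps|V_1|_{H^{2k+1}_\eps}^2$, again up to an error $\eps M(\sigma)|V_1|_{H^{t_0}}^2|\underline\zeta|_{H^{2k+1}_\eps}^2$.

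\textbf{Main obstacle.} The delicate point is the sharp bookkeeping of $\eps$-powers and the precise regularity index $2k+1$ of $\underline\zeta$ in the error. A naive application of Lemma~\ref{lem:elliptic operator-commutator} to $[\mathfrak{P},\fd_\eps^k]$ produces an error involving $|\underline\zeta|_{H^{2k+2}_\eps}$; the sharper index $2k+1$ is recovered by exploiting (i) the $\eps^2$ prefactor in that lemma, (ii) the fact that the principal part $|D_h^\eps|^{2k}$ of $\fd_\eps^k$ commutes exactly with the Fourier multiplier $\mathfrak{P}$, so only sub-principal terms involving derivatives of $\underline\zeta$ contribute to the commutator, and (iii) scaled-Sobolev interpolation between $H^{2k+1}_\eps$ and $H^{t_0}$, balanced via Young's inequality. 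Analogous care is required when commuting $\rho^{-1}$ past $\fd_\eps^k$ in the $V_1$-piece.
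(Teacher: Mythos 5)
Your proof is correct and follows the same route the paper indicates in its one‑line proof: Proposition \ref{prop:DN-basic properties}(3) for the Dirichlet--Neumann quadratic form, the pointwise coercivity/boundedness of $(u,-\alpha\eps\underline A u)$ obtained from integration by parts and Cauchy--Schwarz for the surface-tension quadratic form, and Lemmas \ref{lem:elliptic operator}--\ref{lem:elliptic operator-commutator} to trade $\fd_\eps(\underline\zeta)^k$ (after commuting it past $\mathfrak P$ and $\rho^{\pm1}$) for $|D_h^\eps|^{2k}$ up to the stated $\eps$-small error. The only point to keep in mind is that a direct application of the $[\fd_\eps^k,\mathfrak P]$ estimate at $s=0$ in Lemma~\ref{lem:elliptic operator-commutator} produces $|\underline\zeta|_{H^{2k+2}_\eps}$ rather than $|\underline\zeta|_{H^{2k+1}_\eps}$; as you observe, the extra $\eps$-power and the fact that $|D_h^\eps|^{2k}$ commutes exactly with $\mathfrak P$ let one absorb this, and in any case the index is immaterial for how the lemma is used in Step~3 of the proof of Proposition~\ref{prop:linearized system-tri}.
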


\begin{proof}\,The first inequality can be deduced from Proposition
\ref{prop:DN-basic properties}, Lemma \ref{lem:elliptic operator}
and Lemma \ref{lem:elliptic operator-commutator}, while the
remaining ones are obvious.\end{proof}\vspace{0.2cm}

Now let us turn to the proof of Proposition \ref{prop:linearized
system-tri}.\\

\no{\bf Proof of Proposition \ref{prop:linearized system-tri}.\,}
With the {\it a priori} estimate \eqref{prop6.2}, it is   classical
to prove the existence part of the proposition. So we only present
the detailed proof of \eqref{prop6.2} for smooth enough solutions of
\eqref{eq:linearized system-new}.

\no{\bf Step 1.\,} High order energy estimate.

Recall that $\fd_{\eps}(\underline\zeta)$ is given by
\eqref{eq:ellipticoperator} and
$\rho=\rho(\underline\zeta)=\bigl(1+\eps^3|\na^\eps_h\underline\zeta|^2\bigr)^{\f12}.$
Let $\lambda>0$ to be determined later, and we denote $\underline G
{\eqdefa}G[{\eps} \underline\zeta]$. Then applying a standard energy
estimate to \eqref{eq:linearized system-new} yields
\beq\label{eq:energy-high}
\begin{split}
e^{{\eps}\lambda t}\f d{dt}\Big(e^{-{\eps}\lambda t}{\mathcal
E}^{k}_h(V(t))^2\Big) =&-{\eps}\lambda{\mathcal
E}^{k}_h(V(t))^2+2\big(\fd_{\eps}(\underline\zeta)^k\p_t(\rho^{-1}V_1\big),
\rho\big(1\\
&-\alpha\eps\underline{A})\rho\fd_{\eps}(\underline\zeta)^k\rho^{-1}V_1\big)+2\big(\fd_{\eps}(\underline\zeta)^k\p_tV_2,\f1{\eps}
\underline G\fd_{\eps}(\underline\zeta)^kV_2\big)\\
&+\Big[\big(\fd_{\eps}(\underline\zeta)^k\rho^{-1}V_1,\big[\p_t,\rho(1-\alpha\eps\underline
A)\rho\big]\fd_{\eps}(\underline\zeta)^k
\rho^{-1}V_1\big)\\
&+\big(\fd_{\eps}(\underline\zeta)^kV_2,\big[\p_t,\f1{\eps}\underline
G\big]\fd_{\eps}(\underline\zeta)^kV_2\big)\\
&+2\big(\big[\p_t,\fd_{\eps}(\underline\zeta)^k\big]\rho^{-1}V_1,\rho(1-\alpha\eps\underline
A)\rho\fd_\eps(\underline\zeta)^k\rho^{-1}V_1\big)\\
&+2\big(\big[\p_t,\fd_{\eps}(\underline\zeta)^k\big]V_2,\f1{\eps}\underline
G\fd_{\eps}(\underline\zeta)^k V_2\big)\Big]\\
{\eqdefa}&-{\eps}\lambda {\mathcal E}^{k}_{h}(V)^2+2D_1+2D_2+D_3.
\end{split}\eeq

\noindent$\bullet$ \textbf{The estimate of $D_1$}
\begin{align*}
\begin{split}
D_1=&\big(\fd_{\eps}(\underline\zeta)^k\rho^{-1}\p_tV_1,
\rho(1-\alpha\eps\underline{A})\rho\fd_{\eps}(\underline\zeta)^k\rho^{-1}V_1\big)\\
&+\big(\fd_{\eps}(\underline\zeta)^k(\p_t\rho^{-1})V_1,
\rho(1-\alpha\eps\underline{A})\rho\fd_{\eps}(\underline\zeta)^k\rho^{-1}V_1\big)
{\eqdefa}D_1^1+D_1^2.
\end{split}
\end{align*}
Applying Lemma \ref{lem:elliptic operator} gives
\begin{align*}
\begin{split}
|D_1^2| \leq&
\underline{C}\Big(|\fd_{\eps}(\underline\zeta)^k(\p_t\rho^{-1})V_1|_2
|\fd_{\eps}(\underline\zeta)^k\rho^{-1}V_1|_2+\eps|\fd_{\eps}(\underline\zeta)^k(\p_t\rho^{-1})V_1|_{H^1_\eps}
|\fd_{\eps}(\underline\zeta)^k\rho^{-1}V_1|_{H^1_\eps}\Big)\\
\leq&
\eps^3\underline{C}\Big(|V_1|^2_{H^{2k}_\eps}+\eps|V_1|^2_{H^{2k+1}_\eps}
+|V_1|^2_{H^{t_0}}|(\underline\zeta,\p_t\underline\zeta)|^2_{H^{2k+2}_\eps}
\Big).
\end{split}
\end{align*}
While thanks to \eqref{eq:linearized system-new}, we write
\begin{align}\label{prop6.2b}
\begin{split}
D_1^1=&\big(\fd_{\eps}(\underline\zeta)^k\rho^{-1}\f1{\eps}\underline
G V_2,
\rho(1-\alpha\eps\underline{A})\rho\fd_{\eps}(\underline\zeta)^k\rho^{-1}V_1\big)\\
&-\big(\fd_{\eps}(\underline\zeta)^k\rho^{-1}{\eps}\na^{\eps}_h\cdot(\underline{\bf
v}V_1\big),
\rho\big(1-\alpha\eps\underline{A})\rho\fd_{\eps}(\underline\zeta)^k\rho^{-1}V_1\big)\\
&+\big(\fd_{\eps}(\underline\zeta)^k\rho^{-1}{\eps} H_1,
\rho(1-\alpha\eps\underline{A})\rho\fd_{\eps}(\underline\zeta)^k\rho^{-1}V_1\big)\\
{\eqdefa}& D_1^{11}+D_1^{12}+D_1^{13}.
\end{split}
\end{align}
Let \beq\label{prop6.2aa} \na^{\eps}_{h,{\underline{\bf
v}}}f{\eqdefa}\f12(\na^{\eps}_h\cdot(\underline{\bf v} f)
+\underline{\bf v}\cdot\na^{\eps}_h f), \eeq  we write
\begin{align*}
\begin{split}
D_1^{12}=&-\big(\fd_{\eps}(\underline\zeta)^k\rho^{-1}{\eps}\na^{\eps}_{h,{\underline{\bf
v}}}V_1,\rho(1-\alpha\eps\underline{A})\rho\fd_{\eps}(\underline\zeta)^k\rho^{-1}V_1\big)\\
&-\f12\big(\fd_{\eps}(\underline\zeta)^k\rho^{-1}{\eps}(\na^{\eps}_h\cdot\underline{\bf
v})V_1,
\rho(1-\alpha\eps\underline{A})\rho\fd_{\eps}(\underline\zeta)^k\rho^{-1}V_1\big)\\
=&-{\eps}\big(\big[\fd_{\eps}(\underline\zeta)^k\rho^{-1},\na^{\eps}_{h,{\underline{\bf
v}}}\big]V_1,\rho(1-\alpha\eps\underline{A})
\rho\fd_{\eps}(\underline\zeta)^k\rho^{-1}V_1\big)\\
&-\f12{\eps}\big(\fd_{\eps}(\underline\zeta)^k\rho^{-1}V_1,\big[\rho(1-\alpha\eps\underline{A})
\rho,\na^{\eps}_{h,{\underline{\bf v}}}\big]\fd_{\eps}(\underline\zeta)^k\rho^{-1}V_1\big)\\
&
-\f12{\eps}\big(\fd_{\eps}(\underline\zeta)^k\rho^{-1}(\na^{\eps}_h\cdot
\underline{\bf v})V_1,
\rho(1-\alpha\underline{A})\rho\fd_{\eps}(\underline\zeta)^k\rho^{-1}V_1\big),
\end{split}
\end{align*}
which together with Lemma \ref{lem:elliptic operator}, Lemma
\ref{lem:elliptic operator-commutator} and Proposition
\ref{prop:shape derivative estimate} implies that
\begin{align*}
|D_1^{12}|\le {\eps}
\underline{C}\Big(|V_1|^2_{H^{2k}_\eps}+\eps|V_1|^2_{H^{2k+1}_\eps}
+|V_1|^2_{H^{t_0+1}}|(\underline\zeta,\mathfrak
P\underline{\psi})|^2 _{H^{2k+3}_\eps}\Big).
\end{align*}
And similarly, one has \beno \begin{split}
 |D_1^{13}|\leq &{\eps}
\underline{C}\Big(|H_1|^2_{H^{2k}_\eps}+\eps|H_1|^2_{H^{2k+1}_\eps}
+|V_1|^2_{H^{2k}_\eps}+\eps|V_1|^2_{H^{2k+1}_\eps}+(|H_1|^2_{H^{t_0}}+|V_1|^2_{H^{t_0+1}})
|\underline\zeta|^2_{H^{2k+2}_\eps}\Big).
\end{split}
\eeno Here we used the fact that $|\underline{\bf v}|_{H^s_\eps}\le
C|(\underline\zeta,\mathfrak
P\underline{\psi})|^2_{H^{s+1}_{\eps}}$. Consequently, we obatin
\beq\label{eq:energy-A} D_1=D_1^{11}+\cR_1 \eeq with $D_1^{11}$
given by \eqref{prop6.2b} and $\cR_1$ satisfying \beno \begin{split}
|\cR_1|\leq &{\eps}
\underline{C}\Big(|H_1|^2_{H^{2k}_{\eps}}+\eps|H_1|^2_{H^{2k+1}_{\eps}}
+|V_1|^2_{H^{2k}_{\eps}}+\eps|V_1|^2_{H^{2k+1}_{\eps}}\\
&+(|H_1|^2_{H^{t_0}}+|V_1|^2_{H^{t_0+1}})
|(\underline\zeta,\mathfrak P\underline{\psi},
\p_t\underline{\zeta})|^2 _{H^{2k+3}_{\eps}}\Big). \end{split} \eeno

\noindent$\bullet$ \textbf{The estimate of $D_2$}

Again thanks to \eqref{eq:linearized system-new}, we write
\begin{align}\label{prop6.2c}
\begin{split}
D_2=&-\big(\fd_{\eps}(\underline\zeta)^k(\underline{a}-\alpha\eps\underline
A)V_1,\f1{\eps}\underline G\fd_\eps(\underline\zeta)^k
V_2\big)-\big(\fd_{\eps}(\underline\zeta)^k{\eps}\underline{\bf
v}\cdot\na^{\eps}_h V_2,
\f1{\eps}\underline G\fd_{\eps}(\underline\zeta)^k V_2)\\
&+(\fd_{\eps}(\underline\zeta)^k{\eps} H_2,\f1{\eps}\underline
G\fd_\eps(\underline\zeta)^k V_2\big){\eqdefa}
D_2^{1}+D_{2}^2+D_2^{3}.
\end{split}
\end{align}
Applying Lemma \ref{lem:elliptic operator}, Lemma \ref{lem:elliptic
operator-commutator},  Proposition \ref{prop:DN-basic properties}
and Proposition \ref{prop:shape derivative estimate} yields
\begin{align*}
\begin{split}
D_{2}^2=&-{\eps}\big(\big[\fd_{\eps}(\underline\zeta)^k,\underline{\bf
v}\cdot\na^{\eps}_h\big]V_2, \f1{\eps} \underline
G\fd_{\eps}(\underline\zeta)^kV_2\big) -{\eps}\big(\underline{\bf
v}\cdot\na^{\eps}_h(\fd_{\eps}(\underline\zeta)^kV_2),
\f1{\eps} \underline G\fd_{\eps}(\underline\zeta)^kV_2\big)\\
\leq& {\eps} \underline{C}\Big(|\mathfrak PV_2|^2_{H^{2k}_\eps}
+|\mathfrak PV_2|^2_{H^{t_0+1}}|(\underline\zeta,\mathfrak
P\underline{\psi})|^2 _{H^{2k+3}_\eps}\Big).
\end{split}
\end{align*}
And similarly one has
\[
|D_2^{3}|\leq {\eps} \underline{C}\Big(|\mathfrak
PH_2|^2_{H^{2k}_\eps}+ |\mathfrak PV_2|^2_{H^{2k}_\eps} +(|\mathfrak
PH_2|^2_{H^{t_0}} +|\mathfrak PV_2|^2_{H^{t_0}})|\underline\zeta|^2
_{H^{2k+2}_\eps}\Big),
\]
which leads to \beq\label{eq:energy-B} D_2=D_2^1+\cR_2, \eeq with
$D_2^1$ given by \eqref{prop6.2c} and $\cR_2$ satisfying \beno
\begin{split}
|\cR_2|\leq& {\eps} \underline{C}\Big(|\mathfrak
PH_2|^2_{H^{2k}_\eps}+ |\mathfrak PV_2|^2_{H^{2k}_\eps}+(|\mathfrak
PH_2|^2_{H^{t_0}}+|\mathfrak
PV_2|^2_{H^{t_0+1}})|(\underline\zeta,\mathfrak
P\underline{\psi})|^2 _{H^{2k+3}_\eps}\Big).\end{split} \eeno

\noindent$\bullet$ \textbf{The estimate of $D_1+D_2$}

Thanks to \eqref{prop6.2b}, we rewrite $D_1^{11}$ as
\begin{align}
\label{prop6.2d}
\begin{split}
D_1^{11}=&\big(\fd_{\eps}(\underline\zeta)^k\rho^{-1}\f1{\eps}
\underline GV_2,\rho^2
\fd_{\eps}(\underline\zeta)^k\rho^{-1}V_1\big)-\big(\fd_{\eps}(\underline\zeta)^k\rho^{-1}\f1{\eps}\underline
G V_2, \alpha\eps\rho\underline
A\rho\fd_{\eps}(\underline\zeta)^k\rho^{-1}V_1\big)\\
=&\big(\fd_{\eps}(\underline\zeta)^k\f1{\eps} \underline GV_2,
\fd_{\eps}(\underline\zeta)^kV_1\big)+\big(\fd_{\eps}(\underline\zeta)^k\rho^{-1}\f1{\eps}
\underline GV_2,
\rho\big[\rho,\fd_{\eps}(\underline\zeta)^k\big]\rho^{-1}V_1\big)\\
&+\big(\big[\rho,\fd_{\eps}(\underline\zeta)^k\big]\rho^{-1}\f1{\eps}
\underline GV_2,
\fd_{\eps}(\underline\zeta)^kV_1\big)-\alpha\eps\big(\fd_{\eps}(\underline\zeta)^k\rho^{-1}\f1{\eps}\underline
G V_2, \rho \fd_{\eps}(\underline\zeta)^k\underline AV_1\big)\\
&-\alpha\eps\big(\fd_{\eps}(\underline\zeta)^k\rho^{-1}\f1{\eps}\underline
G V_2, \rho \big[\underline
A\rho,\fd_{\eps}(\underline\zeta)^k\big]\rho^{-1}V_1\big)\\
\eqdefa &\frak{G}_1+\frak{G}_2+\frak{G}_3+\frak{G}_4+\frak{G}_5.
\end{split}
\end{align}
It follows from Lemma \ref{lem:elliptic operator}-\ref{lem:elliptic
operator-commutator}, Proposition \ref{prop:DN-basic properties} and
Proposition \ref{prop:shape derivative estimate} that \beno
\begin{split}
\bigl|\frak{G}_2\bigr|+\bigl|\frak{G}_3\bigr|\leq &{\eps}
\underline{C}\Big(|V_1|^2_{H^{2k}_\eps}+|\mathfrak P V_2|^2
_{H^{2k}_\eps}+(|V_1|^2_{H^{t_0+1}}+|\mathfrak P V_2|^2
_{H^{t_0+1}})|\underline\zeta|^2_{H^{2k+2}_\eps}\Big). \end{split}
\eeno Since $\underline{A}(\rho \cdot)$ can be written as \beno
\underline{A}(\rho f)=-\fd_{\eps}(\underline\zeta) f+h_{\underline
A}\cdot\na^{\eps}_h f+h_{\underline A}'f \eeno for some smooth
function $h_A, h_A'$ depending on  $\eps^\f32\underline\zeta,$ $
\eps^\f32\na_h^{\eps}\underline\zeta,
\eps^\f32(\na_h^{\eps})^2\underline\zeta$, this implies that
$$
\big[\underline
A\rho,\fd_{\eps}(\underline\zeta)^k\big]=\big[h_{\underline
A}\cdot\na^\eps_h,\fd_{\eps}(\underline\zeta)^k\big]
+\big[h_{\underline A}',\fd_{\eps}(\underline\zeta)^k\big].
$$
Therefore, one has
\[\bigl|\frak{G}_5\bigr|\leq
{\eps}\underline{C}\Big(\eps|V_1|^2_{H^{2k+1}_{\eps}}+|\mathfrak P
V_2|^2 _{H^{2k}_{\eps}}+(|V_1|^2_{H^{t_0+1}}+|\mathfrak P V_2|^2
_{H^{t_0+1}})|\underline\zeta|^2_{H^{2k+2}_{\eps}}\Big).
\]
As a consequence, we obtain \beq\label{eq:energy-A11}
D_1^{11}=\frak{G}_1+\frak{G}_4+\cR_3 \eeq with $\frak{G}_1,$
$\frak{G}_4$ given by \eqref{prop6.2d} and $\cR_3$ satisfying \beno
\begin{split}
|\cR_3|\le &
{\eps}\underline{C}\Big(|V_1|^2_{H^{2k}_\eps}+\eps|V_1|^2_{H^{2k+1}_\eps}+|\mathfrak
P V_2|^2 _{H^{2k}_\eps}+(|V_1|^2_{H^{t_0+1}}+|\mathfrak P V_2|^2
_{H^{t_0+1}})|\underline\zeta|^2_{H^{2k+2}_\eps}\Big).
\end{split}\eeno

On the other hand, let $\underline a= 1+{\eps} \underline b$ with
$\underline b{\eqdefa}{\eps}\underline{\bf v}
\cdot\na^{\eps}_h\underline Z+\p_t\underline Z$. Thanks to
\eqref{prop6.2c}, we have \beq\label{prop6.2e}
\begin{split}
D_2^1=&-\big(\fd_{\eps}(\underline\zeta)^kV_1,\fd_{\eps}(\underline\zeta)^k\f1{\eps}\underline
GV_2\big) +\big(\fd_{\eps}(\underline\zeta)^k\alpha\eps\underline
AV_1, \rho\fd_{\eps}(\underline\zeta)^k\rho^{-1}\f1{\eps}\underline
GV_2\big)
\\&-\big(\fd_{\eps}(\underline\zeta)^kV_1, \big[\f1{\eps}\underline G,\fd_{\eps}(\underline\zeta)^k\big]V_2\big)\
-\big(\fd_{\eps}(\underline\zeta)^k{\eps}\underline bV_1,
\f1{\eps}\underline G\fd_{\eps}(\underline\zeta)^kV_2\big)
\\
&+\big(\fd_{\eps}(\underline\zeta)^k\alpha\eps\underline AV_1,
\rho\big[\rho^{-1}\f1{\eps}\underline
G,\fd_{\eps}(\underline\zeta)^k\big]V_2\big)
{\eqdefa}\frak{H}_{1}+\frak{H}_{2}+\frak{H}_{3}+\frak{H}_{4}+\frak{H}_{5}.\end{split}
\eeq Applying Lemma \ref{lem:elliptic operator} and Proposition
\ref{prop:DN-commutator} gives
\begin{align*}
\begin{split}
\bigl|\frak{H}_{3}\bigr|\leq &|\fd_{\eps}(\underline\zeta)^k
V_1|_2\big|\big[\f1{\eps}\underline G,
\fd_{\eps}(\underline\zeta)^k\big]V_2\big|_2\\
\leq&{\eps} \underline{C}\Big(|V_1|^2_{H^{2k}_{\eps}}+|\mathfrak P
V_2|^2 _{H^{2k}_{\eps}}+(|V_1|^2_{H^{t_0}}+|\mathfrak P V_2|^2
_{H^{t_0+1}})|\underline\zeta|^2_{H^{2k+2}_{\eps}}\Big),
\end{split}
\end{align*}
and  Theorem \ref{thm:DN-commutator-sharp} ensures that
\begin{align*}
\begin{split}
\bigl|\frak{H}_{5}\bigr|\leq&
\eps\underline{C}|\fd_{\eps}(\underline\zeta)^{k-1}\underline
AV_1|_{H^1_\eps}
\big|\big[\rho^{-1}\f1{\eps}\underline G,\fd_{\eps}(\underline\zeta)^k\big]V_2\big|_{H^1_\eps}\\
\leq& \eps\underline{C}\Big(\eps|V_1|^2_{H^{2k+1}_\eps} +|\mathfrak
PV_2|^2_{H^{2k}_{\eps}} +(|V_1|^2_{H^{t_0+2}_\eps}+|\mathfrak
PV_2|^2_{H^{t_0+2}}) |\underline \zeta|^2_{H^{2k+4}_\eps}\Big).
\end{split}
\end{align*}
While it follows from  Proposition \ref{prop:shape derivative} and
Proposition \ref{prop:shape derivative estimate} that \beno
|\underline b|_{H^s_{\eps}} \leq
\sqrt{\eps}\underline{C}\Big(|(\underline \zeta,\mathfrak
P\underline\psi)|_{H^{s+2}_\eps}+ |(\p_t\underline
\zeta,\p_t\mathfrak P\underline\psi)|_{H^{s+1}_{\eps}}\Big), \eeno
from which and Proposition \ref{prop:DN-basic properties}, Lemma
\ref{lem:elliptic operator} and Lemma \ref{lem:elliptic
operator-commutator}, we infer that
\begin{align*}
\begin{split}
 \bigl|\frak{H}_{4}\bigr|&\leq {\eps}
\underline{C}|\fd_{\eps}(\underline\zeta)^k(\underline
bV_1)|_{H^{1}_{\eps}}
|\mathfrak P\fd_{\eps}(\underline\zeta)^k V_2|_2\\
&\leq \eps\underline{C}\Big(\eps|V_1|^2_{H^{2k+1}_{\eps}}
+|\mathfrak PV_2|^2_{H^{2k}_{\eps}} +(|V_1|^2_{H^{t_0}}+|\mathfrak
PV_2|^2_{H^{t_0}})\\
&\qquad\times (|(\underline \zeta,\mathfrak
P\underline\psi)|^2_{H^{2k+3}_\eps} +|(\p_t\underline
\zeta,\p_t\mathfrak P\underline\psi)|^2_{H^{2k+3}_\eps})\Big).
\end{split}
\end{align*}
Consequently, we obtain
 \beq\label{eq:energy-B1}
D_2^1=\frak{H}_{1}+\frak{H}_2+\cR_4 \eeq with $\frak{H}_{1},$
$\frak{H}_2$ given by \eqref{prop6.2e} and $\cR_4$ satisfying
\begin{align*}
\begin{split}
|\cR_4|\le &
\eps\underline{C}\Big(|V|_{H^{2k}_\eps}+\eps|V_1|^2_{H^{2k+1}_{\eps}}
+|\mathfrak PV_2|^2_{H^{2k}_{\eps}}+(|V_1|^2_{H^{t_0}}+|\mathfrak PV_2|^2_{H^{t_0+1}})\\
&\times(|(\underline \zeta,\mathfrak
P\underline\psi)|^2_{H^{2k+3}_\eps} +|(\p_t\underline
\zeta,\p_t\mathfrak P\underline\psi)|^2_{H^{2k+3}_\eps})\Big).
\end{split}
\end{align*}

Summing up (\ref{eq:energy-A}), \eqref{eq:energy-B},
\eqref{eq:energy-A11}  and (\ref{eq:energy-B1}) yields that
\begin{align}\label{eq:energy-A+B}
\begin{split}
|D_1+D_2|&=|\cR_1+\cR_2+\cR_3+\cR_4|\\
&\le
\eps\underline{C}\Big(|V_1|_{H^{2k}_\eps}+\eps|V_1|^2_{H^{2k+1}_{\eps}}
+|\mathfrak PV_2|^2_{H^{2k}_{\eps}}+|H_1|^2_{H^{2k}_{\eps}}
+\eps|H_1|^2_{H^{2k+1}_{\eps}}\\
&\quad+|\mathfrak
PH_2|^2_{H^{2k}_{\eps}}+(|V_1|^2_{H^{t_0}}+|\mathfrak
PV_2|^2_{H^{t_0+1}}+|(H_1,\mathfrak PH_2)|_{H^{t_0}})\\
&\qquad\times (|(\underline \zeta,\mathfrak
P\underline\psi)|^2_{H^{2k+3}_\eps} +|(\p_t\underline
\zeta,\p_t\mathfrak P\underline\psi)|^2_{H^{2k+3}_\eps})\Big).
\end{split}
\end{align}

\noindent$\bullet$ \textbf{The estimate of $D_3$}

The estimate of $D_3$ is much easier. For example,   we get by
applying  Proposition \ref{prop:DN-commutator-time} to the second
term in $D_3$ that
\begin{align*}
\bigl|\big(\fd_{\eps}(\underline\zeta)^kV_2,\big[\p_t,\f1{\eps}\underline
G\big]\fd_{\eps}(\underline\zeta)^kV_2)\bigr| \leq&
\eps\underline{C}|\mathfrak P
\fd_{\eps}(\underline\zeta)^kV_2|^2_2\\
\leq& \eps\underline{C}\Big(|\mathfrak PV_2|^2_{H^{2k}_{\eps}}+
|\mathfrak PV_2|^2_{H^{t_0}}|\underline
\zeta|^2_{H^{2k+2}_{\eps}}\Big).
\end{align*}
Following exactly the same line, we can obtain similar estimates for
the other terms in $D_3.$ This gives
\begin{align}\label{eq:energy-C}\begin{split}
|D_3|\le&
\eps\underline{C}\Big(|V_1|_{H^{2k}_\eps}+\eps|V_1|^2_{H^{2k+1}_{\eps}}
+|\mathfrak PV_2|^2_{H^{2k}_{\eps}}+(|V_1|^2_{H^{t_0}}+|\mathfrak
PV_2|^2_{H^{t_0+1}}) |(\underline \zeta,\p_t\underline
\zeta)|^2_{H^{2k+3}_\eps}\Big).
\end{split}
\end{align}

Plugging (\ref{eq:energy-A+B}) and (\ref{eq:energy-C}) into
(\ref{eq:energy-high}) results in \beq\label{eq:energy-highest}
\begin{split}
e^{{\eps}\lambda t}\f d{dt}\Big(e^{-{\eps}\lambda t}\mathcal
E^{k}_{h}(V)^2\Big) &\leq -{\eps}\lambda \mathcal E^{k}_{h}(V)^2
+\eps\underline{C}\Big(|V_1|^2_{H^{2k}_{\eps}}+\eps|V_1|^2_{H^{2k+1}_{\eps}}
+|\mathfrak
PV_2|^2_{H^{2k}_{\eps}}\\
&\quad+|H_1|^2_{H^{2k}_{\eps}}+\eps|H_1|^2_{H^{2k+1}_{\eps}}
+|\mathfrak PH_2|^2_{H^{2k}_{\eps}}+(|V_1|^2_{H^{t_0}}+|\mathfrak
PV_2|^2_{H^{t_0+1}}\\
&\quad+|(H_1,\mathfrak PH_2)|^2_{H^{t_0}})(|(\underline
\zeta,\mathfrak P\underline\psi)|^2_{H^{2k+3}_{\eps}}
+|(\p_t\underline \zeta,\p_t\mathfrak
P\underline\psi)|^2_{H^{2k+3}_\eps})\Big).
\end{split}
\eeq

\no{\bf Step 2.\,} Lower order energy estimate.

Similar to \eqref{eq:energy-high}, we have
\begin{align}\label{eq:energy-low}
\begin{split}
e^{\eps\lam t}\f d{dt}\big(e^{-\eps\lam t}\mathcal
E^k_{l}(V(t))^2\Big)
 &=-\eps\lam \mathcal E^k_{l}(V)^2
+2(\Lam^k\p_tV_1,(1-\alpha\eps \underline A)\Lam^kV_1)\\
&\quad+2(\Lam^k\p_tV_2,\f1\eps\underline G\Lam^kV_2)+\Big[(\Lam^kV_1,[\p_t,1-\alpha\eps\underline
A]\Lam^kV_1)\\
&\quad+(\Lam^kV_2,[\p_t,\f1\eps \underline G]\Lam^k V_2)\Big]+2\eps^2\big(\Lam^{k-1} \p_tV_2,
 \Lam^{k-1} V_2\big)\\\
&{\eqdefa}-\eps\lam \mathcal E^k_{l}(V)^2+2E_1+2E_2+E_3+2E_4.
\end{split}
\end{align}
First of all, it is easy to show that \ben\label{eq:energylow-C}
|E_3|\le \eps\underline{C}\Big(\eps|\na_h^{\eps}
V_1|^2_{H^k}+|\mathfrak PV_2|^2_{H^k}\Big). \een

\noindent$\bullet$ \textbf{The estimate of $E_1$}

Thanks to \eqref{eq:linearized system-new}, we write
\begin{align*}
\begin{split}
E_1=&-\big(\Lam^k\eps \na_h^{\eps}\cdot (\underline{\bf
v}V_1),(1-\alpha\eps\underline
A)\Lam^kV_1\big)+\big(\Lam^k\f1\eps\underline GV_2,
(1-\alpha\eps\underline
A)\Lam^kV_1\big)\\
&+\big(\Lam^k\eps H_1,(1-\alpha\eps\underline
A)\Lam^kV_1\big){\eqdefa}E_1^1+E_1^2+E_1^3.
\end{split}
\end{align*}
It is easy to observe  that
\[
|E_1^3|\le \eps\underline{C}\Big(|H_1|_{H^k}^2+\eps|\na_h^{\eps}
H_1|_{H^k}^2+|V_1|_{H^k}^2+\eps|\na_h^{\eps} V_1|_{H^k}^2\Big).
\]
Whereas similar to \eqref{prop6.2aa}, we split $E_1^1$ further as
\begin{align*}\begin{split}
E_1^1=&-\eps\big(\big[\Lam^k,{\na^{\eps}}_{h,\underline {\bf
v}}\big]V_1,(1-\alpha\eps\underline
A)\Lam^kV_1\big)-\f12\eps\big(\Lam^k V_1,\big[1-\alpha\eps\underline
A,{\na^{\eps}}_{h,\underline {\bf
v}}\big]\Lam^kV_1\big)\\
&-\f12\eps\big(\Lam^k (\na_h^{\eps}\cdot\underline{\bf
v})V_1,(1-\alpha\eps\underline
A)\Lam^kV_1\big){\eqdefa}E_1^{11}+E_1^{12}+E_1^{13}.
\end{split}
\end{align*}
Note from Remark \ref{rem:DN operator} that \beno
|\Lam^2_{\eps}\underline {\bf v}|_{H^s}\le
\underline{C}\big(|\Lam^3_\eps \mathfrak P\underline
\psi|^2_{H^s}+|\Lam^4_\eps\underline \zeta|^2_{H^s}\big), \eeno and
\[[\Lam^k,{\na^{\eps}}_{h,\underline {\bf
v}}]=[\Lam^k,\underline{\bf v}]\cdot\na_h^{\eps}+\f12
[\Lam^k,\na_h^{\eps}\cdot\underline{\bf v}],\] so it follows from
Lemma2.4 that
\begin{align*}
|E_1^{11}| &\le \eps
\underline{C}\big(|[\Lam^k,{\na_h^{\eps}}_{\underline {\bf
v}}]V_1|_2+\eps^\f12|\na_h^{\eps}[\Lam^k,{\na_h^{\eps}}_{\underline
{\bf
v}}]V_1|_2\big)\big(|V_1|_{H^k}+\eps^\f12|\na_h^{\eps} V_1|_{H^k}\big)\\
&\le\eps\underline{C}\big(|V_1|_{H^k}+\eps^\f12|\na_h^{\eps}
V_1|_{H^k}+|V_1|_{H^{t_0+1}}|\Lam^2_\eps\underline {\bf
v}|_{H^k}\big)
\big(|V_1|_{H^k}+\eps^\f12|\na_h^{\eps} V_1|_{H^k}\big)\\
&\le\eps \underline{C}\Big(|V_1|^2_{H^k}+\eps|\na_h^{\eps}
V_1|^2_{H^k}+|V_1|^2_{H^{t_0+1}}\big(|\Lam^3_\eps \mathfrak
P\underline \psi|^2_{H^k}+|\Lam^3_\eps\underline
\zeta|^2_{H^k}\big)\Big).
\end{align*}
 Similarly, we have \beno\begin{split}
  |E_1^{12}+E_1^{13}|\le& \eps
\underline{C}\Big(|V_1|^2_{H^k}+\eps|\na_h^{\eps}
V_1|^2_{H^k}+|V_1|^2_{H^{t_0+1}}\big(|\Lam^3_\eps \mathfrak
P\underline \psi|^2_{H^k}+|\Lam^3_\eps\underline
\zeta|^2_{H^k}\big)\Big).
\end{split} \eeno To handle $E_1^2$, we write
\begin{align*}
\begin{split}
E_1^2=&\big(\f1\eps \underline G\Lam^kV_2,(1-\alpha\eps\underline
A)\Lam^k V_1\big)+\big(\big[\Lam^k,\f1\eps \underline
G\big]V_2,(1-\alpha\eps\underline A)\Lam^k V_1\big)
{\eqdefa}E_1^{21}+E_1^{22}. \end{split}
\end{align*}
As
\[
|\Lam^2_\eps\mathfrak
Pu|_{H^{k-1}}=|(\Lam^{2k}_\eps)^{\f1k}(\Lam^k)^{\f{k-1}k}\mathfrak
Pu|_2\le C\big(|\Lam^{2k}_\eps\mathfrak Pu|_2+|\Lam^k\mathfrak
Pu|_2\big),
\]
which along with  Remark \ref{rem:DN-commutator} ensures that
\begin{align*}
\begin{split}
|E_1^{22}| \le & \underline{C} \big(|\big[\Lam^k,\f1\eps \underline
G\big]V_2|_2+\eps^\f12|D^{\eps}_{h}\big[\Lam^k,\f1\eps
\underline G\big]V_2|_2)\big(|V_1|_{H^k}+\eps^\f12|\na_h^{\eps} V_1|_{H^k}\big)\\
\le&\eps \underline{C}\Big(|V_1|_{H^k}^2+\eps|\na_h^{\eps}
V_1|_{H^k}^2+ |\mathfrak PV_2|_{H^{2k}_\eps}^2+|\mathfrak
PV_2|_{H^k}^2 +|\mathfrak PV_2|_{H^{t_0+2}}^2|\Lam^3_\eps \underline
\zeta|_{H^k}^2\Big).
\end{split}
\end{align*}
As a consequence, we obtain \beq\label{eq:energy-low-A}
E_1=\big(\f1\eps \underline G\Lam^kV_2,(1-\alpha\eps\underline
A)\Lam^k V_1\big)+\frak{R}_1 \eeq with $\frak{R}_1$ satisfying
\begin{align*}
\begin{split}
|\frak{R}_1|\le& \eps
\underline{C}\Big(|V_1|_{H^k}^2+\eps|\na_h^{\eps} V_1|_{H^k}^2+
|\mathfrak PV_2|_{H^{2k}_\eps}^2+|\mathfrak
PV_2|_{H^k}^2+|H_1|_{H^k}^2\\
&\quad+\eps|\na_h^{\eps} H_1|_{H^k}^2+\big(|\mathfrak
PV_2|_{H^{t_0+2}}^2+|V_1|^2_{H^{t_0+1}}\big)|\Lam^4_\eps(\underline
\zeta, \mathfrak P\underline \psi)|^2_{H^k}\Big).
\end{split}
\end{align*}

\noindent$\bullet$ \textbf{The estimate of $E_2$}

Thanks to \eqref{eq:linearized system-new}, we write
\begin{align*}
\begin{split}
E_2=&-\big(\Lam^k(\underline a-\alpha\eps\underline
A)V_1,\f1\eps\underline G\Lam^kV_2\big)-\big(\Lam^k\eps\underline
{\bf
v}\cdot\na_h^{\eps} V_2,\f1\eps\underline G\Lam^kV_2\big)\\
&+\big(\Lam^k\eps H_2,\f1\eps\underline G\Lam^kV_2\big)  {\eqdefa}
E_2^1+E_2^2+E_2^3.
\end{split}
\end{align*}
Applying Proposition \ref{prop:DN-basic properties} gives
\[|E_2^3|\le \eps \underline{C}|\mathfrak PH_2|_{H^k}|\mathfrak PV_2|_{H^k}.\]
While notice that
\begin{align*}
\begin{split}
E_2^1=&-\big((1-\alpha\eps\underline A)\Lam^kV_1,\f1\eps\underline
G\Lam^kV_2\big)-\big(\big[\Lam^k,1-\alpha\eps\underline
A\big]V_1,\f1\eps\underline G\Lam^kV_2\big)\\
&-\big(\Lam^k\eps \underline b V_1, \f1\eps\underline
G\Lam^kV_2\big){\eqdefa}E_2^{11}+E_2^{12}+E_2^{13}.
\end{split}
\end{align*}
It follows from Lemma \ref{lem:commutator estimate}, Proposition
\ref{prop:DN-basic properties}, and  an interpolation argument that
\begin{align*}
\begin{split}
|E_2^{12}|\le& \underline{C}||D^{\eps}_{h}|\big[\Lam^k,\al\eps \underline A\big]V_1|_2|\mathfrak P V_2|_{H^k}\\
\le& \eps\underline{C}\Big(|\mathfrak PV_2|_{H^k}^2+\eps|
V_1|_{H^{2k+1}_{\eps}}^2+\eps|\na_h^{\eps}
V_1|_{H^k}^2+|V_1|_{H^k}^2+|V_1|_{H^{t_0+3}}^2|\Lam^3_{\eps}
\underline \zeta|_{H^k}^2\Big).
\end{split}
\end{align*}
Similarly, one has \beno
\begin{split}
|E_2^{13}|\le&\eps\underline{C}\Big(|\mathfrak
PV_2|_{H^k}^2+\eps|\na_h^{\eps}
V_1|_{H^k}^2+|V_1|_{H^k}^2\\
&+|V_1|_{H^{t_0+1}}^2(|\Lambda_{\eps}^2(\underline \zeta,\mathfrak
P\underline\psi)|^2_{H^{k}}+|\Lambda_{\eps}^2(\p_t\underline
\zeta,\p_t\mathfrak P\underline\psi)|^2_{H^{k}})\Big).\end{split}
\eeno Noting that
\begin{align*} E_2^2=-\eps\big(\big[\Lam^k,\underline {\bf v}\big]\cdot\na_h^{\eps} V_2,\f1\eps
\underline G\Lam^k V_2)-\eps\big(\underline{\bf
v}\cdot\na_h^{\eps}\Lam^k V_2, \f1\eps \underline G\Lam^k V_2\big),
\end{align*}
 from which,  Proposition \ref{prop:DN-basic properties} and Lemma
\ref{lem:commutator estimate}, we infer that
\begin{align*}
|E_2^2|\le \eps\underline{C}\Big(|\mathfrak PV_2|_{H^k}^2+|\mathfrak
PV_2|_{H^{2k}}^2 +|\mathfrak P
V_2|_{H^{t_0+1}}^2|\Lambda_{\eps}^3(\underline \zeta,\mathfrak
P\underline\psi)|^2_{H^{k}}\Big).
\end{align*}
Therefore, we obtain \beq\label{eq:energylow-B} E_2=-\big(\f1\eps
\underline G\Lam^kV_2,(1-\alpha\eps\underline A)\Lam^k
V_1\big)+\frak R_2 \eeq with $\cR_2$ satisfying
\begin{align*}
|\frak R_2|\le& \eps
\underline{C}\Big(|V_1|_{H^k}^2+\eps|\na_h^{\eps} V_1|_{H^k}^2+\eps|
V_1|_{H^{2k+1}_\eps}^2+ |\mathfrak PV_2|_{H^{2k}_\eps}^2+|\mathfrak
PV_2|_{H^k}^2+|\mathfrak
PH_2|_{H^k}^2\\
&\quad+\big(|\mathfrak
PV_2|_{H^{t_0+2}}^2+|V_1|^2_{H^{t_0+3}}\big)(|\Lam^3_\eps(\underline
\zeta, \mathfrak P\underline
\psi)|^2_{H^k}+|\Lam^2_\eps(\p_t\underline \zeta, \p_t\mathfrak
P\underline \psi)|^2_{H^k})\Big).
\end{align*}

\noindent$\bullet$ \textbf{The estimate of $E_4$}

Again thanks to \eqref{eq:linearized system-new}, we write
\begin{align*}
\begin{split}
E_4=&-\eps^2\big(\Lambda^{k-1}(\underline a-\alpha\eps\underline
A)V_1,\Lambda^{k-1} V_2\big)-\eps^2\big(\Lam^{k-1}\eps\underline
{\bf
v}\cdot\na_h^{\eps} V_2, \Lambda^{k-1} V_2\big)\\
&+\eps^3\big(\Lambda^{k-1} H_2, \Lambda^{k-1}
V_2\big){\eqdefa}E_4^1+E_4^2+E_4^3.
\end{split}
\end{align*}
It is easy to observe that \beno \begin{split} &|E_4^1|\le \eps
\underline{C}\Big(|V_1|^2_{H^k}+|V_1|^2_{H^{2k}_\eps}+\eps^2|V_2|^2_{H^{k-1}}
+|V_1|_{H^{t_0+1}}^2(|\Lambda_{\eps}^2(\underline \zeta,\mathfrak
P\underline\psi)|^2_{H^{k-1}}
\\
&\qquad\quad+|\Lambda_{\eps}^2(\p_t\underline \zeta,\p_t\mathfrak P\underline\psi)|^2_{H^{k-1}})\Big),\\
&|E_4^3|\le \eps
\underline{C}\Big(\eps^2|H_2|_{H^{k-1}}^2+\eps^2|V_2|^2_{H^{k-1}}\Big).
\end{split}
\eeno And one gets by using integration by parts that \beno
E_4^2=-\eps^3\big(\big[\Lam^{k-1},\underline {\bf
v}\big]\cdot\na_h^{\eps} V_2, \Lambda^{k-1} V_2\big)
+\f12\eps^3\big((\na_h^{\eps}\cdot\underline {\bf v})\Lam^{k-1}V_2,
\Lambda^{k-1} V_2\big), \eeno which together with Lemma
\ref{lem:commutator estimate} implies that \beno |E_4^2|\le \eps
\underline{C}\Big(\eps^2|V_2|^2_{H^{k-1}}
+\eps^2|V_2|_{H^{t_0+1}}^2|\Lambda_{\eps}^2(\underline
\zeta,\mathfrak P\underline\psi)|^2_{H^{k-1}}\Big). \eeno Then we
arrive at
\begin{align}\label{eq:energylow-D}
|E_4|\le& \eps \underline{C}\Big(|V_1|^2_{H^k}+|V_1|^2_{H^{2k}_\eps}
+\eps^2|V_2|^2_{H^{k-1}}+\big(|V_1|_{H^{t_0+1}}^2\\
&+\eps^2|V_2|_{H^{t_0+1}}^2\big)(|\Lambda_{\eps}^2(\underline
\zeta,\mathfrak P\underline\psi)|^2_{H^{k-1}}
+|\Lambda_{\eps}^2(\p_t\underline \zeta,\p_t\mathfrak
P\underline\psi)|^2_{H^{k-1}})\Big). \nonumber
\end{align}

Plugging (\ref{eq:energylow-C})-(\ref{eq:energylow-D}) into
(\ref{eq:energy-low}) results in
\begin{align} e^{{\eps}\lambda t}\frac
d{dt}\Big(e^{-{\eps}\lambda t}\mathcal E^k_l(V(t))^2\Big) &
\leq-{\eps}\lambda \mathcal E^k_l(V)^2+\eps
\underline{C}\Big(|V_1|_{H^k}^2+\eps|
V_1|_{H^{2k+1}_{\eps}}^2+\eps|\na_h^{\eps} V_1|_{H^k}^2+ |\mathfrak
PV_2|_{H^{2k}_{\eps}}^2\nonumber\\
&\quad+|\mathfrak
PV_2|_{H^k}^2+\eps^2|V_2|^2_{H^{k-1}}+|H_1|_{H^k}^2+|\mathfrak
PH_2|_{H^k}^2+\eps|\na_h^{\eps}
H_1|_{H^k}^2\nonumber\\
&\quad+\eps^2|H_2|^2_{H^{k-1}}+\big(|\mathfrak
P(V_2,H_2)|_{H^{t_0+2}}^2
+|(V_1,H_1)|^2_{H^{t_0+3}}\big)\label{eq:energy-lowest}\\
&\qquad\times\big(|\Lam^4_{\eps}(\underline \zeta, \mathfrak
P\underline \psi)|^2_{H^k}+|\Lam^3_{\eps}(\p_t\underline \zeta,
\p_t\mathfrak P\underline \psi)|^2_{H^k}\big)\Big).\nonumber
\end{align}

\no{\bf Step 3.} Full energy estimates.

Combining (\ref{eq:energy-highest}) with (\ref{eq:energy-lowest}),
we get by applying Lemma \ref{lem:energy relation} that
\begin{align*}
\begin{split}
&e^{{\eps}\lambda t}\frac d{dt}\Big(e^{-{\eps}\lambda t}\mathcal
E^k(V(t))^2\Big)
\leq-{\eps}\lambda E^k(V)^2+\eps \underline{C}E^k(V)^2\\
&\qquad+\eps \underline{C}E^k(H)^2+\eps
\underline{C}\big(E^{t_0+3}_l(V)^2+E^{t_0+3}_l(H)^2\big)
\big|\Lambda^5_{\eps}(\underline{\zeta},\underline{\psi})\big|^2_{Y^k_T}.
\end{split}
\end{align*}
Taking $\lambda=\underline{C}$ in the above inequality and applying
Lemma \ref{lem:energy relation} again yields
\begin{align}
E^k(V(t))^2&\le\underline{C}\cI^k(t,V_0,H)+\underline{C}(\cI^{t_0}(t,V_0,H)
+\eps E^{t_0}_l(V(t))^2)|\Lambda^3_\eps\underline{\zeta}|^2_
{X^k_T}\nonumber\\
&\quad+\eps\underline{C}\int_0^t\big(E^{t_0+3}_l(V)^2
+E^{t_0+3}_l(H)^2\big)d\tau\big|\Lambda^5_\eps
(\underline{\zeta},\underline{\psi})\big|^2_{Y^k_T}.
\label{eq:total-energy}
\end{align}
On the other hand, it follows from (\ref{eq:energy-lowest}) that
\begin{align*}
\begin{split}
E^k_l(V(t))^2\le &\underline{C}\cI^k(t,V_0,H)+\underline{C}\eps \int_0^tE^k(V(\tau))^2d\tau\\
&+\underline{C}\eps \int_0^t
\big(E^{t_0+3}_l(V)^2+E^{t_0+3}_l(H)^2\big)d\tau
\big|\Lambda^5_\eps(\underline{\zeta},\underline{\psi})\big|^2_{Y^k_T}.
\end{split}
\end{align*}
After taking $k=t_0+3$ in the above inequality and applying
Gronwall's inequality, we plug the resulting inequality into
(\ref{eq:total-energy})(where $k=5$) to yield that \beno
E^5(V(t))^2\le \underline{C}\cI^5(t,V_0,H), \eeno which together
with (\ref{eq:total-energy}) implies that \beno E^k(V(t))^2\le
\underline{C}\Big(\cI^k(t,V_0,H)
+\cI^5(t,V_0,H)\big|\Lambda^5_\eps(\underline{\zeta},\underline{\psi})\big|^2_{Y^k_t}\Big).
\eeno This completes the  proof of Proposition \ref{prop:linearized
system-tri}.\ef

\renewcommand{\theequation}{\thesection.\arabic{equation}}
\setcounter{equation}{0}
\section{Large time existence for the nondimensionalized water-wave  system}\label{well}

The goal of  this section is to use a modified Nash-Moser iteration
theorem in the Appendix and the uniform estimates obtained for the
linearized system \eqref{eq:linearized system}  to solve the
water-wave system \eqref{eq:Hamiltonian form-non} on
$[0,\f{T}\eps].$ As noticed in Remark\ref{rmk1.2}, there is no need
to prove Theorem\ref{thm:5-KP approximation} here.

We start the proof of Theorem \ref{thm:KP approximation} with the
following lemma:

\begin{lem}\label{lem:linear system} {\sl For all $U^0\in
\frak{X}^{s},$ we denote $S^{\eps}(t)$ the solution operator to the
linear system
 \beq\label{eq:linear system} \left\{\begin{array}{ll}
\p_tV+\f1{\eps}\mathcal L V=0,\\
V|_{t=0}=U^0.
\end{array}\right.
\eeq Then for all $T>0,$ $S^{\eps}(\cdot) U^0$ is well-defined in
$C([0,T];\frak{X}^{s})$. Moreover, for all $t\in[0,T]$, there holds
\beq\label{lem7.1} |S^{\eps}(t)U^0|_{\frak{X}^{s}}\leq
C(T,\f1{h_0},|b|_{H^{2s+5}})|U^0|_{\frak{X}^{s}}. \eeq}
\end{lem}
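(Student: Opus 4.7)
The plan is to establish existence, uniqueness, and the quantitative bound (\ref{lem7.1}) by the energy method developed in Section 6, specialised to the trivial reference state $\underline{U}\equiv 0$ (so that $\underline{\zeta}=0$, $\underline{\bf v}=0$, $\underline{Z}=0$, $\rho(\underline{\zeta})\equiv 1$, $\fd_{\eps}(0)=|D^{\eps}_{h}|^2$ and $\underline{A}=-|D^{\eps}_{h}|^2$) but without the $\underline{\mathcal{A}}$-type coupling, since the equation (\ref{eq:linear system}) is autonomous and purely linear. The coefficient operator $\frac{1}{\eps}\mathcal{L}$ is formally skew-adjoint with respect to the scalar product $\langle V,W\rangle_\ast := (V_1,W_1) + (V_2,\tfrac{1}{\eps}G[0]W_2)$, which is coercive on $L^2\times\mathfrak{P}^{-1}L^2$ by Proposition \ref{prop:DN-basic properties}(1),(3). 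Hence Stone's theorem (or a direct Faedo-Galerkin argument using the self-adjointness of $G[0]$) provides a one-parameter group $S^{\eps}(t)$ of $\langle\cdot,\cdot\rangle_\ast$-isometries, and higher regularity is propagated by commuting the equation with $\Lambda^s$ and $|D^{\eps}_{h}|^{2s}$, using Remark \ref{rem:DN-commutator} to show every such commutator produces a bounded, skew-perturbation of the generator.

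For the quantitative bound I would introduce the two-level energy modelled on (\ref{energy}):
\begin{align*}
\mathcal{E}^s_l(V)^2 &:= \bigl(\Lambda^sV_1,(1+\alpha\eps|D^{\eps}_{h}|^2)\Lambda^sV_1\bigr)+\bigl(\Lambda^sV_2,\tfrac{1}{\eps}G[0]\Lambda^sV_2\bigr)+\eps^2|\Lambda^{s-1}V_2|_2^2,\\
\mathcal{E}^s_h(V)^2 &:= \bigl(|D^{\eps}_{h}|^{2s}V_1,(1+\alpha\eps|D^{\eps}_{h}|^2)|D^{\eps}_{h}|^{2s}V_1\bigr)+\bigl(|D^{\eps}_{h}|^{2s}V_2,\tfrac{1}{\eps}G[0]|D^{\eps}_{h}|^{2s}V_2\bigr),
\end{align*}
which by the argument of Lemma \ref{lem:energy relation} (using $\rho\equiv 1$) is equivalent to $|V|_{\frak{X}^s}^2$ up to an absorbable remainder depending on $|b|_{H^{2s+2}}$; note in particular that the surface-tension enrichment $\alpha\eps(|D^{\eps}_{h}|^{2s+1}V_1,|D^{\eps}_{h}|^{2s+1}V_1)$ is exactly what furnishes the piece $\sqrt{\eps}|\zeta|_{H^{2s+1}_\eps}$ of the $\frak{X}^s$-norm (the analogous role is played by the $\Lambda^s$-enrichment for the low-order piece $\sqrt{\eps}|\na^\eps_h\zeta|_{H^s}$). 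Differentiating along the flow $\p_t V_1=\tfrac{1}{\eps^2}G[0]V_2$, $\p_tV_2=-\tfrac{1}{\eps}V_1$ and using the self-adjointness of $G[0]$, the principal cross-terms of order $\eps^{-2}$ cancel identically, leaving only (i) commutator errors $[\Lambda^s,G[0]]$ and $[|D^{\eps}_{h}|^{2s},G[0]]$ which by Remark \ref{rem:DN-commutator} and Theorem \ref{thm:DN-commutator-sharp} are $O(\eps)$ and absorbable by $C(|b|_{H^{2s+5}})\mathcal{E}^s(V)^2$; and (ii) a single residual ``surface-tension cross-term'' of the form $\tfrac{2\alpha}{\eps}(G[0]|D^{\eps}_{h}|^{2s}V_2,|D^{\eps}_{h}|^{2s+2}V_1)$, which arises because (\ref{eq:linear system}) lacks the surface-tension coupling present in the linearised operator $\underline{\mathcal{M}}$.

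\textbf{The main obstacle} is controlling this last residual term within $\mathcal{E}^s_h(V)^2$ itself. I would rewrite it (by self-adjointness of $G[0]$ and moving one $|D^{\eps}_{h}|$ factor onto $V_2$) as $2\alpha\bigl((G[0]/\eps)^{1/2}|D^{\eps}_{h}|^{2s+1}V_2,(G[0]/\eps)^{1/2}|D^{\eps}_{h}|^{2s+1}V_1\bigr)$, and then use Proposition \ref{prop:DN-basic properties}(3) to identify $(G[0]/\eps)^{1/2}|D^{\eps}_{h}|^{2s+1}V_1$ with $\mathfrak{P}|D^{\eps}_{h}|^{2s+1}V_1\sim \sqrt{\eps}|D^{\eps}_{h}|^{2s+2}V_1$ at high frequency, which is precisely the surface-tension piece already controlled by $\mathcal{E}^s_h(V)^2$; a Young-type splitting with the analogous quantity on the $V_2$-side (whose high-frequency behaviour is absorbed into the $(|D^{\eps}_{h}|^{2s}V_2,\tfrac{1}{\eps}G[0]|D^{\eps}_{h}|^{2s}V_2)$-piece after using Remark \ref{rem:DN operator} and trading one derivative against the Poisson regularisation) then bounds the cross-term by $C(\alpha,|b|_{H^{2s+5}})\mathcal{E}^s(V)^2$. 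This is the delicate point and the only place where the precise structure of $\mathfrak{P}$ and the equivalence between Dirichlet energy and the $\mathfrak{P}$-norm really matters.

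Assembling, one obtains $\frac{d}{dt}\mathcal{E}^s(V)^2\leq C(\eps,\alpha,\tfrac1{h_0},|b|_{H^{2s+5}})\mathcal{E}^s(V)^2$ on $[0,T]$, and Gronwall's inequality combined with the equivalence $\mathcal{E}^s(V)^2\sim|V|_{\frak{X}^s}^2$ delivers (\ref{lem7.1}) with a constant of the announced form. Continuity in time in $\frak{X}^s$ then follows from the equation and a standard density argument.
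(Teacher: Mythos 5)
Your overall strategy is the paper's: the official proof of Lemma \ref{lem:linear system} is a one-line pointer to the proof of Proposition \ref{prop:linearized system-tri} specialised to $\underline U=(0,0)$, and you carry this out with the correct specialisations ($\underline{\bf v}=\underline Z=0$, $\rho\equiv 1$, $\fd_{\eps}(0)=|D^{\eps}_{h}|^{2}$, $\underline A=-|D^{\eps}_{h}|^{2}$). You are also right, and more careful than the paper's terse proof, to flag the key wrinkle: $\mathcal L$ has only $1$ in its lower-left slot whereas $\underline{\mathcal M}\big|_{\underline U=0}$ has $1+\al\eps|D^{\eps}_{h}|^{2}$, so the surface-tension coupling that produces the exact cancellation $\frak G_4+\frak H_2=0$ between \eqref{prop6.2d} and \eqref{prop6.2e} is simply absent from \eqref{eq:linear system}. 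The residual term you isolate,
\begin{equation*}
\frac{2\al}{\eps}\bigl(G[0]\,|D^{\eps}_{h}|^{2s}V_2,\;|D^{\eps}_{h}|^{2s+2}V_1\bigr),
\end{equation*}
is indeed what is left over in $\frac{d}{dt}\mathcal E^s_h(V)^2$.

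The step you label the main obstacle does not close as you describe, and this is a genuine gap. Your control of the cross-term hinges on the identification $\mathfrak P|D^{\eps}_{h}|^{2s+1}V_1\sim\sqrt\eps\,|D^{\eps}_{h}|^{2s+2}V_1$ at high frequency. Since the symbol of $\mathfrak P$ is $|\xi^{\eps}|/(1+\sqrt\eps|\xi^{\eps}|)^{1/2}\sim\eps^{-1/4}|\xi^{\eps}|^{1/2}$ for $\sqrt\eps|\xi^{\eps}|\gg 1$, the symbol of $\mathfrak P|D^{\eps}_{h}|^{2s+1}$ is $\sim\eps^{-1/4}|\xi^{\eps}|^{2s+3/2}$, while $\sqrt\eps\,|D^{\eps}_{h}|^{2s+2}$ has symbol $\eps^{1/2}|\xi^{\eps}|^{2s+2}$; the two coincide only at the single scale $|\xi^{\eps}|\sim\eps^{-3/2}$ and are not mutually comparable. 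Moreover, even granting the identification, $\sqrt\eps\,\bigl||D^{\eps}_{h}|^{2s+2}V_1\bigr|_2$ is not the surface-tension piece of $\mathcal E^s_h$: what the energy actually controls is $\sqrt{\al\eps}\,\bigl||D^{\eps}_{h}|^{2s+1}V_1\bigr|_2$, a full derivative lower. Tracking symbols, the Cauchy--Schwarz bound via Proposition \ref{prop:DN-basic properties}(2)--(3) gives $|\,\text{cross-term}\,|\le C\bigl|\mathfrak P|D^{\eps}_{h}|^{2s+1}V_2\bigr|_2\,\bigl|\mathfrak P|D^{\eps}_{h}|^{2s+1}V_1\bigr|_2$, and each factor exceeds what $\mathcal E^s_h(V)$ controls --- by $|\xi^{\eps}|$ on the $V_2$ side and by $\eps^{-3/4}|\xi^{\eps}|^{1/2}$ on the $V_1$ side --- so no Young splitting dominates the term by $C\,\mathcal E^s(V)^2$. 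As written, your Gronwall step therefore has a hole at exactly the point you yourself singled out as the crux; a correct treatment must either integrate this term by parts in time (exploiting $\p_t V_2=-\frac1\eps V_1$, which converts its time integral into a boundary term $\sim\al\eps|\mathfrak P|D^{\eps}_{h}|^{2s+1}V_2|_2^2$), adjust the functional setting, or use a different energy, and none of these is sketched in the proposal.
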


\begin{proof} Indeed \eqref{lem7.1}  can be deduced from the proof of
Proposition \ref{prop:linearized system-tri} in the particular  case
when $\underline{U}=(0,0)$.\end{proof}

With  $V\eqdefa S^{\eps}(t)U^0$ thus defined,  we shall seek for a
solution $U$ of (\ref{eq:Hamiltonian form-non}) under the form
$U=V+W$, which is equivalent to solve  the following system of $W$:
\beq\label{eq:nonlinear system} \left\{\begin{array}{ll}
\p_tW+\f1{\eps}\mathcal L W+\mathcal F[t,W]=h,\\
W|_{t=0}=(0,0)^T,
\end{array}\right.
\eeq where $\mathcal F[t,W]\eqdefa \mathcal A[V+W]-\mathcal A[V]$
and $h\eqdefa-\mathcal A[V]$.

\begin{lem}\label{lem:nonlinear operator} {\sl Let $T>0$ and $s\ge m_0$.
Then we have
\begin{itemize}
 \item[1)]
 The mapping $\mathcal L: X^{s+1}\rightarrow X^{s}$ is
well-defined and continuous, and the family of linear solution
operators $(S^{\eps}(t))_{0<{\eps} <1}$ is uniformly bounded in
$C([-T,T];\mathfrak L(X^{s},X^{s}))$;

\item[2)] For  $0\le j\le2$,  $W\in X^{s+2}(\R^2)$ and $(W_1,\cdots,W_j)\in
X^{s+2}(\R^2)^j,$ \beno
\begin{split}
\sup_{t\in[0,T]}|d_{W}^j&\mathcal F[t,W](W_1,\cdots,W_j)| _{X^{s}}
\leq C(s,T,|W|_{X^{m_0}})\\
&\quad\times\Big( \sum^j_{k=1}|\Lambda_{\eps}^2
W_k|_{X^{s}}\prod_{l\neq k}|W_l|_{X^{t_0+1}}
+|\Lambda_{\eps}^2W|_{X^{s}}\prod^j_{k=1}|W_k|_{X^{t_0+1}} \Big);
\end{split}
\eeno

\item[3)] For  $0\le j\le2$, $W\in X^{s}(\R^2)$ and $(W_1,\cdots,W_j)\in
X^{s}(\R^2)^j,$ \beno
\begin{split}
\sup_{t\in[0,T]}|\Lambda_{\eps}^{-2}d_{W}^j&\mathcal
F[t,W](W_1,\cdots,W_j)| _{X^{s}} \leq C(s,T,|W|_{X^{m_0}})\\
&\quad\times\Big( \sum^j_{k=1}| W_k|_{X^{s}}\prod_{l\neq
k}|W_l|_{X^{t_0+1}}+|W|_{X^{s}}\prod^j_{k=1}|W_k|_{X^{t_0+1}} \Big)
\end{split}
\eeno \end{itemize}}\end{lem}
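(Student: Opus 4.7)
The plan is to verify the three claims by routine but careful applications of the product/composition/commutator estimates of Lemma \ref{lem2.1}, the DN bounds of Propositions \ref{prop:DN-basic properties} and \ref{prop:shape derivative estimate} (together with Remark \ref{rem:DN operator}), and the energy argument underlying Proposition \ref{prop:linearized system-tri}.

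Part (1) is the easiest. Since $\mathcal{L}(\zeta,\psi)=(-\eps^{-1}G[0]\psi,\zeta)^T$ and $G[0]$ is the flat-bottom DN, a Fourier multiplier of order one in $|D_h^\eps|$, the continuity $X^{s+1}\to X^s$ follows directly from comparing the seminorms in Definition \ref{definitions}. The uniform bound on $(S^\eps(t))_{0<\eps<1}$ I would obtain by specializing the energy argument of Proposition \ref{prop:linearized system-tri} to $\underline U=0$: then $\underline A=0$, $\underline{\bf v}=\underline Z=0$, $\underline a=1$, and all commutator terms vanish, so the full energy $\mathcal E^s(V)$ is essentially conserved and controls the $\sqrt\eps|\zeta|_{H^{2s+1}_\eps}+|\zeta|_{H^{2s}_\eps}+|\mathfrak P\psi|_{H^{2s}_\eps}$ piece; the extra $\eps|\psi|_{H^{s-1}}$ summand is propagated directly from the second component of \eqref{eq:linear system}, namely $\p_tV_2=-\eps^{-1}V_1$.

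For part (2), note first that for $j\ge 1$ one has $d_W^j\mathcal{F}[t,W](W_1,\dots,W_j)=d^j\mathcal{A}[V+W](W_1,\dots,W_j)$, while for $j=0$ one integrates along the segment from $V$ to $V+W$; by part (1), $|V|_{X^{m_0}}\le C(T,|b|_{H^{2m_0+5}})|U^0|_{X^{m_0}}$, so all $V$-dependence is absorbed into the constant. I would then split $\mathcal{A}=(A_1,A_2)^T$ and analyze each term. Writing $A_1[U]=-\eps^{-1}\int_0^1 d_\zeta G[\eps\tau\zeta]\psi\cdot\zeta\,d\tau$ and differentiating under the integral, Proposition \ref{prop:shape derivative estimate} (applied iteratively, combined with Remark \ref{rem:DN operator} for the higher Sobolev norms) yields the bound on $d^j A_1$: the combined powers $\eps^{3k/4}$ from $k\le j+1$ shape derivatives absorb the prefactor $\eps^{-1}$. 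For $A_2$, the algebraic pieces are handled by Lemma \ref{lem2.1}(ii)--(iii), recognising the denominator $1/(1+\eps^3|\na_h^\eps\zeta|^2)$ as a composition with the smooth function $x\mapsto 1/(1+\eps^3 x^2)$ and the surface tension term as a divergence of the composition $x\mapsto x/\sqrt{1+\eps^3|x|^2}$ applied to $\na_h^\eps\zeta$. The Moser-type combinatorial structure on the right of the claimed bounds emerges from distributing $t_0$-order derivatives via Sobolev embedding (Lemma \ref{lem2.1}(i)) onto all but one of the $W_k$, which then carries the top-order seminorm; the factor $\Lambda_\eps^2$ encodes the two derivatives lost in the top-order surface tension term $\na_h^\eps\cdot(\na_h^\eps\zeta/\rho)$ and, correspondingly, in the dominant piece of $d_\zeta G[\eps\zeta]$.

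For part (3) the argument is parallel, the only change being that the left-hand $\Lambda_\eps^{-2}$ recovers exactly the two derivatives that forced the $\Lambda_\eps^2$ on the right in (2): integration by parts moves one $\na_h^\eps$ off the surface tension term, while the DN-type contributions are controlled by $\Lambda_\eps^{-2}d_\zeta^k G[\eps\zeta]$ bounds obtained from Proposition \ref{prop:DN-basic properties}(3)--(4) (producing the $\mathfrak P$-norms) and from Proposition \ref{prop:shape derivative estimate} with $m=0$. The principal obstacle I anticipate is the bookkeeping in part (2): each summand in the Leibniz-type expansion of $d^j\mathcal{A}[V+W]$ must be checked to fall into one of the two allowed patterns, and one must verify that no term forces a higher seminorm than $|\Lambda_\eps^2 W_k|_{X^s}$. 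The most delicate case is the squared DN piece $(\eps^{-1/2}G[\eps\zeta]\psi+\eps^{3/2}\na_h^\eps\zeta\cdot\na_h^\eps\psi)^2/(1+\eps^3|\na_h^\eps\zeta|^2)$ in $A_2$, where the bare $\eps^{-1/2}$ prefactor must be balanced by the $\eps^{3/4}$ gains from two shape derivatives; this tight matching is precisely what makes the estimates uniform in $\eps\in(0,1)$.
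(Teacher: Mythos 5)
Your overall strategy matches the paper's one-line proof (which simply cites Proposition \ref{prop:shape derivative estimate} and Remark \ref{rem:DN operator}), but two concrete errors undermine the $\eps$-bookkeeping. First, in part (2) you write $A_1[U]=-\eps^{-1}\int_0^1 d_\zeta G[\eps\tau\zeta]\psi\cdot\zeta\,d\tau$, whereas the definition in Section 6.1 is $A_1[U]=-\eps^{-2}(G[\eps\zeta]\psi-G[0]\psi)$, and the fundamental theorem of calculus gives $G[\eps\zeta]\psi-G[0]\psi=\int_0^1 d_\zeta G[\eps\tau\zeta]\psi\cdot\zeta\,d\tau$ with \emph{no} extra factor of $\eps$ (the chain-rule $\eps$ from the map $\zeta\mapsto\eps\zeta$ is already baked into the formula of Proposition \ref{prop:shape derivative}). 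So the singular prefactor is $\eps^{-2}$, not $\eps^{-1}$. Your claim that ``the combined powers $\eps^{3k/4}$ from $k\le j+1$ shape derivatives absorb the prefactor $\eps^{-1}$'' thus fails by a full factor of $\eps^{-1}$: for $j=0$ (the difference $\mathcal{A}[V+W]-\mathcal{A}[V]$, reduced to $d\mathcal{A}$), each occurrence carries only a single $d_\zeta G$, and Proposition \ref{prop:shape derivative estimate} with $j=1$ gives only $\eps^{3/4+1/2}=\eps^{5/4}$, leaving a deficit of $\eps^{-3/4}$. The estimate is in fact true, but proving it requires more than a black-box use of Proposition \ref{prop:shape derivative estimate}: one must go back to the explicit formula $d_\zeta G[\eps\zeta]\psi\cdot h=-\eps G[\eps\zeta](hZ)-\eps^2\na_h^\eps\cdot(h\mathbf{v})$ and exploit that $Z$ itself carries a $G[\eps\zeta]\psi$ factor, hence is $O(\eps)$, so that $\eps G[\eps\zeta](hZ)$ gains the missing power; alternatively one can expand $G[\eps\zeta]\psi-G[0]\psi$ to second order in $\zeta$ and treat the explicit first-order term $d_\zeta G[0]\psi\cdot\zeta$ by hand.

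Second, in part (1) you claim that at $\underline U=0$ one has $\underline A=0$. This is false: from the definition, at $\underline\zeta=0$ the operator $\underline A$ reduces to $\na_h^\eps\cdot\na_h^\eps=\Delta_h^\eps$, the scaled Laplacian, which is manifestly nonzero. Your own sentence then becomes internally inconsistent, since you go on to assert that the energy $\mathcal E^s(V)$ controls $\sqrt\eps|\zeta|_{H^{2s+1}_\eps}$ -- but that control comes precisely from the quadratic form $(\Lambda^s V_1,-\al\eps\underline A\Lambda^s V_1)\sim\al\eps|\na_h^\eps V_1|_{H^{s}}^2$ generated by $\underline A$; if $\underline A$ really vanished, the $\sqrt\eps|\zeta|_{H^{2s+1}_\eps}$ piece of the $X^s$ norm would not be recoverable from the energy functional at $\underline U=0$. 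The intended conclusion (that the commutator and variable-coefficient terms disappear and the energy is essentially conserved) is correct, but the surface-tension contribution from $\underline A=\Delta_h^\eps$ must be retained, not dropped.
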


This lemma is a direct consequence of Proposition \ref{prop:shape
derivative estimate} and Remark \ref{rem:DN operator}.

\noindent{\bf Proof of Theorem  \ref{thm:KP approximation}.}\, With
the above preparations, this proof is much similar to that of
Theorem 5.1 in \cite{Lannes-Inven}, so we only sketch its proof
here. Indeed rescaling the system (\ref{eq:Hamiltonian
form-non-new}) by using a new time variable $t'={\eps} t$, we only
need to show that there exists a $T>0$ independent of $\eps$ so that
 the following system has a unique solution on $[0,T]:$  \beq\label{eq:rescaling
water wave system} \left\{\begin{array}{ll}
\p_tU+\f1{\eps}\mathcal L U+\mathcal A[U]=0,\\
U|_{t=0}=U^0.
\end{array}\right.
\eeq As shown above, the solution of (\ref{eq:rescaling water wave
system}) can be equivalently decomposed into the sum of solution of
(\ref{eq:linear system}) and solution of (\ref{eq:nonlinear
system}), so the proof of this theorem relies on the well-posedness
of the nonlinear system (\ref{eq:nonlinear system}). Here we use the
Nash-Moser theorem \ref{thm:Nash-Moser} to solve it. Lemma
\ref{lem:nonlinear operator} ensures the first two assumptions of
Theorem \ref{thm:Nash-Moser} in the Appendix, and the third
assumption  of Theorem \ref{thm:Nash-Moser} follows from Proposition
\ref{prop:linearized system}. Then applying Theorem
\ref{thm:Nash-Moser} completes the proof of the theorem.\ef

\renewcommand{\theequation}{\thesection.\arabic{equation}}
\setcounter{equation}{0}
\section{Appendix. A Nash-Moser iteration theorem}

In order to solve the full water-wave system (\ref{eq:Hamiltonian
form-non}), here we present a variant of Nash Moser iteration
theorem in \cite{Lannes-IUMJ}. As far as one can see, we present
energy estimates with both scaled Sobolev spaces and standard Sobolev
spaces. One will find out easily that the Banach space
$\mathfrak{X}^s$ in our paper doesn't satisfy the definition of a
'Banach scale' in \cite{Lannes-IUMJ}, and that's the reason why a
modified Nash-Moser based on \cite{Lannes-IUMJ} is needed in our
paper.

 We shall
focus on the well-posedness of the singular evolution equations of
the form \ben\label{eq:singular equation} \left\{
\begin{array}{ll}\p_t \underline u^\eps+\f1\eps\cL^\eps(t)\underline u^\eps
+\cF^\eps[t,\underline u^\eps]=h^\eps,\\
\underline u^\eps|_{t=0}=\underline u^\eps_0,\end{array}\right. \een
where $\eps\in (0,\eps_0)$ is a small parameter, $\cL^\eps(t)$ is a
linear operator, while $\cF^\eps[t,\cdot]$ is a nonlinear mapping.
First of all, we introduce a family of Banach scale $\big(X^s,
|\cdot|_{X^s}\big)_{s\in\R}$  in the following sense:

\begin{defi}\label{defi9.1} {\sl We say that a family of Banach spaces $\big(X^s,
|\cdot|_{X^s}\big)_{s\in\R}$ are Banach scale if
\begin{itemize}
 \item[(1)]\,For all $s\le s'$, one has $X^{s'}\subset X^s$ and
$|\cdot|_{X^s}\le |\cdot|_{X^{s'}}$;

\item[(2)]\,There exits a family of smoothing operator $S_{\tht}$ $
(\tht\ge 1)$ satisfying $S_{2\tht}S_\tht=S_\tht$ and \beno \forall
s<s',\quad |(1-S_\tht)u|_{X^s}\le C_{s,s'}\tht^{s-s'}|u|_{X^{s'}};
\eeno

\item[(3)]\,There exists a linear positive operator $\Lambda$ such that
for $m\ge 0$,
$$
|S_\tht \Lambda^m u|_{X^s}\le C\tht^{m}|u|_{X^s}\quad\mbox{and}\quad
|\Lambda^m u|_{X^s}\le C|u|_{X^{s+m}};
$$

\item[(4)]\,The norms satisfy a convexity property \beno \forall s\le
s''\le s',\quad |u|_{X^{s''}}\le
C_{s,s',s''}|u|_{X^{s}}^\mu|u|_{X^{s'}}^{1-\mu}, \eeno where $\mu$
is determined by $\mu s+(1-\mu)s'=s''$. \end{itemize}}
\end{defi}

\no{\bf Notations.}\,If $X_1$ and $X_2$ be two Banach spaces, we
denote by $\mathfrak L(X_1,X_2)$ the set of all continuous mappings
from $X_1$ to $X_2$; If $X$ is a Banach space and $T>0$, $X_T$
stands for $C([0,T];X)$ with the norm $|\cdot|_{X_T}$; For $\cF\in
C([0,T];C^j(X_1,X_2))$, we denote by $d^j_u\cF$ the $j$-th order
derivatives of the mapping $u\mapsto \cF[\cdot,u]$. \vspace{0.1cm}

\noindent{\bf Assumption 8.1}. There exist $T>0$, $s_0\in\R$ and
$m\ge 0$ such that
\begin{itemize}
\item[(1)]\, For all $s\ge s_0$, one has
$(\cL^\eps(\cdot))_{0<\eps<\eps_0}$ is bounded in $C([0,T];$
$\mathfrak L(X^{s+m},X^s))$;

\item[(2)]\, For all $g\in X^s,$ the evolution operator $(U^\eps(\cdot))_{0<\eps<\eps_0}$
defined by
\[ U^\eps(t)g{\eqdefa}u^\eps(t),\quad \hbox{where}\quad
\p_tu^\eps+\f1\eps \cL^\eps(t)u^\eps=0,\quad u^\eps|_{t=0}=g\] is
bounded in $C([-T,T];\mathfrak L(X^s,X^s))$ for $s\ge s_0$.
\end{itemize}
\vspace{0.2cm}

\noindent{\bf Assumption 8.2}. There exist $T>0$, $s_0\in\R$ and
$m\ge 0$ such that for all $s\ge s_0$ and $0\le j\le 2$, one has
$\cF^\eps\in C([0,T];C^2(X^{s+m},X^s))$ and for all
$u,v_1,...,v_j\in X^{s+m}$, \beno
&&\sup_{[0,T]}\big|d_u^j\cF[t,u](v_1,...,v_j)\big|_{X^{s}}\le
C(s,T,|u|_{X^{s_0+m}})\\
&&\qquad\qquad\times\Big(\sum^j_{k=1}|\Lambda^{m}v_k|_{X^{s}}\prod_{l\neq
k}|v_l|_{X^{s_0+m}}+|\Lambda^{m}u|_{X^{s}}\prod^j_{k=1}|v_k|_{X^{s_0+m}}\Big).
\eeno Moreover for all $u,v_1,...,v_j\in X^{s}\cap X^{s_0+m}$, \beno
&&\sup_{[0,T]}\big|\Lambda^{-m}d_u^j\cF[t,u](v_1,...,v_j)\big|_{X^{s}}\le
C(s,T,|u|_{X^{s_0+m}})\\
&&\qquad\qquad\times\Big(\sum^j_{k=1}|v_k|_{X^{s}}\prod_{l\neq
k}|v_l|_{X^{s_0+m}}+|u|_{X^{s}}\prod^j_{k=1}|v_k|_{X^{s_0+m}}\Big).
\eeno

In order to state the third assumption, we need to introduce some
functional spaces as follows \beno
\begin{split}
&E^s_m{\eqdefa}\cap^1_{i=0}C^i([0,T];X^{s}),\quad |u|_{E^s_m}{\eqdefa}|u|_{X^s_T}+|\Lambda^{-m}\p_t u|_{X^{s}_T},\\
&F^s_m{\eqdefa}C([0,T];X^{s})\times X^{s+m},\quad
|(f,g)|_{F^s_m}{\eqdefa}|f|_{X^s_T}+|g|_{X^{s+m}},\\
&\mathfrak I^s_m(t,f,g){\eqdefa}|g|^2_{X^{s+m}}+\int^t_0\sup_{0\le
t''\le t'}|f(t'')|^2_{X^{s}}dt'. \end{split}\eeno

\noindent{\bf Assumption 8.3}. There exists $d_1\ge 0$ such that for
all $s\ge s_0+m$,  $u^\eps\in E^{s+d_1}_m$, and $(f^\eps,g^\eps)\in
F^{s}_m$, the IVP \beno
\p_tv^\eps+\f1\eps\cL^\eps(t)v^\eps+d_u\cF^\eps[t,u^\eps]v^\eps=f^\eps,\qquad
v^\eps|_{t=0}=g^\eps \eeno admits a unique solution $v^\eps\in
C([0,T];X^s)$ for all $\eps\in(0,\eps_0)$ and \beno
\begin{split}
|v^\eps|^2_{X^s_T}\le&
C(\eps_0,s,T,|u^\eps|_{E^{s_0+m+d_1}_m})\Big(\mathfrak
I^{s}_m(t,f^\eps,g^\eps)+ |\Lambda^{d_1}u^\eps|^2_{E^{s}_m}\mathfrak
I^{s_0+m}_m(t,f^\eps,g^\eps)\Big).\end{split} \eeno

In what follows, we shall always denote \beno
\begin{split}
&D_1{\eqdefa}d_1+m,\quad q{\eqdefa}D-m,\quad\hbox{and}\quad\\
&P_{min}{\eqdefa} D_1+\f
Dq\left(\sqrt{D_1}+\sqrt{2(D_1+q)}\right)^2.\end{split} \eeno Then
Nash-Moser iteration theorem is stated as follows.

\bthm{Theorem}\label{thm:Nash-Moser} {\sl Let $T>0$, $s_0$, $m$,
$d_1$ be such that Assumptions 8.1-8.3 are satisfied. Let $D>D_1$,
$P>P_{min}$, $s\ge s_0+m$, and let $(h^\eps,\underline
u^\eps_0)_{0<\eps <\eps_0}$ be bounded in $F^{s+P}_m$. Then there
exist $0<T'\le T$ such that (\ref{eq:singular equation}) has a
unique family of solutions $\{\underline u^\eps\}_{0<\eps<\eps_0}$
which are uniformly bounded in $C([0,T'];X^{s+D}).$ }\ethm

The proof of Theorem \ref{thm:Nash-Moser} essentially follows the
framework of \cite{Lannes-IUMJ}, and we omit the detailed proof
here.

\bigskip

\noindent {\bf Acknowledgments.} Part of this work was done when we
were visiting Morningside Center of Mathematics, CAS, in the summer
of 2009. We appreciate the hospitality and the financial support
from the center. The authors would like to thank David Lannes and
Thomas Alazard for profitable discussions. P. Zhang is partially
supported by NSF of China under Grant 10421101 and 10931007, and the
innovation grant from Chinese Academy of Sciences under Grant
GJHZ200829. Z. Zhang is supported by NSF of China under Grant
10990013 and 11071007.

\end{document}